\documentclass[a4paper,10pt,leqno]{amsart}
\usepackage[T1]{fontenc}
\usepackage[utf8]{inputenc}
\usepackage[margin=1.2in]{geometry}
\usepackage{lmodern}
\usepackage[english]{babel}
\usepackage{graphicx}
\usepackage{amsmath}
\usepackage{amsfonts}
\usepackage{amssymb} 
\usepackage{amsthm}
\usepackage{hyperref}
\usepackage{bbm}
\usepackage{bm} 
\usepackage[foot]{amsaddr}
\usepackage{subfig}
\usepackage{lipsum}
\usepackage{verbatim}
\usepackage{xcolor}
\usepackage{url}
\theoremstyle{plain}
\newtheorem{thm}{Theorem}[section]
\newtheorem{lem}[thm]{Lemma}
\newtheorem{prop}[thm]{Proposition}
\newtheorem{cor}[thm]{Corollary}
\newtheorem{conj}{Conjecture}[section]
\DeclareMathOperator\arctanh{arctanh}

\DeclareMathOperator\sign{sign}
\newtheorem{rem}{Remark}

\def \be {\begin{equation}}
\def \ee {\end{equation}}


\begin{document}

\title[A hierarchical mean field model of interacting spins]{A hierarchical mean field model of interacting spins}

\author{Paolo Dai Pra}
\author{Marco Formentin}
\author{Guglielmo Pelino}
\noindent \address[P. Dai Pra, M. Formentin, G. Pelino]{Department of Mathematics ``Tullio Levi-Civita'', \newline \indent University of Padua, \newline \indent Via Trieste 63, 35121 Padova, Italy.}
\address[P. Dai Pra]{Department of Computer Science, \newline \indent University of Verona, \newline \indent Strada Le Grazie 15, 37134 Verona, Italy.}
\address[M. Formentin]{Padova Neuroscience Center, \newline \indent University of Padua,\newline \indent via Giuseppe Orus 2, 35131 Padova, Italy.}
\vspace{1cm}
\email[P. Dai Pra]{daipra@math.unipd.it}
\email[M. Formentin]{marco.formentin@unipd.it}
\email[G. Pelino]{guglielmo.pelino@math.unipd.it}
\thanks{\textbf{Funding:} The authors acknowledge financial support through the project ``Large Scale Random Structures'' of the Italian Ministry of Education, Universities and Research (PRIN 20155PAWZB-004). The last author is partially supported by the PhD Program in Mathematical Science, Department of Mathematics, University of Padua (Italy), Progetto Dottorati - Fondazione Cassa di Risparmio di Padova e Rovigo. 
}

\subjclass[2010]{58J65, 60J25, 60J60, 60J75, 60K35, 82C22, 82C44} %
\keywords{Mean field interacting particle systems, hierarchical spin systems, Curie--Weiss model, propagation of chaos}

\date{\today}

%
%

\begin{abstract}
We consider a system of hierarchical interacting spins under dynamics of spin-flip type with a ferromagnetic mean field interaction, scaling with the hierarchical distance, coupled with a system of linearly interacting hierarchical diffusions of Ornstein-Uhlenbeck type. In particular, the diffusive variables enter in the spin-flip rates, effectively acting as dynamical magnetic fields. In absence of the diffusions, the spin-flip dynamics can be thought of as a modification of the Curie--Weiss model. We study the mean field and the two-level hierarchical model, in the latter case restricting to a subcritical regime, corresponding to high temperatures, obtaining macroscopic limits at different spatio-temporal scales and studying the phase transitions in the system. We also formulate a generalization of our results to the $k$-th level hierarchical case, for any $k$ finite, in the subcritical regime. We finally address the supercritical regime, in the zero-temperature limit, for the two-level hierarchical case, proceeding heuristically with the support of numerics.
\end{abstract}

\maketitle


\section{Motivation}
\label{motivation}
Hierarchical models are often employed in the literature for applications in population dynamics and genetics, where individuals naturally dispose in groups with a hierarchical structure (families, clans, villages, colonies, populations and so on).
A series of papers from the '90s - '00s (initiated with \cite{dawson_greven_hier} and \cite{dawson_greven_1lev} among others), nicely reviewed in \cite{denholl}, deals with different types of hierarchical mean field linearly interacting diffusions (the prototype being linear Wright-Fisher diffusions), where in most cases the macroscopic limits are retrieved at every spatio-temporal scale, and a renormalization map can be defined, allowing one to pass from one hierarchical level to the other. Moreover, the study of the fixed points of the renormalization map is in some cases fully worked out. Two crucial ingredients which allow for an iterative renormalization procedure are the linearity of the interactions, which in the above works is realized by considering linear drifts, of imitative type, which scale with the hierarchical distance, and some ergodicity properties of the individual dynamics.
The motivation for focusing on diffusive dynamics as building blocks for the hierarchical models stems from the fact that, with their choices, each individual non-interacting dynamics can itself be obtained as a continuum limit of a corresponding finite state space model of interacting particles: for example, the discrete prelimit counterpart of the Wright-Fisher diffusion is the \textit{Voter model} (see e.g.\! \cite{cox_longtime}).

When working directly on finite state models fewer results are known, due to the non-linearity of the microscopic interactions. Hierarchical Ising-type models for spin systems were introduced in \cite{dyson}. Since then, a literature on the hierarchical group and renormalization theory for spin systems was developed (e.g.\! \cite{bleher}, \cite{derrida}, \cite{desimoi}, \cite{hara}, \cite{kaufman}), but always studying equilibrium models. 
On the finite state space dynamics, we acknowledge the work \cite{athreya}, which studies contact processes on the hierarchical group, with a focus on deriving sufficient conditions on the speed of decay of the infection rates for obtaining a phase transition between extinction and survival.

Here we study a model of non-Markovian interacting spins with a hierarchical mean field structure; at the individual level, the full Markovian state is indeed given by a pair of variables: the spin and a continuous variable which evolves in a diffusive way, modeling some aggregated remaining characteristics of the individual. The main goal of our study is to obtain macroscopic limits at various spatio-temporal scales for both the mean field and the two-level hierarchical formulation of the model, analyzing the presence of phase transitions in the system. Note that in our model the interaction between the spins is highly non-linear. However, as we shall see, the linear diffusions \textit{drive} the system of spins, eventually allowing for a separation of spatio-temporal scales for the spin dynamics as well.  

The rest of the paper is organized as follows: in Section \ref{intro_hier} we formulate the dynamics for a general interaction graph, which we then specify to the two contexts of our interest: the mean field case - analyzed in Section \ref{mean_field_hier}, and the two-level hierarchical case - analyzed in Section \ref{hier_hier}. In particular, in Section \ref{mean_field_hier} we derive the macroscopic limit at the two characteristic timescales of the model for any value of the parameters, highlighting the presence of a phase transition, and studying the resulting effects on the dynamics at each timescale. In Section \ref{hier_hier}, we study the macroscopic limits at the three different timescales of the two-level hierarchical model, restricting ourselves to a range of interaction parameters which we refer to as \textit{subcritical}, corresponding to a high temperature regime. We also formulate a generalization of these results to the $k$-level hierarchical version of the model, for any $k \in \mathbb{N}$ finite (Section \ref{renormalization_theory}). In the \textit{supercritical} region we focus on the zero-temperature limit (Section \ref{the_limit_case}), where we give a description of the limit dynamics supported by numerics and heuristic arguments, allowing for a comparison with the mean field scenario. 

\section{Introducing the model}
\label{intro_hier}
Consider a set $V$ (possibly countably infinite), indexing individuals in a population. Each individual $r \in V$ is identified with a pair of variables $(\mu_r, x_r)$: a spin variable $\mu_r \in \left\{-1,1\right\}$, and a continuous one $x_r \in \mathbb{R}$, modeling some summary statistics of the remaining characteristics of the individual, and thus being naturally normally distributed by a central limit theorem.
The interaction between each pair of spin variables $\mu_r, \mu_s \in V$ is encoded in a (possibly random) variable $J_{rs} \in \mathbb{R}$. Analogously, $x_r$ and $x_s$ interact with a strength proportional to some variables $J'_{rs}\in \mathbb{R}$. The particles $(\mu_r,x_r)_{r \in V}$ follow stochastic dynamics given by
\begin{equation}
\label{eqn:graph}
\begin{cases}
\mu_r \mapsto  \ -\mu_r, \ \ \ \ \text{      with rate     }\ \ \  1 + \tanh\left[-\mu_r \sum_{s \in V}J_{rs}(\mu_s + x_s)\right],\\
d x_r  = - \sum_{s \in V}J'_{rs}(x_r - x_s)dt + \sigma dW_r(t),
\end{cases}
\end{equation} 
where $W_r(t)$'s are $|V|$ independent Brownian motions, and $\sigma > 0$ is the diffusion coefficient. 
The choice of the rate function $1+\tanh(\cdot)$ in \eqref{eqn:graph} might seem unusual. Note that it is alternative to the more common choice $e^{-\mu_r \sum_{s \in V}J_{rs}(\mu_s+x_s)}$. As the latter, in the case without diffusions, it defines a Glauber-type spin-flip dynamics with respect to which the Gibbs measure
$$
\pi(\bm{\mu}) \propto 1+\tanh\left(\sum_{r,s \in V}J_{rs}\mu_r\mu_s\right)
$$
is reversible. The reason for the alternative choice $1 + \tanh(\cdot)$ is technical, as the boundedness of the transition rates is convenient for the proofs, even though we believe it is not an essential ingredient.

We focus on two different choices for $V$ and (deterministic) interaction parameters $J_{rs}$ and $J'_{rs}$: 
\begin{itemize}
\item \textit{Ferromagnetic mean field case:} 
\begin{equation}
\label{eqn:mf}
\begin{aligned}
V &:= \left\{1,\dots,N\right\},\\
J_{rs} &= \frac{\beta}{N},\\
 J'_{rs} &= \frac{\alpha}{N},
\end{aligned}
\end{equation}
with $ \alpha, \beta \geq 0$.
\item \textit{Ferromagnetic two-level hierarchical case:} 
\begin{equation}
\label{eqn:hier}
\begin{aligned}
V:=  \left\{1,\dots,N\right\}\times\left\{1,\dots,N\right\},\\ 
\begin{cases}
J_{rs} =  \frac{\beta_1}{N},  \ \  J'_{rs} = \frac{\alpha_1}{N}, \ \ \ \text{  if } |r-s| \leq 1,\\
J_{rs} = \frac{\beta_2}{N^2}, \ \ J'_{rs} = \frac{\alpha_2}{N^3}, \ \ \ \text{ if } |r-s| = 2,
\end{cases}
\end{aligned}
\end{equation}
with $\alpha_1, \alpha_2, \beta_1,\beta_2 \geq 0$, where the distance $|\cdot|$ between $r := (i,j)$ and $s := (k,l)$ is defined by
\begin{equation}
\label{eqn:hierarchicaL_dista}
|r-s|:=
\begin{cases}
0, \ \ \ \text{ if } i = k, \ j = l\\
1, \ \ \ \text{ if } i \neq k, \  j = l \\
2, \ \ \ \text{ otherwise}.
\end{cases}
\end{equation}
\end{itemize}
The two-level hierarchical case can be thought of as a model for a collection of $N$ interacting populations, each of which is itself a mean field interacting particle system with $N$ particles. 
In the definition \eqref{eqn:hierarchicaL_dista} of the hierarchical distance $|r-s|$, the first index $i$ refers to the individual, while the index $j$ identifies the $j$-th population. Two individuals $r= (i,j)$ and $s = (k,l)$ are thus said to be at distance $1$ if $j = l$ (i.e.\! they belong to the same population); otherwise, they are at distance $2$. The choices in \eqref{eqn:hier} are such that the strength of the interaction decays with the hierarchical distance.
This construction can be reiterated a finite number of times to define a $k$-level hierarchical model, where $V := \left\{1,\dots,N\right\}^{k}$, $J_{rs} \propto \frac{1}{N^l}$, $J'_{rs} \propto \frac{1}{N^{2l -1}}$ for $|r-s| = l$, with $l=1,\dots,k$. See Section \ref{renormalization_theory} for details.
The main goal of this paper is to obtain a limit description of both the mean field and the two-level hierarchical formulation of dynamics \eqref{eqn:graph} at different spatio-temporal scales, analyzing the possible presence of phase transitions in the system.

\section{The mean field model}
\label{mean_field_hier}
In this section we study the mean field version of the model, i.e.\! the case of a single population of $N$ individuals with a mean field type interaction. 
We denote by $(\bm{\mu}, \bm{x}) := (\mu_j,x_j)_{j=1,\dots,N} \in (\left\{-1,1\right\} \times \mathbb{R})^N$ a configuration of the entire population. In the following, we interchangeably use the coordinates $(\mu_i, \lambda_i)$ and $(\mu_i, x_i)$, where $\lambda_i := \mu_i + x_i$ is the total local field of the $i$-th individua. Let
$$
m^N(t) := \frac{1}{N}\sum_{i=1}^N \mu_i(t)
$$ 
be the magnetization of the spin variables at time $t$, and $x^N(t)$ (resp.\! $\lambda^N(t)$) the analogous quantity for the $x_i(t)$'s (resp.\! $\lambda_i(t)$'s).
The dynamics is such that, at time $t$, the $i$-th spin flips with rate 
$$
\mu_i \mapsto -\mu_i, \ \ \ \text{with rate} \ \ \ 1 + \tanh({-\beta\mu_i(t)\lambda^N(t)}),
$$
where $\lambda^N(t)$ and $x^N(t)$ satisfy, substituting the mean field coupling constants \eqref{eqn:mf} in the general dynamics \eqref{eqn:graph},
\begin{equation}
\label{eqn:lambda}
\begin{cases}
d \lambda^N(t)  = dm^N(t) + dx^N(t),\\
d x^N(t)  = \frac{\sigma}{\sqrt{N}}dW^N(t), 
\end{cases}
\end{equation}
where $W^N:= \frac{1}{\sqrt{N}}\sum_{i=1}^N W_i$ is a Brownian motion and $\sigma > 0$ the diffusion coefficient. We stress that the law of $W^N$ does not depend on $N$, but we keep the notation $W^N$ to refer to the specific Brownian motion obtained by the aggregation of the single $W_i$'s.

From the definition of the spin-flip rates, we obtain the transition rates at time $t$ for the order parameter $m^N$
\begin{equation}
\label{eqn:m}
\begin{aligned}
m^N \mapsto & \ m^N + \frac{2}{N}, \ \ \ \ \text{      with rate     }\ \ \  N \frac{1 - m^N(t)}{2} \left[1 + \tanh({\beta \lambda^N(t)})\right]\\
m^N \mapsto & \ m^N - \frac{2}{N}, \ \ \ \ \text{      with rate     } \ \ \   N \frac{1+m^N(t)}{2} \left[1 - \tanh({\beta \lambda^N(t)})\right].
\end{aligned}
\end{equation}
We assume i.i.d.\! initial data for the single variables $x_i(0) \sim \mathcal{N}\left(x_0, \sigma^2\right)$, and $\mu_i(0) \sim \text{Ber}(p)$, for some $p \in [0,1]$. 

The infinitesimal generator associated to the dynamics \eqref{eqn:lambda} and \eqref{eqn:m}, applied to a function $f: \mathbb{R} \times [-1,1] \to \mathbb{R}$, is given by
\begin{equation}
\label{eqn:generator}
\begin{aligned}
\mathcal{L}^N & f(\lambda, m) = N\frac{1-m}{2} \left[1 + \tanh({\beta \lambda})\right] \left[f\left(\lambda + \frac{2}{N}, m + \frac{2}{N}\right) - f(\lambda, m)\right]\\
& +  N\frac{1+m}{2} \left[1 - \tanh({\beta \lambda})\right] \left[f\left(\lambda - \frac{2}{N}, m - \frac{2}{N}\right) - f(\lambda, m)\right] + \frac{\sigma^2}{2N}\frac{\partial^2}{\partial \lambda^2} f(\lambda, m).
\end{aligned}
\end{equation}
The rest of this section on the mean field case is organized as follows. In the next two subsections we motivate the expected limit behavior at the two different timescales characterizing the model: in Section \ref{deterministic_mf} we deduce the order $1$ timescale \textit{deterministic} limit dynamics for $N\to +\infty$, while in Section \ref{fluctuations_mf} we introduce the problem of studying the accelerated dynamics at a timescale of order $N$. We finally address rigorously the convergence problem in the so-called \textit{subcritical} regime in Section \ref{sub_mfcase}, and in the \textit{supercritical} regime in Section \ref{sup_mfcase}.

\subsection{Deterministic mean field limit}
\label{deterministic_mf}
At times of order $1$, where the fluctuations terms (i.e.\! the terms which tend to $0$ for $N\to +\infty$ in the generator \eqref{eqn:generator})  become negligible for $N\gg0$, the dynamics of the system is well approximated by the following system of two ODEs
\begin{equation}
\label{eqn:limit_sys}
\begin{cases}
\dot{\lambda}(t)  = 2 \tanh(\beta \lambda(t)) - 2m(t) \\
\dot{m}(t) = 2 \tanh(\beta \lambda(t)) - 2m(t)\\
\lambda(0) = \lambda_0 \in \mathbb{R},\\
m(0) = m_0 \in [-1,1],
\end{cases}
\end{equation}
which represents the mean field limit of the dynamics introduced at the beginning. Sys. \eqref{eqn:limit_sys} is easily derived by observing that the generator \eqref{eqn:generator} uniformly converges to 
\begin{equation}
\label{eqn:lim_gen}
\mathcal{L} f(\lambda, m)  = (2 \tanh(\beta \lambda) - 2m)\left[\frac{\partial}{\partial \lambda} f(\lambda, m) + \frac{\partial}{\partial m} f(\lambda,m)\right].
\end{equation}
From the uniform convergence of the generators we obtain the weak convergence of the stochastic processes $(\lambda^N(t), m^N(t))_{t \in [0,T]}$ satisfying dynamics \eqref{eqn:lambda} and \eqref{eqn:m} to the limit deterministic process $(\lambda(t),m(t))_{t \in [0,T]}$, for which Sys. \eqref{eqn:limit_sys} holds (see \cite{ethier} for a classic reference).  Note that if we choose initial conditions such that $\lambda_0 = m_0$, the above system restricts to the Curie--Weiss model for $m(t) \equiv \lambda(t)$, except for a missing multiplicative term in the vector field which does not modify the qualitative behavior of the dynamics. System \eqref{eqn:limit_sys} is such that its equilibria form a one-dimensional curve of fixed points, given by
$$
m = \tanh{\beta \lambda},
$$  
corresponding to the points $(\lambda, m)$ for which $(\dot{\lambda}, \dot{m}) = (0,0)$. By studying the sign of the two-dimensional vector field in \eqref{eqn:limit_sys}, which has a constant slope of $1$ since its components are equal, one can get convinced that the equilibrium curve is a global attractor for the dynamics. However, we can distinguish two regimes, depending on the value of the parameter $\beta$.

\begin{figure}   
\centering   
\includegraphics[width=6cm]{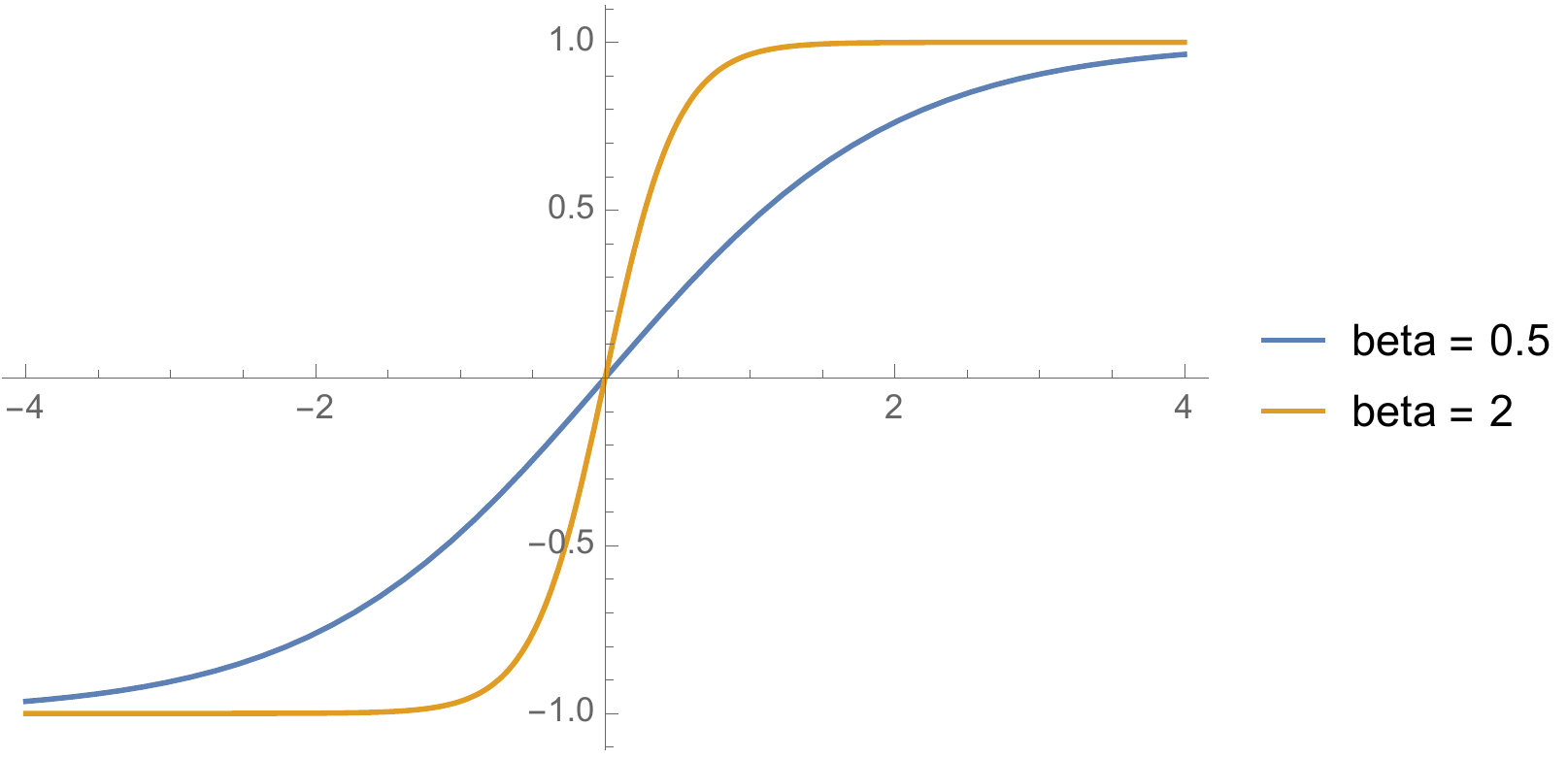}   
\caption{Invariant manifold for different values of $\beta$.}   
\label{figura1}   
\end{figure}   

Fig.\! \ref{figura1} and \ref{figura2} should highlight the qualitative behavior of the dynamics: for $\beta <1$, when the slope of the invariant curve is always smaller than the one of the vector field, the whole curve is a stable manifold; for $\beta > 1$ instead, the curve is stable in the two disjoint external intervals where the slope is less than $1$, while it shows an unstable behavior in the internal interval where the slope of the curve is greater than $1$. 
For $\beta > 1$, we denote the critical points where the curve has a slope equal to $1$ as $(\pm \lambda_a(\beta), \pm m_a(\beta))$, where
\begin{equation}
\label{eqn:critical_p}
\begin{aligned}
\lambda_a(\beta) & = \frac{1}{\beta}\text{arctanh}\left(\sqrt{1 - \frac{1}{\beta}}\right),\\
m_a(\beta) & = \sqrt{1 - \frac{1}{\beta}}.
\end{aligned}
\end{equation}

Thus, for some initial conditions close enough to the critical points, the dynamics will be soon attracted to the other branch of the curve, as shown in Fig.\! \ref{figura2}, where the vector field lines are also drawn in red. Consequently one can expect that, at the larger timescales where the diffusive smaller order terms are not negligible, the corresponding $N$-particle system might show an oscillating behavior between the two stable intervals, where the diffusion plays a role in driving the order parameters close enough to the endpoints of the stable intervals, thus determining a sudden change in the macroscopic variables.

\begin{figure}   
\centering   
\includegraphics[width=5.5cm]{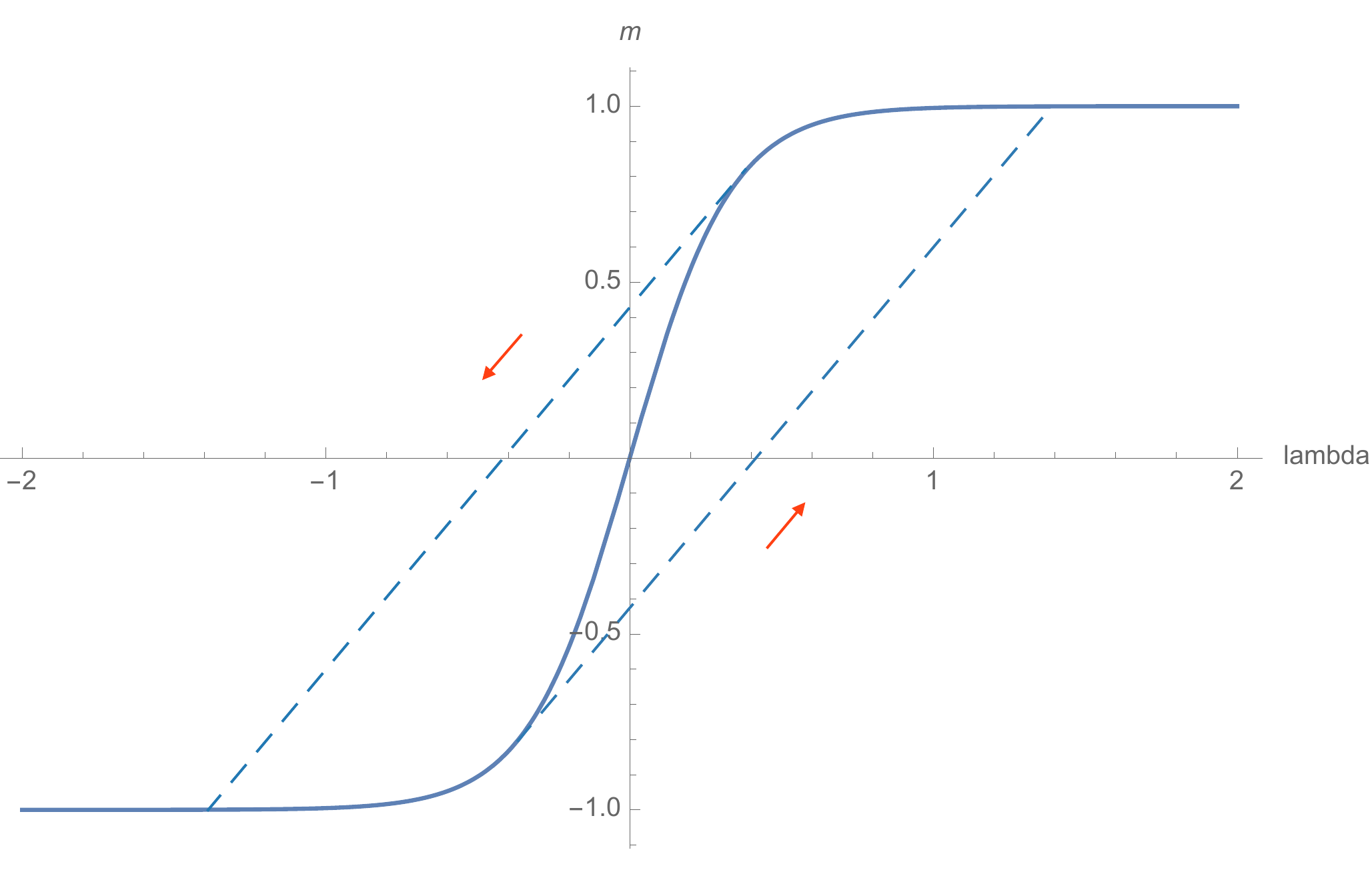}   
\caption{Qualitative behavior for $\beta > 1$.}   
\label{figura2}   
\end{figure}   

\subsection{Accelerated dynamics at times of order $N$}
\label{fluctuations_mf}
In order to rigorously understand the diffusive behavior with jumps which we qualitatively described in the previous section, we are led to study the accelerated $N$-particle dynamics and its relation with the order $1$ deterministic limit \eqref{eqn:limit_sys}, at a timescale where the diffusive smaller order terms are not negligible anymore in the limit $N \to +\infty$. In the following, for ease of notation, we still denote as $(\lambda^N(t),m^N(t))_{t\geq 0}$ the accelerated dynamics at a timescale of order $N$, i.e.\! $(\lambda^N(t),m^N(t)):=(\lambda^N(Nt),m^N(Nt))$, with the latter being the original process at a timescale of order $1$ (and the same notation $(x^N(t),m^N(t))$ for the alternative variables).
To motivate the presence of a limiting diffusive behavior at the accelerated timescale, we develop the jump terms in the generator \eqref{eqn:generator} at the second order, without considering the remainder terms of higher orders, yielding
\begin{equation}
\label{eqn:ap_gen}
\begin{aligned}
\mathcal{L}^N& f(\lambda, m) \approx (2 \tanh(\beta \lambda) - 2m)\left[\frac{\partial}{\partial \lambda} f(\lambda, m) + \frac{\partial}{\partial m} f(\lambda,m)\right]\\
& + \frac{1}{N}(2 - 2m\tanh(\beta\lambda))\left[2\frac{\partial^2}{\partial m\partial \lambda} f(\lambda, m) + \frac{\partial^2}{\partial \lambda^2} f(\lambda, m) + \frac{\partial^2}{\partial m^2}f(\lambda, m)\right]+ \frac{\sigma^2}{2N}\frac{\partial^2}{\partial \lambda^2}f(\lambda, m).
\end{aligned}
\end{equation}
The corresponding approximate dynamics features a strong drift, associated to the first order term in \eqref{eqn:ap_gen}, which grows with $N$ in both variables, and a bidimensional diffusion term which is of order $1$. The first fastly attracts the dynamics towards the curve $m = \tanh(\beta \lambda)$, on which the diffusive part then acts on a larger timescale. In the limit $N \to +\infty$, at an order $N$ timescale, one is then expecting to see an effective one-dimensional diffusive motion onto the curve $m = \tanh(\beta \lambda)$. Because of the difference in the stability properties for different values of the parameters, additional care must be put in the case $\beta > 1$, where one should retrieve a diffusive motion on the two stable intervals of the curve, $(-\infty, -\lambda_a(\beta))$ and $(\lambda_a(\beta),+\infty)$ (with $\lambda_a(\beta)$ as in \eqref{eqn:critical_p}), with jumps from one to the other component when the dynamics hits the critical points. Moreover, as we prove below, the arrival points of the jumps are also deterministic, and they are given by the intersection of the invariant curve with the tangent line passing through the critical points (see Fig.\! \ref{figura2}).  

An easy computation shows that our intuition is indeed correct: the accelerated $N$-particle \textit{exact} dynamics $(\lambda^N(t), m^N(t))_{t \geq 0}$ contracts the distance between $m$ and the invariant curve $\tanh(\beta \lambda)$, but only in the stable intervals $(-\infty, -\lambda_a(\beta))$ and $(\lambda_a(\beta),+\infty)$ when $\beta > 1$. Specifically, if we denote 
\begin{equation}
\label{eqn:y(t)}
y^N(t) := m^N(t)-\tanh(\beta \lambda^N(t)),
\end{equation}
we have the following
\begin{prop}
\label{mf_contr}
Let $y^N(t)$ be as in \eqref{eqn:y(t)}. Then, for any $T>0$, $k > 0$, $\beta < 1$,
\begin{equation}
\label{eqn:mf_contr}
\mathbb{E}\left[\sup_{t \in [0,T]} |y^N(t)|^k\right] \xrightarrow{N \to +\infty} 0.
\end{equation}
\end{prop}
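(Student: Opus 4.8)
The plan is to realize $y^N$ as the solution of a scalar SDE whose drift is mean-reverting at rate of order $N$, and then to combine a Gronwall energy estimate with a maximal inequality. First I would write down the equation for $y^N$. Applying It\^o's formula for jump--diffusions (equivalently Dynkin's formula with the accelerated generator $N\mathcal L^N$) to the accelerated process $(\lambda^N_t,m^N_t)$ and to the map $(\lambda,m)\mapsto m-\tanh(\beta\lambda)$, and expanding the jump increments to second order in $1/N$ — in the accelerated scale $m^N$ jumps by $\pm 2/N$ at rates of order $N^2$, while $\lambda^N$ carries the same jumps plus the order-one Brownian part $\sigma\,dW$ — one obtains
\[
dy^N_t=-2N\,a(\lambda^N_t)\,y^N_t\,dt+R^N(\lambda^N_t,m^N_t)\,dt+dM^N_t,\qquad a(\lambda):=1-\beta\bigl(1-\tanh^2(\beta\lambda)\bigr),
\]
where $M^N$ is a martingale with $d\langle M^N\rangle_t\le \hat C\,dt$, and $R^N$ is bounded, both uniformly in $N$ and in $(\lambda,m)\in\mathbb R\times[-1,1]$. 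The structural point is that $1-\tanh^2\le 1$, so for $\beta<1$ one has $a(\lambda)\ge 1-\beta>0$ uniformly in $\lambda$: the drift relaxes $y^N$ towards $0$ at rate $\ge 2N(1-\beta)$. (For $\beta>1$ the coefficient $a$ would change sign, vanishing precisely at $\pm\lambda_a(\beta)$, which is why the statement is restricted to $\beta<1$.) I would also record the a priori bound $|y^N_t|\le 2$, valid for all $t$ since $m^N\in[-1,1]$ and $|\tanh|<1$.

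The second step is a pointwise-in-time $L^{2p}$ estimate. For $p\in\mathbb N$, apply It\^o to $|y^N_t|^{2p}$: the leading drift contribution is $-4pN\,a(\lambda^N_t)|y^N_t|^{2p}\le -4pN(1-\beta)|y^N_t|^{2p}$, while the remaining second-order and $R^N$ terms are dominated, using $|y^N|\le 2$, by a constant $C_p$ independent of $N$. Taking expectations and applying Gronwall's lemma gives, uniformly in $t\in[0,T]$,
\[
\mathbb E\bigl[|y^N_t|^{2p}\bigr]\le \mathbb E\bigl[|y^N_0|^{2p}\bigr]\,e^{-4pN(1-\beta)t}+\frac{C_p}{N}.
\]
Since in the accelerated regime one starts on (or asymptotically on) the invariant curve $m=\tanh(\beta\lambda)$, so that $\mathbb E[|y^N_0|^{2p}]\to 0$, this already yields $\sup_{t\in[0,T]}\mathbb E[|y^N_t|^{2p}]\to 0$, in fact at rate $1/N$.

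The third step upgrades this to a bound on $\mathbb E[\sup_{[0,T]}|y^N_t|^{2p}]$. By variation of constants, $y^N_t=y^N_0\,E(0,t)+\int_0^t E(s,t)R^N_s\,ds+I^N_t$, with $E(s,t):=\exp\bigl(-2N\int_s^t a(\lambda^N_u)\,du\bigr)\le e^{-2N(1-\beta)(t-s)}$ and $I^N_t:=\int_0^t E(s,t)\,dM^N_s$; the first two terms are bounded pathwise by $|y^N_0|+C/N$. To control $\sup_{[0,T]}|I^N_t|$ I would partition $[0,T]$ into cells of length $h_N$, use $I^N_t=E(t_j,t)\,I^N_{t_j}+\int_{t_j}^t E(s,t)\,dM^N_s$ on each cell, estimate the last integral by the running maximum over the cell of a martingale whose quadratic variation there is at most $\hat C\,h_N e^{4Nh_N}$ (via Doob's or the Burkholder--Davis--Gundy inequality), sum the cell bounds together with the vertex estimates $\mathbb E[|I^N_{t_j}|^{2p}]\lesssim 1/N$ from the previous step, and optimize $h_N\asymp(\log N)/N$. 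This yields $\mathbb E[\sup_{[0,T]}|I^N_t|^{2p}]\to 0$ for every integer $p\ge 2$, hence $\mathbb E[\sup_{[0,T]}|y^N_t|^{2p}]\to 0$ for such $p$; the case of an arbitrary $k>0$ then follows by interpolation — Jensen's inequality if $k\le 2p$, and $|y^N|^k\le 2^{k-2p}|y^N|^{2p}$ if $k>2p$.

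Steps one and two are routine computations; the obstacle is the third. Because the relaxation rate is of order $N$, the crude approach — take the supremum and then discard the favourable drift term — leaves an $O(T)$ remainder coming from the bounded perturbation $R^N$ and does not close, so one must genuinely interleave the strong drift with the martingale, which is exactly what the cell decomposition plus Doob/BDG accomplishes (sub-Gaussian tail bounds for the martingale increments, available since $d\langle M^N\rangle$ is bounded, would even give the sharp $(\log N)/N$ rate). A minor caveat: the statement presupposes $y^N_0\to 0$, i.e.\ initial data on the invariant manifold; for i.i.d.\ data that do not satisfy $m_0=\tanh(\beta\lambda_0)$ in the limit, the same argument delivers the conclusion with the supremum taken over $[\tau,T]$ for any $\tau>0$.
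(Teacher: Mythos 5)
Your proposal is correct, and its first two steps coincide with the paper's own argument: the paper likewise applies the accelerated generator to $|y^N|^k$, obtains $N\mathcal L^N|y^N(t)|^k\le -2k(1-\beta)N|y^N(t)|^k+O(1)$ (your $a(\lambda)=1-\beta(1-\tanh^2(\beta\lambda))\ge 1-\beta$ is the same contraction), and closes the fixed-time bound by Gronwall; your use of even powers plus interpolation is in fact a slightly cleaner way to treat non-integer $k$ than the paper's direct manipulation of $|y|^{k-1}\sign(y)$. Where you genuinely diverge is the passage from $\sup_t\mathbb E$ to $\mathbb E\sup_t$: the paper does not prove this step itself but cites Section 4 of \cite{collet} and the Appendix of \cite{comets} to get $\mathbb P\bigl(\sup_{t\in[0,T]}|y^N(t)|^k>\delta\bigr)\to0$ and then concludes via the uniform bound $|y^N|\le 2$, whereas you give a self-contained argument (integrating-factor representation, cell decomposition of $[0,T]$, Doob/BDG per cell plus a union bound over cells and vertices), which is essentially the kind of discretization argument contained in those references and has the added benefit of producing a rate. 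Two minor quantitative remarks: with $h_N=c\log N/N$ the summed cell bound is of order $T\,h_N^{p-1}N^{4pc}$, so the constant $c$ must be chosen small, $c<(p-1)/(4p)$, for the scheme to close (or one uses exponential martingale inequalities, as you suggest); and your closing caveat about the initial data is consistent with the paper, whose claim that $\mathbb E[|y^N(0)|^k]\to0$ by the LLN implicitly assumes the limiting initial point lies on the invariant curve, exactly as in its Proposition \ref{subcritical_chaos}.
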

\begin{proof}
If we apply the generator \eqref{eqn:generator} in the accelerated timescale to any power $k$ of the distance $|y^N(t)|$, we obtain
\begin{align*}
N \mathcal{L}^N |y^N(t)|^k & = N \mathcal{L}^N |m^N(t) - \tanh(\beta \lambda^N(t))|^k \\
&\leq - 2kN(m^N(t) - \tanh(\beta \lambda^N(t)))|m^N(t) - \tanh(\beta \lambda^N(t))|^{k-1} \times\\
& \times \text{sign}(m^N(t) - \tanh(\beta \lambda^N(t)))\left[-\frac{d}{d\lambda}(\tanh(\beta\lambda^N(t))) + 1\right] + O(1)\\
& = -2kN|m^N(t) - \tanh(\beta \lambda^N(t))|^k\left[-\beta(1-\tanh^2(\beta\lambda^N(t))) + 1\right] + O(1),
\end{align*}
where in the equality we have used $x \cdot \text{sign}(x) = |x|$. The $O(1)$ terms are estimated by exploiting the diffusive approximation \eqref{eqn:ap_gen}.
Observing that, for $\beta < 1$, the function $1-\beta(1-\tanh^2(\beta\lambda))$ has a global minimum in $0$ given by $1 - \beta$, we have found
\begin{equation}
\label{eqn:contracting}
N \mathcal{L}^N |y^N(t)|^k \leq - C(\beta,k)N|y^N(t)|^k + O(1),
\end{equation}
with $C(\beta,k) := 2k(1-\beta) > 0$. 
By definition of $\mathcal{L}^N$, \eqref{eqn:contracting} implies
\begin{equation*}
\frac{d}{dt} \mathbb{E}\Big[|y^N(t)|^k\Big] \leq - C(\beta,k)N\mathbb{E}\Big[|y^N(t)|^k\Big] + O(1),
\end{equation*}
which, integrating both sides gives
\begin{align*}
\mathbb{E}\Big[|y^N(t)|^k\Big] &\leq  e^{-C_1N t } \mathbb{E}\Big[|y^N(0)|^k\Big] -  \frac{C_2}{N} e^{-C_1N t} +  \frac{C_2}{N}.
\end{align*}
Thus, $\sup_{t \geq 0} \mathbb{E}\Big[|y^N(t)|^k\Big] \leq \mathbb{E}\Big[|y^N(0)|^k\Big] + \frac{C}{N}$. Note that by the assumptions on the initial data we have by a LLN that $\mathbb{E}\Big[|y^N(0)|^k\Big] \xrightarrow{N \to +\infty} 0$. 
For getting the stronger convergence \eqref{eqn:mf_contr} we refer to Section 4 of \cite{collet} for the diffusive case and to the Appendix of \cite{comets} for a  general proof for jump processes, where their results imply here that, for any $\delta > 0$,
$$
\mathbb{P}\left[\sup_{t \in [0,T]} |y^N(t)|^k > \delta \right] \xrightarrow{N \to +\infty} 0.
$$
Since $|y^N(t)|^k$ is uniformly bounded \eqref{eqn:mf_contr} follows.
\end{proof}

\begin{rem}
\label{beta>1_contracting}
For $\beta > 1$, when $\frac{d}{d\lambda}(\tanh(\beta\lambda)) < 1$ we can repeat the previous arguments to obtain an estimate as \eqref{eqn:contracting}. To be more precise, for any $\delta > 0$ we can find an $\varepsilon > 0$ such that $\frac{d}{d\lambda}\left[\tanh(\beta(\lambda_a(\beta) + \delta))\right] = \frac{d}{d\lambda}\left[\tanh(-\beta(\lambda_a(\beta) + \delta))\right]= 1- \varepsilon$, and $\frac{d}{d\lambda}\left[\tanh(\beta\lambda)\right] < 1-\varepsilon$ for any $\lambda \in (-\infty, -\lambda_a(\beta)-\delta) \cup (\lambda_a(\beta)+\delta,+\infty)$. Then, for any $(\lambda,m)$ satisfying the above conditions we have, denoting $y := m - \tanh(\beta \lambda)$,
\begin{equation}
\label{eqn:beta>1_contracting}
N \mathcal{L}^N |y|^k \leq - C(\delta,\beta, k,\varepsilon) N |y|^k + O(1),
\end{equation}
with $C(\delta,\beta,k,\varepsilon) > 0$ if and only if $\lambda \in (-\infty, -\lambda_a(\beta) - \delta) \cup (\lambda_a(\beta)+\delta,+\infty)$.
\end{rem}

\subsection{The subcritical case: $\beta < 1$}
\label{sub_mfcase}
In this section we employ the result of Proposition \ref{mf_contr} to obtain the convergence of the sequence of the accelerated processes $(\lambda^N(t), m^N(t))_{t \geq 0}$ to some limit random process $(\lambda(t),m(t))_{t \geq 0}$ in the subcritical case $\beta <1$. For convenience and coherence with the further analyses, we state the main result of the section (Proposition \ref{subcritical_chaos}) for the variables $(x^N(t),m^N(t))_{t \geq 0}$, whose infinitesimal accelerated generator can be obtained from \eqref{eqn:generator} through a change of coordinates and a multiplication by $N$. To be precise, $(x^N(t))_{t \geq 0}$ satisfies
\begin{equation}
\label{eqn:X}
\begin{cases}
dx^N(t) = \sigma dW^N(t),\\
x^N(0) \sim \mathcal{N}\left(x_0,\frac{1}{N}\sigma^2\right), 
\end{cases}
\end{equation}
with $W^N$ the Brownian motion $W^N(t) := \frac{1}{\sqrt{N}}\sum_{i=1}^N W_i(t)$, while $(m^N(t))_{t \geq 0}$ is given as in \eqref{eqn:m} but with rates multiplied by $N$, i.e.\!
\begin{equation}
\label{eqn:acc_jump}
\begin{cases}
m^N(t) \! \mapsto \!m^N(t) \pm \frac{2}{N}  \ \text{ rate }  \ \ N^2 \frac{1 \mp m^N(t)}{2}\! \left(1 \!\pm \tanh\!\left(\beta (x^N(t) + m^N(t))\right)\right),\\
m^N(0) = \frac{1}{N}\text{Bin}(N,p).
\end{cases}
\end{equation}
We show below that the limit process for the sequence $(x^N(t), m^N(t))_{t \geq 0}$ is given by 
\begin{equation}
\label{eqn:subcritical_mf}
\begin{cases}
m(t) = \tanh(\beta(x(t)+m(t))),\\
dx(t) = \sigma dW(t),\\
m(0) = m_0 \in [-1,1],\\
x(0) = x_0 \in \mathbb{R},
\end{cases}
\end{equation}
with $m_0 = 2p -1$ and $W$ a Brownian motion. 
In the subcritical case, Eq. \eqref{eqn:subcritical_mf} is well-posed. Indeed, for $\beta < 1$, the relation $m(t) = \tanh(\beta (x(t) + m(t)))$ can be made explicit so that $m(t) = \varphi(x(t))$ for some function $\varphi : \mathbb{R} \to [-1,1]$ (see also Proposition \ref{limit_sub_diff} below).

\begin{prop}[Subcritical order $N$ mean field limit dynamics]
\label{subcritical_chaos}
Let $T >0$ and $\beta < 1$. Then, $(x^N(t), m^N(t))_{t \in [0,T]}$ converges for $N \to +\infty$, in the sense of weak convergence of stochastic processes, to $(x(t),m(t))_{t \in [0,T]}$, the solution to \eqref{eqn:subcritical_mf}.
\end{prop}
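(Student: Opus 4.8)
The plan is to reduce the statement to two facts: the (trivial) convergence of the diffusive component $x^N$, and the ``slaving'' of $m^N$ to $x^N$ provided by Proposition~\ref{mf_contr}. Throughout I write $\lambda^N=x^N+m^N$ and keep $y^N$ as in \eqref{eqn:y(t)}. First I would set up the implicit function $\varphi$. For $\beta<1$ the map $m\mapsto\tanh(\beta(x+m))$ is a uniform contraction of $[-1,1]$, since its derivative $\beta\bigl(1-\tanh^2(\beta(x+m))\bigr)$ is bounded by $\beta<1$; hence for each $x\in\mathbb{R}$ there is a unique $\varphi(x)\in[-1,1]$ solving $\varphi(x)=\tanh(\beta(x+\varphi(x)))$, and implicit differentiation gives $\varphi'(x)=\frac{c}{1-c}$ with $c:=\beta\bigl(1-\tanh^2(\beta(x+\varphi(x)))\bigr)\in(0,\beta]$, so $0\le\varphi'(x)\le\beta/(1-\beta)$ and $\varphi$ is globally Lipschitz. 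This also settles well-posedness of \eqref{eqn:subcritical_mf}: its unique solution is $x(t)=x_0+\sigma W(t)$ together with $m(t)=\varphi(x(t))$.

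Next I would prove the deterministic bound
\begin{equation*}
|m^N(t)-\varphi(x^N(t))|\ \le\ (1-\beta)^{-1}\,|y^N(t)|\qquad\text{for all }t\in[0,T],
\end{equation*}
which follows by writing $m^N-\varphi(x^N)=y^N+\bigl(\tanh(\beta\lambda^N)-\tanh(\beta(x^N+\varphi(x^N)))\bigr)$ and using that $\tanh$ is $1$-Lipschitz together with $\varphi(x^N)=\tanh(\beta(x^N+\varphi(x^N)))$, which gives $|m^N-\varphi(x^N)|\le|y^N|+\beta\,|m^N-\varphi(x^N)|$. Combined with Proposition~\ref{mf_contr} (applied with $k=1$), this yields $\mathbb{E}\bigl[\sup_{t\in[0,T]}|m^N(t)-\varphi(x^N(t))|\bigr]\to0$, and in particular $\sup_{t\le T}|m^N(t)-\varphi(x^N(t))|\to0$ in probability.

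Then I would assemble the limit. Since $x^N(t)=x^N(0)+\sigma W^N(t)$ with $x^N(0)\sim\mathcal{N}(x_0,\sigma^2/N)\to x_0$ and the law of $W^N$ independent of $N$, the processes $(x^N(t))_{t\in[0,T]}$ converge in law in $C([0,T];\mathbb{R})$ to $x(t)=x_0+\sigma W(t)$; by continuity of $\varphi$ and the continuous mapping theorem, $(x^N,\varphi(x^N))\Rightarrow(x,\varphi(x))$ in $D([0,T];\mathbb{R}\times[-1,1])$. Writing $(x^N,m^N)=(x^N,\varphi(x^N))+(0,\,m^N-\varphi(x^N))$ and using that the last term vanishes uniformly in probability (a Slutsky-type ``converging together'' argument), I conclude $(x^N,m^N)\Rightarrow(x,\varphi(x))$, which is precisely the solution of \eqref{eqn:subcritical_mf}.

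I expect no serious obstacle at this stage: everything rests on Proposition~\ref{mf_contr}, which already provides the uniform-in-time smallness of $y^N$, so that no separate Skorokhod tightness estimate for the jump process $m^N$ is needed and $x^N$ is handled by hand. Were one to prove the statement without quoting Proposition~\ref{mf_contr}, the genuine difficulty would be exactly the one that proposition packages, namely showing that the $O(N)$ restoring drift towards the curve $m=\tanh(\beta\lambda)$ appearing in \eqref{eqn:ap_gen} dominates uniformly over the whole time horizon of order $N$.
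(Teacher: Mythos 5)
Your proof is correct, and it rests on exactly the same two pillars as the paper's: the triviality of the convergence of $x^N$ and Proposition~\ref{mf_contr} with $k=1$, with the hypothesis $\beta<1$ entering through a $(1-\beta)^{-1}$ factor coming from the Lipschitz constant $\beta$ of $\tanh(\beta\,\cdot\,)$. The assembly, however, differs from the paper's. The paper couples the processes by driving $x^N$ and the limit $x$ with the \emph{same} Brownian motion, then bounds $\mathbb{E}\bigl[\sup_{t\le T}|m^N(t)-m(t)|\bigr]$ directly via the triangle inequality through $\tanh(\beta(x^N+m^N))$, absorbing the term $\beta\,\mathbb{E}\bigl[\sup_t|m^N-m|\bigr]$ on the left-hand side; weak convergence then follows because $W^N\stackrel{\mathcal{D}}{=}W$. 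You instead make the slaving map $\varphi$ explicit (the paper only alludes to it just before the statement), prove the pathwise bound $|m^N-\varphi(x^N)|\le(1-\beta)^{-1}|y^N|$, and conclude by the continuous mapping theorem applied to $w\mapsto(w,\varphi\circ w)$ together with a converging-together argument, with no coupling needed. What each buys: your route cleanly separates the deterministic slaving estimate from the probabilistic limit of $x^N$, makes the well-posedness of \eqref{eqn:subcritical_mf} and the Lipschitz constant $\beta/(1-\beta)$ of $\varphi$ explicit, and yields the weak convergence in $D([0,T])$ directly; the paper's synchronous coupling gives the slightly stronger statement that, for the coupled versions, $\sup_{t\le T}|m^N(t)-m(t)|$ and $\sup_{t\le T}|x^N(t)-x(t)|$ vanish in $L^1$, which is the form reused later in the stopped-process argument of Lemma~\ref{lem_small}. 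Both arguments are complete for the stated weak-convergence claim.
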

\begin{proof}
We plug in the definition \eqref{eqn:X} of $x^N(t)$ the \textit{same} Brownian motion $W(t)$ appearing in the definition \eqref{eqn:subcritical_mf} of $x(t)$. We then prove, for the resulting processes
\begin{align}
\label{eqn:mlim}
\mathbb{E}\left[\sup_{t \in [0,T]} |m^N(t) - m(t)|\right] \xrightarrow{N \to +\infty} 0,\\
\label{eqn:xlim}
\mathbb{E}\left[\sup_{t \in [0,T]} |x^N(t) - x(t)|\right] \xrightarrow{N \to +\infty} 0.
\end{align}
Since $W^N \stackrel{\mathcal{D}}{=} W$ for every $N$, as they are both Brownian motions, \eqref{eqn:mlim} and \eqref{eqn:xlim} imply the desired convergence in distribution between the processes. 
Limit \eqref{eqn:xlim} is trivial, since the dynamics of $x^N(t)$ in the accelerated scale is
$$
x^N(t) = x^N(0) + \sigma \int_0^t dW(t),
$$
and $x^N(0) \to x(0)$ by a LLN. For \eqref{eqn:mlim}, we estimate
\begin{align*}
\mathbb{E}&\left[\sup_{t \in [0,T]}\Big|m^N(t) - m(t)\Big|\right] \leq \mathbb{E}\left[\sup_{t \in [0,T]}\Big|m^N(t) - \tanh(\beta(x^N(t) + m^N(t))) \Big|\right] \\
& \hspace{1cm} + \mathbb{E}\left[\sup_{t \in [0,T]} \Big|\tanh(\beta(x^N(t) + m^N(t))) - m(t)\Big|\right].
\end{align*}
The first term in the right hand side tends to $0$ thanks to \eqref{eqn:mf_contr} for $k=1$. For the second term, using Eq. \eqref{eqn:subcritical_mf} for $m(t)$, we have
\begin{align*}
\mathbb{E}&\left[\sup_{t \in [0,T]} \Big|\tanh(\beta(x^N(t) + m^N(t))) - m(t)\Big|\right] \\
& = \mathbb{E}\left[\sup_{t \in [0,T]} \Big|\tanh(\beta(x^N(t) + m^N(t))) - \tanh(\beta(x(t)+m(t)))\Big|\right]\\
& \leq \beta\mathbb{E}\left[\sup_{t \in [0,T]} |x^N(t) - x(t)|\right] + \beta \mathbb{E}\left[\sup_{t \in [0,T]} |m^N(t) - m(t)|\right],
\end{align*}
where in the inequality we have used the global Lipschitz continuity of $\tanh(\cdot)$.
Thus, recollecting the above estimates
\begin{align*}
(1-\beta)\mathbb{E}&\left[\sup_{t \in [0,T]}|m^N(t) - m(t)|\right] \leq \beta\mathbb{E}\left[\sup_{t \in [0,T]} |x^N(t) - x(t)|\right] \xrightarrow{N \to +\infty} 0.
\end{align*}
\end{proof}
We conclude this section by noting that, in the subcritical regime $\beta < 1$, we can furthermore obtain an explicit one-dimensional description of the limit process $m(t)$. Indeed, in the dynamics \eqref{eqn:subcritical_mf}, the only randomness is due to the diffusion $x(t)$, while $m(t)$ is slaved to be onto the invariant curve. A standard application of Itô's formula shows that
\begin{prop}[Limit diffusion]
\label{limit_sub_diff}
The process $(m(t))_{t \geq 0}$ defined in \eqref{eqn:subcritical_mf} is a strong solution to 
\begin{equation}
\label{eqn:limit_sub_diff}
\begin{cases}
dm(t) = - \frac{\beta^2 \sigma^2 m(t) \left(1-m^2(t)\right)}{\left(1-\beta(1-m^2(t))\right)^3}dt + \frac{\sigma \beta (1-m^2(t))}{1-\beta(1-m^2(t))}dW(t),\\
m(0) = m_0 \in [-1,1].
\end{cases}
\end{equation}
\end{prop}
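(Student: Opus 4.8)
The plan is to exhibit $m(t)$ as a smooth deterministic function of the driving diffusion $x(t)$ and then apply Itô's formula. First I would make rigorous the assertion, already announced just before the statement, that in the subcritical regime the invariant-curve relation $m=\tanh(\beta(x+m))$ determines $m$ as a function of $x$. Setting $F(x,m):=\tanh(\beta(x+m))-m$, one has $\partial_m F=\beta\left(1-\tanh^2(\beta(x+m))\right)-1\le\beta-1<0$ when $\beta<1$, while $F(x,1)<0<F(x,-1)$ for every $x$; hence there is a unique $m=\varphi(x)\in(-1,1)$ with $F(x,\varphi(x))=0$, and by the implicit function theorem $\varphi\in C^{\infty}(\mathbb{R};(-1,1))$. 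Equivalently, $\varphi$ is the inverse of the strictly increasing map $g(m):=\frac1\beta\arctanh(m)-m$, whose derivative $g'(m)=\dfrac{1-\beta(1-m^2)}{\beta(1-m^2)}$ is strictly positive precisely because $\beta<1$. In particular $m(t)=\varphi(x(t))$ is the unique (and strong) solution of the $m$-component of \eqref{eqn:subcritical_mf}.

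Next I would use that $x(t)=x_0+\sigma W(t)$ is a Brownian motion with $d\langle x\rangle_t=\sigma^2\,dt$, so that Itô's formula applied to $\varphi\in C^2$ gives
\[
dm(t)=\varphi'(x(t))\,\sigma\,dW(t)+\tfrac12\,\varphi''(x(t))\,\sigma^2\,dt .
\]
It then remains to express $\varphi'$ and $\varphi''$ through $m=\varphi(x)$. Differentiating the identity $x=g(\varphi(x))$ yields $\varphi'(x)=1/g'(\varphi(x))=\dfrac{\beta(1-m^2)}{1-\beta(1-m^2)}=:h(m)$; differentiating once more gives $\varphi''(x)=h'(m)\,\varphi'(x)=h'(m)h(m)$, and a short computation shows $h'(m)=-\dfrac{2\beta m}{\left(1-\beta(1-m^2)\right)^2}$, hence $\varphi''(x)=-\dfrac{2\beta^2 m(1-m^2)}{\left(1-\beta(1-m^2)\right)^3}$. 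Substituting these two expressions into the Itô expansion produces exactly the drift $-\beta^2\sigma^2 m(1-m^2)\big/\big(1-\beta(1-m^2)\big)^3$ and the diffusion coefficient $\sigma\beta(1-m^2)\big/\big(1-\beta(1-m^2)\big)$ of \eqref{eqn:limit_sub_diff}. Since $m(t)=\varphi(x_0+\sigma W(t))$ is by construction adapted to the filtration generated by $W$ and stays in the open interval $(-1,1)$, on which the coefficients of \eqref{eqn:limit_sub_diff} are smooth and bounded, it is a strong solution.

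The computations in the second paragraph are routine; the only point deserving care — and the one I would treat as the main obstacle — is the qualitative preliminary step: showing that $\varphi$ is globally well defined, $C^2$, with range contained in the \emph{open} interval $(-1,1)$, so that the denominators $1-\beta(1-m^2)$ never vanish. This is precisely where the hypothesis $\beta<1$ enters, through the estimate $1-\beta(1-m^2)\ge 1-\beta>0$, and it is also what guarantees that the SDE \eqref{eqn:limit_sub_diff} has smooth bounded coefficients, ruling out explosion or boundary pathologies and making the notion of strong solution unambiguous.
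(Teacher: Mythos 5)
Your proof is correct, and the computations check out: $g'(m)=\frac{1-\beta(1-m^2)}{\beta(1-m^2)}$, $h'(m)=-\frac{2\beta m}{(1-\beta(1-m^2))^2}$, and $\varphi''=h'h$ indeed reproduce the drift and diffusion coefficients of \eqref{eqn:limit_sub_diff}, and the hypothesis $\beta<1$ is used exactly where it must be, namely in $1-\beta(1-m^2)\ge 1-\beta>0$. Your route, however, differs from the paper's. The paper does not invert the invariant-curve relation: it postulates that $m(t)$ satisfies an SDE of the form $dm=a\,dt+b\,dW$ with the same Brownian motion as $x(t)$, applies Itô's formula to $\tanh(\beta(x(t)+m(t)))$ treating both $x$ and $m$ as semimartingales (so the quadratic variation of $x+m$ is $(b+\sigma)^2\,dt$), and then identifies coefficients: matching the martingale part gives an equation for $b$ whose solution is $b=\frac{\sigma\beta(1-m^2)}{1-\beta(1-m^2)}$, and substituting back into the drift identity yields $a$. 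You instead solve the constraint explicitly, writing $m(t)=\varphi(x(t))$ with $\varphi=g^{-1}$, $g(m)=\frac1\beta\arctanh(m)-m$, and apply Itô directly to the $C^\infty$ function $\varphi$ of the Brownian motion $x(t)$. Your version is somewhat more self-contained: it avoids the a priori ansatz on the form of $dm(t)$ (which in the paper is justified only by the remark that $m$ is an explicit function of $x$) and it delivers adaptedness, the strict inclusion of the range in $(-1,1)$, and smooth bounded coefficients in one stroke, so the meaning of ``strong solution'' is immediate. What the paper's coefficient-matching buys is brevity and the fact that it never needs the explicit inverse, working directly with the $\tanh$ relation; this is also the computation they reuse verbatim in the hierarchical case (Proposition \ref{1d_diff_ordN}), where $x(t)$ is an Ornstein--Uhlenbeck process rather than a Brownian motion and your inversion argument would need the extra drift term handled as in Remark \ref{modified_ordN}.
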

\begin{proof}
By Eq. \eqref{eqn:subcritical_mf}, $m(t)$ can be written as an \textit{explicit} function of $x(t)$, and thus its dynamics must be of the form
$$
d m(t) = a(t,m(t))dt + b(t,m(t)) d W(t)
$$
for some functions $a,b : [0,\infty) \times [-1,1] \to \mathbb{R}$ to be determined, and $W(t)$ is the same Brownian motion appearing in the dynamics of $x(t)$.
By applying Itô's formula to the function $\tanh(\beta(x(t) + m(t))$, we find
\begin{align*}
dm(t) &= d\left\{\tanh{\beta(x(t) + m(t))}\right\} \\
& = \beta [1-\tanh^2{\beta(x(t) + m(t))}] (dx(t) + dm(t))\\
& - \beta^2\tanh{\beta(x(t)+m(t))}[1-\tanh^2{\beta(x(t) + m(t))}] (b(t,m(t)) + \sigma)^2dt\\
& = \big[\beta(1-m^2(t))a(t,m(t)) - \beta^2 m(t)(1-m^2(t))(b(t,m(t)) + \sigma)^2\big] dt \\
& + \beta(1-m^2(t))\big[\sigma + b(t,m(t))\big] dW(t).
\end{align*}
By reading the diffusion coefficient from the last line, we must have
$$
b(t,m(t)) = \beta(1-m^2(t))[\sigma + b(t,m(t))],
$$
and thus
$$
b(t,m(t)) = b(m(t)) = \frac{\sigma \beta(1-m^2(t))}{1-\beta(1-m^2(t))}.
$$
For the drift term instead
\begin{align}
\label{eqn:aux}
a(t,m(t)) = \beta(1-m^2(t))a(t,m(t)) - \beta^2 m(t)(1-m^2(t))[(b(t,m(t))+\sigma)^2].
\end{align}
Using the expression found for $b(t,m(t))$, we have that 
\begin{align*}
(b(t&,m(t))+\sigma)^2  = b^2(t,m(t)) + \sigma^2 + 2 \sigma b(t,m(t))  = \frac{\sigma^2}{(1-\beta(1-m^2(t)))^2},
\end{align*}
and thus, reading from \eqref{eqn:aux},
\begin{align*}
a(t,m(t))(1-\beta(1-m^2(t))) = - \beta^2 m(t) (1-m^2(t))\frac{\sigma^2}{(1-\beta(1-m^2(t)))^2},
\end{align*}
so that we can conclude.
\end{proof}

\begin{rem}
\label{subcritic_diffusion_is_well-posed}
For $\beta < 1$, the SDE \eqref{eqn:limit_sub_diff} is well-posed. Existence follows by Proposition \ref{limit_sub_diff}. Uniqueness follows by the Lipschitz properties of the drift and diffusion functions in $[-1,1]$. Indeed, note that Eq. \eqref{eqn:limit_sub_diff} defines a dynamics in $[-1,1]$, due to the sign of the drift at the borders of $(-1,1)$ and to the fact that the diffusion is zero at the borders of $(-1,1)$. 
\end{rem}

\subsection{The supercritical case: $\beta > 1$}
\label{sup_mfcase}
In this section we deal with the analysis of the supercritical case $\beta > 1$. The main result is the following convergence theorem:
\begin{thm}[Supercritical order $N$ mean field limit dynamics]
\label{thm1}
Fix $T >0$, $\beta > 1$, and let $(x^N(t),m^N(t))_{t \in [0,T]}$ be the accelerated processes defined in \eqref{eqn:X} and \eqref{eqn:acc_jump}, with $x^N(0)\stackrel{\mathcal{D}}{\to}  x_0 > \lambda_a(\beta) - m_a(\beta)$ and $m^N(0) \stackrel{\mathcal{D}}{\to} m_0 > m_a(\beta)$, or $x^N(0)\stackrel{\mathcal{D}}{\to}  x_0 < m_a(\beta) - \lambda_a(\beta)$ and $m^N(0)\stackrel{\mathcal{D}}{\to} m_0 < - m_a(\beta)$, with $(\lambda_a(\beta), m_a(\beta))$ as in \eqref{eqn:critical_p}.
Then, the accelerated sequence of processes $(m^N(t))_{t \in [0,T]}$ converges weakly in the sense of stochastic processes, for $N \to +\infty$, to the process which solves the following SDE 
\begin{align}
\label{eqn:lei}
d m(t) &= \mathbbm{1}_{|m(t)| > m_a} \left(- \frac{\beta^2 \sigma^2 m(t) \left(1-m^2(t)\right)}{\left(1-\beta(1-m^2(t))\right)^3}dt + \frac{\sigma \beta (1-m^2(t))}{1-\beta(1-m^2(t))}dW(t)\right) \\
&+  (m_b+m_a)\mathbbm{1}_{m(t) = - m_a} - (m_b+m_a)\mathbbm{1}_{m(t) = m_a},\nonumber
\end{align} 
with $m(0) = m_0$, and $m_b:= m_b(\beta)$ is the solution in $y$ to
\begin{equation}
\label{eqn:correct}
g(y):= 2\beta y - 2\beta(m_a(\beta) - \lambda_a(\beta)) - \log(1+y) + \log(1-y) = 0.
\end{equation}
\end{thm}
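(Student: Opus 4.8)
The plan is to run a coupling argument, splitting $[0,T]$ into the \emph{diffusive excursions} on a stable wing and the short \emph{transits} across the unstable branch, and matching each piece with the corresponding piece of the limit process. Concretely I would feed the \emph{same} Brownian motion $W$ into $x^N$ as in the proof of Proposition~\ref{subcritical_chaos}, so that $x^N(t)-x(t)$ is an $N$-dependent constant tending to $0$ and only $m^N$ needs to be controlled. By the $m\mapsto-m$ symmetry it suffices to treat the first family of initial conditions in the statement; writing $x_a:=\lambda_a(\beta)-m_a(\beta)$, the positive stable wing $\{m=\tanh(\beta\lambda),\ \lambda>\lambda_a(\beta)\}$ is parametrised smoothly and monotonically by $x\in(x_a,\infty)$ through the solution $m=\varphi_+(x)$ of $m=\tanh(\beta(x+m))$, so the limit process is $m(t)=\varphi_+(x(t))$ until the first time $\tau_1$ at which $x$ hits $x_a$ (equivalently $m$ hits $m_a$), at which instant it jumps to the opposite wing, and so on. Before anything else I would record that \eqref{eqn:lei} is well posed: on each wing the coefficients are smooth and Remark~\ref{subcritic_diffusion_is_well-posed} applies; as $m\to\pm m_a$ the drift behaves like $(1-\beta(1-m^2))^{-3}$ and points towards $\pm m_a$, so by Feller's test $\pm m_a$ is reached in finite time; and since each jump restarts the process at a point at positive distance from $\{\pm m_a\}$, there are a.s.\ finitely many jumps on $[0,T]$. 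Fixing $\eta>0$, I would choose $\ell$ with $\mathbb{P}(\text{more than }\ell\text{ jumps in }[0,T])<\eta$ and induct on the jumps on that event.

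For a diffusive excursion I would localise the subcritical analysis. On any random interval on which $m^N$ stays in $\{|m|\ge m_a+\varepsilon\}$, Remark~\ref{beta>1_contracting} gives $\mathbb{E}[\sup|m^N(t)-\tanh(\beta\lambda^N(t))|^k]\to0$ just as in Proposition~\ref{mf_contr}; feeding this into the It\^o computation of Proposition~\ref{limit_sub_diff} and the Lipschitz/Gronwall estimate of Proposition~\ref{subcritical_chaos} — now with $m=\varphi_\pm(x)$ a fixed Lipschitz function of $x$ on the wing — yields that on such an interval $m^N$ stays uniformly $o(1)$-close to the solution of \eqref{eqn:limit_sub_diff} started from its current value. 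In particular, up to the first time $\tau_1^{N,\varepsilon}$ that $|m^N|$ reaches $m_a+\varepsilon$ one gets $\sup|m^N-m^{(\varepsilon)}|\to0$, where $m^{(\varepsilon)}$ is the limit wing diffusion stopped at $m_a+\varepsilon$, and $\tau_1^{N,\varepsilon}$ converges to the corresponding hitting time, which tends to $\tau_1$ as $\varepsilon\downarrow0$ because the limit wing diffusion approaches $m_a$ monotonically.

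The transit is the heart of the proof. At $\tau_1^{N,\varepsilon}$ one has $|m^N|=m_a+\varepsilon$ and $x^N$ within $O(\varepsilon)+o(1)$ of $x_a$, and I would show that there is a deterministic $\Delta_N\to0$ such that, with probability tending to $1$, on $[\tau_1^{N,\varepsilon},\tau_1^{N,\varepsilon}+\Delta_N]$ the variable $x^N$ stays in an $o(1)$-neighbourhood of $x_a$ while $m^N$ is carried across the unstable branch onto the opposite wing. The mechanism is a timescale match: $x^N-x_a$ is, on this window, a Brownian motion from near $0$, so for a fixed small $\rho$ and $N$ large it has, within a time that vanishes as $\rho\downarrow0$, an excursion below $x_a-\rho/2$ of duration $\gg(N\sqrt\rho)^{-1}$; throughout such an excursion no fixed point of the frozen-$x$ flow survives and the $O(N)$ drift $2N(\tanh(\beta\lambda^N)-m^N)$ of $m^N$ is $\le-N\rho$, so $m^N$ drops past the unstable branch in time $O((N\sqrt\rho)^{-1})$, its own jump part contributing only $O((\Delta_N/N)^{1/2})$. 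Below the branch the drift stays negative uniformly for $x$ near $x_a$ and $m$ in a compact subset of the opposite wing, so $m^N$ is swept down and, by the contraction of Remark~\ref{beta>1_contracting} again, relaxes in time $O(N^{-1}\log N)$ onto that wing. Since $\lambda^N-m^N=x^N$ moves by only $o(1)$ along the whole transit, the arrival value of $m^N$ is $o(1)$-close to the point of the invariant curve on the opposite wing with $\lambda-m=x_a$; this is exactly the conserved quantity of the leading-order flow $\dot\lambda=\dot m$ together with $m=\tanh(\beta\lambda)$, i.e.\ the transcendental relation \eqref{eqn:correct}, and it converges to the jump target in \eqref{eqn:lei} as $\varepsilon,\rho\downarrow0$.

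Finally I would iterate over the finitely many jumps and let first $N\to\infty$, then $\varepsilon,\rho\downarrow0$: along the coupling, $m^N$ is uniformly $o(1)$-close to $m$ off a finite union of transit windows of total length $\to0$ whose locations converge to $\tau_1,\dots,\tau_\ell$. Because $m$ has jumps this gives no uniform (sup-norm) convergence, but it is precisely the statement that the Skorokhod $J_1$ distance between $m^N$ and $m$ tends to $0$ in probability, hence the claimed weak convergence in path space. The step I expect to be the real obstacle is the transit: showing, uniformly in $N$, that a jump process whose $O(N)$ drift is \emph{not} sign-definite near the endpoint $m_a$ still leaves a neighbourhood of $m_a$ in time $o(1)$ and lands in an $o(1)$-neighbourhood of the prescribed deterministic arrival point — this forces the delicate matching between the ($N$-independent) excursion lengths of the slow variable $x^N$ below $x_a$ and the ejection time of $m^N$, and combines the quantitative contraction estimate with an excursion/stopping-time comparison.
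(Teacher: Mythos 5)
Your outline is correct and its skeleton is the same as the paper's: couple $x^N$ to the limit through the same Brownian motion, control each excursion on a stable wing by the contraction estimate of Remark~\ref{beta>1_contracting} plus a Gronwall-type argument (this is the paper's Lemma~\ref{lem_small}, which stops on the $x$-variable rather than on $m$ and handles the event $\lambda^N<\lambda_a+\delta$ by a monotone coupling), show that the transit happens while $x^N=\lambda^N-m^N$ is essentially frozen so that the arrival point is the second root of \eqref{eqn:correct}, and iterate by the strong Markov property over the a.s.\ finitely many jumps, sending $N\to+\infty$ first and then the localisation parameters to $0$. Where you genuinely diverge is the transit, which you rightly call the heart. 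You start it at the hitting of $m_a+\varepsilon$ and resolve the saddle-node passage quantitatively, via Brownian excursions below $x_a-\rho/2$ of duration $\gg(N\sqrt{\rho})^{-1}$ sweeping $m^N$ through a bottleneck where the drift is of order $N\rho$. The paper sidesteps the degenerate passage entirely: by the reflection principle (Lemma~\ref{lemma_x}) it waits until $x^N$ has overshot the critical level by a fixed $\varepsilon$, where the frozen-field vector field is uniformly negative (its maximum is $f(m_a)=-\beta\varepsilon$); it then compares $m^N$ monotonically with an auxiliary CTMC whose $x$-argument is frozen at $\lambda_a-m_a-\varepsilon/2$ and whose slowed-down version converges to an explicit ODE (Lemma~\ref{lem_jump}), so the crossing takes a time $O_\varepsilon(1/N)$ while the Brownian motion needs an $N$-independent time to move by $\varepsilon/2$; the unanalysed window between the levels $x_a+\delta$ and $x_a-\varepsilon$ is recovered at the end through the continuity in $\varepsilon,\delta$ of the hitting-time laws. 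Your route buys a sharper picture (the jump is seen to begin $o(1)$ after $m_a+\varepsilon$ is reached, and you state the $J_1$ convergence explicitly, which the paper leaves implicit), but it is the harder one to make rigorous, exactly at the step you flag; the paper's comparison argument is more elementary because after the $\varepsilon$-overshoot the crossing is uniformly transversal, and it identifies $m_b$ through the same conservation of $x$ that you invoke. Two small slips, neither fatal: the martingale part of the accelerated $m^N$ has jumps of size $2/N$ at rate $O(N^2)$, so its fluctuation over a window $\Delta_N$ is $O(\sqrt{\Delta_N})$, not $O(\sqrt{\Delta_N/N})$ (still $o(1)$, and still negligible against the bottleneck width $\sqrt{\rho}$ for fixed $\rho$); and the bound ``drift $\le -N\rho$'' holds only up to constants and only in the bottleneck, the drift being of order $N$ elsewhere, which is what actually produces the $O((N\sqrt{\rho})^{-1})$ crossing time you use.
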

We want to derive a limit one-dimensional diffusion for each variable, which also contains the jump components illustrated in Fig.\! \ref{figura2} for $\beta >1$. 
As highlighted in Remark \ref{beta>1_contracting}, in this case the $N$-particle dynamics is contractive only in the union of the two intervals where $1-\beta(1-\tanh^2(\beta \lambda)) > 0$, i.e.\! for $\lambda > \lambda_a(\beta)$ or $\lambda < -\lambda_a(\beta)$, which we refer to as the \textit{stable} components of the invariant curve. As long as the dynamics does not hit the critical points, we expect the limit evolution to be the same as for the subcritical case in Proposition \ref{limit_sub_diff}. Note that both the drift and diffusion coefficients explode at the critical points of the invariant curve. In fact, when the dynamics hits the critical points, we expect to see an \textit{instantaneous} jump to the point given by the intersection between the vector field line passing through the critical point and the invariant curve.

Denoting with $c(\cdot)$ the drift function and with $\sqrt{g(\cdot)}$ the diffusion coefficient, we get that the global limiting accelerated one-dimensional dynamics, written in either of the two variables $m(t)$ or $\lambda(t)$, should be of the form
\begin{equation}
\label{eqn:lim}
d X(t) = \mathbbm{1}_{|X(t)| > a} \left(\sqrt{g(X(t))} dW(t) + c(X(t))dt \right)+ (b+a)\mathbbm{1}_{X(t) = - a} - (b+a)\mathbbm{1}_{X(t) = a}, 
\end{equation}
where the point $a = a(\beta)$ is the critical (positive) point on the invariant curve, and the point $b = b(\beta)$ (resp.\! $-b$) is the intersection between the curve and the vector field line passing through $-a$ (resp.\! $a$).

\begin{rem}[Limit case $\beta \to \infty$]
\label{limiting_case_b=infty}
When $\beta \to \infty$, the limit dynamics for the accelerated magnetization $(m(t))_{t \geq 0}$ is expected to be a spin-valued jump process $m(t) \in \left\{-1,1\right\}$  with non-exponentially distributed random interarrival jump times, with their distribution being the one of the hitting times of a Brownian motion with diffusion coefficient $\sigma > 0$. Indeed, the critical points (see Eq.\! \eqref{eqn:critical_p}) tend to $\pm 1$ in the $m$-variable, and to $0$ in the $\lambda$-variable, while the diagonal line $x(t) = \lambda(t) - m(t)$, determining when the process jumps, still evolves according to a Brownian motion with diffusion coefficient $\sigma > 0$.
\end{rem}

\subsubsection{The convergence argument}
\label{convergence_argument}
We now address the full proof of convergence to the limit dynamics for $\beta > 1$, given in Theorem \ref{thm1}. As we did above for the subcritical case, we consider the dynamics in the alternative variables $(x^N,m^N)$. Recall that the variable $x^N$, the intersection between the diagonal line (at $45$ degrees) passing through the point $(\lambda^N, m^N)$ and the $\lambda$-axis, follows a Brownian motion, while $m^N$ is a jump process depending on $x^N$: if we think of the latter as being deterministic and fixed, such motion is a unidimensional continuous-time Markov chain on the diagonal line parametrized by the fixed value $x^N = x$, which is attractive towards the invariant curve.
The limit dynamics is thus the projection of the combination of these two motions on the invariant curve. 
We divide the proof of Theorem \ref{thm1} in three lemmas. In the following proofs we assume that $x_0 > \lambda_a - m_a$ and $m_0 > m_a$. For the symmetry of the problem the case $x_0 < m_a - \lambda_a$ and $m_0 < -m_a$ is analogous.

\begin{lem}
\label{lemma_x}
Set 
\begin{equation}
\label{eqn:T_ma^e}
T_{m_a}^{\varepsilon} := \inf\left\{t \geq 0 : x^N(t) =\lambda_a- m_a - \varepsilon\right\},
\end{equation}
for $\varepsilon \in \mathbb{R}$.
Then,
\begin{equation}
\label{eqn:just_for_fin}
\mathbb{P}(T_{m_a}^{\varepsilon} < \infty) = 1.
\end{equation}
\end{lem}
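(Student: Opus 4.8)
The statement is an immediate consequence of the recurrence of one-dimensional Brownian motion, and the plan is the following. Recall from \eqref{eqn:X} that in the accelerated timescale $x^N(t) = x^N(0) + \sigma W^N(t)$, where $W^N$ is a standard Brownian motion and $x^N(0) \sim \mathcal{N}(x_0, \sigma^2/N)$ is independent of $W^N$ (the former is built from the initial data $x_i(0)$, the latter from the driving noises $W_i$). Fix $N$ and condition on the value of $x^N(0)$; then $t \mapsto x^N(t)$ is simply a Brownian motion with variance parameter $\sigma^2$ started from a deterministic point $\xi \in \mathbb{R}$.

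I would then invoke the classical fact that almost surely $\limsup_{t\to\infty} W^N(t) = +\infty$ and $\liminf_{t\to\infty} W^N(t) = -\infty$ (this follows e.g.\ from the law of the iterated logarithm, or from Brownian scaling together with Blumenthal's $0$–$1$ law, or simply from recurrence of one-dimensional Brownian motion). Consequently the continuous path $t \mapsto x^N(t)$ is unbounded both from above and from below, so by the intermediate value theorem it attains every real value — in particular the level $\lambda_a - m_a - \varepsilon$ — at some finite (random) time. Hence $T_{m_a}^{\varepsilon} < \infty$ almost surely conditionally on $x^N(0)$, and integrating over the law of $x^N(0)$ yields \eqref{eqn:just_for_fin}.

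There is essentially no genuine obstacle here; the only point worth flagging is that one should \emph{not} attempt to prove the claim via a moment bound on $T_{m_a}^{\varepsilon}$, since the hitting time of a fixed level by a Brownian motion has infinite expectation — the argument must proceed through almost-sure path oscillation (recurrence) rather than through $\mathbb{E}[T_{m_a}^{\varepsilon}]$. Note also that the conclusion holds irrespective of the sign of $\varepsilon$, i.e.\ whether the target level $\lambda_a - m_a - \varepsilon$ lies below or above the starting value $x_0 > \lambda_a - m_a$, since a one-dimensional Brownian motion visits every level of $\mathbb{R}$ almost surely.
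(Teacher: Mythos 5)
Your proof is correct, but it takes a genuinely different route from the paper. You argue qualitatively: after conditioning on $x^N(0)$, the accelerated process $x^N(t)=x^N(0)+\sigma W^N(t)$ is a Brownian motion, whose a.s.\ oscillation ($\limsup_{t\to\infty}=+\infty$, $\liminf_{t\to\infty}=-\infty$) together with path continuity forces it to hit the level $\lambda_a-m_a-\varepsilon$ in finite time, whatever the sign of $\varepsilon$; integrating over the law of $x^N(0)$ finishes the argument. The paper instead computes: it assumes $x^N(0)=x_0$ for simplicity, applies the reflection principle to get $\mathbb{P}(T_{m_a}^\varepsilon\le t)$ explicitly, differentiates to obtain the hitting-time density $f_{T_{m_a}^\varepsilon}(t)\propto t^{-3/2}\exp\bigl(-\tfrac{(\lambda_a-m_a-\varepsilon-x_0)^2}{2\sigma^2 t}\bigr)$, and checks that it integrates to $1$. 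Your approach is shorter, handles the random initial condition cleanly, and correctly flags that one must not go through $\mathbb{E}[T_{m_a}^\varepsilon]$ (which is infinite); what the paper's explicit computation buys is the hitting-time law itself, which is reused later (in Lemma \ref{lem_jump} and the proof of Theorem \ref{thm1}, where continuity of the distribution of $T_{\varepsilon/2}$ in $\varepsilon$ and the fact that $\mathbb{P}(T_{\varepsilon/2}\le\delta)\to 0$ as $\delta\to 0$ are needed), so if you adopted your proof verbatim you would still need the reflection-principle formula at that later stage.
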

\begin{proof}
Recall that $x^N$ evolves as in \eqref{eqn:X}. For the proof, we assume for simplicity that $x^N(0) = x_0$ (otherwise, we just add an additional term in the variance at time $t$, accounting for the initial variance - which is small in $N$).We thus have that $x^N(t) \sim \mathcal{N}\left(0, \sigma^2 t\right)$, and we can get explicitly the distribution of $T_{m_a}^{\varepsilon}$ in a classic way, using the reflection principle for the Brownian motion. Indeed, we have that for any $t \geq 0$, 
\begin{align*}
\mathbb{P}(T_{m_a}^{\varepsilon} \leq t) & = \mathbb{P}(\inf_{0 \leq s \leq t}x^N(s) \leq \lambda_a - m_a - \varepsilon) = 2 \mathbb{P}(x^N(t) \leq \lambda_a - m_a - \varepsilon)\\
& = \frac{2}{\sqrt{2 \pi \sigma^2 t}}\int_{-\infty}^{\lambda_a - m_a - \varepsilon}e^{-\frac{(x-x_0)^2}{2\sigma^2t}} dx = \left[z = \frac{x-x_0}{\sqrt{t}}\right]\\
& = \frac{2}{\sqrt{2\pi\sigma^2}}\int_{-\infty}^{\frac{\lambda_a - m_a - \varepsilon - x_0}{\sqrt{t}}} e^{-\frac{z^2}{2\sigma^2}} dz.
\end{align*}
By taking the derivative with respect to $t$ of the previous expression we get that $T_{m_a}^{\varepsilon}$ has density
\begin{equation*}
f_{T_{m_a}^{\varepsilon}}(t) = \frac{(\lambda_a - m_a - \varepsilon - x_0)}{\sqrt{2\pi\sigma^2}}\frac{1}{t^{3/2}}e^{-\frac{(\lambda_a - m_a - \varepsilon - x_0)^2}{2\sigma^2t}},
\end{equation*}
and, as one can check
\begin{equation*}
\mathbb{P}(T_{m_a}^{\varepsilon} < \infty) = \int_{0}^{\infty} f_{T_{m_a}^{\varepsilon}}(t) dt = 1,
\end{equation*}
so that \eqref{eqn:just_for_fin} is verified.
\end{proof}
Lemma \ref{lemma_x} tells us that, almost surely, the process $(x^N(t))_{t \geq 0}$ reaches in a finite time the point $\lambda_a- m_a - \varepsilon$, which corresponds - up to an $\varepsilon$ error - to the critical point on the invariant curve we discussed in the previous section.
The following two lemmas respectively describe the limit equation for the times preceding and following the hitting time $T_{m_a}^\varepsilon$. For $t < T_{m_a}^{-\delta}$, for some $\delta > 0$, we can proceed similarly as in Propositions \ref{subcritical_chaos} and \ref{limit_sub_diff} since the contraction estimates of Remark \ref{beta>1_contracting} are holding, while for $t > T_{m_a}^{\varepsilon}$ for some $\varepsilon > 0$ we capture the jumps via a direct estimate. We then conclude by the continuity with respect to $\varepsilon$ and $\delta$ of the hitting times distributions $T_{m_a}^\varepsilon$, $T_{m_a}^{-\delta}$.

\begin{lem}
\label{lem_small}
Fix $T,\delta > 0$. Set $T_{m_a}^{-\delta}:=\inf\left\{t \geq 0 : x^N(t) =\lambda_a- m_a + \delta \right\}$, and let $\Big(m^N(t \wedge T_{m_a}^{-\delta})\Big)_{t \in [0,T]}$ denote the accelerated stopped process, with initial conditions as in Theorem \ref{thm1}. Then, $\Big(m^N(t \wedge T_{m_a}^{-\delta})\Big)_{t \in [0,T]}$ converges weakly in the sense of stochastic processes, for $N \to +\infty$, to $\Big(m(t\wedge T_{m_a}^{-\delta})\Big)_{t \in [0,T]}$, with $(m(t))_{t \geq 0}$ the solution to \eqref{eqn:limit_sub_diff} with the same initial conditions as in Theorem \ref{thm1}, and $\Big(m(t\wedge T_{m_a}^{-\delta})\Big)_{t \in [0,T]}$ its stopped version.
\end{lem}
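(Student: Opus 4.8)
The plan is to follow the scheme of Propositions~\ref{subcritical_chaos} and~\ref{limit_sub_diff}, using the exponential contraction of Remark~\ref{beta>1_contracting} in place of Proposition~\ref{mf_contr}; the one genuinely new difficulty is that this contraction is available only as long as the total field $\lambda^N(t)=x^N(t)+m^N(t)$ stays in the stable component $\{\lambda>\lambda_a\}$. First I would couple $x^N$ from~\eqref{eqn:X} with the limiting diffusion $x$ on the same Brownian motion $W$; since $x^N(0)-x_0\to 0$, a routine perturbation argument (using that, under this coupling, the hitting times of the level $\lambda_a-m_a+\delta$ by $x^N$ and by $x$ converge almost surely as $x^N(0)\to x_0$, by continuity of Brownian paths) reduces the problem to $x^N(0)=x_0$, so that $x^N\equiv x$ and both processes are stopped at the common time $T_{m_a}^{-\delta}=\inf\{t:x(t)=\lambda_a-m_a+\delta\}$. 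Because the initial data sit on the invariant curve ($m_0=\tanh(\beta(x_0+m_0))$, $m_0>m_a$), the Itô computation in the proof of Proposition~\ref{limit_sub_diff}, being purely local, applies on the stable branch and gives $m(t)=\varphi_+(x(t))$, where $\varphi_+(x)=h^{-1}(x)-x$ and $h(\lambda):=\lambda-\tanh(\beta\lambda)$ is strictly increasing on $[\lambda_a,\infty)$ with $h(\lambda_a)=\lambda_a-m_a$; in particular $\lambda(0)=h^{-1}(x_0)>h^{-1}(\lambda_a-m_a+\delta)>\lambda_a$, and for $t<T_{m_a}^{-\delta}$ one has $m(t)\ge\varphi_+(\lambda_a-m_a+\delta)>m_a$, so the coefficients of~\eqref{eqn:limit_sub_diff} are Lipschitz and bounded there and the stopped SDE is well posed (localize away from $m=m_a$).

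Next I would fix a small $\eta=\eta(\delta)>0$ (chosen below) and introduce the auxiliary stopping time $\tau_\eta:=\inf\{t\ge 0:\lambda^N(t)\le\lambda_a+\eta\}$, together with $\rho:=\tau_\eta\wedge T_{m_a}^{-\delta}\wedge T$. On $[0,\tau_\eta)$ we have $\lambda^N(t)\ge\lambda_a+\eta$, so Remark~\ref{beta>1_contracting} yields $N\mathcal L^N|y^N|^k\le-C(\eta)N|y^N|^k+O(1)$ with $C(\eta)>0$, where $y^N=m^N-\tanh(\beta\lambda^N)$. Applying Dynkin's formula to $|y^N(t\wedge\rho)|^k$ exactly as in the proof of Proposition~\ref{mf_contr}, and using that $\mathbb E[|y^N(0)|^k]\to 0$ by the law of large numbers on the on-curve initial data, I get $\mathbb E\big[\sup_{t\in[0,T]}|y^N(t\wedge\rho)|^k\big]\to 0$ for all $k>0$ (the passage from expectations to the supremum is again provided by \cite{collet,comets}).

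The crux is then to show that the auxiliary time $\tau_\eta$ is asymptotically irrelevant. On the event $\Omega_N:=\{\sup_{t\le\rho}|y^N(t)|<\delta/2\}\cap\{\lambda^N(0)>\lambda_a+2\eta\}$, which has probability tending to $1$ by the previous step and $\lambda^N(0)\to h^{-1}(x_0)>\lambda_a$, for every $t<\tau_\eta\wedge T_{m_a}^{-\delta}$ (so that $\lambda^N(t)>\lambda_a$ and $x^N(t)>\lambda_a-m_a+\delta$) one has
\[
h(\lambda^N(t))=\lambda^N(t)-\tanh(\beta\lambda^N(t))=x^N(t)+y^N(t)>\lambda_a-m_a+\tfrac{\delta}{2},
\]
whence $\lambda^N(t)>h^{-1}(\lambda_a-m_a+\delta/2)$. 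Fixing now $\eta:=\tfrac12\big(h^{-1}(\lambda_a-m_a+\delta/2)-\lambda_a\big)>0$, this says $\lambda^N$ stays strictly above $\lambda_a+2\eta$ on $[0,\tau_\eta\wedge T_{m_a}^{-\delta}\wedge T)$, so, since its jumps have size $2/N$, for $N$ large it cannot reach $\lambda_a+\eta$ before $T_{m_a}^{-\delta}\wedge T$; hence $\rho=T_{m_a}^{-\delta}\wedge T$ on $\Omega_N$. Combined with the trivial bound $|y^N|\le 2$ on $\Omega_N^{c}$, this upgrades the previous estimate to $\mathbb E\big[\sup_{t\le T\wedge T_{m_a}^{-\delta}}|y^N(t)|^k\big]\to 0$.

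Finally, on $[0,T\wedge T_{m_a}^{-\delta}]$ and on $\Omega_N$ the map $\mu\mapsto\tanh(\beta(x^N(t)+\mu))$ is a contraction with constant $1-\varepsilon(\eta)<1$ (exactly the defining inequality of the stable component), and $m^N(t)$ solves the corresponding fixed-point equation up to the error $y^N(t)$, so $|m^N(t)-\varphi_+(x^N(t))|\le\varepsilon(\eta)^{-1}|y^N(t)|$; together with the Lipschitz bound $|\varphi_+(x^N(t))-\varphi_+(x(t))|\le L(\delta)\,|x^N(t)-x(t)|$ and $x^N\equiv x$, this gives $\mathbb E\big[\sup_{t\in[0,T]}|m^N(t\wedge T_{m_a}^{-\delta})-m(t\wedge T_{m_a}^{-\delta})|\big]\to 0$, hence the asserted weak convergence of the stopped processes (the limit being continuous and the jumps of $m^N$ vanishing). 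I expect the delicate point to be precisely the bootstrap of the third paragraph: breaking the apparent circularity between ``the contraction needs $\lambda^N$ in the stable region'' and ``$\lambda^N$ stays in the stable region because $y^N$ is small'' requires both the auxiliary stopping time $\tau_\eta$ and the geometric fact that $h$ attains its minimum $\lambda_a-m_a$ at $\lambda_a$, which turns the $O(\delta)$ margin carried by $x^N$ into a genuine positive gap in $\lambda$; everything else parallels the subcritical arguments already available.
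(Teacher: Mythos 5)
Your proof is correct in substance, but it reaches the key containment step by a genuinely different route than the paper. Both arguments share the skeleton (couple $x^N$ and $x$ through the same Brownian motion, use the supercritical contraction of Remark~\ref{beta>1_contracting} for the collapse variable $y^N=m^N-\tanh(\beta\lambda^N)$, then transfer to $m^N-m$ via Lipschitz bounds on the stable branch); the difference is how one guarantees that $\lambda^N$ actually stays in the stable region up to $T^{-\delta}_{m_a}$, which is exactly where the circularity you point out lives. The paper resolves it stochastically: it writes $\{\lambda^N<\lambda_a+\delta\}\subset\{m^N<m_a\}$ (using $x^N\ge\lambda_a-m_a+\delta$ before the stopping time) and then dominates $m^N$ from below by an auxiliary birth--death process $\tilde m^N$ with frozen field $\lambda_a-m_a+\delta$, whose fast relaxation to a point $m^*>m_a$ on the invariant curve makes $\mathbb{P}(\exists t\le T\wedge T^{-\delta}_{m_a}: m^N(t)<m_a)$ vanish; the contraction and Lipschitz estimates are then run on the (anticipating) event $A=\{\min_t\lambda^N>\lambda_a+\delta\}$ and closed by a Gronwall-type inequality with constant $1-\varepsilon$. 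You instead resolve it pathwise: stop at the genuine stopping time $\tau_\eta$, run the contraction on the stopped process, and use the identity $h(\lambda^N)=x^N+y^N$ with $h(\lambda)=\lambda-\tanh(\beta\lambda)$ increasing on $[\lambda_a,\infty)$ and minimal at $\lambda_a$, so that the $\delta$-margin carried by $x^N$ plus smallness of $y^N$ forces $\lambda^N>\lambda_a+2\eta(\delta)$, and the $2/N$ jump size rules out ever reaching $\lambda_a+\eta$; you also parametrize the stable branch explicitly via $\varphi_+=h^{-1}-\mathrm{id}$ rather than closing a fixed-point/Gronwall estimate. What each buys: the paper's comparison avoids any geometric analysis of $h$ and the choice of $\eta(\delta)$, at the price of constructing and analyzing the auxiliary frozen-field chain; your bootstrap is more self-contained and arguably cleaner on the measurability side (a true stopping time instead of conditioning the generator estimate on the anticipating event $A$), but it leans on the monotonicity/invertibility of $h$ and on the initial data sitting (asymptotically) on the invariant curve — an assumption the paper also uses implicitly through $\mathbb{E}|y^N(0)|^k\to 0$. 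Two minor glosses to tighten, both at the same level of informality as the paper: convergence of the hitting times under the initial-condition perturbation needs the standard fact that Brownian motion immediately crosses a level it hits (path continuity alone is not enough), and the contraction constant $1-\varepsilon(\eta)$ in your last paragraph should be justified by noting that both arguments $x^N+m^N$ and $x^N+\varphi_+(x^N)$ (hence all intermediate points) lie above $\lambda_a$ by a fixed margin on the good event.
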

\begin{proof}
As in the proof of Proposition \ref{subcritical_chaos}, we plug in the definition \eqref{eqn:X} of $x^N(t)$ the \textit{same} Brownian motion $W(t)$ appearing in the definition \eqref{eqn:subcritical_mf} of $x(t)$. Let $T_{m_a}^{-\delta}$ be the resulting stopping time: we prove, 
\begin{equation}
\label{eqn:limit_for_m^N_stopped}
\mathbb{E}\left[\sup_{t \in [0,T]} \Big|m^N(t \wedge T_{m_a}^{-\delta}) - m(t\wedge T_{m_a}^{-\delta})\Big|\right] \xrightarrow{N \to +\infty} 0,
\end{equation}
which implies the result in distribution by reasoning as in Proposition \ref{subcritical_chaos}.
When $t < T_{m_a}^{-\delta}$ we have that $x^N(t) > \lambda_a - m_a + \delta$. Thus, we are in the stable component of the invariant curve.

From \eqref{eqn:xlim}, it follows that 
\begin{equation}
\label{eqn:limit_for_x^N_stopped}
\mathbb{E}\left[\sup_{t \in [0,T]} \Big|x^N(t \wedge T_{m_a}^{-\delta}) - x(t\wedge T_{m_a}^{-\delta})\Big|\right] \xrightarrow{N \to +\infty} 0.
\end{equation}
For \eqref{eqn:limit_for_m^N_stopped}, denoting the event $A:=\left\{\min_{t \in [0,T]}\lambda^N(t \wedge T_{m_a}^{-\delta}) > \lambda_a + \delta\right\}$, we estimate,
\begin{align}
\label{eqn:aux_stopped}
\mathbb{E}&\left[\sup_{t \in [0,T]}\Big|m^N(t\wedge T_{m_a}^{-\delta}) - m(t\wedge T_{m_a}^{-\delta})\Big|\right]= \mathbb{E}\left[\sup_{t \in [0,T]}\Big|m^N(t\wedge T_{m_a}^{-\delta}) - m(t\wedge T_{m_a}^{-\delta})\Big|\mathbbm{1}_A\right]\\
& \hspace{0.8cm}+ \mathbb{E}\Bigg[\sup_{t \in [0,T]}\Big|m^N(t\wedge T_{m_a}^{-\delta})- m(t\wedge T_{m_a}^{-\delta})\Big|\mathbbm{1}_{\exists t \in[0,T]  :   \lambda^N(t \wedge T_{m_a}^{-\delta}) < \lambda_a + \delta}\Bigg]\nonumber\\
&  \leq \mathbb{E}\Bigg[\sup_{t \in [0,T]}\Big|m^N(t\wedge T_{m_a}^{-\delta}) - \tanh(\beta(x^N(t\wedge T_{m_a}^{-\delta}) + m^N(t\wedge T_{m_a}^{-\delta}))) \Big|\mathbbm{1}_A\Bigg] \nonumber \\
&\hspace{0.8cm}+ \mathbb{E}\Bigg[\sup_{t \in [0,T]} \Big|\tanh(\beta(x^N(t\wedge T_{m_a}^{-\delta}) + m^N(t\wedge T_{m_a}^{-\delta}))) - m(t\wedge T_{m_a}^{-\delta})\Big|\mathbbm{1}_A\Bigg].\nonumber\\
& \hspace{1.6cm}+ 2 \mathbb{P}\Big(\exists t \in[0,T]  :  \lambda^N(t \wedge T_{m_a}^{-\delta}) < \lambda_a + \delta\Big) \nonumber,
\end{align}
where in the last line we have used the boundedness of the integrands.
The first term in the right hand side of the above inequality tends to $0$ thanks to estimate \eqref{eqn:beta>1_contracting} of Remark \ref{beta>1_contracting}  for $k = 1$, which can be applied for any $\lambda > \lambda_a + \delta$, and to the same argument used for the proof of Proposition \ref{mf_contr}. 
For the second term in the right hand side of inequality \eqref{eqn:aux_stopped}, using Eq. \eqref{eqn:subcritical_mf} for $m(t \wedge T_{m_a}^{-\delta})$, we have
\begin{align*}
\mathbb{E}&\left[\sup_{t \in [0,T]} \Big|\tanh(\beta(x^N(t\wedge T_{m_a}^{-\delta} ) + m^N(t\wedge T_{m_a}^{-\delta}))) - m(t\wedge T_{m_a}^{-\delta})\Big|\mathbbm{1}_A\right]\\
& = \mathbb{E}\Bigg[\sup_{t \in [0,T]} \Big|\tanh(\beta(x^N(t\wedge T_{m_a}^{-\delta}) + m^N(t\wedge T_{m_a}^{-\delta}))) - \tanh(\beta(x(t\wedge T_{m_a}^{-\delta})+m(t\wedge T_{m_a}^{-\delta})))\Big|\mathbbm{1}_A\Bigg]\\
& \leq (1-\varepsilon)\mathbb{E}\!\left[\sup_{t \in [0,T]}\! \Big|x^N(t\wedge T_{m_a}^{-\delta}) - x(t\wedge T_{m_a}^{-\delta})\Big|\right] \!+ (1-\varepsilon) \mathbb{E}\!\left[\sup_{t \in [0,T]} \! \Big|m^N(t\wedge T_{m_a}^{-\delta}) - m(t\wedge T_{m_a}^{-\delta})\Big|\right],
\end{align*}
where in the first inequality we have used that, by the properties of $\tanh(\cdot)$ and by definition of $\lambda_a$, there exists an $\varepsilon > 0$ such that $\frac{d}{d\lambda} \tanh(\beta \lambda) < 1-\varepsilon$ for every $\lambda > \lambda_a + \delta$.
Finally, the third term in the right hand side of \eqref{eqn:aux_stopped} can be estimated as follows
\begin{align}
\label{eqn:estimate_lambda_process}
2 \mathbb{P}&\Big(\exists t \in[0,T] : \lambda^N(t \wedge T_{m_a}^{-\delta}) < \lambda_a + \delta\Big)= 2\mathbb{P}\Big(\exists t \in[0,T] : x^N(t \wedge T_{m_a}^{-\delta}) + m^N(t \wedge T_{m_a}^{-\delta}) < \lambda_a + \delta\Big)\\
& = 2\mathbb{P}\Big(\exists t \in[0,T] : x^N(t \wedge T_{m_a}^{-\delta})  < \lambda_a - m^N(t \wedge T_{m_a}^{-\delta}) + \delta\Big)\nonumber\\
&\leq 2\mathbb{P}\Big(\exists t \in[0,T] : m^N(t \wedge T_{m_a}^{-\delta})  < m_a\Big)\nonumber,
\end{align}
where the inequality follows by the definition of $T_{m_a}^{-\delta}$.
To bound the latter, we introduce an auxiliary process $(\tilde{m}^N(t))_{t\in[0,T]}$, coupled with $(x^N(t),m^N(t))_{t \in [0,T]}$, with dynamics
\begin{equation*}
\begin{cases}
\tilde{m}^N(t) \! \mapsto \!\tilde{m}^N(t) \pm \frac{2}{N}  \ \text{ rate }  \ \ N^2 \frac{1 \mp \tilde{m}^N(t)}{2}\! \left(1 \!\pm \tanh\!\left(\beta (\lambda_a - m_a + \delta + \tilde{m}^N(t))\right)\right),\\
\tilde{m}^N(0) =m^{N}(0),
\end{cases}
\end{equation*}
and consider its stopped version $\Big(\tilde{m}^N(t \wedge T_{m_a}^{-\delta})\Big)_{t \in [0,T]}$. Since, by definition of $T_{m_a}^{-\delta}$, it holds $x^N(t \wedge T_{m_a}^{-\delta}) \geq \lambda_a - m_a + \delta$, we have that the rate of increase of $m^N(t \wedge T_{m_a}^{-\delta})$ is bigger than the rate of increase of $\tilde{m}^N(t \wedge T_{m_a}^{-\delta})$; symmetrically, the rate of decrease of $m^N(t \wedge T_{m_a}^{-\delta})$ is smaller than the rate of decrease of $\tilde{m}^N(t \wedge T_{m_a}^{-\delta})$. We thus have, for any $ t \in [0,T]$, $N \in \mathbb{N}$, $\overline{m} \in [-1,1]$,
\begin{equation}
\label{eqn:obs_tilde_aux}
\mathbb{P}\Big(m^N(t \wedge T_{m_a}^{-\delta}) < \overline{m}\Big) \leq \mathbb{P}\Big(\tilde{m}^N(t \wedge T_{m_a}^{-\delta}) < \overline{m}\Big).
\end{equation}
Moreover, note that $\tilde{m}^N(t \wedge T_{m_a}^{-\delta})$ is a jump process with rates independent of $x^N$, starting above $m_a$ with probability tending to $1$ for $N \to +\infty$, and that it gets fastly attracted, for $N \to +\infty$, to the point $m^*$ on the invariant curve identified by
\begin{equation*}
\begin{cases}
x = \lambda_a - m_a + \delta,\\
m = \tanh(\beta(x + m)),
\end{cases}
\end{equation*}
for which it holds by construction $m_a < m^*$. Thus $\mathbb{P}\Big(\exists t \in[0,T] : \tilde{m}^N(t \wedge T_{m_a}^{-\delta}) < m_a\Big) \leq C(N)$, with $C(N) \xrightarrow{N\to +\infty} 0$. By the above observation \eqref{eqn:obs_tilde_aux}, this implies the same bound for $m^N$ in the last line of the right hand side of \eqref{eqn:estimate_lambda_process}.
Finally, recollecting the above estimates from \eqref{eqn:aux_stopped},
\begin{align*}
\mathbb{E}&\left[\sup_{t \in [0,T]}\Big|m^N(t\wedge T_{m_a}^{-\delta}) - m(t\wedge T_{m_a}^{-\delta})\Big|\right] \leq \frac{(1-\varepsilon)}{\varepsilon}\mathbb{E}\left[\sup_{t \in [0,T]} \Big|x^N(t\wedge T_{m_a}^{-\delta}) - x(t\wedge T_{m_a}^{-\delta})\Big|\right] \\
&+ C(N) \leq \frac{C(N)}{\varepsilon} \xrightarrow{N \to +\infty} 0.
\end{align*}
\end{proof}
The next lemma deals with the times which follow the hitting time $T_{m_a}^{\varepsilon}$. Using the strong Markov's property, we can restart the dynamics from the point reached at the hitting time, assuming that we are above the invariant curve.
\begin{lem}
\label{lem_jump}
Fix $\varepsilon > 0$ such that $\lambda_a - \varepsilon > 0$. Let $(x^N(t), m^N(t))_{t \geq 0}$ be the accelerated processes, with initial data $(x^N(0), m^N(0)) = (x_0, m_0)$, such that $x_0 = \lambda_a- m_a - \varepsilon$ and $m_0 > \tanh{\beta(x_0 + m_a)}$. 
Let 
$$
T_{\varepsilon/2} := \inf\left\{t > 0 : x^N(t) = \lambda_a- m_a - \frac{\varepsilon}{2}\right\}, \quad T_{m_b} := \inf\left\{t> 0 : m^N(t) \leq m_b\right\},
$$ 
with $m_b$ as in \eqref{eqn:correct}.
Then,
\begin{equation}
\label{eqn:jump}
\lim_{N\to \infty}\mathbb{P}(T_{m_b} < T_{\varepsilon/2}) = 1.\\
\end{equation}
\end{lem}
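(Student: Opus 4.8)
The plan is to exploit the separation of time scales built into the accelerated dynamics: $m^{N}$ is subject to a drift of order $N$, while $x^{N}$ moves only diffusively, so $m^{N}$ is expected to be driven below $m_{b}$ in a time of order $1/N$, long before the Brownian motion $x^{N}$ can climb the macroscopic gap $\varepsilon/2$ up to $\lambda_{a}-m_{a}-\varepsilon/2$. Throughout, write $h(x,m):=2\tanh(\beta(x+m))-2m$ for the (un-accelerated) magnetization drift, so that from \eqref{eqn:acc_jump} one has $dm^{N}(t)=N\,h(x^{N}(t),m^{N}(t))\,dt+dM^{N}(t)$ with $M^{N}$ a martingale, while $x^{N}(t)=x_{0}+\sigma W^{N}(t)$.

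The first and main step is a uniform bound $-h\ge c>0$ on the drift, valid as long as $m^{N}\ge m_{b}$ and $x^{N}\le\lambda_{a}-m_{a}-\varepsilon/2$. Rewriting \eqref{eqn:correct} via $\log(1-y)-\log(1+y)=-2\arctanh(y)$ and $\arctanh(m_{a})=\beta\lambda_{a}$ shows that the relevant solution $m_{b}$ satisfies $m_{b}=\tanh\!\big(\beta(m_{b}+\lambda_{a}-m_{a})\big)$, i.e.\ $m_{b}$ is the fixed point $m^{*}(\lambda_{a}-m_{a})$ of the frozen-$x$ magnetization dynamics at $x=\lambda_{a}-m_{a}$; here, for $x<\lambda_{a}-m_{a}$, the map $m\mapsto\tanh(\beta(x+m))$ has a unique fixed point $m^{*}(x)<-m_{a}$ (the local maximum of $\lambda\mapsto\tanh(\beta\lambda)-\lambda+x$ lies below $0$). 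On this branch $(m^{*})'(x)=\beta(1-(m^{*})^{2})/\big(1-\beta(1-(m^{*})^{2})\big)>0$ (the denominator is positive since $(m^{*})^{2}>m_{a}^{2}=1-1/\beta$), so $m^{*}$ is strictly increasing and $m^{*}(x)\le m^{*}(\lambda_{a}-m_{a}-\varepsilon/2)<m_{b}$ for every $x\le\lambda_{a}-m_{a}-\varepsilon/2$. Hence $h(x,m)=2(\tanh(\beta(x+m))-m)<0$ whenever $x\le\lambda_{a}-m_{a}-\varepsilon/2$ and $m\in[m_{b},1]$ (then $m>m^{*}(x)$); since $\partial_{x}h=2\beta(1-\tanh^{2}(\beta(x+m)))>0$, it suffices to maximize $h$ over the compact set $\{\lambda_{a}-m_{a}-\varepsilon/2\}\times[m_{b},1]$, yielding a constant $c=c(\beta,\varepsilon)>0$ with $h\le-c$ on the whole region. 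Note also $m_{0}>\tanh(\beta(x_{0}+m_{a}))>0>m_{b}$ (using $\lambda_{a}-\varepsilon>0$), so $m_{0}\in(m_{b},1]$ and $m_{0}-m_{b}\le 2$.

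From \eqref{eqn:acc_jump} the total jump rate of $m^{N}$ is at most $2N^{2}$ with jumps of size $2/N$, so $d\langle M^{N}\rangle_{t}\le 8\,dt$ and, by Doob's $L^{2}$ inequality, $\mathbb{E}\big[\sup_{s\le\eta}(M^{N}(s))^{2}\big]\le 4\,\mathbb{E}[\langle M^{N}\rangle_{\eta}]\le 32\eta$ for every $\eta>0$. On $\{t<T_{m_{b}}\wedge T_{\varepsilon/2}\}$ we have $m^{N}(t)\in(m_{b},1]$ and $x^{N}(t)<\lambda_{a}-m_{a}-\varepsilon/2$, so the drift bound gives $m^{N}(t)\le m_{0}-cNt+M^{N}(t)$. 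Now fix a small $\eta>0$ and split the bad event $\{T_{m_{b}}\ge T_{\varepsilon/2}\}$: if $T_{\varepsilon/2}\le\eta$ then $\sup_{s\le\eta}(x^{N}(s)-x_{0})\ge\varepsilon/2$, which by the reflection principle has probability $\rho(\eta)$ independent of $N$ with $\rho(\eta)\to 0$ as $\eta\to0$; if instead $T_{\varepsilon/2}>\eta$ while $T_{m_{b}}\ge T_{\varepsilon/2}$, then $T_{m_{b}}>\eta$, so the estimate $m^{N}(t)\le m_{0}-cNt+M^{N}(t)$ holds at $t=\eta$ together with $m^{N}(\eta)>m_{b}$, forcing $\sup_{s\le\eta}|M^{N}(s)|>cN\eta-(m_{0}-m_{b})\ge cN\eta/2$ once $N\ge 4/(c\eta)$, an event of probability $\le 128/(c^{2}N^{2}\eta)\to 0$. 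Hence $\limsup_{N}\mathbb{P}(T_{m_{b}}\ge T_{\varepsilon/2})\le\rho(\eta)$ for every $\eta>0$, and letting $\eta\to0$ proves \eqref{eqn:jump}.

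The crux is the drift bound of the second paragraph: correctly identifying $m_{b}$ with the lower stable fixed point $m^{*}(\lambda_{a}-m_{a})$ of the frozen dynamics and checking — using the monotonicity of $h$ in $x$ to avoid dealing with the a priori unbounded downward excursions of $x^{N}$ — that $-h$ stays bounded below by a positive constant throughout the region visited before $T_{m_{b}}\wedge T_{\varepsilon/2}$. Once that is secured, the Brownian small-time estimate and Doob's inequality are entirely routine.
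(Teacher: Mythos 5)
Your proof is correct, but it follows a genuinely different route from the paper's. The paper couples $m^N$ with an auxiliary jump process $\nu^N$ whose rates freeze the diffusion at the worst admissible value $x=\lambda_a-m_a-\varepsilon/2$ (stochastic domination from monotonicity of the rates in $x$), then slows $\nu^N$ down by a factor $N$, invokes its fluid-limit ODE \eqref{eqn:approx_lim}, analyzes the sign of its vector field through the functions $f$ and $g$ (producing the root $m^*_{f,b}<m_b$ and a deterministic crossing time $C$), and concludes that $\nu^N$ passes below $m_b$ within a time $C(N)=O(1/N)$ with probability tending to one, which is then compared with the $N$-independent law of $T_{\varepsilon/2}$. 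You instead work directly with the semimartingale decomposition $dm^N=N\,h\,dt+dM^N$, prove a uniform drift bound $h\le -c(\beta,\varepsilon)$ on $\{x\le\lambda_a-m_a-\varepsilon/2,\ m\ge m_b\}$, and control the martingale part by Doob's inequality together with a small-time reflection estimate for $x^N$. The key ingredient replacing the paper's $f$/$g$ root analysis is your observation that, rewriting \eqref{eqn:correct} with $\arctanh$, $m_b$ is exactly the lower fixed point of the frozen map $m\mapsto\tanh(\beta(m+\lambda_a-m_a))$, together with the monotonicity of $h$ in $x$ and of the lower fixed-point branch $m^*(x)$; note that the paper's phrase ``the solution in $y$'' to $g=0$ is slightly ambiguous ($m_a$ is also a root), and you correctly select the root below $-m_a$, which is what the paper intends. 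Both arguments rest on the same monotonicity in $x$; what yours buys is a more elementary and quantitative proof (explicit bound of order $\rho(\eta)+C/(c^2N^2\eta)$ on the bad event, no need to justify the fluid-limit convergence of the auxiliary chain, which the paper treats somewhat informally via a first-order generator expansion), while the paper's coupling-plus-ODE scheme stays closer to the techniques used elsewhere in the article. Your constants (total jump rate $\le 2N^2$, hence $\langle M^N\rangle_t\le 8t$) are in fact sharp, and the case split at time $\eta$ is handled correctly, so there is no gap.
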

\begin{proof}
The proof makes extensive use of $(\nu^N(t))_{t \geq 0 }$, an auxiliary CTMC - coupled with $(m^N(t))_{t \geq 0}$ - with the same initial datum $m_0$, whose transition rates are given by
\begin{align}
\label{eqn:nu}
\nu^N \mapsto & \ \nu^N + \frac{2}{N} \ \ \ \ \text{      with rate     }\ \ \ \ N^2 \frac{1 - \nu^N(t)}{2} \left[1 + \tanh({\beta (\nu^N(t) + \lambda_a - m_a - \varepsilon/2))}\right]\\
\nu^N \mapsto & \ \nu^N - \frac{2}{N} \ \ \ \ \text{      with rate     } \ \ \ \  N^2 \frac{1+\nu^N(t)}{2} \left[1 - \tanh({\beta (\nu^N(t) + \lambda_a - m_a - \varepsilon/2))}\right].\nonumber
\end{align}
Note that $(\nu^N(t))_{t \geq 0 }$ is independent of $(x^N(t))_{t \geq 0}$. Setting $\tilde{T}_{m_b} := \inf\left\{t > 0 : \nu^N(t) \leq m_b\right\}$, we have
\begin{equation}
\label{eqn:ineq}
\mathbb{P}(T_{m_b} < T_{\varepsilon/2}) \geq \mathbb{P}(\tilde{T}_{m_b} < T_{\varepsilon/2}).
\end{equation}
Indeed, it is easy to check that for $t \leq T_{\varepsilon/2}$, for which $x^N(t) \leq \lambda_a - m_a - \varepsilon/2$, the rate of increase in the dynamics of $\nu^N(t)$ is greater than that of $m^N(t)$, while the opposite is true for the rate of decrease. Since $m_b < m_0$, \eqref{eqn:ineq} follows.
Consider now the slowed version of the process $\nu^N(t)$, i.e.\! $\tilde{\nu}^N(t) := \nu^N(tN^{-1})$, whose generator is
\begin{align*}
\mathcal{L}^Nf(\tilde{\nu}) &:= N \frac{1+\tilde{\nu}}{2} \left[1 - \tanh({\beta (\tilde{\nu} + \lambda_a - m_a - \varepsilon/2))}\right]\left[f\left(\tilde{\nu} - \frac{2}{N}\right) - f(\tilde{\nu})\right] \\
& + N \frac{1 - \tilde{\nu}}{2} \left[1 + \tanh({\beta (\tilde{\nu} + \lambda_a - m_a - \varepsilon/2))}\right]\left[f\left(\tilde{\nu}+\frac{2}{N}\right) - f(\tilde{\nu})\right].\nonumber
\end{align*}
Expanding it to the first order, we find, up to terms of order $O\left(\frac{1}{N}\right)$,
\begin{align*}
\mathcal{L}^Nf(\tilde{\nu})  \approx \left[- 2 \tilde{\nu} + 2\tanh(\beta(\tilde{\nu} + \lambda_a - m_a - \varepsilon/2)) \right]f'(\tilde{\nu}).
\end{align*}
This implies that, in the limit $N \to +\infty$, the process $(\tilde{\nu}^N(t))_{t \geq 0}$ weakly converges to the solution of the following ODE 
\begin{equation}
\label{eqn:approx_lim}
\begin{cases}
\frac{d}{dt}\overline{m}(t) = v(\overline{m})= - 2 \overline{m}(t) + 2\tanh(\beta(\overline{m}(t) + \lambda_a - m_a - \varepsilon/2)) \\
\overline{m}(0) = m_0.
\end{cases}
\end{equation}
The vector field $v(m)$ in \eqref{eqn:approx_lim} is positive if and only if
\begin{equation}
\label{eqn:ap}
f(m) := 2\beta m - 2\beta(m_a - \lambda_a) - \log(1+m) + \log(1-m) - \beta\varepsilon  > 0.
\end{equation}
Indeed, $v(m)$ is positive if and only if $m < \tanh(\beta(m + \lambda_a - m_a - \varepsilon/2))$, which is equivalent to 
$$
\frac{1}{\beta}\arctanh(m) < m + \lambda_a - m_a - \varepsilon/2.
$$
By using the identity $\arctanh(m) = \frac{1}{2} \log\left(\frac{1+m}{1-m}\right)$ we get the desired inequality \eqref{eqn:ap}. Analogous steps motivate the expression for $g$ given in Theorem \ref{thm1}, obtained for $\varepsilon = 0$, which we recall for the ease of the reader
$$
g(y):= 2\beta y - 2\beta(m_a(\beta) - \lambda_a(\beta)) - \log(1+y) + \log(1-y) = 0.
$$
Recall that by our choice $m_0 > 0$. First of all, it is easy to see that $f(m) < 0$ whenever $m \geq 0$. Indeed, $f(0) < 0$, $f$ has a local maximum in $m = m_a= \sqrt{1 - \frac{1}{\beta}}$ for which $f\left(\sqrt{1 - \frac{1}{\beta}}\right) = -\beta \varepsilon < 0$, and $f(m) \to -\infty$ for $m \to +1$. Moreover, we have that $f(m) = g(m) - \beta\varepsilon$, so that $f(m) < g(m)$ for all $m \in [-1,1]$. Since $f'(m) = g'(m) = 2\beta - \frac{1}{1+m} - \frac{1}{1-m}$ we have that $g$ has a local maximum at $m = m_a$, for which we have $g(m_a) = 0$, while $g(m) < 0$ for all $m > 0, \ m \neq m_a$.
We also observe that:
\begin{itemize}
\item $\exists! \ m^*_{f,b}$ such that $f(m^*_{f,b}) = 0$;
\item $g(m_b) = 0$ and $g(m) \neq 0 \ \ \forall m \neq m_a, m_b$; 
\item $g(m) > 0$ if $ m < m_b$, $g(m) < 0$ if $m > m_b$;
\item $f(m) > 0$ if $m < m^*_{f,b}$, $f(m) < 0$ if $m >  m^*_{f,b}$;
\item $ m^*_{f,b} < m_b$; 
\item $m_b \to -1$ when $\beta \to \infty$.
\end{itemize}
In order to check the claims, we note that, when $m \leq 0$,
$$
f'(m) = g'(m) > 0 \text{ iff } m < -m_a = - \sqrt{1- \frac{1}{\beta}},
$$
and $-m_a$ is a local minimum, for which $f(-m_a), \ g(-m_a) <0$. Moreover, $f(m), \ g(m) \to +\infty$ for $m \to -1$.
Combining these with the above considerations for $m \geq 0$, we deduce the first four bullet points. For the fact that $f(m) < g(m)$ we get the fifth claim, while for the last it is sufficient to observe that $m_b < - \sqrt{1- \frac{1}{\beta}} \to -1$ for $\beta \to \infty$.
The above facts and the convergence of $(\tilde{\nu}^N(t))_{t\geq 0}$ to the deterministic process $(\overline{m}(t))_{t \geq 0}$ imply that, if we define $\bar{T}_{m_b} := \inf\left\{t > 0: \tilde{\nu}^N(t) \leq m_b\right\}$ and $\bar{T}_{ m^*_{f,b}} :=  \inf\left\{t > 0: \tilde{\nu}^N(t) \leq m^*_{f,b}\right\}$, there exists a $C >0$, independent of $N$, such that
\begin{equation}
\label{eqn:concl}
\mathbb{P}(\bar{T}_{m_b} \leq C) \geq \mathbb{P}(\bar{T}_{ m^*_{f,b}} \leq C) \xrightarrow{N \to +\infty} 1.
\end{equation}
Indeed, for the deterministic process $\overline{m}(t)$ the arrival time in $m^*_{f,b}$ (which is greater than the one for arriving in $m_b$) is for sure limited by a constant, because of the sign of the vector field of \eqref{eqn:approx_lim}.
If we now consider the original auxiliary process $(\nu^N(t))_{t\geq0}$, i.e.\! the sped up version of $\tilde{\nu}^N(t)$, we get that, defining $\tilde{T}_{ m^*_{f,b}} :=  \inf\left\{t > 0: \nu^N(t) \leq m^*_{f,b}\right\}$, 
\begin{equation}
\label{eqn:fin}
\mathbb{P}(\tilde{T}_{m_b} \leq C(N)) \geq \mathbb{P}(\tilde{T}_{ m^*_{f,b}} \leq C(N)) \xrightarrow{N \to +\infty} 1,
\end{equation}
with $C(N) \xrightarrow{N \to +\infty} 0$, by means of \eqref{eqn:concl}.

We can finally conclude the proof of \eqref{eqn:jump}, by estimating
\begin{equation*}
\mathbb{P}(T_{m_b} < T_{\varepsilon/2}) \geq \mathbb{P}(\tilde{T}_{m_b} < T_{\varepsilon/2}) \geq \mathbb{P}(\tilde{T}_{ m^*_{f,b}} < T_{\varepsilon/2}) \to 1,
\end{equation*}
as $N \to +\infty$. The last limit is deduced by \eqref{eqn:fin} and by the fact that $T_{\varepsilon/2}$ has an explicit distribution - independent of $N$ - which can be found through the reflection principle for the Brownian motion, in the same way we did in Lemma \ref{lemma_x}, for which we have $\mathbb{P}(T_{\varepsilon/2} \leq \delta) \xrightarrow{\delta \to 0} 0$.
\end{proof}

\begin{proof}[Proof of Theorem \ref{thm1}]
Apply Lemmas \ref{lemma_x}, \ref{lem_small} and \ref{lem_jump} to $(x^N(t),m^N(t))_{t \in [0,T]}$ for fixed $\varepsilon,\delta > 0$. Observe that the density of $T_{\varepsilon/2}$ is smooth with respect to $\varepsilon$, and of course $T_{\varepsilon/2} \to 0$ for $\varepsilon \to 0$. Indeed, repeating analogous computations as in Lemma \ref{lemma_x}, we find, for $t \geq 0$,
\begin{align*}
\mathbb{P}(T_{\varepsilon/2} \leq t) = \frac{\varepsilon}{2\sqrt{2\pi\sigma^2}}\frac{1}{t^{3/2}}e^{-\frac{\varepsilon^2}{8\sigma^2t}}.
\end{align*}
The same is true for both $T_{m_a}^\varepsilon, T_{m_a}^{-\delta} \to T_{m_a}^0$, when $\varepsilon,\delta \to 0$. Sending first $N \to +\infty$ and then $\varepsilon,\delta \to 0$, we get the convergence in distribution for all the times $t \leq T_{m_b}$. Once we are in $m_b$, we can restart the dynamics by the strong Markov property and repeat the arguments above for the symmetric negative component of the invariant curve. Inductively, we can find a sequence of almost surely finite stopping times $(T_k)_{k \in \mathbb{N}}$ (the alternate arrival times in the two symmetric critical points), such that $[0,T] = \cup_k \left\{[T_k,T_{k+1}]\cap[0,T]\right\}$. This is enough to deduce the weak convergence of $(m^N(t))_{t \in [0,T]}$ to the process with instantaneous deterministic jumps described by SDE \eqref{eqn:lei}.
\end{proof}

\begin{figure}
    \centering
    \subfloat[$(x^N(t),m^N(t))$ subcritical case.]{{\includegraphics[width=5.5cm]{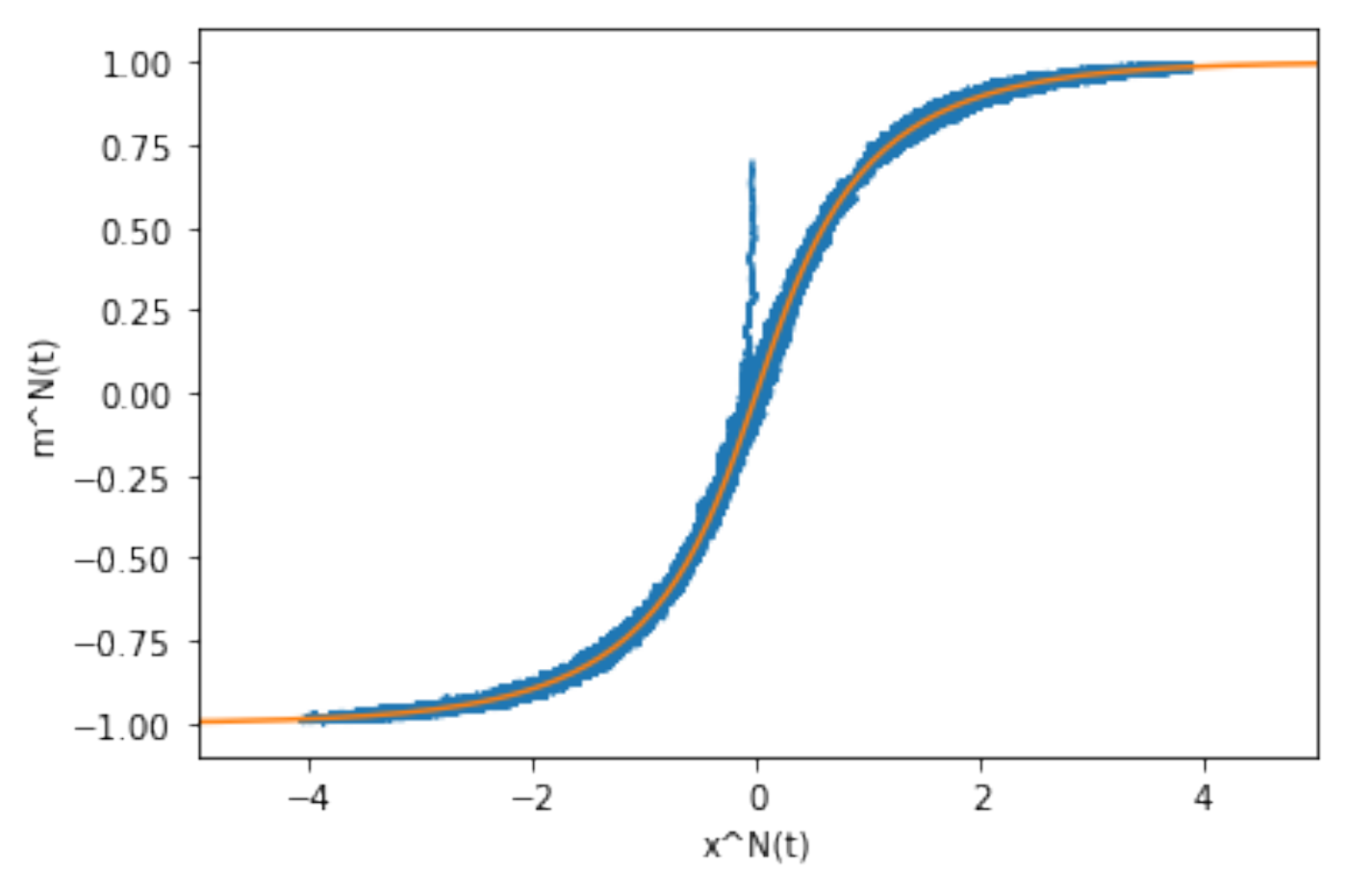} }}%
    \qquad
    \subfloat[$(x^N(t),m^N(t))$ supercritical case.]{{\includegraphics[width=5.5cm]{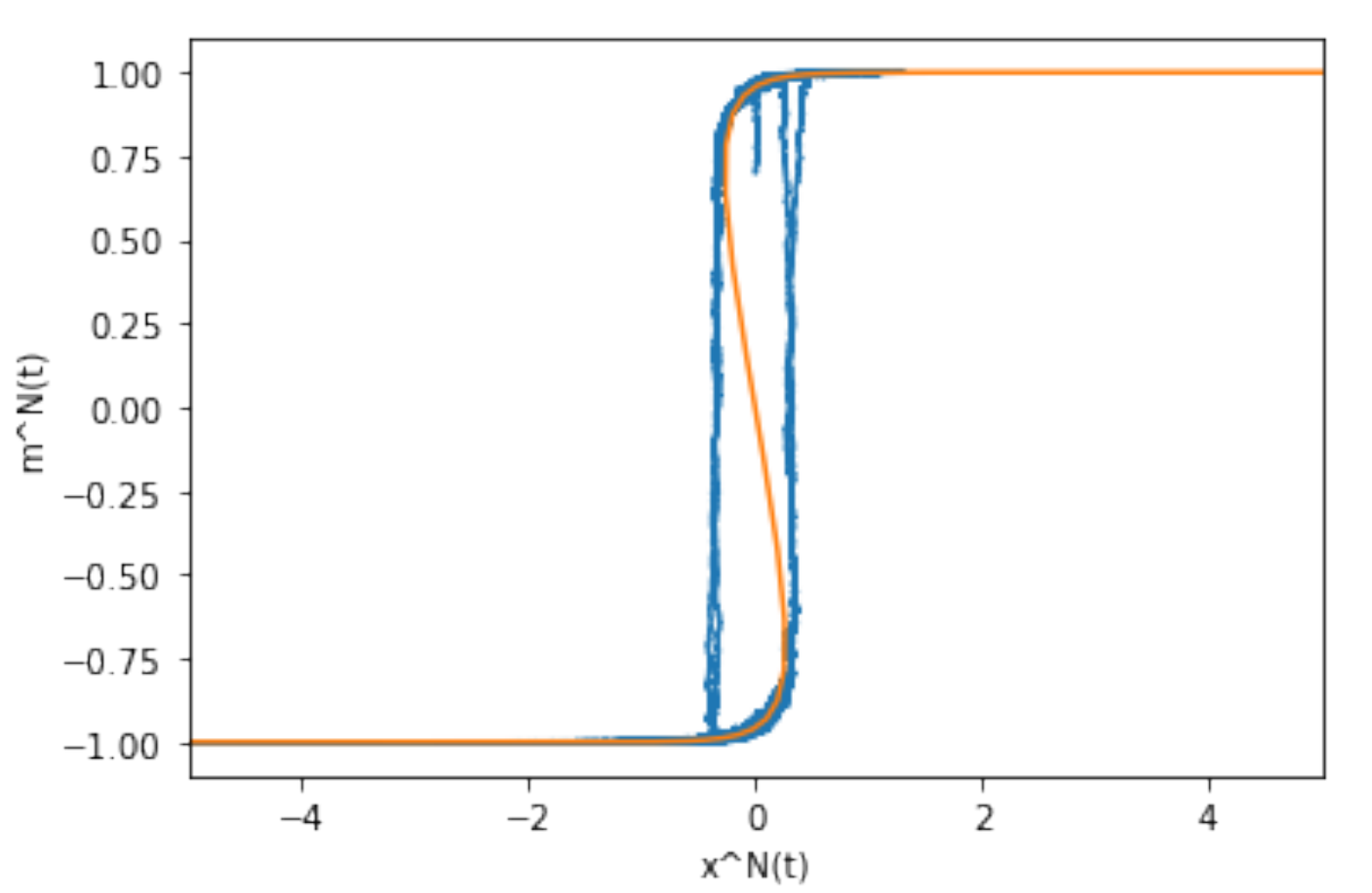} }}%
    \caption{Simulation of the finite $N$ dynamics, for $N = 2000$, $\sigma = 2$, $\beta = 0.5$ (left), and $\beta = 2$ (right).}
    \label{mf-sim}
\end{figure}

In Fig.\! \ref{mf-sim} we show a comparison between two prelimit trajectories in the subcritical and supercritical case for the same initial conditions, where we used the coordinates $(x,m)$ instead of $(\lambda,m)$, which were instead employed in Fig.\! \ref{figura1} and \ref{figura2}. These plots will come useful for a qualitative comparison with the two-level hierarchical case.

\section{The hierarchical model}
\label{hier_hier}
In this section we study the two-level hierarchical version of the previous model. We consider $N$ interacting populations, each of which consists of $N$ mean field interacting particles. 
We denote with a subscript $(i,j)$ the $i$-th individual in the $j$-th population, for $i,j = 1,\dots,N$. The collective state is identified by $N^2$ pairs of variables $(x_{ij}, \mu_{ij})$ (equivalently $(\lambda_{ij}, \mu_{ij})$, with $\lambda_{ij} := \mu_{ij} + x_{ij}$ the total local field), where the $\mu_{ij}$'s are the spins, and the $x_{ij}$'s represent the aggregated remaining characteristics of the individual. As above, we define 
$$
m_j^N(t) := \frac{1}{N}\sum_{i=1}^N \mu_{ij}(t),
$$
the magnetization of the $j$-th population, and the analogous definition for $x_j^N(t)$ and $\lambda_j^N(t)$. Moreover, we define the two-level magnetization as
$$
M^N(t) := \frac{1}{N^2}\sum_{i,j=1}^N \mu_{ij}(t) = \frac{1}{N}\sum_{j=1}^N m_j^N(t),
$$
and the analogous quantities $X^N(t) := \frac{1}{N^2}\sum_{ij}x_{ij}(t) = \frac{1}{N}\sum_{j=1}^Nx_j^N(t)$ (resp.\! $\Lambda^N(t)$) for the $x$ (resp.\! $\lambda$) variables.
Ideally, we want to describe the dynamics at the different hierarchical levels as a projection of a diffusion process onto an invariant curve, as we did for the one population scenario.

With the choices specified in \eqref{eqn:hier}, the stochastic dynamics \eqref{eqn:graph} becomes
\begin{equation}
\label{eqn:hierarch}
\begin{cases}
\mu_{ij} \mapsto -\mu_{ij} \   \text{ rate }  \ 1\!+\tanh\!\left[{-\beta_1 \mu_{ij}(t)(x_{j}^N(t)+ m_{j}^N(t))- \beta_2\mu_{ij}(t)(X^N(t) + M^N(t))}\right],\\
d x_{ij}(t) = \sigma d W_{ij}(t) - \alpha_1\left[x_{ij}(t) - x_j^N\left(t\right)\right]dt - \frac{\alpha_2}{N}\left[x_{ij}(t) - X^N\left(t\right)\right]dt,\\
\mu_{ij}(0) \sim \text{Ber}(p),\\
x_{ij}(0) \sim \mathcal{N}(0,1),
\end{cases}
\end{equation}
for $\beta_1, \beta_2, \sigma,\alpha_1,\alpha_2 > 0$, with the $W_{ij}(t)$'s being $N^2$ independent one-dimensional Brownian motions. 
In terms of the alternative variables $(\mu_{ij},\lambda_{ij})$ and their corresponding macroscopic quantities, the above can be rewritten as
\begin{equation*}
\begin{cases}
\mu_{ij} \mapsto -\mu_{ij} \ \ \ \text{ with rate } \ \ \ 1+\tanh\!\left[{-\beta_1 \mu_{ij}(t)\lambda_j^N(t)- \beta_2\mu_{ij}(t)\Lambda^N(t)}\right],\\
d \lambda_{ij}(t) = d \mu_{ij}(t) + \sigma d W_{ij}(t) - \alpha_1\left[(\lambda_{ij}(t) - \mu_{ij}(t)) - \left(\lambda_j^N(t)- m_j^N(t)\right)\right]dt\\
\ \ \ \ \ \ \ \ \ \ \  - \frac{\alpha_2}{N}\left[(\lambda_{ij}(t) - \mu_{ij}(t)) - \left(\Lambda^N(t) - M^N(t)\right)\right]dt,\\
\mu_{ij}(0) \sim \text{Ber}(p),\\
\lambda_{ij}(0) \sim \text{Ber}(p)*\mathcal{N}(0,1),
\end{cases}
\end{equation*}
where the $*$ denotes the convolution between the two distributions.
Thanks to the linearity of the dynamics for the $x_{ij}$'s, it follows directly from \eqref{eqn:hierarch} that
\begin{equation}
\label{eqn:diff}
\begin{cases}
d x_j^N(t) = -\frac{\alpha_2}{N}\left[x_j^N(t) - X^N(t)\right]dt +\frac{\sigma}{\sqrt{N}}dW_{j}^N(t),\\
x_j^N(0) \sim \mathcal{N}\left(0,\frac{1}{N}\right).
\end{cases}
\begin{cases}
d X^N(t)  = \frac{\sigma}{N} dW^N(t),\\
X^N(0) \sim \mathcal{N}\left(0,\frac{1}{N^2}\right),
\end{cases}
\end{equation}
where $W_j^N := \frac{1}{\sqrt{N}}\sum_{i=1}^N W_{ij}$ are $N$ independent Brownian motions, and $W^N := \frac{1}{\sqrt{N}}\sum_{j=1}^N W_j^N$ 
is another Brownian motion. Note that the laws of $(W_j^N(t))_{t \geq 0 }$ and $(W^N(t))_{t \geq 0}$ are independent of $N$, but we keep the dependency on $N$ in the notation to refer to the specific Brownian motions. 
As we did for the mean field case, we describe each population through the order parameters $(m_j^N(t),x_j^N(t))_{t \geq 0}$. The collective behavior of the system can be studied in terms of the infinitesimal generator of the dynamics applied to a function $f = f\left((m_1,x_1),(m_2,x_2),\dots,(m_N,x_N)\right) =: f(\bm{m},\bm{x})$, $f : [-1,1]^N\times\mathbb{R}^N \to \mathbb{R}$, which is given by
\begin{equation}
\label{eqn:h_gen}
\begin{aligned}
&\mathcal{L}^N f(\bm{m},\bm{x}) \\
& \hspace{0.5cm}:= \sum_{j=1}^N \!\left\{N \frac{1 + m_j}{2} \left(1-\tanh\!\left[{\beta_1 (x_j + m_j)+ \beta_2 (X^N + M^N)}\right]\right)\left[f\!\left(x_j,m_j - \frac{2}{N}\right) \!-\! f(x_j, m_j)\right]\right.\\
& \hspace{0.5cm} \left.+ N \frac{1 - m_j}{2} \left(1+\tanh\left[{\beta_1 (x_j + m_j)+ \beta_2 (X^N + M^N)}\right]\right) \left[f\left(x_j,m_j+ \frac{2}{N}\right) - f(x_j, m_j)\right]\right.\\
& \hspace{0.5cm} \left. + \frac{1}{2N}\sigma^2 \frac{\partial^2}{\partial x_j^2} f\left(x_j,m_j\right) - \frac{\alpha_2}{N}\left(x_j - X^N\right)\frac{\partial}{\partial x_j} f\left(x_j,m_j\right)\right\}.
\end{aligned}
\end{equation}
The rest of the paper is organized as follows: in Section \ref{heuristic_hier} we develop some heuristics to present the expected limit behaviors; in Section \ref{order1} we study the convergence at times of order $1$; we then restrict to the subcritical regime for studying rigorously the convergence to the limit dynamics at times of order $N$ and $N^2$ (respectively addressed in Sections \ref{orderN} and \ref{orderN^2}); in Section \ref{renormalization_theory} we generalize the results giving a conjecture on the $k$-level hierarchical case, for any $k$ finite; finally, in Section \ref{the_limit_case} we study heuristically, with the help of numerics, the zero-temperature limit case $\beta_1=\beta_2=+\infty$, highlighting the presence of a phase transition tuned by the diffusion parameters.

\subsection{Heuristics}
\label{heuristic_hier}
At the first hierarchical level we are interested in describing the limit behavior of the order parameters of each population, i.e. the convergence of the sequences $(m_j^N(t), x_j^N(t))_{t \geq 0}$, both at a timescale of order $1$ and $N$. 
At times of order $1$, by \eqref{eqn:diff} it follows that $d x_j^N(t) \to 0$ and thus $x_j^N(t) \to 0$, that is the mean of the initial condition. The same holds for the sequence $X^N(t) \to 0$. Expanding the generator \eqref{eqn:h_gen} at the first order in the variables $m_j$'s, similarly to what we did for the one population case, we find that $m_j^N(0) \to m(0) = 2p -1$, $m_j^N(t) \to m(t)$, and $M^N(t) \to m(t)$ for $N \to +\infty$, where $(m(t),x(t))_{t \geq 0}$ solves the ODE
\begin{equation}
\label{eqn:ord_1}
\begin{cases}
\dot{m}(t) = 2 \tanh((\beta_1+\beta_2)m(t)) - 2m(t),\\
\dot{x}(t) = 0,\\
m(0) = 2p -1,\\
x(0) = 0.
\end{cases}
\end{equation}
Eq. \eqref{eqn:ord_1} is the mean field equation for the Curie--Weiss model with inverse temperature parameter $\beta_1 + \beta_2$, i.e.\! the corresponding two-level hierarchical version of the deterministic mean field limit Eq. \eqref{eqn:limit_sys}. The equilibria of the above ODE are either just one ($m = 0$), when $\beta_1 + \beta_2 \leq 1$, or three when $\beta_1 + \beta_2 > 1$: two stable (the polarized ones) and one unstable (the disordered one), where the asymptotic one is one of the two polarized states, determined by the sign of the initial magnetization.

At times of order $N$, the diffusions $x_j^N$'s are now subject to non-trivial dynamics. Indeed, denoting again - with an abuse of notation - the sped up processes as $x_j^N(t) := x_j^N(Nt)$, $X^N(t) := X^N(Nt)$, equations \eqref{eqn:diff} become
\begin{equation}
\begin{cases}
d x_j^N(t) = -\alpha_2\left[x_j^N(t) - X^N(t)\right]dt + \sigma dW_{j}^N(t),\\
x_j^N(0) \sim \mathcal{N}\left(0,\frac{\sigma^2}{2\alpha_2}\frac{1}{N}\right).
\end{cases}
\begin{cases}
d X^N(t)  = \frac{\sigma}{\sqrt{N}} dW^N(t),\\
X^N(0) \sim \mathcal{N}\left(0,\frac{\sigma^2}{2\alpha_2}\frac{1}{N^2}\right),
\end{cases}
\end{equation}
where the initial data are given by the long-time limit of the diffusions at the timescale of order $1$.
In this timescale we thus find $x_j^N(t) \to x(t)$, $X^N(t) \to 0$, where $x(t)$ follows the Ornstein-Uhlenbeck dynamics
\begin{equation*}
\begin{cases}
d x(t) = -\alpha_2x(t)dt + \sigma dW(t),\\
x(0) =0,
\end{cases}
\end{equation*}
with $W$ a Brownian motion.
As in the mean field case, the accelerated approximate diffusive generator can give us intuition on the limit dynamics for the magnetization processes at a timescale of order $N$. Indeed, expanding up to the second order the jump terms of the dynamics in $m_j$ in \eqref{eqn:h_gen}, we get
\begin{equation}
\label{eqn:expanded}
\begin{aligned}
N \mathcal{L}^N & f\left(m_j,  x_j\right)\approx N\left[2 \tanh(\beta_1 (x_j+m_j) + \beta_2(X^N+ M^N))- 2 m_j\right]\frac{\partial}{\partial m_j}f(m_j,x_j)\\
& +\left[2- 2m_j\tanh(\beta_1 (x_j+m_j) + \beta_2(X^N+ M^N))\right]\frac{\partial^2}{\partial m_j^2}f(x_j, m_j) \\
&+ \frac{\sigma^2}{2} \frac{\partial^2}{\partial x_j^2} f\left(x_j,m_j\right) - \alpha_2\left(x_j - X^N\right)\frac{\partial}{\partial x_j} f\left(x_j,m_j\right).
\end{aligned}
\end{equation}
Assuming that a propagation of chaos property holds, the presence of the strong drift in the above generator should be such that the limit of the magnetizations processes $m_j^N(t)$'s is a (mean field) process laying on the curve $m = \tanh(\beta_1(x+m)) + \beta_2 M)$, where the dynamics is driven by the evolution of the Ornstein-Uhlenbeck limit process $x(t)$. Moreover, the limit mean field $M(t)$ should be proved to be the mean of $m(t)$ with respect to the distribution of $x(t)$. Specifically, denoting with $\mu_t(dx)$ the distribution of the O-U process at time $t$, we should find that each pair of accelerated processes  $(x_j^N(t),m_j^N(t))_{t \geq 0}$, for $j=1,\dots,N$, at times of order $N$, converges to 
\begin{equation}
\label{eqn:first_lev}
\begin{cases}
m(t) := m(t)(x(t)),\\
d x(t) = \sigma d W(t) - \alpha_2 x(t) dt,\\
m(0) = 2p-1,\\
x(0) = 0,\\
M(t) = \int_{\mathbb{R}}m(t)(x) \mu_{t}(dx),
\end{cases}
\end{equation}
where $m(t)(x) := \tanh[\beta_1(x+ m(t)(x)) + \beta_2 M(t)]$.
The study of \eqref{eqn:first_lev} is hard to perform for general choices of the parameters. Indeed, the behavior of the dynamics can drastically change, depending on $\beta_1, \beta_2, \alpha_2, \sigma$ and the initial conditions. By analogy with the mean field case, one can expect to recognize a radical difference between the case where one has uniqueness of the equilibrium for the dynamics at order $1$ \eqref{eqn:ord_1}, and the case where multiple equilibria appear.  

At the second hierarchical level, we write the infinitesimal generator for a function $f(M, X)$ by averaging over the different populations, 
\begin{align*}
\mathcal{L^N}& f(M,X) = \\
& N \sum_{j=1}^N \frac{1 + m_j}{2} \left(1-\tanh\left[\beta_1 (x_j + m_j)+ \beta_2 (X + M)\right]\right)\left[f\left(M - \frac{2}{N^2}, X\right) - f(M, X)\right]\\
& + N\sum_{j=1}^N \frac{1 - m_j}{2} \left(1+\tanh\left[{\beta_1 (x_j + m_j)+ \beta_2 (X + M)}\right]\right)\left[f\left(M + \frac{2}{N^2}, X\right) - f(M, X)\right] \\
&+ \frac{1}{2}\frac{\sigma^2}{N^2} \frac{\partial^2}{\partial X^2} f(M, X).
\end{align*}
With analogous expansions as above for the jump components, we find
\begin{align*}
\mathcal{L^N} & f(M,X) \approx \frac{1}{N} \sum_{j=1}^N \Big[2 \tanh\left[\beta_1 (x_j + m_j)+ \beta_2 (X + M)\right] - 2 m_j\Big]\frac{\partial }{\partial M}f(M,X) \\
& + \frac{1}{N^3}\sum_{j=1}^N \Big[2 - 2 m_j \tanh\left[\beta_1 (x_j + m_j)+ \beta_2 (X + M)\right]\Big]\frac{\partial^2}{\partial M^2} f(M,X) \\
& + \frac{1}{2N^2}\sigma^2 \frac{\partial^2}{\partial X^2} f(M, X).
\end{align*}
In the drift component we can recognize the empirical average of the drifts of the single magnetizations.
It is reasonable to ask for a description of the limit dynamics of $M^N(t)$ at any timescale. As we already motivated heuristically, at a timescale of order $1$ the limit $M(t)$ of the macroscopic magnetization is the same as the magnetization of each population, which follows a Curie--Weiss ODE. For long times (but still of order $1$), the value of $M^N(t)$ should converge to the stable equilibrium of the C--W ODE, which, depending on the value of $\beta_1+\beta_2$ may be the disordered or a polarized state. Once we consider a scale of order $N$, we expect the single magnetizations to be close to their invariant curves. However, the evolution of $M^N(t)$ can change drastically depending on the interaction and diffusion parameters. We expect to find a regime of the parameters for which $M^N(t)$ does not move much from the equilibrium reached at times of order $1$, eventually starting to move only at a scale of order $N^2$, when the macroscopic diffusion $X^N(t)$ starts to evolve non-trivially.
At least in this regime, we expect the $N^2$ accelerated second-level process $M^N(t)$, conditionally on $X^N(t) \approx X$, to converge, for every fixed $t \geq 0$, to the deterministic value
\begin{equation}
\label{eqn:limi_cond}
\begin{cases}
M(t) = \int_{\mathbb{R}} \tanh(\beta_1(x + m(t)(x)) + \beta_2(X + M(t)))\mu_{\infty}(dx;X),\\
M(0) = 2p -1, \\
\end{cases}
\end{equation}
where $\mu_{\infty}(dx;X)$ is the stationary distribution of the process
$$
dx(\xi) = -\alpha_2(x(\xi) - X) d\xi + \sigma d W(\xi),
$$
where $X$ enters as a parameter (it must be intended as the \textit{current} fixed value of $X(t)$), and $m(t)(x)$ is the solution to 
$$
m(t)(x) = \tanh(\beta_1(x+m(t)(x)) + \beta_2(X + M(t))).
$$
In turns, the limit process $X(t)$, $X^N(t) \to X(t)$, evolves as
\begin{equation}
\label{eqn:limi_diff}
\begin{cases}
dX(t) = \sigma d B(t),\\
X(0) = 0,
\end{cases}
\end{equation}
where $B$ is a Brownian motion.
In order to obtain a full description of the law of the limit process $M(t)$, one then needs to consider a combination of the conditional dynamics \eqref{eqn:limi_cond} and \eqref{eqn:limi_diff}, which takes into account the diffusive motion of $X(t)$ (see Section \ref{orderN^2} for details). 

For a rigorous treatment (Sections \ref{order1}-\ref{orderN^2}) we restrict to the subcritical case $\beta_1 + \beta_2 < 1$ (except for the order $1$ timescale, analyzed in Section \ref{order1}, where the argument works for any choice of the parameters), while we give solid heuristics and numerics for the supercritical zero-temperature limit regime $\beta_1 = \beta_2 \to +\infty$, analyzing the relevance of the diffusion parameters $\alpha_2$ and $\sigma$ for obtaining a phase transition already at a timescale of order $N$ (see Section \ref{the_limit_case} below). Moreover, in Section \ref{renormalization_theory} we conjecture a generalization of the results on the subcritical regime to the $k$-level hierarchical version of the model.

\subsection{Propagation of chaos at times of order $1$}
\label{order1}
In this section we prove the convergence of the empirical processes $(m_j^N(t),x_j^N(t))_{j=1,\dots,N}$ to the deterministic limit dynamics given by \eqref{eqn:ord_1}, for any choice of the parameters. Our proof works as well for random i.i.d.\! initial data $x_j^N(0) \sim \mu(dx)$, when $\mu(dx)$ is a normal distribution $\mathcal{N}(0,(\sigma^*)^2)$ (in our particular case we have $\sigma^* = \frac{1}{\sqrt{N}}$, so that randomness is deleted in the limit), with the resulting modification of the limit dynamics,
\begin{equation}
\label{eqn:gen_ord1}
\begin{cases}
\dot{m}(t)(x) = 2 \tanh(\beta_1(x+m(t)(x))+\!\beta_2 M(t)) - 2m(t)(x),\\
m(0)(x) \equiv 2p -1,\\
M(t) = \int_{\mathbb{R}}m(t)(x)\mu(dx).
\end{cases}
\end{equation}
Considering random initial data also for the limit dynamics will be useful for the analyses of the longer timescales. For clarity we recall the dynamics of the empirical processes $(x_j^N(t),m_j^N(t))_{t \geq 0}$,
\begin{equation}
\label{eqn:empirical_ord1}
\begin{cases}
m_j^N \! \mapsto \!m_j^N \pm \frac{2}{N} \text{ rate } N \frac{1 \mp m_j^N(t)}{2}\! \left(1 \!\pm \tanh\!\left[\beta_1 (x_j^N(t) \!+\! m_j^N(t)) \!+ \beta_2 (X^N(t) \!+ \! M^N(t))\right]\right),\\
m_{j}^N(0) = m_j \sim \frac{1}{N}\text{Bin}(N,p),\\
d x_j^N(t) = -\frac{\alpha_2}{N}\left[x_j^N(t) - X^N(t)\right]dt +\frac{\sigma}{\sqrt{N}} dW_{j}^N(t),\\
x_{j}^N(0) = x_j \sim \mathcal{N}\left(0, (\sigma^*)^2\right).
\end{cases}
\end{equation}
Since the magnetizations are not appearing in the diffusion dynamics, the propagation of chaos property for the $x_j^N(t)$'s is trivially true for any finite time interval. Indeed, every diffusion is converging to its initial datum due to the decaying factors in front of the drift and diffusion coefficients. The i.i.d.\! processes $(\tilde{m}_j(t))_{j=1,\dots,N}$ to which the $m_j^N(t)$'s will be proved to converge are denoted as $\tilde{m}_j(t) := m(t)(x_j)$,
where the $x_j$'s coincide with the initial data for the diffusions, and $m(t)(x)$ is the solution to \eqref{eqn:gen_ord1}.
\begin{thm}[Propagation of chaos at order $1$]
\label{chaos_ord1}
Fix $T > 0$. For any $\beta_1,\beta_2,\alpha_1,\alpha_2,\sigma > 0$, and any $j = 1,\dots,N$, we have
\begin{equation}
\label{eqn:chaos1}
\lim_{N\to \infty}\mathbb{E}\Bigg[\sup_{t \in [0,T]}\big| m_j^N(t) - \tilde{m}_j(t)\big|\Bigg]=  0.
\end{equation}
\end{thm}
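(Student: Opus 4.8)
The plan is a classical propagation-of-chaos argument built on Dynkin's formula, the Lipschitz structure of the drift, and a Gronwall estimate closed by exchangeability; the highly nonlinear spin interaction is harmless here because it enters the \emph{drift} of each magnetization only through the globally Lipschitz function $\tanh$ of the macroscopic variables. As a preliminary, one records the well-posedness of the limit system \eqref{eqn:gen_ord1}: viewing it as a fixed point for $M\in C([0,T])$ (solve the ODE for $m(\cdot)(x)$ with $\beta_2 M(t)$ frozen, then impose $M(t)=\int m(t)(x)\,\mu(dx)$) one obtains a contraction for small $T$ because $\tanh$ is $1$-Lipschitz and $M\mapsto\int m\,d\mu$ is linear and bounded; iterating gives a unique global solution, which moreover satisfies $|m(t)(x)|\le 1$ and $|\partial_t m(t)(x)|\le 4$ uniformly in $x$. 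One then sets $\tilde m_j(t):=m(t)(x_j)$ with $x_j=x_j^N(0)$ the i.i.d.\ initial data of \eqref{eqn:empirical_ord1}, and $M(t)=\mathbb{E}[m(t)(X)]$, $X\sim\mu$.

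Next, applying the generator \eqref{eqn:h_gen} to the coordinate $m_j$ gives the Dynkin decomposition
$$m_j^N(t)=m_j^N(0)+\int_0^t b_j^N(s)\,ds+\mathcal{M}_j^N(t),\qquad b_j^N(s)=2\tanh\!\big[\beta_1(x_j^N(s)+m_j^N(s))+\beta_2(X^N(s)+M^N(s))\big]-2m_j^N(s),$$
where $\mathcal{M}_j^N$ is a martingale with predictable quadratic variation $\le 8T/N$ on $[0,T]$ (the jumps are $\pm 2/N$ at total rate $O(N)$), so Doob's $L^2$ inequality yields $\mathbb{E}[\sup_{t\le T}|\mathcal{M}_j^N(t)|^2]\le 32T/N\to0$. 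The limit $\tilde m_j$ satisfies the same identity with drift $\tilde b_j(s)=2\tanh[\beta_1(x_j+\tilde m_j(s))+\beta_2 M(s)]-2\tilde m_j(s)$ and no martingale part, while $\mathbb{E}|m_j^N(0)-\tilde m_j(0)|^2=\mathbb{E}|m_j^N(0)-(2p-1)|^2=O(1/N)$. The diffusive variables are trivial at this timescale: subtracting $x_j$ in \eqref{eqn:empirical_ord1} and using that the drift carries a factor $1/N$ and the noise a factor $1/\sqrt N$, a short Gronwall bootstrap gives $\mathbb{E}[\sup_{t\le T}|x_j^N(t)-x_j|^2]\le C/N$ and $\mathbb{E}[\sup_{t\le T}|X^N(t)|^2]\le C/N^2$.

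Finally, subtracting the two drift identities and using the $1$-Lipschitz property of $\tanh$,
$$|b_j^N(s)-\tilde b_j(s)|\le C_\beta\big(|x_j^N(s)-x_j|+|m_j^N(s)-\tilde m_j(s)|+|X^N(s)|+|M^N(s)-M(s)|\big),$$
together with $|M^N(s)-M(s)|\le\frac1N\sum_k|m_k^N(s)-\tilde m_k(s)|+|\frac1N\sum_k m(s)(x_k)-M(s)|$. By exchangeability of $\{(m_j^N,x_j^N)\}_j$, the quantity $u^N(t):=\mathbb{E}[\sup_{s\le t}|m_j^N(s)-\tilde m_j(s)|]$ does not depend on $j$ and $\mathbb{E}[\sup_{s\le T}\frac1N\sum_k|m_k^N(s)-\tilde m_k(s)|]\le u^N(T)$, while $\varepsilon_N:=\mathbb{E}[\sup_{s\le T}|\frac1N\sum_k m(s)(x_k)-M(s)|]\to0$ by the law of large numbers at finitely many times combined with the a priori bound $|\partial_s m(s)(x)|\le 4$. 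Taking $\mathbb{E}\sup_{s\le t}$ in the difference of the Dynkin identities yields
$$u^N(t)\le u^N(0)+\mathbb{E}\big[\sup_{s\le T}|\mathcal{M}_j^N(s)|\big]+C_\beta\int_0^t u^N(s)\,ds+C_\beta'\big(N^{-1/2}+\varepsilon_N\big),$$
so Gronwall gives $u^N(T)\le\big(u^N(0)+\mathbb{E}[\sup_{s\le T}|\mathcal{M}_j^N(s)|]+C_\beta'(N^{-1/2}+\varepsilon_N)\big)e^{C_\beta T}\to0$, which is exactly \eqref{eqn:chaos1}.

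The main obstacle is closing the Gronwall loop through the mean-field term $M^N(t)=\frac1N\sum_k m_k^N(t)$: this requires both the exchangeability bound that dominates $\frac1N\sum_k|m_k^N-\tilde m_k|$ by the tagged-particle error, and a \emph{uniform-in-time} law of large numbers for $\frac1N\sum_k m(s)(x_k)$, for which the a priori Lipschitz-in-$s$ control of the limit ODE (equicontinuity of the family $\{x\mapsto m(s)(x)\}_{s\le T}$) is essential. Everything else is routine, and the bounded rate function $1+\tanh(\cdot)$ only helps, since it caps each particle's jump intensity by $2N$.
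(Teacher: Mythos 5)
Your proposal is correct and follows essentially the same route as the paper's proof: couple the particle system and the limit through the same initial data $x_j$ (so $\tilde m_j(t)=m(t)(x_j)$), compare drifts via the global Lipschitz property of $\tanh$, control $|M^N-M|$ by exchangeability plus a uniform-in-time law of large numbers for $\frac1N\sum_k m(\cdot)(x_k)$, and close with Gronwall. The only difference is presentational: the paper writes the jump dynamics as an SDE against Poisson random measures and invokes the smoothing formula, while you use the Dynkin decomposition and bound the compensated martingale explicitly via its $O(1/N)$ quadratic variation and Doob's inequality (and justify the uniform LLN by a grid-plus-equicontinuity argument), details the paper leaves implicit.
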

Before proving Theorem \ref{chaos_ord1} we need to assess the well-posedness of Eq. \eqref{eqn:gen_ord1}. We rewrite the dynamics with a generic initial datum 
\begin{equation}
\label{eqn:gen_ord1_generic}
\begin{cases}
\dot{m}(t)(x) = 2 \tanh(\beta_1(x+m(t)(x))+\!\beta_2 M(t)) - 2m(t)(x),\\
m(0)(x) = m_0(x),\\
M(t) = \int_{\mathbb{R}}m(t)(x)\mu(dx),
\end{cases}
\end{equation}
with $m_0 : \mathbb{R} \to [-1,1]$, $m_0 \in C(\mathbb{R})$.
\begin{prop}[Well-posedness at order $1$]
\label{wp-ord1}
For any $T>0$, Eq. \eqref{eqn:gen_ord1_generic} has a unique solution $m : [0,T] \times \mathbb{R} \to [-1,1]$ such that $m(t)(\cdot) \in C(\mathbb{R})$ for any $t \in [0,T]$.
\end{prop}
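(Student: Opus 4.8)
The plan is to treat \eqref{eqn:gen_ord1_generic} as a McKean--Vlasov fixed-point problem in the self-consistent field $M(\cdot)$. First I would fix an arbitrary $M \in C([0,T])$ and consider, for each $x \in \mathbb{R}$, the scalar Cauchy problem $\dot m(t) = 2\tanh(\beta_1(x+m(t)) + \beta_2 M(t)) - 2m(t)$, $m(0) = m_0(x)$. Its right-hand side is globally Lipschitz in $m$, uniformly in $(t,x)$, with constant $2(\beta_1+1)$ (because $\tanh$ is $1$-Lipschitz and $\partial_m$ of the tanh term is bounded by $2\beta_1$), so there is a unique global solution, which I denote $m^M(t)(x)$. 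Invariance of $[-1,1]$ is immediate: at $m=1$ the drift is $2\tanh(\cdot)-2\le 0$ and at $m=-1$ it is $2\tanh(\cdot)+2\ge 0$, hence $m_0(x)\in[-1,1]$ forces $m^M(t)(x)\in[-1,1]$ for all $t$. Continuity of $x\mapsto m^M(t)(x)$ follows from the smooth (hence locally Lipschitz) dependence of ODE solutions on the parameter $x$ and on the initial datum $m_0$, the latter being continuous. Consequently $\Phi(M)(t) := \int_{\mathbb{R}} m^M(t)(x)\,\mu(dx)$ is a well-defined map from $C([0,T])$ into itself: the integral is finite since $|m^M|\le 1$ and $\mu$ is a probability measure, and $t\mapsto \Phi(M)(t)$ is continuous by dominated convergence together with the continuity of $t\mapsto m^M(t)(x)$.

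Next I would prove that $\Phi$ has a unique fixed point, which is equivalent to existence and uniqueness of a solution to \eqref{eqn:gen_ord1_generic}. Given $M_1,M_2\in C([0,T])$, subtracting the two ODEs, using $x\,\sign(x)=|x|$ and the $1$-Lipschitz bound on $\tanh$, gives $\frac{d}{dt}|m^{M_1}(t)(x)-m^{M_2}(t)(x)| \le 2(\beta_1+1)|m^{M_1}(t)(x)-m^{M_2}(t)(x)| + 2\beta_2|M_1(t)-M_2(t)|$; since both solutions start at $m_0(x)$, Grönwall's inequality yields $|m^{M_1}(t)(x)-m^{M_2}(t)(x)| \le 2\beta_2\int_0^t e^{2(\beta_1+1)(t-s)}|M_1(s)-M_2(s)|\,ds$, a bound independent of $x$. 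Integrating against $\mu$ gives the same bound for $|\Phi(M_1)(t)-\Phi(M_2)(t)|$. Equipping $C([0,T])$ with the weighted norm $\|M\|_\lambda := \sup_{t\in[0,T]} e^{-\lambda t}|M(t)|$ and choosing $\lambda$ sufficiently larger than $2(\beta_1+1)$ turns $\Phi$ into a strict contraction on all of $[0,T]$; Banach's fixed point theorem then provides a unique $M^\ast\in C([0,T])$, and $m(t)(x):=m^{M^\ast}(t)(x)$ is the desired unique solution, with values in $[-1,1]$ and $m(t)(\cdot)\in C(\mathbb{R})$ by the first paragraph.

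I do not expect a serious obstacle: this is the standard Picard--Banach scheme for mean-field ODEs, and the global-in-time statement comes for free since the confinement to $[-1,1]$ rules out blow-up. The only points requiring mild care are (i) checking that $x\mapsto m^M(t)(x)$ is genuinely continuous so that $M(t)=\int m^M(t)(x)\,\mu(dx)$ is meaningful — handled by continuous dependence of ODE flows on parameters — and (ii) upgrading a local contraction to a global one, for which the weighted-norm trick is cleaner than patching subintervals. Alternatively, uniqueness can be obtained directly on \eqref{eqn:gen_ord1_generic} via a single Grönwall estimate on $\sup_x|m_1(t)(x)-m_2(t)(x)|$, noting that the self-consistent term contributes at most $\beta_2|M_1(t)-M_2(t)|\le \beta_2\sup_x|m_1(t)(x)-m_2(t)(x)|$.
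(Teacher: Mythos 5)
Your argument is correct, but it takes a different route from the paper. The paper treats the whole profile $m(t)(\cdot)$ as a single unknown in the Banach space $C(\mathbb{R})$ (with the sup norm, solutions being confined to $[-1,1]$) and observes that the full vector field $m \mapsto 2\tanh(\beta_1(x+m)+\beta_2 M)-2m$, including the nonlocal term $M=\int m\,d\mu$ which is $1$-Lipschitz in the sup norm because $\mu$ is a probability measure, is globally Lipschitz; Cauchy--Lipschitz in the Banach space then gives existence and uniqueness in one stroke, and invariance of $[-1,1]$ is checked exactly as you do, by the sign of the drift at $m=\pm 1$. You instead decouple the problem McKean--Vlasov style: freeze $M\in C([0,T])$, solve the $x$-parametrized family of scalar ODEs (globally Lipschitz with constant $2(\beta_1+1)$, invariance of $[-1,1]$, continuous dependence on the parameter $x$ and on $m_0$), and then close the loop by a Banach fixed point for the map $\Phi(M)(t)=\int m^M(t)(x)\,\mu(dx)$ in the weighted norm $\sup_t e^{-\lambda t}|M(t)|$, the $x$-uniform Gr\"onwall bound making $\Phi$ a strict contraction on all of $[0,T]$ for $\lambda$ large. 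The two schemes are equally elementary and both work for every $\beta_1,\beta_2>0$ with no subcriticality assumption; the paper's buys brevity (no auxiliary map $\Phi$, no separate continuity-in-$x$ or measurability checks for the integral), while yours avoids invoking ODE theory in an infinite-dimensional space and reduces everything to scalar ODEs plus a contraction on the one-dimensional mean field, which is the template that generalizes most directly to genuinely measure-dependent (McKean--Vlasov) settings. Your closing remark --- a single Gr\"onwall estimate on $\sup_x|m_1(t)(x)-m_2(t)(x)|$, with the self-consistent term controlled by $\beta_2\sup_x|m_1-m_2|$ --- is in fact the uniqueness half of the paper's Banach-space argument, so you have both proofs in hand. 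The only point to state explicitly if you write this up is the (easy) equivalence between solutions of \eqref{eqn:gen_ord1_generic} and fixed points of $\Phi$: any $[-1,1]$-valued solution with $m(t)(\cdot)\in C(\mathbb{R})$ yields a continuous $M$ by dominated convergence, hence coincides with $m^{M^\ast}$ by uniqueness of the frozen problem.
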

\begin{proof}
The vector field $f : \mathbb{R} \times C(\mathbb{R}) \to C(\mathbb{R})$,
\begin{equation}
\label{eqn:vect_field_ord1}
f(x,m):= 2 \tanh(\beta_1(x+m) + \beta_2 M) - 2m
\end{equation} 
is globally Lipschitz continuous for any $\beta_1, \beta_2 > 0$, thus existence and uniqueness of a solution to \eqref{eqn:gen_ord1_generic}, with $m(t)(\cdot) \in C(\mathbb{R})$ for any $t \in [0,T]$, is standard. Moreover, studying the sign of the vector field \eqref{eqn:vect_field_ord1}, we see that \eqref{eqn:gen_ord1_generic} defines a dynamics such that $m(t) : \mathbb{R} \to [-1,1]$, provided the initial datum $m_0 :  \mathbb{R} \to [-1,1]$ has the same property. Indeed, at a point $\overline{x} \in \mathbb{R}$ for which $m(t)(\overline{x}) = 1$, we have that $\frac{d}{dt}m(t)(x)\Big|_{x = \overline{x}} \leq 0$, and symmetrically if $m(t)(\overline{x}) = -1$ it holds $\frac{d}{dt}m(t)(x)\Big|_{x = \overline{x}} \geq 0$.
\end{proof}
For the proof of Theorem \ref{chaos_ord1}, we make use of a representation of the jump processes $m_j^N(t)$'s in terms of SDEs, by employing Poisson random measures (see \cite{Graham}), as follows
\begin{equation}
\label{eqn:sde_hierarch}
m_j^N(t)  = m_j^N(0) + \int_0^t \int_\Xi f( m_j^N(s^{-}), \xi, M^N(s^-),x_j^N(s),X^N(s))\mathcal{N}_j(ds,d\xi),
\end{equation}
for $j= 1,\dots,N$, where each $m_j^N(t)$ takes values in $\Sigma = \left\{-1,-1+\frac{2}{N},\dots,1-\frac{2}{N},1\right\}$; the $\mathcal{N}_j$'s are $N$ i.i.d.\! stationary Poisson random measures on $[0,T] \times \Xi$ with intensity measure $\nu$ on $\Xi := [0,\infty)^{|\Sigma|} \subset \mathbb{R}^{|\Sigma|}$ given by
\begin{equation}
\label{nu_hierarch}
\nu(E) := \sum_{i=1}^{|\Sigma|} \ell(E \cap \Xi_i),
\end{equation}
for any $E$ in the Borel $\sigma$-algebra $\mathcal{B}(\Xi)$ of $\Xi$, where $\Xi_j := \left\{ u \in \Xi \ : \ u_i = 0 \ \ \forall \ i \neq j\right\}$ is viewed as a subset of $\mathbb{R}$, and $\ell$ is the Lebesgue measure on $\mathbb{R}$.  
We fix a probability space $(\Omega, \mathcal{F}, \mathbb{P})$ and denote by 
$\mathbb{F}=(\mathcal{F}_t)_{t\in [0,T]} $ the filtration generated by the Poisson measures. The function $f$, modeling the possible jumps of the process, is given by
\begin{equation*}
f(m,\xi,M,x,X) :=\sum_{y \in \Sigma}(y-m)\mathbbm{1}_{]0, \lambda_{my}[} (\xi_y),
\end{equation*}
where $\lambda_{my}$ denotes the rate of jumping from state $m$ to state $y$. Denoting by 
$$
\lambda_{\pm}(m,M,x,X) := N \frac{1 \mp m}{2} \left(1 \pm \tanh\left[\beta_1 (x + m)+ \beta_2 (X + M)\right]\right)
$$ 
the rate of going from $m$ to $m \pm \frac{2}{N}$, in our case the function $f$ further simplifies to 
\begin{equation}
\label{f_hierarch}
f(m,\xi,M,x,X) = \frac{2}{N}\mathbbm{1}_{]0, \lambda_+[} (\xi_{m + \frac{2}{N}}) - \frac{2}{N}\mathbbm{1}_{]0,\lambda_-[}(\xi_{m - \frac{2}{N}}),
\end{equation}
since the only possible jumps are the ones from $m$ to $m \pm \frac{2}{N}$ with rates $\lambda_{\pm}$.
The above definitions of $f$ and $\nu$ ensure that $\lambda_{\pm}$ are exactly the transition rates of the continuous time Markov chains $m_j^N(t)$'s, and that $\pm \frac{2}{N}$ are the only possible jumps allowed at every time. Indeed, it is easy to prove that with our choices \eqref{eqn:sde_hierarch} is equivalent to 
\begin{equation*}
\mathbb{P}\left[m_j^N(t+h) \!= m \!\pm \frac{2}{N} \Bigg|  \! m_j^N(t) \!=\! m, M^N(t) \!=\! M, x_j^N(t) \!=\! x, X^N(t) \!=\! X\!\right] \!=\! \lambda_{\pm}(m, M,x,X) h + o(h).
\end{equation*}
By the smoothing formula of Poisson calculus (see \cite[Ch. 9]{bremaud}), we have 
\begin{equation}
\label{eqn:smoothing}
\begin{aligned}
\mathbb{E}&\left[m_j^N(t)\right] = \mathbb{E}\left[m_j^N(0)\right] + \mathbb{E}\left[\int_0^t\int_\Xi f( m_j^N(s^{-}), \xi, M^N(s^-),x_j^N(s),X^N(s))ds \nu(d\xi)\right]\\
& = \mathbb{E}\left[m_j^N(0)\right] + \mathbb{E}\left[\int_0^t\int_\Xi\left[\frac{2}{N}\mathbbm{1}_{]0, \lambda_+[} (\xi_{m + \frac{2}{N}}) - \frac{2}{N}\mathbbm{1}_{]0,\lambda_-[}(\xi_{m - \frac{2}{N}})\right]ds \nu(d\xi)\right]\\
& = \mathbb{E}\left[m_j^N(0)\right] + \mathbb{E}\Bigg[\int_0^t\Big[ 2 \tanh\left(\beta_1(x_j^N(s)+m_j^N(s))+\beta_2 (X^N(s) +M^N(s))\right)-  2m_j^N(s)\Big]ds\Bigg].
\end{aligned}
\end{equation}
\begin{proof}[Proof of Theorem \ref{chaos_ord1}]
First, we observe that, by the dynamics \eqref{eqn:sde_hierarch} with the choice (\ref{f_hierarch}) for $f$, we can write
\begin{align*}
\sup_{s \in [0,t]}&|m_j^N(s) - \tilde{m}_j(s)| = \sup_{s \in [0,t]}\left|\int_0^s \int_\Xi f( m_j^N(r^{-}), \xi, M^N(r^-),x_j^N(r),X^N(r))\mathcal{N}_j(dr,d\xi) - \tilde{m}_j(s)\right|.
\end{align*}
Taking the expectation and using formula \eqref{eqn:smoothing} and the limit dynamics \eqref{eqn:gen_ord1}, we can estimate 
\begin{align*}
\mathbb{E}&\Bigg[\sup_{s \in [0,t]}\big|m_j^N(s) - \tilde{m}_j(s) \big|\Bigg] \leq \mathbb{E}\bigg[\big|m_j^N(0) - (2p - 1)\big|\bigg] + \mathbb{E}\Bigg[\int_0^t \Big| 2 m_j^N(s)- 2 \tilde{m}_j(s) \Big|ds\Bigg]\\
&\!\!\!\!+\! \mathbb{E}\Bigg[\!\int_0^t \!\Big| 2\tanh\left(\beta_1(x_j^N(s) \!+\! m_j^N(s)) \!+\! \beta_2(X^N(s) \!+\! M^N(s))\right)\!-\! 2\tanh\left(\beta_1 (x_j \!+\! \tilde{m}_j(s)) \!+\! \beta_2 M(s)\right)\Big|ds\Bigg] \\
& \leq \mathbb{E}\bigg[\big|m_j^N(0) - (2p - 1)\big|\bigg]+ \mathbb{E}\Bigg[\int_0^t \Big| 2\tanh\left(\beta_1(x_j^N(s) + m_j^N(s)) + \beta_2(X^N(s) + M^N(s))\right)\\
& - 2\tanh\left(\beta_1 (x_j + \tilde{m}_j(s)) + \beta_2 M(s)\right)\Big|ds\Bigg] + C\mathbb{E}\Bigg[\int_0^t \sup_{r \in [0,s]}\Big| m_j^N(r)- \tilde{m}_j(r) \Big|ds\Bigg].
\end{align*}
By LLN on the initial data we have $\mathbb{E}\bigg[\big|m_j^N(0) - (2p - 1)\big|\bigg] \leq C(N)$, with $C(N) \xrightarrow{N \to +\infty} 0$. We now focus on estimating the first of the two integrals.
Using the globally Lipschitz continuity of $\tanh(\cdot)$, we have
\begin{align*}
\mathbb{E}&\Bigg[\int_0^t \Big| 2\tanh\left(\beta_1(x_j^N(s) + m_j^N(s)) + \beta_2(X^N(s) + M^N(s))\right)\\
& - 2\tanh\left(\beta_1 (x_j + \tilde{m}_j(s)) + \beta_2 M(s)\right)\Big| ds \Bigg] \leq C \mathbb{E}\Bigg[\int_0^t \Big|x_j^N(s) - x_j\Big| ds \Bigg] + C \mathbb{E}\Bigg[\int_0^t \Big| X^N(s)\Big|ds \Bigg] \\
& + C\mathbb{E}\Bigg[\int_0^t \Big| m_j^N(s) - \tilde{m}_j(s)\Big|ds\Bigg] + C\mathbb{E}\Bigg[\int_0^t \Big|M^N(s) - M(s)\Big|ds\Bigg]\\
& \leq C \mathbb{E}\Bigg[\int_0^T \Big|x_j^N(s) - x_j\Big| ds \Bigg] + C \mathbb{E}\Bigg[\int_0^T \Big| X^N(s)\Big|ds \Bigg] + C\mathbb{E}\Bigg[\int_0^t \sup_{r \in [0,s]}\Big| m_j^N(r) - \tilde{m}_j(r)\Big|ds\Bigg] \\
&+ C\mathbb{E}\Bigg[\int_0^t \sup_{r \in [0,s]}\Big|M^N(r) - M(r)\Big|ds\Bigg],
\end{align*}
where the constants are allowed to change from line to line.
By the propagation of chaos for the diffusions, we have
$$
 \mathbb{E}\Bigg[\int_0^T \Big|x_j^N(s) - x_j\Big| ds \Bigg] +  \mathbb{E}\Bigg[\int_0^T \Big| X^N(s)\Big|ds \Bigg] \leq C(N),
$$
for some $C(N) \to 0$ when $N \to +\infty$.
For the last integral, denoting $\tilde{M}^N(t) := \frac{1}{N}\sum_{i=1}^N \tilde{m}_i(t)$, we estimate
\begin{align*}
\mathbb{E}&\Bigg[\int_0^t  \sup_{r \in [0,s]}\Big|M^N(r) - M(r)\Big|ds\Bigg] \leq \mathbb{E}\Bigg[\int_0^t  \sup_{r \in [0,s]}\Big|M^N(r) - \tilde{M}^N(r)\Big|ds\Bigg] \\
&+ \mathbb{E}\Bigg[\int_0^t  \sup_{r \in [0,s]} \Big|\tilde{M}^N(r) - M(r)\Big|ds\Bigg]\leq C\frac{1}{N}\sum_{i=1}^N\mathbb{E}\Bigg[\int_0^t \sup_{r \in [0,s]}\Big|m_i^N(r) - \tilde{m}_i(r)\Big|ds\Bigg] + C(N)\\
& = C \mathbb{E}\Bigg[\int_0^t \sup_{r \in [0,s]}\Big| m_j^N(r) - \tilde{m}_j(r)\Big|ds\Bigg] + C(N),
\end{align*}
where the $C(N) \to 0$ when $N \to +\infty$ by LLN, and the last equality is a consequence of the exchangeability of the processes $(m_i^N(t), \tilde{m}_i(t))_{i=1,\dots,N}$.
Recollecting all the above observations and estimates, we have found
\begin{equation*}
\mathbb{E}\Bigg[\sup_{s \in [0,t]}\big|m_j^N(s) - \tilde{m}_j(s) \big|\Bigg] \leq C(N) + C \mathbb{E}\Bigg[\int_0^t \sup_{r \in [0,s]}\Big| m_j^N(r) - \tilde{m}_j(r)\Big|ds\Bigg],
\end{equation*}
with $C(N)$ going to $0$ for $N \to +\infty$. Denoting $\varphi(t) := \mathbb{E}\Bigg[\sup_{s \in [0,t]}\big|m_j^N(s) - \tilde{m}_j(s) \big|\Bigg]$, the last estimate implies $\varphi(t) \leq C(N) + \int_0^t \varphi(s) ds$.
Thus, the propagation of chaos follows by the Gronwall's lemma.
\end{proof}
\begin{rem}
\label{true_chaos1}
Note that the strong convergence \eqref{eqn:chaos1} implies the convergence (in e.g.\! $1$-Wasserstein distance $\bm{d_1}$, uniform in time) of the associated empirical measures $\mu^N(t) := \frac{1}{N}\sum_{j=1}^N \delta_{m_j^N(t)}$ and $\tilde{\mu}^N(t) := \frac{1}{N}\sum_{j=1}^N \delta_{\tilde{m}_j(t)}$ to the deterministic measure $\mu(t)$, the distribution of the i.i.d.\! processes $\tilde{m}_j(t)$. Indeed, one has that, almost surely, $||\mu^N - \tilde{\mu}^N||_{\bm{d_1}} \leq \frac{1}{N}\sum_{i=1}^N \mathbb{E}\left[\sup_{t \in [0,T]}|m_i^N(t) - \tilde{m}_i(t)|\right]$, which tends to zero  because of \eqref{eqn:chaos1}, while $||\tilde{\mu}^N - \mu||_{\bm{d_1}} \to 0$ as $N \to +\infty$ is standard (by LLN). This in turns implies the propagation of chaos in the classic sense.
\end{rem}

The following proposition assesses the long-time behavior of the deterministic limit dynamics. Specifically, we show the convergence to a unique symmetric stationary profile $\overline{m}(x)$, regardless of the initial datum $m_0(x)$.
\begin{prop}[Long-time subcritical limit behavior]
\label{longtime1}
For $\beta_1 + \beta_2 < 1$, the solution $m(t)(\cdot)$ to \eqref{eqn:gen_ord1_generic} is such that
\begin{equation}
\label{eqn:time_limit}
\mathbb{E}\left[|m(t)(\xi) - \overline{m}(\xi)|^2\right] \rightarrow 0,
\end{equation}
for $t \to \infty$, with $\xi \sim \mathcal{N}(0,\sigma^*)$ and $\overline{m}(\cdot)$ is the unique solution to
\begin{equation}
\label{eqn:asympt}
\overline{m}(x) = \tanh(\beta_1(x+\overline{m}(x))).
\end{equation}
\end{prop}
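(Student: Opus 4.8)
The plan is a Lyapunov argument: show that the squared $L^2(\mu)$--distance between $m(t)(\cdot)$ and $\overline m(\cdot)$ decays exponentially in $t$, where $\mu := \mathcal{N}(0,\sigma^*)$. One first has to pin down the profile $\overline m$. Since $\beta_2 \ge 0$ gives $\beta_1 \le \beta_1+\beta_2 < 1$, for each fixed $x$ the map $y \mapsto \tanh(\beta_1(x+y))$ is a $\beta_1$-contraction of $\mathbb{R}$, so \eqref{eqn:asympt} has a unique solution $\overline m(x) \in (-1,1)$; subtracting the equations at $x_1$ and $x_2$ shows $\overline m$ is $\tfrac{\beta_1}{1-\beta_1}$-Lipschitz. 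Next, $x \mapsto -\overline m(-x)$ solves the same equation, so by uniqueness $\overline m$ is odd, and since $\mu$ is symmetric, $\overline M := \int_{\mathbb{R}}\overline m(x)\,\mu(dx) = 0$. Hence $\overline m(x) = \tanh(\beta_1(x+\overline m(x)) + \beta_2\overline M)$, i.e.\ the constant-in-time function $\overline m(\cdot)$ is a stationary solution of the limit dynamics \eqref{eqn:gen_ord1_generic}. (Running the same contraction estimate on the self-consistency map for $\int m(t)\,d\mu$ shows, when $\beta_1+\beta_2<1$, that it is moreover the \emph{unique} stationary profile, though we will not need this.)

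Next I would set $u(t)(x) := m(t)(x) - \overline m(x)$ and $\phi(t) := \mathbb{E}[u(t)(\xi)^2] = \int_{\mathbb{R}} u(t)(x)^2\,\mu(dx)$, which is finite because $|m(t)(\cdot)|,|\overline m(\cdot)| \le 1$ by Proposition \ref{wp-ord1}. Subtracting the stationary equation from \eqref{eqn:gen_ord1_generic},
\[
\dot u(t)(x) = 2\big[\tanh(\beta_1(x+m(t)(x)) + \beta_2 M(t)) - \tanh(\beta_1(x+\overline m(x)) + \beta_2 \overline M)\big] - 2 u(t)(x);
\]
by the mean value theorem the bracket equals $\rho_t(x)\big(\beta_1 u(t)(x) + \beta_2(M(t)-\overline M)\big)$ for some $\rho_t(x) = 1 - \tanh^2(\theta_t(x)) \in [0,1]$. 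Since $|u|$ and $|\dot u|$ are uniformly bounded, dominated convergence justifies differentiating under the integral, giving
\[
\phi'(t) = 4\,\mathbb{E}\big[u(t)(\xi)\,\rho_t(\xi)\,(\beta_1 u(t)(\xi) + \beta_2(M(t)-\overline M))\big] - 4\phi(t).
\]
Bounding $\rho_t \le 1$, using $|M(t)-\overline M| = |\mathbb{E}[u(t)(\xi)]| \le \mathbb{E}[|u(t)(\xi)|]$ and $\mathbb{E}[|u(t)(\xi)|]^2 \le \phi(t)$ by Cauchy--Schwarz, the expectation on the right is at most $(\beta_1+\beta_2)\phi(t)$, whence $\phi'(t) \le -4(1-\beta_1-\beta_2)\phi(t)$.

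Finally, since $\phi(0) = \mathbb{E}[(m_0(\xi)-\overline m(\xi))^2] \le 4$, Gronwall's inequality gives $\phi(t) \le 4\,e^{-4(1-\beta_1-\beta_2)t} \xrightarrow{t\to\infty} 0$, which is \eqref{eqn:time_limit}. The computation is elementary and the only place that needs real care is the first step: the oddness-and-symmetry identification $\overline M = 0$ (without it the genuine stationary profile would differ from the one in \eqref{eqn:asympt} by an $x$-independent shift inside the $\tanh$), the rest being the standard contraction/Gronwall machinery. The explicit rate $4(1-\beta_1-\beta_2)$ will also be handy in the analysis of the longer timescales.
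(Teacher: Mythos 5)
Your proposal is correct and follows essentially the same route as the paper's proof: an $L^2(\mu)$ Gronwall/contraction estimate with the same exponential rate $4(1-\beta_1-\beta_2)$, hinging on the key observation that $\overline{m}$ is odd (hence $\overline{M}=0$ by symmetry of $\mu$) and therefore a stationary solution of \eqref{eqn:gen_ord1_generic}. The only cosmetic difference is that the paper proves the contraction estimate \eqref{eqn:est} between two arbitrary solutions and then specializes one of them to $\overline{m}$, whereas you compare $m(t)$ with $\overline{m}$ directly and spell out the oddness/symmetry identification explicitly.
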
 
\begin{proof}
The uniqueness of solution to Eq. \eqref{eqn:asympt} follows by considering any two solutions $m(x), n(x)$ and observing that
$$
|m(x) - n(x)| \leq \beta_1 |m(x) - n(x)| \leq \dots \leq \beta_1^k  |m(x) - n(x)|,
$$
for any $x \in \mathbb{R}$, so that we can conclude by a contraction argument. For the proof of \eqref{eqn:time_limit}, consider any two solutions $m(t)$ and $n(t)$ with different initial data. It holds
\begin{align}
\label{eqn:est}
\frac{1}{2}\frac{d}{dt}\int_{\mathbb{R}}\big(m(t)(x) - n(t)(x)\big)^2\mu(dx) \leq -2(1-(\beta_1+\beta_2)) \int_{\mathbb{R}}\big(m(t)(x) - n(t)(x)\big)^2 \mu(dx),
\end{align}
which is negative for $\beta_1 + \beta_2 < 1$, thus implying \eqref{eqn:time_limit} because of the well-posedness of \eqref{eqn:gen_ord1_generic}. Indeed $\overline{m}(x)$, the unique solution to Eq. \eqref{eqn:asympt}, is always a solution to \eqref{eqn:gen_ord1_generic} with initial datum $m_0(x) = - m_0(-x)$ and $M(t) = 0$ for every $t$. In order to verify \eqref{eqn:est}, we use Eq. \eqref{eqn:gen_ord1_generic} to compute
\begin{align*}
\frac{1}{2}&\frac{d}{dt}\int_{\mathbb{R}}(m(t)(x) - n(t)(x))^2\mu(dx) = \int_{\mathbb{R}}(\dot{m}(t)(x) - \dot{n}(t)(x))(m(t)(x) - n(t)(x))\mu(dx)\\
& = -2\int_{\mathbb{R}}(m(t)(x) - n(t)(x))^2\mu(dx) \\
&+\! 2 \!\int_{\mathbb{R}}\!\Big[\tanh(\beta_1(m(t)(x) + x) + \beta_2 M(t)) - \tanh(\beta_1(n(t)(x) + x) + \beta_2N(t))\Big]\times\\
& \times(m(t)(x) - n(t)(x))\mu(dx)\\
& \leq -2\int_{\mathbb{R}}(m(t)(x) - n(t)(x))^2\mu(dx)  + 2(\beta_1 + \beta_2)\int_{\mathbb{R}}(m(t)(x) - n(t)(x))^2 \mu(dx),
\end{align*}
where in the last step we have used the Lipschitz properties of $\tanh(\cdot)$ and the definitions of $M(t)$ and $N(t)$.
\end{proof}

\begin{rem}
\label{modified_ord1}
Theorem \ref{chaos_ord1} and Propositions \ref{wp-ord1}, \ref{longtime1} can be generalized to the case of Gaussian initial data not centered around zero. The limit equation becomes
\begin{equation}
\label{eqn:modified_ord1}
\begin{cases}
\dot{m}(t)(x) = 2 \tanh(\beta_1(x+m(t)(x))+ \beta_2 (\overline{X} + M(t))) - 2m(t)(x),\\
m(0)(x)= m_0(x),\\
M(t) = \int_{\mathbb{R}}m(t)(x)\mu(dx;\overline{X}),
\end{cases}
\end{equation}
with $\mu(dx;\overline{X}) = \mathcal{N}\left(\overline{X},\rho^2\right)$. The equilibrium solution to \eqref{eqn:modified_ord1} is given by 
\begin{equation}
\label{eqn:modified_longtime}
\begin{cases}
\overline{m}_{\overline{X}}(x) = \tanh\Big(\beta_1(x+\overline{m}_{\overline{X}}(x)) + \beta_2(\overline{X} + \overline{M})\Big),\\
\overline{M} = \int_{\mathbb{R}}\overline{m}_{\overline{X}}(x)\mu(dx;\overline{X}),
\end{cases}
\end{equation}
whose well-posedness can be obtained by a contraction argument as in Proposition \ref{longtime1}.
\end{rem}

We conclude the section noting that the processes $x_j^N$'s and $m_j^N$'s are close to their i.i.d.\! limits for any fixed time ranging in an interval which is allowed to grow with $N$ with a certain speed. 
\begin{thm}[Long-time subcritical particles behavior]
\label{erg1}
For any $T > 0$, $\beta_1 + \beta_2 < 1$, $\varepsilon >0$ and $j =1,\dots,N$, we have 
\begin{itemize}
\item[(i)] For any $A \in \mathcal{B}(\mathbb{R})$,
$$
\sup_{t \in [0,TN^{2-\varepsilon}]}\Big| \mathbb{P}\Big(x_j^N(t) \in A\Big) - \mathbb{P}\Big(x_j(t) \in A\Big)\Big| \xrightarrow{N \to +\infty} 0.
$$ 
\item[(ii)] For any $A \in\mathcal{B}([-1,1])$, 
$$
\sup_{t \in [0,TN^{2/3 - \varepsilon}]}\Big| \mathbb{P}\Big(m_j^N(t) \in A \Big) - \mathbb{P}\Big(\tilde{m}_j(t) \in A\Big)\Big| \xrightarrow{N \to +\infty} 0,
$$ 
\end{itemize}
where $\tilde{m}_j(t) := m(t)(x_j(t))$, with
\begin{equation}
\label{eqn:xjN_h}
\begin{cases}
d x_j^N(t) = -\frac{\alpha_2}{N}(x_j^N(t) - X^N(t)) dt + \frac{\sigma}{\sqrt{N}} d W_j^N(t),\\
x_j^N(0) = x_j \sim \mathcal{N}\left(0,\frac{1}{N}\right),
\end{cases}
\end{equation}
and
\begin{equation}
\label{eqn:xj_h}
\begin{cases}
d x_j(t) = -\frac{\alpha_2}{N}(x_j(t) - \mathbb{E}[x_j(t)]) dt + \frac{\sigma}{\sqrt{N}} d W_j(t),\\
x_j(0) = x_j  \sim \mathcal{N}\left(0,\frac{1}{N}\right),
\end{cases}
\end{equation}
with $X^N(t) := \frac{1}{N}\sum_{k=1}^N x_k^N(t)$ and $W_j(t)$ is a Brownian motion. 
\end{thm}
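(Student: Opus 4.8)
\emph{The plan.} The two assertions will be established independently and by different tools: part (i) reduces to comparing two explicitly Gaussian laws, while part (ii) combines the propagation of chaos estimate underlying Theorem \ref{chaos_ord1} with the contractivity available in the subcritical regime $\beta_1+\beta_2<1$. For part (i), I would drive $x_j^N$ and $x_j$ by the same Brownian motion $W_j:=W_j^N$ (coupling also the initial data). Since \eqref{eqn:xjN_h} is a linear system with Gaussian, independent initial data and \eqref{eqn:xj_h} has deterministic drift, both $x_j^N(t)$ and $x_j(t)$ are Gaussian, and by symmetry and linearity $\mathbb{E}[x_j^N(t)]=\mathbb{E}[x_j(t)]=0$. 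Using $X^N(t)=X^N(0)+\tfrac{\sigma}{N}W^N(t)$ from \eqref{eqn:diff}, the difference $\delta_j(t):=x_j^N(t)-x_j(t)$ solves $\dot\delta_j=-\tfrac{\alpha_2}{N}(\delta_j-X^N(t))$ with $\delta_j(0)=0$, so $\delta_j(t)=\tfrac{\alpha_2}{N}\int_0^t e^{-\alpha_2(t-s)/N}X^N(s)\,ds$ and $\mathbb{E}[\delta_j(t)^2]\le (1+\sigma^2 t)/N^2$ after inserting $\mathbb{E}[X^N(s)^2]=(1+\sigma^2 s)/N^2$. Writing $v_j(t),v_j^N(t)$ for the variances of $x_j(t),x_j^N(t)$, this gives $|v_j^N(t)-v_j(t)|\le 2\sqrt{v_j(t)}\,\|\delta_j(t)\|_{L^2}+\|\delta_j(t)\|_{L^2}^2$, and, distinguishing the regimes $t\lesssim N$ (where $v_j(t)\asymp (1+\sigma^2 t)/N$) and $t\gtrsim N$ (where $v_j(t)\asymp 1$), one obtains $v_j^N(t)/v_j(t)\to 1$ uniformly on $[0,TN^{2-\varepsilon}]$. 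Since $\sup_{A}|\mathbb{P}(x_j^N(t)\in A)-\mathbb{P}(x_j(t)\in A)|$ is exactly the total variation distance between $\mathcal N(0,v_j^N(t))$ and $\mathcal N(0,v_j(t))$, which is a function of the ratio $v_j^N(t)/v_j(t)$ vanishing as the ratio tends to $1$, part (i) follows.

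\emph{Part (ii).} I would represent $m_j^N$ through the Poisson random measure SDE \eqref{eqn:sde_hierarch}--\eqref{f_hierarch} and couple it, via the same noises $W_j:=W_j^N$, with the i.i.d.\ family $\tilde m_j(t)=m(t)(x_j(t))$, where $m(\cdot)(\cdot)$ solves \eqref{eqn:gen_ord1} (the $x_j$'s, hence the $\tilde m_j$'s, being genuinely independent as they are driven by disjoint blocks of the $W_{ij}$'s). Setting $\psi(t):=\mathbb{E}[(m_j^N(t)-\tilde m_j(t))^2]$ and applying Dynkin's formula with the smoothing identity \eqref{eqn:smoothing}, I expect: the compensated jump martingale to contribute a predictable bracket of rate $O(1/N)$ (jump rate $O(N)$ times squared jump size $4/N^2$); the drift difference, by the mean value theorem and the bound $\partial_m[2\tanh(\beta_1(x+m)+\beta_2 M)-2m]\le 2\beta_1-2$, to contribute $\le -4(1-\beta_1)\psi(t)$; the $\beta_2$ coupling through $M^N-M=\tfrac1N\sum_k(m_k^N-\tilde m_k)+\tfrac1N\sum_k(\tilde m_k-M)$ to contribute at most $4\beta_2\psi(t)+O(1/N)$, using exchangeability of $(m_k^N,\tilde m_k)_k$ for the first sum and independence of the $\tilde m_k$'s (so $\mathbb{E}[(\tfrac1N\sum_k(\tilde m_k-M))^2]=O(1/N)$) for the second; and the mismatch $x_j^N-x_j$, together with the extra drift $\propto\dot x_j$ from differentiating $m(t)(x_j(t))$, to be absorbed as $O((1+\sigma^2 t)/N^2)$ via part (i). Collecting, with $\gamma:=2(1-\beta_1-\beta_2)>0$, this should give $\psi'(t)\le-\gamma\psi(t)+C/N$ for $t\lesssim N$; since $\psi(0)=\mathrm{Var}(m_j^N(0))=O(1/N)$, Gronwall yields $\psi(t)=O(1/N)$, hence $\mathbb{E}[|m_j^N(t)-\tilde m_j(t)|]=O(N^{-1/2})$ uniformly on the window. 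To turn this into closeness of the one-time marginals I would smooth: for $\delta>0$, $|\mathbb{P}(m_j^N(t)\in A)-\mathbb{P}(\tilde m_j(t)\in A)|\le \mathbb{P}(|m_j^N(t)-\tilde m_j(t)|>\delta)+\sup_A\mathbb{P}(\tilde m_j(t)\in\partial_\delta A)$, the first term being $O(1/(N\delta^2))$ by Chebyshev and the second bounded because $\tilde m_j(t)$ is the image of the Gaussian $x_j(t)$ under the smooth, strictly monotone map $m(t)(\cdot)$ (monotonicity and a lower bound on its derivative following from the implicit characterisation of $m(t)(x)$ and $\beta_1<1$); optimising $\delta$ and restricting to $t\le TN^{2/3-\varepsilon}$ makes both terms vanish.

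\emph{Main obstacle.} The delicate point is not the $L^2$ contraction — which is routine once one notices that the $-2m$ term and the $\beta_2$ coupling combine into the strictly negative rate $-\gamma$ exactly when $\beta_1+\beta_2<1$ — but the uniformity over the \emph{growing} time window: one must verify that none of the error terms (the diffusive approximation remainder, the $M^N-M$ fluctuation, the mismatch $x_j^N-x_j$) accumulates over $[0,TN^{2-\varepsilon}]$ resp.\ $[0,TN^{2/3-\varepsilon}]$, which is precisely why the contractive sign is indispensable. Moreover, in the magnetisation case the final smoothing step pits the coupling error $N^{-1/2}$ against the shrinking concentration scale $\sqrt{v_j(t)}$ of $\tilde m_j(t)$, and it is this competition that forces the admissible window down to $N^{2/3-\varepsilon}$, in contrast with the exact-Gaussian situation of part (i), where $N^{2-\varepsilon}$ is reached.
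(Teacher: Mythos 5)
Your part (i) follows the same route as the paper (couple $x_j^N$ and $x_j$ through the same Brownian motion and bound $\mathbb{E}[(x_j^N(t)-x_j(t))^2]$ by $C(1+\sigma^2 t)/N^2$, which vanishes on $[0,TN^{2-\varepsilon}]$); your extra step comparing the two centered Gaussian laws through the variance ratio and total variation is a legitimate strengthening of a passage the paper leaves implicit. The core of your part (ii) — Poisson-measure representation, coupling, the contraction rate coming from $-2m$ plus the Lipschitz bound on $\tanh$ under $\beta_1+\beta_2<1$, exchangeability for $\tfrac1N\sum_k(m_k^N-\tilde m_k)$, an LLN bound for $\tfrac1N\sum_k(\tilde m_k-M)$, and the part-(i) estimates for the $x_j^N-x_j$ and $X^N$ mismatches — is essentially the paper's argument, carried out in $L^2$ instead of $L^1$.

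The gap is in your final step and in your explanation of the exponent $2/3$. In the paper the restriction $t\le TN^{2/3-\varepsilon}$ is produced inside the coupling estimate itself: the error terms $\int_0^t\mathbb{E}|x_j^N(s)-x_j(s)|\,ds$ and $\int_0^t\mathbb{E}|X^N(s)|\,ds$ are of order $t^{3/2}/N$, and the resulting Gronwall bound $c(t)\le e^{-kt}c(0)+Ct^{3/2}/N+C/\sqrt N$ vanishes precisely when $t\ll N^{2/3}$. You instead claim a uniform $O(N^{-1/2})$ coupling bound and attribute the window to the smoothing step, but that step does not work as written. At times $t$ of order one, $\mathrm{Var}(x_j(t))\sim(1+\sigma^2 t)/N$, so $\tilde m_j(t)=m(t)(x_j(t))$ is concentrated at scale $N^{-1/2}$ — exactly the size of your coupling error; your bound $\tfrac{1}{N\delta^2}+\delta\,\sup(\text{density of }\tilde m_j(t))\sim \tfrac{1}{N\delta^2}+\delta\sqrt{N/(1+\sigma^2 t)}$ is, after optimizing in $\delta$, of order $(1+t)^{-1/3}$: it is not small at the left end of the window (so the supremum over $[0,TN^{2/3-\varepsilon}]$ of your final bound does not vanish), and it improves rather than degrades as $t$ grows, so it cannot be what caps the window at $N^{2/3-\varepsilon}$. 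Moreover, $\sup_A\mathbb{P}(\tilde m_j(t)\in\partial_\delta A)$ over arbitrary Borel sets is not small (take $A$ dense); since $m_j^N(t)$ is lattice-valued, the conclusion has to be read, as the paper reads it, as closeness in distribution implied by the coupling estimate, not as a set-wise bound for every Borel $A$. The repair is to drop the anti-concentration step, conclude weak convergence directly from $\sup_{t\le TN^{2/3-\varepsilon}}\mathbb{E}\big|m_j^N(t)-\tilde m_j(t)\big|\to0$, and obtain the $N^{2/3-\varepsilon}$ window from the accumulated drift-mismatch terms as above.
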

\begin{proof}
We realize the process $x_j^N(t)$ by plugging in \eqref{eqn:xjN_h} the \textit{same} Brownian motion $W_j(t)$ of the definition of $x_j(t)$ in \eqref{eqn:xj_h}. Then, for the resulting processes we prove 
\begin{equation}
\label{eqn:unif_timex}
\sup_{0 \leq t \leq T N^{2-\varepsilon} }\mathbb{E}\Bigg[\Big(x_j^N(t) - x_j(t)\Big)^2\Bigg] \xrightarrow{N \to +\infty} 0,
\end{equation}
\begin{equation}
\label{eqn:unif_timem}
\sup_{0 \leq t \leq T N^{2/3-\varepsilon}}\mathbb{E}\Bigg[\Big|m_j^N(t) - \tilde{m}_j(t)\Big|\Bigg] \xrightarrow{N \to +\infty} 0,
\end{equation}
which imply the limits in distribution $(i)$ and $(ii)$.
First of all we observe that, for any $t \geq 0$, we have $\mathbb{E}[x_j(t)] = 0$ and $X^N(t) = \frac{\sigma}{N}W(t)$, with $W(t) := \frac{1}{\sqrt{N}}\sum_{k=1}^N W_k(t)$. For \eqref{eqn:unif_timex}, by Itô's formula, we compute
\begin{equation}
\label{eqn:ultima_a}
\begin{aligned}
&\mathbb{E}\Big[(x_j^N(t) - x_j(t))^2\Big] = \mathbb{E}\Big[(x_j^N(0) - x_j(0))^2\Big]- \frac{2 \alpha_2}{N} \int_0^t\mathbb{E}\Big[(x_j^N(s) - x_j(s))^2\Big]ds \\
& \hspace{0.1cm} - \frac{2 \alpha_2}{N}\int_0^t \mathbb{E}\Big[(x_j^N(s)-x_j(s)) X^N(s)\Big]ds\\
&  \hspace{0.1cm} \leq \! \mathbb{E}\Big[(x_j^N(0) - x_j(0))^2\Big] \!-\! \frac{2 \alpha_2}{N} \int_0^t\mathbb{E}\Big[(x_j^N(s) - x_j(s))^2\Big]ds \!+ \frac{2 \alpha_2}{N}\int_0^t \mathbb{E}\Big[|x_j^N(s)-x_j(s)|| X^N(s)|\Big]ds\\
&  \hspace{0.1cm} \leq \mathbb{E}\Big[(x_j^N(0) - x_j(0))^2\Big]- \frac{2 \alpha_2}{N} \int_0^t\mathbb{E}\Big[(x_j^N(s) - x_j(s))^2\Big]ds + \frac{\alpha_2}{N}\int_0^t\mathbb{E}\Big[(x_j^N(s) - x_j(s))^2\Big]ds\\
&  \hspace{0.1cm} + \frac{\alpha_2}{N}\int_0^t\mathbb{E}\Big[(X^N(s))^2\Big]ds,
\end{aligned}
\end{equation}
where in the last estimate we have used $ab \leq \frac{a^2}{2} + \frac{b^2}{2}$. By definition, we have
\begin{equation}
\label{eqn:ult_auxxx}
\int_0^t \mathbb{E}[(X^N(s))^2] ds =  \frac{\sigma^2}{N^2}\int_0^t \mathbb{E}[(W(s))^2] ds  = \frac{1}{N^2}\sigma^2 \frac{t^2}{2}.
\end{equation}
Using \eqref{eqn:ult_auxxx} in the right hand side of \eqref{eqn:ultima_a}, we have found
$$
\mathbb{E}[(x_j^N(t) - x_j(t))^2]  \leq \mathbb{E}[(x_j^N(0) - x_j(0))^2] - \frac{\alpha_2}{N} \int_0^t\mathbb{E}[(x_j^N(s)- x_j(s))^2]ds  + \frac{\alpha_2}{N^3}\sigma^2 \frac{t^2}{2},
$$
which, denoting with $c(t) := \mathbb{E}[(x_j^N(t) - x_j(t))^2]$, in differential form reads
$$
\dot{c}(t) \leq - \frac{\alpha_2}{N} c(t)+ \frac{\alpha_2}{N^3}\sigma^2 t.
$$
By solving the differential equation on the right hand side of the inequality, we deduce 
\begin{equation}
\label{eqn:c_t_N2}
c(t) \leq e^{-\frac{\alpha_2}{N} t} c(0) + \frac{\sigma^2}{N \alpha_2}(e^{-\frac{\alpha_2}{N} t}  - 1) + \frac{\sigma^2}{N^2} t.
\end{equation}
Note that $c(0) =0$ because of our choices of initial data. When we take the supremum over $t$ in the above expression the dominant term is $ \frac{\sigma^2}{N^2} t$, which still tends to $0$ with $N$ going to infinity, if the supremum is taken over $0 \leq t \leq T N^{2-\varepsilon}$, so that \eqref{eqn:unif_timex} is proved.

Moreover, we have 
\begin{equation}
\label{eqn:useful}
\int_0^t\mathbb{E}\Bigg[|x_j^N(s) - x_j(s)|\Bigg] ds \leq  \frac{C}{N}t^{\frac{3}{2}}.
\end{equation}
Indeed, by Jensen and Hölder inequalities and by \eqref{eqn:c_t_N2}, we estimate
\begin{align*}
\Bigg(\int_0^t \mathbb{E}&\Bigg[\Big|x_j^N(s) - x_j(s)\Big|\Bigg]ds\Bigg)^2 = \Bigg(\frac{t}{t}\int_0^t \mathbb{E}\Bigg[\Big|x_j^N(s) - x_j(s)\Big|\Bigg]ds\Bigg)^2\\
 \leq t \int_0^t \mathbb{E}&\Bigg[\Big|x_j^N(s) - x_j(s)\Big|^2\Bigg]ds \leq t^2 \sup_{s \in [0,t]}\left[\frac{\sigma^2}{N\alpha_2}(e^{-\frac{\alpha_2}{N}s} - 1) + \frac{\sigma^2}{N^2}s\right] \leq \frac{\sigma^2}{N^2}t^3
 \end{align*}
so that \eqref{eqn:useful} follows by taking the square root. 
Note also that
\begin{align*}
\int_0^t & \mathbb{E}\Bigg[|X^N(s)|\Bigg] ds = \frac{\sigma}{N} \int_0^t \mathbb{E}\big[|W(s)|\big]ds \leq \frac{C}{N} t^{\frac{3}{2}},
\end{align*}
since $|X^N(s)| = \frac{1}{N}|W(s)|$, and $\mathbb{E}[|W(s)|] \leq C \sqrt{s}$.

Finally, for proving \eqref{eqn:unif_timem} we compute (using $\text{sign}(x)\cdot x = |x|$), 
\begin{align*}
\mathbb{E}&\Bigg[\Big|m_j^N(t) - \tilde{m}_j(t)\Big|\Bigg] = \mathbb{E}\Bigg[\Big|m_j^N(0) - \tilde{m}_j(0)\Big|\Bigg] -2\int_0^t \mathbb{E}\Bigg[\Big|m_j^N(s) - \tilde{m}_j(s)\Big|\Bigg]ds \\
& + 2 \int_0^t \mathbb{E}\Bigg[\text{sign}(m_j^N(s) - \tilde{m}_j(s))\Big(\tanh(\beta_1(x_j^N(s) + m_j^N(s)) + \beta_2 (M^N(s) + X^N(s)) - \\
& -\tanh(\beta_1(x_j(s) + \tilde{m}_j(s)) + \beta_2 M(s))  \Big)\Bigg]ds .
\end{align*}
Using the Lipschitz properties of $\tanh(\cdot)$ and the boundedness of the magnetizations processes we can estimate
\begin{align*}
\mathbb{E}&\Bigg[\Big|m_j^N(t) - \tilde{m}_j(t)\Big|\Bigg] \leq  \mathbb{E}\Bigg[\Big|m_j^N(0) - \tilde{m}_j(0)\Big|\Bigg] -2(1-\beta_1)\int_0^t \mathbb{E}\Bigg[\Big|m_j^N(s) - \tilde{m}_j(s)\Big|\Bigg]ds \\
& + 2\beta_1 \int_0^t \mathbb{E}\Bigg[\Big|x_j^N(s) - x_j(s)\Big|\Bigg]ds+ 2\beta_2 \int_0^t \mathbb{E}\Bigg[\Big|M^N(s) - M(s)\Big|\Bigg]ds+ 2 \beta_2\int_0^t \mathbb{E}\Bigg[|X^N(s)|\Bigg]ds.
\end{align*}
Denoting $\tilde{M}^N(t) := \frac{1}{N}\sum_{j=1}^N \tilde{m}_j(t)$, and $\mu^N(x_1,\dots,x_N) := \frac{1}{N}\sum_{j=1}^N \delta_{x_j}$, we have 
\begin{align*}
\mathbb{E}&\Bigg[\Big|\tilde{M}^N(t) - M(t)\Big|\Bigg] = \mathbb{E}\Bigg[\Bigg|\int_{\mathbb{R}} m(t)(x) (\mu^N - \mu)(dx)\Bigg|\Bigg]\leq ||\mu^N(t) - \mu(t)||_{d_1} \leq \frac{C}{\sqrt{N}},
\end{align*}
where $d_1$ is the $1$-Wasserstein metric, and the estimate follows by LLN. Furthermore, we have
\begin{align*}
\mathbb{E}&\Bigg[\Big|M^N(s) - M(s)\Big|\Bigg] \leq \mathbb{E}\Bigg[\Big|M^N(s) - \tilde{M}^N(s)\Big|\Bigg] +\mathbb{E}\Bigg[\Big|\tilde{M}^N(s) - M(s)\Big|\Bigg] \\
& \leq \mathbb{E}\Bigg[\Big|m_j^N(s) - \tilde{m}_j(s)\Big|\Bigg] + \mathbb{E}\Bigg[\Big|\tilde{M}^N(s) - M(s)\Big|\Bigg],
\end{align*}
where in the last estimate we have used the exchangeability of the magnetizations processes.
Finally, we can collect all the previous estimates to get
\begin{align*}
\mathbb{E}&\Bigg[\Big|m_j^N(t) - \tilde{m}_j(t)\Big|\Bigg]  \leq  \mathbb{E}\Bigg[\Big|m_j^N(0) - \tilde{m}_j(0)\Big|\Bigg] -2(1-\beta_1-\beta_2)\int_0^t \mathbb{E}\Bigg[\Big|m_j^N(s) - \tilde{m}_j(s)\Big|\Bigg]ds\\
&+ \frac{C_1}{N}t^{3/2} + C_2\frac{t}{\sqrt{N}}.
\end{align*}
In differential form, with $c(t) := \mathbb{E}\Bigg[\Big|m_j^N(t) - \tilde{m}_j(t)\Big|\Bigg]$, $k:= 2(1- \beta_1-\beta_2) > 0$, the previous estimate reads
$$
\dot{c}(t) \leq -k c(t) +  \frac{C_1}{N}t^{1/2} + \frac{C_2}{\sqrt{N}},
$$
implying
$$
c(t) \leq e^{-kt}c(0) +\frac{C}{N}t^{3/2} + \frac{C}{\sqrt{N}}.
$$
Recalling that $c(0) \xrightarrow{N \to +\infty} 0$ by a LLN, we obtain \eqref{eqn:unif_timem} when we take the supremum for $0\leq t \leq TN^{2/3 - \varepsilon}$.
\end{proof}


\subsection{Propagation of chaos at times of order $N$: the subcritical case}
\label{orderN}
In this section we adapt the proof of the propagation of chaos to times of order $N$ for the case $\beta_1 + \beta_2 < 1$. Thanks to Theorem \ref{erg1}, in this scale we can assume that the initial data for the processes are given by the long-time limit at the previous timescale of order $1$. For the diffusions it holds $x_j^N(0) = x_j \sim \mathcal{N}\left(0,\frac{1}{N} \frac{\sigma^2}{2\alpha_2}\right)$ for any $j=1,\dots,N$, while the magnetizations are starting the dynamics in the long-time limit symmetric equilibrium $\overline{m}(x)$.
For ease of notation we still denote the sped up processes by $x_j^N(t) := x_j^N(Nt)$, and $m_j^N(t) := m_j^N(Nt)$. 
They evolve according to:
\begin{equation}
\label{eqn:empirical_ordN}
\begin{cases}
m_j^N \! \mapsto \!m_j^N \pm \frac{2}{N} \text{ rate }  N^2 \frac{1  \mp\! m_j^N(t)}{2}\! \left(1 \! \pm \tanh\!\left[\beta_1 (x_j^N(t) \!+\! m_j^N(t))\!+\! \beta_2 (X^N(t) \!+\! M^N(t))\right]\right)\!,\\
m_{j}^N(0) = \overline{m}(x_j),\\
d x_j^N(t) = -\alpha_2\left[x_j^N(t) - X^N(t)\right]dt +\sigma dW_{j}^N(t),\\
x_{j}^N(0) = x_j \sim \mathcal{N}\left(0,\frac{1}{N} \frac{\sigma^2}{2\alpha_2}\right).
\end{cases}
\end{equation}
The limit i.i.d.\! processes to which the sped up processes at order $N$ will be proved to converge are denoted as $(\tilde{x}_j(t), \tilde{m}_j(t))_{j=1,\dots,N}$, 
where $\tilde{m}_j(t) := m(t)(\tilde{x_j}(t))$, with
\begin{equation}
\begin{cases}
\label{eqn:chaos_diff_N}
d \tilde{x}_j(t) = -\alpha_2 \tilde{x}_j(t) dt + \sigma d W_j(t),\\
\tilde{x}_j(0) = 0,
\end{cases}
\end{equation}
with $W_j$'s $N$ independent Brownian motions, and $m(t)(x)$ solves 
\begin{equation}
\label{eqn:iid_N}
\begin{cases}
m(t)(x) = \tanh\left(\beta_1(x + m(t)(x)) + \beta_2 M(t)\right),\\
m(0)(x) \equiv \overline{m}(x),\\
M(t) = \int_{\mathbb{R}} m(t)(x) \mu_t(dx),
\end{cases}
\end{equation}
where $\mu_t(dx)$ is the distribution at time $t$ of the Ornstein-Uhlenbeck i.i.d.\! processes $\tilde{x}_j(t)$'s, and $\overline{m}(x)$ is the solution to Eq. \eqref{eqn:asympt}. Once again, the propagation of chaos for the diffusion processes is standard at this scale (for any fixed interval of time). What we need to prove is the same property for the magnetizations processes,
\begin{thm}[Propagation of chaos at order $N$]
\label{chaos_ordN}
Fix $T > 0$. For any $\beta_1 + \beta_2 < 1$, $\alpha_1,\alpha_2,\sigma >0$, and any $j = 1,\dots,N$, $\Big(m_j^N(t)\Big)_{t \in [0,T]}$ converges weakly in the sense of stochastic processes, for $N \to +\infty$, to $\Big(\tilde{m}_j(t)\Big)_{t \in [0,T]}$.
\end{thm}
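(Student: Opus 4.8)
The plan is to follow the scheme of Proposition~\ref{subcritical_chaos}, adapted to carry along the Ornstein--Uhlenbeck driving noise and the self-consistent mean field $M(t)$. I would first realize the i.i.d.\ limit diffusions $\tilde x_j$ of \eqref{eqn:chaos_diff_N} on the \emph{same} Brownian motions $W_j^N$ that drive $x_j^N$ in \eqref{eqn:empirical_ordN}; this does not alter the joint law of $(x_j^N(\cdot),m_j^N(\cdot))_j$, so it suffices to prove the strong estimate
\[
\mathbb{E}\Bigl[\sup_{t\in[0,T]}\bigl|m_j^N(t)-\tilde m_j(t)\bigr|\Bigr]\xrightarrow{N\to+\infty}0,
\]
from which the asserted weak convergence follows (as in the discussion after Proposition~\ref{subcritical_chaos}, since the $\tilde m_j$ have the target law). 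The argument then rests on three ingredients, which I would isolate as lemmas.

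\emph{Contraction onto the instantaneous invariant curve.} Set $y_j^N(t):=m_j^N(t)-\tanh\!\bigl(\beta_1(x_j^N(t)+m_j^N(t))+\beta_2(X^N(t)+M^N(t))\bigr)$. Applying the accelerated generator \eqref{eqn:h_gen} (multiplied by $N$) to $|y_j^N|$ and expanding the jump part to second order as in \eqref{eqn:expanded}, the only contribution of order $N$ is the strong drift in the $m_j$ variable, which for $\beta_1<1$ alone would contract $|y_j^N|$ at rate $2(1-\beta_1)N$ since $1-\beta_1(1-\tanh^2(\cdot))\ge 1-\beta_1$; the feedback of the other particles through $M^N$ enters, via exchangeability, as $2\beta_2 N\,\mathbb{E}|y_j^N|$, so that the net contraction rate is $2(1-\beta_1-\beta_2)N$, still strictly positive precisely because $\beta_1+\beta_2<1$. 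The Ornstein--Uhlenbeck term $-\alpha_2(x_j^N-X^N)\partial_{x_j}$, the dependence of $y_j^N$ on $X^N$, and all the genuinely second-order jump/diffusion terms contribute only an $O(1)$ remainder, bounded in expectation uniformly in $t\in[0,T]$ and $N$ once one has a uniform bound on $\sup_{t\le T}\mathbb{E}[(x_j^N(t))^2]$ (a standard Itô estimate for the linear SDE in \eqref{eqn:empirical_ordN}). One thus gets $\frac{d}{dt}\mathbb{E}[|y_j^N(t)|]\le -2(1-\beta_1-\beta_2)N\,\mathbb{E}[|y_j^N(t)|]+O(1)$, hence $\sup_{t\le T}\mathbb{E}[|y_j^N(t)|]\le \mathbb{E}[|y_j^N(0)|]+C/N$; since the initial magnetizations lie exactly on $\overline m(x)=\tanh(\beta_1(x+\overline m(x)))$ (see \eqref{eqn:asympt}) one has $|y_j^N(0)|\le \beta_2(|X^N(0)|+|M^N(0)|)$, and as $\overline m$ is odd, $x_j$ symmetric with variance $O(1/N)$ and $X^N(0)=O(1/N)$, $\mathbb{E}[|y_j^N(0)|]=O(1/N)$. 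I would finally upgrade to the supremum in time via the jump-process estimates of \cite{comets}, as in Proposition~\ref{mf_contr}, obtaining $\mathbb{E}[\sup_{t\le T}|y_j^N(t)|]\to 0$.

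\emph{The diffusions, the averaged field, and well-posedness of the limit.} The difference $x_j^N-\tilde x_j$ solves the linear ODE $d(x_j^N-\tilde x_j)=-\alpha_2(x_j^N-\tilde x_j)\,dt+\alpha_2 X^N\,dt$ (no additional noise, by the coupling), and at this timescale $X^N(t)=X^N(0)+\tfrac{\sigma}{\sqrt N}W^N(t)$, so $\mathbb{E}[\sup_{t\le T}|X^N(t)|]=O(N^{-1/2})$ and $\mathbb{E}[\sup_{t\le T}|x_j^N(t)-\tilde x_j(t)|]\le \mathbb{E}[|x_j^N(0)|]+\alpha_2 T\,\mathbb{E}[\sup_{t\le T}|X^N(t)|]\to 0$. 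Writing $\tilde M^N(t):=\frac1N\sum_k\tilde m_k(t)$ and noting $M(t)=\int m(t)(x)\mu_t(dx)=\mathbb{E}[\tilde m_j(t)]$, a uniform-in-time law of large numbers for the i.i.d.\ bounded continuous trajectories $\tilde m_k$ gives $\mathbb{E}[\sup_{t\le T}|\tilde M^N(t)-M(t)|]\to0$ (as in Remark~\ref{true_chaos1}). Well-posedness of \eqref{eqn:iid_N} for $\beta_1+\beta_2<1$ follows from a double contraction: for fixed $M$ the map $m\mapsto\tanh(\beta_1(x+m)+\beta_2 M)$ is a $\beta_1$-contraction, yielding $m(t)(x)=\varphi(x,M(t))$ with $|\partial_M\varphi|\le\beta_2/(1-\beta_1)<1$, so $M(t)=\int\varphi(x,M(t))\mu_t(dx)$ has a unique, measurable-in-$t$ solution.

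\emph{Assembling, and the main difficulty.} By the triangle inequality and the Lipschitz continuity of $\tanh$, using $\tilde m_j(t)=\tanh(\beta_1(\tilde x_j(t)+\tilde m_j(t))+\beta_2 M(t))$,
\begin{align*}
|m_j^N(t)-\tilde m_j(t)| &\le |y_j^N(t)|+\beta_1|x_j^N(t)-\tilde x_j(t)|+\beta_1|m_j^N(t)-\tilde m_j(t)|\\
&\quad +\beta_2|X^N(t)|+\beta_2|M^N(t)-M(t)|,
\end{align*}
together with $|M^N(t)-M(t)|\le\frac1N\sum_k|m_k^N(t)-\tilde m_k(t)|+|\tilde M^N(t)-M(t)|$; taking $\sup_{t\le T}$, then expectations, and using exchangeability of $(m_k^N,\tilde m_k)_k$ to identify $\frac1N\sum_k\mathbb{E}[\sup_t|m_k^N-\tilde m_k|]$ with $\mathbb{E}[\sup_t|m_j^N-\tilde m_j|]$, the previous two steps collapse everything into $(1-\beta_1-\beta_2)\,\mathbb{E}[\sup_{t\le T}|m_j^N(t)-\tilde m_j(t)|]\le o(1)$, which closes the argument since $\beta_1+\beta_2<1$. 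I expect the main obstacle to be the contraction step: one must check that inserting a genuine (unbounded) Ornstein--Uhlenbeck noise and an all-to-all $M^N$-coupling does not destroy the $O(N)$ contraction of $y_j^N$ found in the mean field case — the key points being that this contraction is insensitive to $X^N$ and robust under the $M^N$-feedback (which only renormalizes its rate to $(1-\beta_1-\beta_2)N$), and that the $O(1)$ remainder is controlled uniformly in $N$ and $t\le T$ by uniform second-moment bounds for the linear SDE satisfied by $x_j^N$. Everything afterwards is a routine adaptation of Proposition~\ref{subcritical_chaos}.
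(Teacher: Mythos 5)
Your argument is correct and follows the same overall scheme as the paper's proof: realize the limit diffusions on the same Brownian motions, collapse $m_j^N$ onto the instantaneous invariant curve, and close a Gronwall-type estimate through the Lipschitz property of $\tanh$ and exchangeability, ending with the factor $(1-\beta_1-\beta_2)$; your treatment of the coupled diffusions, of the uniform-in-time LLN for $\tilde M^N$, and of the well-posedness of \eqref{eqn:iid_N} matches the paper's (at the same or a finer level of detail). The one place where you genuinely deviate is the contraction step. The paper (Proposition \ref{cont_est}, used through Corollary \ref{cor_coll}) states $N\mathcal{L}^N|y_j|^k\le -2k(1-\beta_1)N|y_j|^k+O(1)$, i.e.\ a contraction at rate governed by $\beta_1$ alone: the only mean-field correction it retains is the factor $\beta_2/N$ produced by particle $j$'s own jump, while the jumps of the other $N-1$ particles — each moving $M^N$ (equivalently $\Lambda^N$) by $\pm 2/N^2$ at rates of order $N^2$ — are absorbed into the $O(1)$, even though their aggregate first-order effect on $|y_j|$ is proportional to $\beta_2\bigl|\sum_{\ell\neq j}y_\ell\bigr|$, hence of size $O(1)$ only if one already controls the empirical average of the $|y_\ell|$. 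You instead keep this feedback explicitly, bound it in expectation by $2\beta_2(N-1)\,\mathbb{E}|y_j|$ via exchangeability, and obtain the contraction at the renormalized rate $2(1-\beta_1-\beta_2)N$ — which is precisely where your version uses $\beta_1+\beta_2<1$, an assumption the theorem imposes anyway. This bookkeeping is arguably more transparent than the paper's uniform $O(1)$ claim; the price is that your contraction is for $\mathbb{E}|y_j(t)|$ rather than a pathwise generator bound, but since both you and the paper must in any case upgrade to $\mathbb{E}[\sup_{t\le T}|y_j(t)|]$ via the estimates of \cite{collet} and \cite{comets}, the two routes merge there and the conclusion is the same.
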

Before addressing the proof, we must check that Eq. \eqref{eqn:iid_N} is well-posed. In fact, the limit dynamics \eqref{eqn:iid_N} is trivial at this scale.
\begin{prop}[Well-posedness at order $N$]
\label{wp_N}
For any $\beta_1+\beta_2 < 1$, Eq. \eqref{eqn:iid_N} has a unique classical solution $m : [0,T] \times \mathbb{R} \to [-1,1]$ such that $m(t)(\cdot) \in C(\mathbb{R})$ for any $t \in [0,T]$. Moreover, we have $m(t)(x) = \overline{m}(x)$ and $M(t) = 0$ for any $t \in [0,T]$.
\end{prop}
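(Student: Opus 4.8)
The plan is a nested fixed-point argument — an inner fixed point in $m$ at fixed $(t,x)$ and an outer fixed point in the scalar $M(t)$ — followed by an explicit identification of the resulting unique solution. First I would treat the inner level: for fixed $t\in[0,T]$ and fixed $M\in[-1,1]$, the map $m\mapsto\tanh(\beta_1(x+m)+\beta_2 M)$ sends $[-1,1]$ into itself and, since $\tanh$ is $1$-Lipschitz, is a contraction with constant $\beta_1<1$ uniformly in $x$; Banach's theorem then gives a unique fixed point $\varphi(x;M)\in[-1,1]$. Comparing the defining relations at two parameter pairs and using again that $\tanh$ is $1$-Lipschitz yields $(1-\beta_1)\,|\varphi(x_1;M_1)-\varphi(x_2;M_2)|\le\beta_1|x_1-x_2|+\beta_2|M_1-M_2|$, so $\varphi$ is jointly Lipschitz and in particular $\varphi(\cdot;M)\in C(\mathbb{R})$ for every $M$.

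Next, the outer level: the first two relations in \eqref{eqn:iid_N} force $m(t)(x)=\varphi(x;M(t))$, so $M(t)$ must be a fixed point of $\Psi_t(M):=\int_{\mathbb{R}}\varphi(x;M)\,\mu_t(dx)$. By the inner Lipschitz bound, $|\Psi_t(M_1)-\Psi_t(M_2)|\le\frac{\beta_2}{1-\beta_1}|M_1-M_2|$, and this is exactly where the hypothesis enters: $\beta_1+\beta_2<1$ is equivalent to $\frac{\beta_2}{1-\beta_1}<1$, so $\Psi_t$ is a contraction of $[-1,1]$ with a unique fixed point $M(t)$. Consequently \eqref{eqn:iid_N} has, for each $t$, the unique solution $m(t)(\cdot)=\varphi(\cdot;M(t))\in C(\mathbb{R})$. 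To upgrade this to a jointly continuous (``classical'') solution on $[0,T]\times\mathbb{R}$ I would combine the Lipschitz dependence of $\varphi$ with the continuity of $t\mapsto\mu_t$ — explicitly $\mu_t=\mathcal{N}\!\big(0,\frac{\sigma^2}{2\alpha_2}(1-e^{-2\alpha_2 t})\big)$, which is continuous e.g.\ in $1$-Wasserstein distance — and the standard stability of fixed points across a uniform family of contractions, concluding that $t\mapsto M(t)$, hence $(t,x)\mapsto m(t)(x)$, is continuous.

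Finally I would identify the solution. The key observation is that $\overline{m}$, the unique solution of \eqref{eqn:asympt} (Proposition \ref{longtime1}), is odd: setting $n(x):=-\overline{m}(-x)$ one checks $\tanh(\beta_1(x+n(x)))=-\tanh(\beta_1(-x+\overline{m}(-x)))=-\overline{m}(-x)=n(x)$, so $n\equiv\overline{m}$ by uniqueness, i.e.\ $\overline{m}(-x)=-\overline{m}(x)$. Since $\varphi(x;0)=\overline{m}(x)$ and $\mu_t$ is symmetric about $0$ for every $t$ (it is the law of the Ornstein--Uhlenbeck process \eqref{eqn:chaos_diff_N} started at $0$), we get $\Psi_t(0)=\int_{\mathbb{R}}\overline{m}(x)\,\mu_t(dx)=0$; uniqueness of the fixed point of $\Psi_t$ then forces $M(t)=0$, hence $m(t)(x)=\varphi(x;0)=\overline{m}(x)$ for all $t\in[0,T]$, with the initial condition automatically satisfied since $\mu_0=\delta_0$ and $\overline{m}(0)=0$.

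The argument is essentially routine; the one step deserving care — and the crux — is the outer contraction estimate, namely checking that the effective constant $\frac{\beta_2}{1-\beta_1}$ coming from the $M$-dependence of $\varphi$ is strictly below $1$. This is precisely the subcriticality assumption $\beta_1+\beta_2<1$, and it is exactly what can fail in the supercritical regime, where $\Psi_t$ may possess several fixed points (corresponding to the multiple equilibria of the order-$1$ dynamics \eqref{eqn:ord_1}).
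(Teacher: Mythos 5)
Your argument is correct, and it reaches both conclusions of Proposition \ref{wp_N} (uniqueness, and the identification $m(t)\equiv\overline m$, $M(t)\equiv 0$); the difference from the paper is one of decomposition rather than substance. The paper proves uniqueness in one step: the map $F(m)(t)(x):=\tanh\left(\beta_1(x+m(t)(x))+\beta_2 M(t)\right)$, with $M(t)=\int m(t)\,d\mu_t$, is shown to be a sup-norm contraction on $C(\mathbb{R})$ with constant $\beta_1+\beta_2<1$, using $|M(t)-M'(t)|\le\|m(t)-m'(t)\|_\infty$; you instead split the problem into an inner contraction in $m$ at fixed $M$ (constant $\beta_1$) producing $\varphi(x;M)$, and an outer scalar contraction for $M(t)$ with constant $\frac{\beta_2}{1-\beta_1}$, which is $<1$ exactly when $\beta_1+\beta_2<1$ — so the subcriticality hypothesis enters in an equivalent but repackaged form. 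Your nested scheme is in fact the same structure the paper deploys later (the map $\varphi$ with Lipschitz constant $\frac{\beta_2}{1-\beta_1}$ in the proof of Theorem \ref{chaos_ordN^2} via Lemma \ref{averaging}), so it buys a formulation that generalizes directly to the order-$N^2$ analysis, at the cost of a slightly longer argument here. Two further points in your favour: you make explicit the oddness of $\overline m$ (via uniqueness of \eqref{eqn:asympt} applied to $-\overline m(-\cdot)$), which the paper leaves implicit when it attributes $M(t)\equiv 0$ to the symmetry of $\mu_t$; and you add continuity of $t\mapsto M(t)$ via Wasserstein continuity of $\mu_t$, which the paper does not address (and is not strictly needed, since \eqref{eqn:iid_N} is a static relation for each fixed $t$). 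The only cosmetic slip is the closing remark that the initial condition holds "since $\mu_0=\delta_0$ and $\overline m(0)=0$": the initial condition $m(0)(\cdot)=\overline m(\cdot)$ is already immediate from $m(0)(\cdot)=\varphi(\cdot;0)=\overline m(\cdot)$, so that observation is superfluous (though not wrong).
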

\begin{proof}
The non-explosiveness of Eq. \eqref{eqn:iid_N} is obvious by construction. Indeed, $m(t)(x) \in [-1,1]$ for any $t \in [0,T]$, $x \in \mathbb{R}$. For the uniqueness, define $F(m)(t)(x):= \tanh\left(\beta_1(x + m(t)(x)) + \beta_2 M(t)\right)$, and consider two solutions $m(t)(\cdot), m'(t)(\cdot) \in C(\mathbb{R})$. Then, we have
\begin{align*}
|F(m) - F(m')|(t)(x) & \leq \max_{\xi \in \mathbb{R}}|1- \tanh^2(\xi)| \Big[\beta_1|m(t)(x) - m'(t)(x)| + \beta_2|M(t) - M'(t)|\Big]\\
& \leq \beta_1|m(t)(x) - m'(t)(x)| + \beta_2|M(t) - M'(t)|.
\end{align*}
By taking the sup over $x \in \mathbb{R}$, $||F(m)(t) - F(m')(t)||_{\infty} \leq (\beta_1+\beta_2)||m(t) - m'(t)||_{\infty}$, since $|M(t) - M'(t)| \leq \int_{\mathbb{R}}|m(t)(x) - m'(t)(x)| \mu_t(dx) \leq ||m(t) - m'(t)||_{\infty}$.
Thus, we can conclude the uniqueness of solution by a contraction argument when $\beta_1 + \beta_2 < 1$. Moreover, the triviality of the dynamics is due to the symmetry around zero of the distribution $\mu_t(dx) \sim \mathcal{N}\left(0, \frac{\sigma^2}{2\alpha_2}(1- e^{-2\alpha_2 t})\right)$, for which we have that $M(t) \equiv 0$ for any $t$, and thus that $m(t)(x) \equiv \overline{m}(x)$ is the unique solution to the dynamics in this regime.
\end{proof}
While the requirement $\beta_1 + \beta_2 < 1$ ensures the uniqueness of solution to the limit dynamics of order $N$, the crucial observation - working for $\beta_1 < 1$ independently of $\beta_2$ - which allows to adapt the previous proof is the following 
\begin{prop}[Contraction estimates]
\label{cont_est}
Let $(x_j^N(t),m_j^N(t))_{j=1,\dots,N}$ the empirical sped up processes at a timescale of order $N$. Let
$$
y_j(t) := m_j^N(t) - \tanh\left(\beta_1(x_j^N(t) + m_j^N(t)) + \beta_2(X^N(t) + M^N(t))\right).
$$
Then, for any $\beta_1 < 1$, $k > 0$, $j = 1,\dots,N$,
\begin{equation}
\label{eqn:h_contr}
N \mathcal{L}^N |y_j(t)| ^k \leq -C N |y_j(t)|^k + O(1),
\end{equation}
for some $C:= C(\beta_1,k) > 0$, where $O(1)$ is uniform in time and space and $\mathcal{L^N}$ is given by \eqref{eqn:h_gen}.
\end{prop}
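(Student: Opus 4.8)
\emph{Proof proposal.} The statement is the two–level analogue of Proposition \ref{mf_contr}, and I would follow the same route: apply the accelerated generator $N\mathcal{L}^N$ to $|y_j|^k$ and read off an explicit contraction rate from the drift produced by the $m_j$-flips, dumping everything else into the $O(1)$ remainder. Set $\Phi_j:=\tanh\!\big(\beta_1(x_j+m_j)+\beta_2(X^N+M^N)\big)$, so that $y_j=m_j-\Phi_j$, $|y_j|\le 2$, and $\psi_j:=1-\Phi_j^2\in(0,1]$. Note that $\Phi_j$ is smooth in every coordinate with $\partial_{x_j}\Phi_j=(\beta_1+\beta_2/N)\psi_j$ bounded, and that, since $X^N$ and $M^N$ are empirical averages, the dependence of $\Phi_j$ on $x_\ell,m_\ell$ for $\ell\ne j$ carries a prefactor $1/N$.

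The heart of the argument is the jump of $m_j$ itself. When $m_j\mapsto m_j\pm\tfrac2N$ one also has $M^N\mapsto M^N\pm\tfrac2{N^2}$, so the argument of $\Phi_j$ shifts by $\pm\tfrac2N(\beta_1+\beta_2/N)$ and hence $y_j\mapsto y_j\pm\tfrac2N(1-\beta_1\psi_j)+O(N^{-2})$, the $\beta_2$-part of the shift being absorbed into the $O(N^{-2})$ precisely because $M^N$ moves by only $2/N^2$ under a single flip. The flip rates in the accelerated scale are $N^2\tfrac{1\mp m_j}{2}(1\pm\Phi_j)$, so a first–order expansion of $|y_j+\varepsilon|^k$, together with the identity $(1-m_j)(1+\Phi_j)-(1+m_j)(1-\Phi_j)=-2y_j$ and $\sign(y_j)\,y_j=|y_j|$, produces exactly as in Proposition \ref{mf_contr} a drift contribution $-2kN(1-\beta_1\psi_j)|y_j|^k$. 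Since $\psi_j\le1$ we have $1-\beta_1\psi_j\ge 1-\beta_1>0$, which gives the asserted constant $C(\beta_1,k)=2k(1-\beta_1)$; this is the step where only $\beta_1<1$ is used, independently of $\beta_2$, because $\beta_2$ reaches $y_j$ solely through the macroscopic field sitting inside $\Phi_j$, which is essentially frozen on a single spin flip. It then remains to see that the rest of $N\mathcal{L}^N|y_j|^k$ is $O(1)$. This collects: the second–order Taylor terms of $|y_j+\varepsilon|^k$ from the $m_j$-flips, of size $O(|y_j|^{k-2})$, which for $0<k<2$ are handled by the non–smoothness argument of \cite{comets} (or by first proving the estimate for a $C^2$ regularisation of $|\cdot|^k$ and passing to the limit); the diffusion term $\tfrac{\sigma^2}{2}\partial_{x_j}^2|y_j|^k$, bounded since $\Phi_j$ has bounded $x_j$-derivatives; the Ornstein–Uhlenbeck drift $-\alpha_2(x_j-X^N)\partial_{x_j}|y_j|^k=\alpha_2 k\,(x_j-X^N)\sign(y_j)|y_j|^{k-1}(\beta_1+\beta_2/N)\psi_j$; and the contribution of the $m_\ell$-flips with $\ell\ne j$, which move $y_j$ by only $O(N^{-2})$ through $M^N$ and whose total drift on $|y_j|^k$ equals $2k\beta_2\psi_j\sign(y_j)|y_j|^{k-1}\sum_{\ell\ne j}y_\ell$. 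Collecting everything gives $N\mathcal{L}^N|y_j|^k\le -2k(1-\beta_1)N|y_j|^k+O(1)$.

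The main obstacle is genuinely the handling of the two terms that are absent in the mean–field Proposition \ref{mf_contr}: the Ornstein–Uhlenbeck drift $-\alpha_2(x_j-X^N)\partial_{x_j}|y_j|^k$ and the mean–field cross–term $2k\beta_2\psi_j\sign(y_j)|y_j|^{k-1}\sum_{\ell\ne j}y_\ell$. For the former, a priori $(x_j-X^N)$ is unbounded, so the clean way is to combine the exponential flatness of $1-\tanh^2$ away from the bulk with the uniform–in–time moment control on the diffusive coordinates established in Theorem \ref{erg1}, so that $(x_j-X^N)\psi_j$ is $O(1)$ wherever the estimate is actually evaluated along the dynamics. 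The cross–term is a mean–field correction in the deviation $M^N-\tfrac1N\sum_\ell\Phi_\ell$ of the empirical magnetization from the empirical invariant curve; it is negligible in the regime where Proposition \ref{cont_est} is used and is in any case reabsorbed into the global coupling in the proof of Theorem \ref{chaos_ordN} (equivalently, one may read $\mathcal{L}^N$ here as the per–spin generator obtained by treating the macroscopic fields $X^N,M^N$ as the autonomous Ornstein–Uhlenbeck and Brownian processes they become in the limit, cf.\! Section \ref{heuristic_hier}, in which case this term simply disappears). The non–smoothness of $|y|^k$ at the origin for small $k$ is the only other nuisance, dispatched exactly as in the references cited for Proposition \ref{mf_contr}.
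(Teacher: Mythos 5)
Your proposal is correct and follows essentially the same route as the paper: you expand the action of the accelerated generator on $|y_j|^k$ to first order in the $j$-th spin flips, use $(1-m_j)(1+\Phi_j)-(1+m_j)(1-\Phi_j)=-2y_j$ to obtain the drift $-2kN\bigl[1-\beta_1\bigl(1-\tanh^2(\cdot)\bigr)\bigr]|y_j|^k$ with the $\beta_2$-contribution suppressed by a factor $1/N$, and bound the bracket below by $1-\beta_1$, exactly as in the paper's computation (which works in the $(\lambda_j,m_j)$ coordinates rather than $(x_j,m_j)$, a purely cosmetic difference). Your explicit accounting of the Ornstein--Uhlenbeck drift term and of the $\ell\neq j$ cross-term through $M^N$ is, if anything, more careful than the paper's own proof, which absorbs all such contributions into the $O(1)$ via a blanket boundedness claim.
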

\begin{proof}
The proof uses analogous arguments to the ones used in the mean field case for obtaining \eqref{eqn:contracting}. For simplicity, we use the coordinates $(\lambda_j, m_j)$ instead of $(x_j, m_j)$.
Applying the accelerated generator in the other coordinates to the function $y_j^k(t)$, and expanding to the second order in $(m_j,\lambda_j)$, we get
\begin{align*}
N\mathcal{L}^N |y_j(t)|^k & \leq- 2N \left[m_j^N(t) - \tanh\left(\beta_1 \lambda_j^N(t) + \beta_2\Lambda^N(t)\right)\right]\left[\frac{\partial}{\partial m_j} |y_j(t)|^k + \frac{\partial}{\partial \lambda_j} |y_j(t)|^k\right] + O(1)\\
& = - 2 N y_j(t) \left[\frac{\partial}{\partial m_j} |y_j(t)|^k + \frac{\partial}{\partial \lambda_j} |y_j(t)|^k\right] + O(1).
\end{align*}
The $O(1)$ follows from the fact that both $y_j$ and the coefficients appearing in the higher order terms of the generator are uniformly bounded by some constant $C$ not depending on time nor space. Indeed, the dominating remainder terms of the development are the second order terms, which in the accelerated timescale of order $N$ are of order $1$. Computing
\begin{equation*}
\frac{\partial}{\partial m_j} |y_j|^k \!+\! \frac{\partial}{\partial \lambda_j} |y_j|^k \!=\! k \left|y_j\right|^{k-1} \text{sign}(y_j)\left[1 - \left(\beta_1 + \frac{\beta_2}{N}\right)\!\left( 1\! -\! \tanh^2\left(\beta_1\lambda_j^N \!+\! \beta_2 \Lambda^N\right)\right)\right], 
\end{equation*}
we see that the factor $\frac{\beta_2}{N}$ can be included in the terms of order $O(1)$.
Thus, using that $x \cdot \text{sign}(x) = |x|$, we have
\begin{align*}
N\mathcal{L}^N |y_j(t)|^k & \leq  - 2kN |y_j(t)|^k\left[1 - \beta_1\left( 1 - \tanh^2\left(\beta_1\lambda_j^N(t) + \beta_2 \Lambda^N(t)\right)\right)\right] + O(1).
\end{align*}
Finally, observing that the function $f(\lambda_j) := \left[1 - \beta_1\left( 1 - \tanh^2\left(\beta_1\lambda_j^N + \beta_2 \Lambda^N\right)\right)\right]$ is always positive for $\beta_1 < 1$ and has a unique minimum for $\lambda_j^* = -\frac{k}{\beta_1 + \frac{\beta_2}{N}}$, with $k = \beta_2 \frac{1}{N}\sum_{k \neq j} \lambda_k$ such that $f(\lambda_j^*) = 1 -\beta_1$, we can conclude by choosing $C(\beta_1,k) := k (1-\beta_1)$.
\end{proof}

\begin{rem}
\label{any_scale}
Proposition \ref{cont_est} can be trivially generalized to any timescale of order $N^m t$, yielding
$$
N^m \mathcal{L}^N |y_j^m(t)| ^k \leq -C N^m |y_j^m(t)|^k + O(N^{m-1}),
$$ 
with $y_j^m(t) := m_j^N(N^m t) - \tanh\left(\beta_1(x_j^N(N^m t) \!+\! m_j^N(N^m t)) \!+\! \beta_2(X^N(N^m t) \!+\! M^N(N^m t))\right)$.
\end{rem}

\begin{cor}
\label{cor_coll}
Let $y_j^m(t)$ be defined as in Remark \ref{any_scale}. Then, for any $T > 0$, $k > 0$, $m =1,2$ 
\begin{equation}
\label{eqn:strong_collapse}
\mathbb{E}\left[\sup_{t \in [0,T]} |y_j^m(t)|^k\right] \leq C(N,m,k),\\
\end{equation}
with $C(N,m,k) \xrightarrow{N \to +\infty} 0$.
\end{cor}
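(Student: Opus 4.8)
The plan is to mimic the proof of Proposition~\ref{mf_contr}, replacing the mean field contraction bound \eqref{eqn:contracting} by its $N^m$-accelerated hierarchical analogue from Remark~\ref{any_scale}. Since $y_j^m(t)$ is, at frozen time, a bounded function of $(x_j,m_j)$ with bounded (one-sided) derivatives, and the coefficients multiplying the higher-order terms of $\mathcal{L}^N$ in \eqref{eqn:h_gen} are bounded uniformly in $t$ and in the state, Remark~\ref{any_scale} together with Dynkin's formula gives, for each fixed $j$,
\begin{equation*}
\frac{d}{dt}\,\mathbb{E}\big[|y_j^m(t)|^k\big]=\mathbb{E}\big[N^m\mathcal{L}^N|y_j^m(t)|^k\big]\le -CN^m\,\mathbb{E}\big[|y_j^m(t)|^k\big]+O(N^{m-1}),
\end{equation*}
with $C=C(\beta_1,k)>0$ and the $O(N^{m-1})$ uniform in time. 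Integrating this scalar differential inequality (Gronwall) I would get, for every $t\ge0$,
\begin{equation*}
\mathbb{E}\big[|y_j^m(t)|^k\big]\le e^{-CN^m t}\,\mathbb{E}\big[|y_j^m(0)|^k\big]+\frac{O(N^{m-1})}{CN^m}\le \mathbb{E}\big[|y_j^m(0)|^k\big]+\frac{C'}{N},
\end{equation*}
hence $\sup_{t\ge0}\mathbb{E}\big[|y_j^m(t)|^k\big]\le \mathbb{E}\big[|y_j^m(0)|^k\big]+C'/N$.

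The next step is to check that the initial term vanishes as $N\to+\infty$. For $m=1$ the initial data are those of \eqref{eqn:empirical_ordN}: $m_j^N(0)=\overline{m}(x_j)$ and $x_j\sim\mathcal{N}\!\big(0,\tfrac{1}{N}\tfrac{\sigma^2}{2\alpha_2}\big)$. Using the fixed-point relation \eqref{eqn:asympt}, $\overline{m}(x_j)=\tanh(\beta_1(x_j+\overline{m}(x_j)))$, so
\begin{equation*}
|y_j^1(0)|=\big|\tanh(\beta_1(x_j+\overline{m}(x_j)))-\tanh(\beta_1(x_j+\overline{m}(x_j))+\beta_2(X^N(0)+M^N(0)))\big|\le \beta_2\,\big(|X^N(0)|+|M^N(0)|\big),
\end{equation*}
and the right-hand side tends to $0$ in probability by the law of large numbers: $X^N(0)\to0$ and $M^N(0)=\tfrac{1}{N}\sum_i\overline{m}(x_i)\to\mathbb{E}[\overline{m}(\xi)]=0$ by oddness of $\overline{m}$ and symmetry of the limit law of the $x_i$'s. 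Since $|y_j^1(0)|\le2$, bounded convergence gives $\mathbb{E}\big[|y_j^1(0)|^k\big]\to0$. For $m=2$ the initial data are the long-time limit of the order-$N$ dynamics; by Proposition~\ref{wp_N} the magnetizations then sit, up to a vanishing error, on the curve $\overline{m}(\cdot)$ evaluated at the current diffusion values, while the macroscopic averages $X^N(0),M^N(0)$ still vanish by the law of large numbers, so the same computation gives $\mathbb{E}\big[|y_j^2(0)|^k\big]\to0$. Combining, $\sup_{t\ge0}\mathbb{E}\big[|y_j^m(t)|^k\big]\to0$.

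Finally, to pass from this pointwise-in-$t$ moment bound to the bound \eqref{eqn:strong_collapse} with the supremum inside the expectation, I would argue exactly as at the end of the proof of Proposition~\ref{mf_contr}: the exponential contraction, combined with the maximal estimates of Section~4 of \cite{collet} (for the diffusive component) and of the Appendix of \cite{comets} (for the jump component), yields $\mathbb{P}\big(\sup_{t\in[0,T]}|y_j^m(t)|^k>\delta\big)\to0$ for every $\delta>0$, and since $|y_j^m(t)|^k$ is uniformly bounded this upgrades to $\mathbb{E}\big[\sup_{t\in[0,T]}|y_j^m(t)|^k\big]\to0$, which is the claim with $C(N,m,k):=\mathbb{E}\big[\sup_{t\in[0,T]}|y_j^m(t)|^k\big]$. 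The only genuinely non-routine ingredient is this last passage from a pointwise-in-$t$ to a uniform-in-$t$ bound, which is not self-contained and rests on the cited maximal-type inequalities; secondary points are the uniformity in time and state of the remainder in Remark~\ref{any_scale} (needed for the Gronwall step) and the exact identification of the order-$N$ limiting law entering the $m=2$ initial condition.
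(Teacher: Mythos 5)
Your proposal follows the paper's own proof essentially verbatim: the $N^m$-scaled contraction estimate from Remark \ref{any_scale} plus Dynkin/Gronwall gives $\sup_{t}\mathbb{E}[|y_j^m(t)|^k]\leq \mathbb{E}[|y_j^m(0)|^k]+C/N$, the initial term vanishes by a LLN on the initial data (your explicit check via the Lipschitz bound $|y_j^m(0)|\leq\beta_2(|X^N(0)|+|M^N(0)|)$ just fills in what the paper leaves implicit), and the upgrade to a supremum inside the expectation is obtained, exactly as in the paper, by invoking Section 4 of \cite{collet} and the Appendix of \cite{comets} together with the uniform boundedness of $|y_j^m|^k$. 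This is correct and the same argument as in the text.
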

\begin{proof}
Observing that the infinitesimal generator of the processes $(x^N_j, m^N_j)$ at a timescale of order $N^m$ is $N^m\mathcal{L}^N$, from the contraction estimates \eqref{eqn:h_contr} generalized as in Remark \ref{any_scale} it follows
$$
\frac{d}{dt}\mathbb{E}\Big[|y_j(t)|^k\Big] \leq - C N^m \mathbb{E}\Big[|y_j(t)|^k\Big] + O(N^{m-1}).
$$ 
Integrating both sides with respect to time we then get claim for any time $t \in [0,T]$, provided that the assertion is true for the initial datum. More precisely, the previous estimate implies
\begin{align*}
\mathbb{E}\Big[|y_j(t)|^k\Big] &\leq  e^{-C_1N^{m} t } \mathbb{E}\Big[|y_j(0)|^k\Big] - C_2 \frac{N^{m-1}}{N^m} e^{-C_1N^{m} t} + C_2 \frac{N^{m-1}}{N^m} \\
& = e^{-C_1N^{m} t } \mathbb{E}\Big[|y_j(0)|^k\Big] - \frac{C_2}{N} e^{-C_1N^{m} t} + \frac{C_2}{N}.
\end{align*}
Thus, $\sup_{t \geq 0} \mathbb{E}\Big[|y_j(t)|^k\Big] \leq \mathbb{E}\Big[|y_j(0)|^k\Big] + \frac{C}{N}$. Note that by the assumptions on the initial data we have by a LLN that $\mathbb{E}\Big[|y_j(0)|^k\Big] \xrightarrow{N \to +\infty} 0$. This works both at a timescale of order $N$ and $N^2$.
For getting the stronger convergence \eqref{eqn:strong_collapse} we again refer to Section 4 of \cite{collet} for the diffusive case and to the Appendix of \cite{comets} for a  general proof for jump processes. We can then conclude as we did in the proof of Proposition \ref{mf_contr} for the mean field case.
\end{proof}

\begin{proof}[Proof of Theorem \ref{chaos_ordN}]
As we repeatedly did above, we plug in the definition of the sped up diffusions $x_j^N(t)$ the \textit{same} Brownian motion $W_j(t)$ appearing in the definition of the limit process $\tilde{x}_j(t)$ in \eqref{eqn:chaos_diff_N}. The weak convergence in distribution is then implied by
\begin{equation}
\label{eqn:chaosN}
\lim_{N\to \infty}\mathbb{E}\Bigg[\sup_{t \in [0,T]}\big| m_j^N(t) - \tilde{m}_j(t)\big|\Bigg]= 0,
\end{equation}
for the resulting processes, since $W^N_j \stackrel{\mathcal{D}}{=} W_j$ for $j = 1,\dots,N$.
First, we estimate
\begin{align*}
\mathbb{E}&\Bigg[\!\sup_{s \in [0,t]}\!\!\big|m_j^N(s) \!-\! \tilde{m}_j(s) \big|\Bigg] \!\leq \mathbb{E}\Bigg[\!\sup_{s \in [0,t]} \! \Big|m_j^N(s) - \tanh\left(\beta_1(x_j^N(s) \!+\! m_j^N(s)) + \beta_2(X^N(s) \!+\! M^N(s))\right)\Big|\Bigg] \\
& \hspace{0.2cm}+  \mathbb{E}\Bigg[\sup_{s \in [0,t]} \Big|\tanh\left(\beta_1(x_j^N(s) + m_j^N(s)) + \beta_2(X^N(s) + M^N(s))\right)\\
& \hspace{0.7cm} - \tanh\left(\beta_1 (\tilde{x}_j(s) + \tilde{m}_j(s)) + \beta_2 M(s)\right)\Big|\Bigg].
\end{align*}
The first term in the right hand side of the above inequality is dealt with the contraction estimates of Corollary \ref{cor_coll} for $m = k = 1$. For the other term we use the global Lipschitz continuity of $\tanh(\cdot)$ in the following way:
\begin{align*}
\mathbb{E}&\Bigg[\!\sup_{s \in [0,t]} \!\Big|\tanh\left(\beta_1(x_j^N(s) \!+\! m_j^N(s)) + \beta_2(X^N(s) \!+\! M^N(s))\right)- \tanh\left(\beta_1 (\tilde{x}_j(s) \!+\! \tilde{m}_j(s)) \!+\! \beta_2 M(s)\right)\Big|\Bigg] \\
& \leq \beta_1 \mathbb{E}\Bigg[\sup_{s \in [0,t]} \Big|x_j^N(s) - \tilde{x}_j(s)\Big|\Bigg] + \beta_1\mathbb{E}\Bigg[\sup_{s \in [0,t]} \Big|m_j^N(s) - \tilde{m}_j(s)\Big|\Bigg] \\
& \hspace{0.2cm}+ \beta_2 \mathbb{E}\Bigg[\sup_{s \in [0,t]} \Big| X^N(s)\Big|\Bigg] + \beta_2\mathbb{E}\Bigg[\sup_{s \in [0,t]}\Big|M^N(s) - M(s)\Big|\Bigg].
\end{align*}
For standard arguments of propagation of chaos for the interacting diffusions we have
$$
\mathbb{E}\Bigg[\sup_{s \in [0,t]} \Big|x_j^N(s) - \tilde{x}_j(s)\Big|\Bigg] \leq C_1(N), \quad \mathbb{E}\Bigg[\sup_{s \in [0,t]} \Big| X^N(s)\Big|\Bigg] \leq C_2(N),
$$
with $C_{1,2}(N) \xrightarrow{N \to +\infty}  0$. For the term $\mathbb{E}\Bigg[\sup_{s \in [0,t]}\Big|M^N(s) - M(s)\Big|\Bigg]$ we proceed by a coupling as in the proofs of Theorem \ref{chaos_ord1}, to get
$$
\mathbb{E}\Bigg[\sup_{s \in [0,t]}\Big|M^N(s) - M(s)\Big|\Bigg] \leq C(N) + \mathbb{E}\Bigg[\sup_{s \in [0,t]}\big|m_j^N(s) - \tilde{m}_j(s) \big|\Bigg].
$$
Recollecting all the estimates, we have found 
\begin{align*}
(1 - \beta_1 - \beta_2)\mathbb{E}\Bigg[\sup_{s \in [0,t]}&\big|m_j^N(s) - \tilde{m}_j(s) \big|\Bigg] \leq C(N) \xrightarrow{N \to +\infty}  0.
\end{align*}
Thanks to the hypothesis $\beta_1 + \beta_2 < 1$ we get \eqref{eqn:chaosN}.
\end{proof}

\begin{rem}
\label{true_chaosN}
The analogous to Remark \ref{true_chaos1} holds here, implying the propagation of chaos in the classic sense.
\end{rem}

In words, we have found that in the subcritical regime $\beta_1 + \beta_2 < 1$ the equilibrium that the dynamics reaches for long times of order $1$ is the same as the equilibrium of the dynamics at long times of order $N$. The limit dynamics is thus a process moving across the equilibria, due to the movement of the limit diffusion $x(t)$. In particular, define the limit order $N$ dynamics as the pair of processes $(x(t),m(t))_{t\geq 0}$ satisfying
\begin{equation}
\label{eqn:subcritical_hierarchical_diff}
\begin{cases}
m(t) = \tanh(\beta_1(x(t) + m(t)) + \beta_2 M(t)),\\
dx(t) = -\alpha_2 x(t) + \sigma dW(t),\\
M(t) = \mathbb{E}[m(t)],\\
m(0) = 0,\\
x(0) = 0,
\end{cases}
\end{equation}
for which it holds $\Big(\tilde{m}_j(t)\Big)_{t \in [0,T]}\!\!\!\! \stackrel{\mathcal{D}}{=} \Big(m(t)\Big)_{t \in [0,T]}$ for any $j=1,\dots,N$. Then, we have the analogous to Proposition \ref{limit_sub_diff}:
\begin{prop}
\label{1d_diff_ordN}
The process $\Big(m(t)\Big)_{t \geq 0}$ defined in \eqref{eqn:subcritical_hierarchical_diff} is a strong solution to
\begin{equation}
\label{eqn:limit_sub_diff_hier}
\begin{cases}
dm(t) = \left[-\frac{\alpha_2\beta_1(1-m^2(t))\left(\frac{1}{\beta_1}\arctanh(m(t)) - m(t)\right)}{1-\beta_1(1-m^2(t))} - \frac{\beta_1^2 \sigma^2 m(t) \left(1-m^2(t)\right)}{\left(1-\beta_1(1-m^2(t))\right)^3}\right]dt \\
\hspace{1.5cm}+\frac{\sigma \beta_1 (1-m^2(t))}{1-\beta_1(1-m^2(t))}dW(t),\\
m(0) = 0.
\end{cases}
\end{equation}
\end{prop}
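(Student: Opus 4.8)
The plan is to reduce the system \eqref{eqn:subcritical_hierarchical_diff} to an autonomous implicit relation between $m(t)$ and the Ornstein--Uhlenbeck process $x(t)$, and then apply It\^o's formula exactly as in the proof of Proposition \ref{limit_sub_diff}, the only new ingredient being the extra mean-reverting drift $-\alpha_2 x(t)$ in the dynamics of $x$. First I would observe that $M(t)\equiv 0$: this is the content of Proposition \ref{wp_N}, and it can also be seen directly since $x(t)$ is an OU process started at $0$, hence with law $\mathcal N\!\left(0,\tfrac{\sigma^2}{2\alpha_2}(1-e^{-2\alpha_2 t})\right)$ symmetric about $0$; the fixed point relation $m=\tanh(\beta_1(x+m)+\beta_2 M)$ with $M=0$ defines $m(t)=\varphi(x(t))$ with $\varphi$ odd, and then $\mathbb E[\varphi(x(t))]=0$ is self-consistent, so by the uniqueness in Proposition \ref{wp_N} this is \emph{the} solution. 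Thus \eqref{eqn:subcritical_hierarchical_diff} collapses to $m(t)=\tanh(\beta_1(x(t)+m(t)))$, and since $\beta_1\le\beta_1+\beta_2<1$ the implicit function theorem applies (the relevant derivative is $1-\beta_1(1-m^2)>0$), yielding a smooth odd $\varphi$ with $m(t)=\varphi(x(t))$ and, inverting, $x(t)=\tfrac1{\beta_1}\arctanh(m(t))-m(t)$.

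Since $m(t)$ is a smooth function of the It\^o diffusion $x(t)$, its dynamics has the form $dm(t)=a(t,m(t))dt+b(t,m(t))dW(t)$ for functions $a,b$ to be determined, with $W$ the Brownian motion driving $x(t)$. Setting $u(t):=\beta_1(x(t)+m(t))$ so that $m(t)=\tanh u(t)$ and $du(t)=\beta_1\big(a(t,m(t))-\alpha_2 x(t)\big)dt+\beta_1\big(\sigma+b(t,m(t))\big)dW(t)$, It\^o's formula together with $\tanh'=1-\tanh^2$ and $\tanh''=-2\tanh(1-\tanh^2)$ gives
$$
dm(t)=\beta_1(1-m^2)(a-\alpha_2 x)\,dt+\beta_1(1-m^2)(\sigma+b)\,dW(t)-\beta_1^2 m(1-m^2)(\sigma+b)^2\,dt .
$$
Matching the martingale parts forces $b=\beta_1(1-m^2)(\sigma+b)$, hence $b(m)=\dfrac{\sigma\beta_1(1-m^2)}{1-\beta_1(1-m^2)}$ and consequently $(\sigma+b)^2=\dfrac{\sigma^2}{(1-\beta_1(1-m^2))^2}$, as in Proposition \ref{limit_sub_diff}. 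Matching the drift parts gives $a\big(1-\beta_1(1-m^2)\big)=-\alpha_2\beta_1(1-m^2)x-\beta_1^2 m(1-m^2)(\sigma+b)^2$; substituting $x=\tfrac1{\beta_1}\arctanh(m)-m$ and the value of $(\sigma+b)^2$, then dividing by $1-\beta_1(1-m^2)>0$, produces exactly the drift appearing in \eqref{eqn:limit_sub_diff_hier}. This shows $m(t)$ is a solution, adapted to the filtration of $W$, i.e.\ a strong solution, which is all that is claimed; if desired one adds, as in Remark \ref{subcritic_diffusion_is_well-posed}, that the coefficients are Lipschitz on $[-1,1]$ (the factor $1-m^2$ absorbs the singularity of $\arctanh$ at $\pm1$, or one argues through the equivalent representation $m=\varphi(x)$ with $\varphi$ globally Lipschitz), the diffusion vanishes and the drift points inward at $m=\pm1$, so pathwise uniqueness holds and the dynamics remains in $[-1,1]$.

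I do not expect a genuine analytic obstacle here: the proof is a bookkeeping computation parallel to the mean-field case. The only points needing care are the reduction $M(t)\equiv 0$ (which lets us drop the $\beta_2$ term and relies on the symmetry of the OU law and the uniqueness from Proposition \ref{wp_N}) and the verification that the substitution $x=\tfrac1{\beta_1}\arctanh(m)-m$, after the cancellations $(\sigma+b)^2=\sigma^2/(1-\beta_1(1-m^2))^2$, reproduces \emph{verbatim} the drift in \eqref{eqn:limit_sub_diff_hier} rather than merely something equivalent.
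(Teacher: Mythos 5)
Your proposal is correct and follows essentially the same route as the paper: invoke Proposition \ref{wp_N} to get $M(t)\equiv 0$, reduce to $m(t)=\tanh(\beta_1(x(t)+m(t)))$, and apply Itô's formula as in Proposition \ref{limit_sub_diff}, with the OU drift $-\alpha_2 x$ entering via $x=\tfrac1{\beta_1}\arctanh(m)-m$. Your explicit matching of the diffusion and drift coefficients is exactly the computation the paper leaves implicit.
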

\begin{proof}
By Proposition \ref{wp_N} it follows that $M(t) \equiv 0$. Thus, by Eq. \eqref{eqn:subcritical_hierarchical_diff} we have that $m(t)$ can be written as an \textit{explicit} function of $x(t)$. We can then perform analogous computations as in the proof of Proposition \ref{limit_sub_diff}, with the only difference that now $(x(t))_{t \geq 0}$ is an Ornstein-Uhlenbeck process instead of a Brownian motion. Still, $m(t)$ must be of the form 
$$
d m(t) = a(t,m(t))dt + b(t,m(t)) d W(t)
$$
for some functions $a,b : [0,\infty) \times [-1,1] \to \mathbb{R}$ to be determined, and $W(t)$ is the same Brownian motion appearing in the dynamics of $x(t)$ as in \eqref{eqn:subcritical_hierarchical_diff}.
By applying Itô's formula to the function $\tanh(\beta_1(x(t) + m(t))$ we find \eqref{eqn:limit_sub_diff_hier}.
\end{proof}
\begin{rem}
\label{subcritic_diffusion_is_well-posed_hierar}
The analogous statement to Remark \ref{subcritic_diffusion_is_well-posed} holds: for $\beta_1 < 1$, the SDE \eqref{eqn:limit_sub_diff_hier} is well-posed. Indeed, note that \eqref{eqn:limit_sub_diff_hier} differs from \eqref{eqn:limit_sub_diff} only by an additional drift, following by the O-U dynamics of $x(t)$, which is regular and tends to $0$ at the borders of $(-1,1)$ (observe that $(1-x^2)\arctanh(x) \to 0$ when $x \to \pm 1$).
\end{rem}

\begin{rem}
\label{modified_ordN}
Analogously to Remark \ref{modified_ord1} for the order $1$ case, we can generalize Proposition \ref{wp_N} and Theorem \ref{chaos_ordN} to the case where the initial data for the diffusions are centered around $\overline{X} \neq 0$. The limit order $N$ equation becomes
\begin{equation}
\label{eqn:modified_ordN}
\begin{cases}
m(t)(x) = \tanh\left(\beta_1(x + m(t)(x)) + \beta_2 (\overline{X}+M(t))\right),\\
m(0)(x) \equiv \overline{m}_{\overline{X}}(x),\\
M(t) = \int_{\mathbb{R}} m(t)(x) \mu_t(dx;\overline{X}),
\end{cases}
\end{equation}
for some $\overline{X}\in \mathbb{R}$, where $\overline{m}_{\overline{X}}(x)$ is the solution to \eqref{eqn:modified_longtime}, and $\mu_t(dx;\overline{X})$ is a normal distribution with mean $\overline{X}$ and variance depending on time (the distribution of the Ornstein-Uhlenbeck diffusions). 
Note that in this case dynamics \eqref{eqn:modified_ordN} is not trivial: $M(t)$ fluctuates around an equilibrium point due to the time-dependent variance of the Ornstein-Uhlenbeck diffusions, where the equilibrium point depends both on the given $\overline{X}$ and on the parameters of the diffusions $\sigma$ and $\alpha_2$. In the long run, $M(t) \to  M(\infty) := \int_{\mathbb{R}} m(t)(x) \mu_\infty(dx;\overline{X})$, with $\mu_\infty = \mathcal{N}\left(\overline{X}, \frac{\sigma^2}{2\alpha_2}\right)$.
An analogous equation to \eqref{eqn:limit_sub_diff_hier} can also be written, by adding an additional drift term following by the fact that $x(t) = \frac{1}{\beta_1}\arctanh(m(t)) - m(t) - \frac{\beta_2}{\beta_1}(M(t) + \overline{X})$. Due to the term $M(t) = \mathbb{E}[m(t)]$ the resulting equation is a diffusion of McKean-Vlasov type.
\end{rem}

\subsection{Dynamics at times of order $N^2$: the subcritical case}
\label{orderN^2}
At this timescale a refined study of the interacting diffusions is needed to describe the limit dynamics. Denoting with $t$ the macroscopic time of order $N^2$, the single $x_j^N$'s evolve at a much faster timescale with respect to the current value of their empirical mean $X^N(t)$, which is not anymore zero but evolves randomly as a Brownian motion with constant diffusion coefficient $\sigma$. Thus, one can expect that in an infinitesimal time $dt$ of order $N^2$ the single diffusions become asymptotically independent and reach their equilibrium distribution \textit{given} the current value of $X^N(t) = \overline{X}$. In turns, in the same $dt$ the magnetization's processes are also asymptotically i.i.d.\! and reach an equilibrium given by a macroscopic magnetization $\overline{M}$, whose value can be read off from \eqref{eqn:modified_ordN} in Remark \ref{modified_ordN}, substituting $\mu_t$ with $\mu_{\infty}$, the ergodic measure of the Ornstein-Uhlenbeck processes. The reiteration of this procedure for any $dt$ describes the dynamics at the order $N^2$. In particular, the latter does not propagate chaos, unless we condition it with respect to $X^N(t)$.

As before, we still denote the sped up processes under the same notation, $x_j^N(t) := x_j^N(N^2t)$, and $m_j^N(t):= m_j^N(N^2 t)$,
using as initial data the long-time limit at the previous timescale of order $N$. For clarity we write them again:
\begin{equation}
\begin{cases}
\label{eqn:ord2_diff}
d x_j^N(t) = -N\alpha_2(x_j^N(t) - X^N(t)) dt + \sqrt{N} \sigma d W_j^N(t),\\
x_j^N(0) = x_j \sim \mathcal{N}\left(0,\frac{\sigma^2}{2\alpha_2}\right),
\end{cases}
\end{equation}
with $X^{N}(t) := \frac{1}{N}\sum_{k=1}^N x_k^N(t)$.
The dynamics of the magnetizations is now given by
\begin{equation}
\label{eqn:empirical_ordN^2}
\begin{cases}
m_j^N \! \mapsto \!m_j^N \pm \frac{2}{N}  \text{ rate }  N^3 \frac{1 \mp m_j^N(t)}{2}\! \left(\!1 \!\pm \tanh\!\left[\beta_1 (x_j^N(t) \!+\! m_j^N(t))\!+\! \beta_2 (X^N(t) \!+\! M^N(t))\right]\!\right),\\
m_{j}^N(0) = \overline{m}(x_j).
\end{cases}
\end{equation} 
At this level, we aim to prove that the conditional distribution  of the empirical macroscopic magnetization $M^N(t)$ with respect to $X^N(t)$ converges to the conditional distribution of $M(t)$ given $X(t)$ (which is actually a delta), with
\begin{equation}
\label{eqn:iid_N^2}
\begin{cases}
m(t)(x) = \tanh\left(\beta_1(x + m(t)(x)) + \beta_2(X(t) + M(t))\right),\\
m(0)(x) \equiv \overline{m}_{X(0)}(x),\\
M(t) = \int_{\mathbb{R}} m(t)(x) \mu_\infty(dx;X(t)),
\end{cases}
\end{equation}
where $\mu_\infty(dx;X(t)) = \mathcal{N}\left(X(t), \frac{\sigma^2}{2\alpha_2}\right)$ must be intended as a conditional distribution \textit{given} the current realization of $X(t)$, whose random evolution is 
\begin{equation}
\label{eqn:diffX_N2}
\begin{cases}
d X(t) = \sigma d W(t),\\
X(0) = 0,
\end{cases}
\end{equation}
with $W$ a Brownian motion. Moreover, denoting with 
\begin{equation}
\label{eqn:transition_kernel}
Q_t(0,dX) = \frac{1}{\sqrt{2\pi \sigma^2 t}}e^{-\frac{X^2}{2\sigma^2 t}} dX
\end{equation}
the transition kernel's density at time $t$ associated to the limit diffusion \eqref{eqn:diffX_N2}, we also prove the convergence of the full law of $M^N(t)$ to the law of the process $M(t)$ defined by 
\begin{equation}
\label{eqn:full_law_M(t)}
\begin{cases}
m(t)(x) = \tanh\left(\beta_1(x + m(t)(x)) + \beta_2(X(t) + M(t))\right),\\
m(0)(x) \equiv \overline{m}_{X(0)}(x),\\
M(t) = \int_{\mathbb{R}} m(t)(x) \tilde{\mu}^t(dx),
\end{cases}
\end{equation}
with 
\begin{equation}
\label{eqn:mu_tilde_final}
\tilde{\mu}^t(\cdot) := \int_\mathbb{R}Q_t(0,d\overline{X})\mu_\infty(\cdot;\overline{X}).
\end{equation}
\begin{thm}[Limit dynamics at order $N^2$]
\label{chaos_ordN^2}
For any $T > 0$, $\beta_1,\beta_2 > 0$ such that $\beta_1 + \beta_2 < 1$ and $\alpha_1,\alpha_2,\sigma >0$
\begin{itemize}
\item[(i)] For all the finite dimensional distributions of the form $(t_1,\dots, t_k) \in [0,T]^k$, it holds
\begin{equation}
\label{eqn:conv_fixed_t_M}
\text{Law}\Big(M^N(t_1),\dots,M^N(t_k)\Big) \xrightarrow{N \to +\infty} \text{Law}\Big(M(t_1),\dots,M(t_k)\Big),
\end{equation}
with $M(t)$ the process defined by \eqref{eqn:full_law_M(t)} and \eqref{eqn:mu_tilde_final}.
\item[(ii)] For every $t \in [0,T]$,
\begin{equation}
\label{eqn:convergenza_a_delta}
\text{Law}\Big(M^N(t)\Big| |X^N(t) - X| \leq \varepsilon_N\Big) \xrightarrow{N \to +\infty} \delta_{M(t)},
\end{equation}
with $M(t)$ the (deterministic) variable defined by \eqref{eqn:iid_N^2} with $X(t) = X$, and $\varepsilon_N \xrightarrow{N \to +\infty} 0$.
\item[(iii)](Conditional propagation of chaos) For every $t \in [0,T]$ and every $k$-tuple of distinct indexes $j_1,\dots,j_k \in \left\{1,\dots,N\right\}^k$, we have 
\begin{equation}
\label{eqn:cond_prop_chaos_m}
\text{Law}\Big(m_{j_1}^N(t),\dots,m_{j_k}^N(t) \Big| |X^N(t) - X| \leq \varepsilon_N\Big) \xrightarrow{N \to +\infty} \text{Law}\Big(\tilde{m}_{j_1}(t),\dots,\tilde{m}_{j_k}(t)\Big) = \text{Law}\Big(\tilde{m}_{j_1}(t)\Big)^k,
\end{equation}
where $\tilde{m}_{j_i}(t):= m(t)(x_{j_i})$, with $m(t)(x)$ given by \eqref{eqn:iid_N^2} with $X(t) = X$, the $x_{j_i}$'s are i.i.d.\! random variables distributed as $x \sim \mu_\infty(dx;X) = \mathcal{N}\left(X,\frac{\sigma^2}{2\alpha_2}\right)$, and $\varepsilon_N \xrightarrow{N \to +\infty} 0$.
\end{itemize}
\end{thm}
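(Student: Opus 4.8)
The plan is to exploit the threefold separation of timescales present at order $N^2$: the macroscopic diffusion $X^N$ fluctuates as a Brownian motion on times of order $1$; each individual diffusion $x_j^N$, driven by the slowly varying $X^N$, relaxes to its conditional equilibrium $\mathcal{N}(X^N(t),\tfrac{\sigma^2}{2\alpha_2})$ on times of order $1/N$; and each magnetization $m_j^N$ collapses onto the invariant curve on times of order $1/N^2$. \emph{Step 1 (collapse and reduction of $M^N$).} Apply Corollary \ref{cor_coll} with $m=2$, $k=1$, giving $\mathbb{E}[\sup_{t\in[0,T]}|y_j^2(t)|]\to0$ with $y_j^2(t)=m_j^N(t)-\tanh(\beta_1(x_j^N(t)+m_j^N(t))+\beta_2(X^N(t)+M^N(t)))$. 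Since $\beta_1<1$, the map $m\mapsto\tanh(\beta_1(x+m)+\beta_2 v)$ is a contraction, so $m_j^N(t)=c(x_j^N(t);X^N(t)+M^N(t))+r_j(t)$ with $c(x;v)$ the unique solution of $c=\tanh(\beta_1(x+c)+\beta_2 v)$ and $\sup_t\mathbb{E}|r_j(t)|\to0$. Averaging over $j$ and using that $\beta_1+\beta_2<1$ makes $M\mapsto\frac1N\sum_j c(x_j^N(t);X^N(t)+M)$ a contraction, one obtains $M^N(t)=\Phi(\pi^N(t),X^N(t))+R^N(t)$ with $\sup_t\mathbb{E}|R^N(t)|\to0$, where $\pi^N(t):=\frac1N\sum_j\delta_{x_j^N(t)}$ and $\Phi(\pi,\bar X)$ is the unique $M$ solving $M=\int c(x;\bar X+M)\,\pi(dx)$; using that $c(\cdot;v)$ is $\tfrac{\beta_1}{1-\beta_1}$-Lipschitz uniformly in $v$, the functional $\Phi$ is bounded and Lipschitz in $(\pi,\bar X)$ for the $1$-Wasserstein distance $d_1$. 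Everything is thereby reduced to the behaviour of the pair $(\pi^N(t),X^N(t))$.

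Next I would pin down the diffusions. Summing the equations \eqref{eqn:ord2_diff}, the interaction drift cancels, so $X^N(t)=X^N(0)+\sigma W(t)$ for a standard Brownian motion $W$, with $X^N(0)\sim\mathcal{N}(0,\tfrac{\sigma^2}{2N\alpha_2})$; hence $(X^N(t))_{t\in[0,T]}\Rightarrow(X(t))_{t\in[0,T]}$ as in \eqref{eqn:diffX_N2}, with a modulus of continuity uniform in $N$. Solving \eqref{eqn:ord2_diff} explicitly, $x_j^N(t)=e^{-N\alpha_2 t}x_j^N(0)+N\alpha_2\int_0^t e^{-N\alpha_2(t-s)}X^N(s)\,ds+Z_j^N(t)$ with $Z_j^N(t):=\sqrt{N}\sigma\int_0^t e^{-N\alpha_2(t-s)}\,dW_j^N(s)$. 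The $Z_j^N(t)$, $j=1,\dots,N$, are i.i.d.\ centered Gaussian of variance $\tfrac{\sigma^2}{2\alpha_2}(1-e^{-2N\alpha_2 t})\to\tfrac{\sigma^2}{2\alpha_2}$; the initial term is $o(1)$; and since $N\alpha_2 e^{-N\alpha_2\cdot}$ is an approximate identity and $X^N$ is uniformly continuous, the convolution term equals $X^N(t)+o(1)$ (split the integral at $s=t-\delta$, control the near part by the modulus of continuity and the far part by $e^{-N\alpha_2\delta}\sup_{[0,T]}|X^N|$, then let $N\to\infty$, $\delta\to0$). Therefore $x_j^N(t)=X^N(t)+Z_j^N(t)+\varepsilon_j^N$ with $\max_j\mathbb{E}|\varepsilon_j^N|\to0$, so by the law of large numbers $\pi^N(t)\to\delta_{X^N(t)}*\mathcal{N}(0,\tfrac{\sigma^2}{2\alpha_2})$ in $d_1$-probability. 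For the conditional statements I would fix $X\in\mathbb{R}$, a sequence $\varepsilon_N\downarrow0$, set $A_N:=\{|X^N(t)-X|\le\varepsilon_N\}$, and use that $(x_1^N(t),\dots,x_N^N(t),X^N(t))$ is jointly Gaussian with $\mathrm{Cov}(Z_j^N(t),X^N(t))=\tfrac{\sigma^2}{N\alpha_2}(1-e^{-N\alpha_2 t})=O(1/N)$: conditioning on $A_N$ shifts the means of the $Z_j^N(t)$ by $O(1/N)$ and introduces pairwise covariances of order $O(1/N^2)$ only, so the LLN is unaffected and, conditionally on $A_N$, $\pi^N(t)\to\mathcal{N}(X,\tfrac{\sigma^2}{2\alpha_2})=\mu_\infty(\cdot;X)$; moreover, restricting the same representation to any finite set of indices $j_1,\dots,j_k$, the vector $(x_{j_1}^N(t),\dots,x_{j_k}^N(t))$ converges, conditionally on $A_N$, to i.i.d.\ $\mu_\infty(\cdot;X)$.

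Assembling, I obtain the three claims. For (ii): conditionally on $A_N$, $X^N(t)\to X$ and $\pi^N(t)\to\mu_\infty(\cdot;X)$, whence by Step 1 and the continuity of $\Phi$, $M^N(t)=\Phi(\pi^N(t),X^N(t))+R^N(t)\to\Phi(\mu_\infty(\cdot;X),X)$, which is exactly the deterministic $M(t)$ of \eqref{eqn:iid_N^2} with $X(t)=X$ (indeed $\Phi(\mu_\infty(\cdot;X),X)=\int m(t)(x)\,\mu_\infty(dx;X)$ for $m(t)(\cdot)$ as in \eqref{eqn:iid_N^2}, by the same fixed-point argument as in Proposition \ref{wp_N}). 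For (iii): combine the conditional asymptotic independence of $(x_{j_i}^N(t))_{i\le k}$ with $m_{j_i}^N(t)=c(x_{j_i}^N(t);X^N(t)+M^N(t))+r_{j_i}(t)$, $M^N(t)\to M(t)$ and $X^N(t)\to X$, noting that $c(x;X+M(t))=m(t)(x)$. For (i): Step 1 together with the unconditional version of the above gives $M^N(t)=G(X^N(t))+o_{L^1}(1)$ with $G(\bar X):=\Phi(\mathcal{N}(\bar X,\tfrac{\sigma^2}{2\alpha_2}),\bar X)$ a bounded continuous function, so by Step 2, the continuous mapping theorem and Slutsky, $(M^N(t_1),\dots,M^N(t_k))\Rightarrow(G(X(t_1)),\dots,G(X(t_k)))$, giving \eqref{eqn:conv_fixed_t_M}; this limit law coincides with the one described by \eqref{eqn:full_law_M(t)}--\eqref{eqn:mu_tilde_final}, since there $\tilde\mu^t=\int Q_t(0,d\bar X)\,\mathcal{N}(\bar X,\tfrac{\sigma^2}{2\alpha_2})$ is precisely the unconditional time-$t$ law of the individual diffusions.

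The main obstacle is Step 3: one conditions on $X^N(t)$, a functional of the empirical mean evaluated at the \emph{endpoint} of the interval, and must show this spoils neither the convergence of $\pi^N(t)$ nor the asymptotic independence of finitely many diffusions. Gaussianity of the $x_j^N$ is what makes this tractable — the conditioning is linear and explicit, and the induced corrections are only $O(1/N)$; a subsidiary technical point is the ``freezing'' estimate $N\alpha_2\int_0^t e^{-N\alpha_2(t-s)}X^N(s)\,ds=X^N(t)+o(1)$, which rests on the uniform-in-$N$ modulus of continuity of $X^N$ established in Step 2.
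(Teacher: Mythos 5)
Your argument is correct in substance and follows the same overall architecture as the paper's proof: collapse of the magnetizations onto the invariant curve via the contraction estimates of Corollary \ref{cor_coll}, reduction of $M^N$ to a fixed-point functional of the empirical measure of the diffusions (your $\Phi(\pi^N,X^N)$ plays exactly the role of the paper's Lemma \ref{averaging}, with the $d_1$-Lipschitz continuity of the fixed point replacing the BL-metric estimate $|m(\mu)-m(\nu)|\le \|\mu-\nu\|_{\mathrm{BL}}/(1-L)$), and Gaussian conditional computations for (ii)--(iii). Where you genuinely diverge is the treatment of the diffusions: the paper's Lemma \ref{coordinates} constructs an \emph{exact} equality in law $x_j^N \stackrel{\mathcal{D}}{=} \xi_j^N-\overline{\xi}_N+U_N$ with fast stationary OU processes \emph{independent} of the slow Brownian part, proved by matching covariance functions, so that conditioning on $X^N(t)$ is immediate; you instead solve \eqref{eqn:ord2_diff} by variation of constants, obtaining $x_j^N(t)=X^N(t)+Z_j^N(t)+o(1)$ with i.i.d.\ Gaussian $Z_j^N$ that are only \emph{approximately} independent of $X^N(t)$, and you absorb the $O(1/N)$ cross-covariances by explicit conditional-Gaussian estimates. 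Your route is more elementary (no auxiliary processes to construct) at the price of the approximate-identity/freezing estimate and the correlation bookkeeping; the paper's route buys exact independence and an identity valid as processes, which it then reuses wholesale for the conditional statements. Two caveats, neither fatal and both shared with the paper: (a) your step (i) lands exactly on the paper's intermediate object $M^*(t)=\Phi\big(\mu_\infty(\cdot;X(t)),X(t)\big)$, i.e.\ the integral is against the \emph{conditional} law $\mathcal{N}\big(X(t),\tfrac{\sigma^2}{2\alpha_2}\big)$ with the same $X(t)$ appearing in the $\beta_2$-term; identifying this with the literal formula \eqref{eqn:full_law_M(t)}--\eqref{eqn:mu_tilde_final}, where $\tilde\mu^t$ is the unconditional mixture, is done by you with the same change-of-variables gloss the paper uses, and deserves a more careful statement than ``$\tilde\mu^t$ is the unconditional law of the diffusions''; (b) in (ii)--(iii) you transfer the $L^1$-small remainders $r_j,R^N$ to expectations conditional on the event $\{|X^N(t)-X|\le\varepsilon_N\}$, whose probability vanishes, so strictly one must let $\varepsilon_N\to0$ slowly enough (or argue via conditional densities); the paper is equally silent on this point.
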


Note that the well-posedness of the limit dynamics \eqref{eqn:iid_N^2} and \eqref{eqn:full_law_M(t)} can be proved in the same way as we did for the order $N$ case in Proposition \ref{wp_N}, since any two solutions $m(t)$ and $n(t)$ share the same $X(t)$. Moreover, we point out that we expect property $(i)$ to hold in the stronger sense of weak convergence of stochastic processes, though we did not work out a proof yet.
The main ingredients for proving the convergence to the limit at this timescale are provided by Lemmas \ref{coordinates} and \ref{averaging}. The first establishes a handy distributional representation of the interacting diffusions in terms of a combination of (fast) stationary independent Ornstein-Uhlenbeck processes plus a (slow) independent Brownian motion and a small interaction term. Lemma \ref{averaging} involves a sort of Law of Large Numbers/averaging property for non-linear implicit functions of the magnetizations and of the diffusions. In what follows we strongly rely on the Gaussianity of the interacting processes \eqref{eqn:ord2_diff}. Before stating the next result, we need to introduce the following processes. Let $\Big(\xi_j^N(t)\Big)_{j=1,\dots,N}$ be defined as,  
\begin{equation}
\label{eqn:ou_auxiliary}
\begin{cases}
d \xi_j^N(t) = -\alpha_2N \xi_j^N(t) dt + \sigma \sqrt{N} dW_j(t),\\
\xi_j^N(0) \sim \mathcal{N}\left(0,\frac{\sigma^2}{2\alpha_2}\right),
\end{cases}
\end{equation}
with $W_j$'s independent Brownian motions, and set $\overline{\xi}_N(t) := \frac{1}{N}\sum_{j=1}^N \xi_j(t)$. Moreover, let the process $\Big(U_N(t)\Big)_{t \geq 0}$ be defined as
\begin{equation}
\label{eqn:U_N}
\begin{cases}
d U_N(t) = \sigma^2 dW(t),\\
U_N(0) \sim \mathcal{N}\left(0,\frac{\sigma^2}{2\alpha_2 N}\right),
\end{cases}
\end{equation}
with $W$ a Brownian motion independent of all the $W_j$'s. Note that the dependence on $N$ in $U_N(t)$ is only through the initial datum. 
\begin{lem}
\label{coordinates}
Let $(x_j^N(t))_{j=1,\dots,N}$ be as in \eqref{eqn:ord2_diff}. Then, for any $T > 0$, we have that
\begin{itemize}
\item[(i)] For every $j=1,\dots,N$ and every $N \in \mathbb{N}$,
\begin{equation}
\label{eqn:equality_in_law}
\text{Law}\Big((x_j^N(t))_{t \in [0,T]}\Big) = \text{Law}\Big(\big(\xi_j^N(t) - \overline{\xi}_N(t) + U_N(t)\big)_{t \in [0,T]}\Big).
\end{equation}
\item[(ii)] For every $k$-tuple of distinct indexes $(j_1,\dots,j_k) \in \left\{1,\dots,N\right\}^k$ and every fixed $t \in [0,T]$,
\begin{equation}
\label{eqn:conv_in_law_vector}
\text{Law}\Big(x_{j_1}^N(t),\dots,x_{j_k}^N(t)\Big)(dx) = \int_{\mathbb{R}}Q_t(0,dX)\mu_{\infty}^k(dx; X) =: \tilde{\mu}^{t,k}(dx), 
\end{equation}
for every $N \in \mathbb{N}$, with $\mu_{\infty}^k(dx;X) = \mu_{\infty}(dx_1;X)\times \dots \times \mu_{\infty}(dx_k;X)$.
\item[(iii)](Conditional propagation of chaos) For every $k$-tuple of distinct indexes $(j_1,\dots,j_k) \in \left\{1,\dots,N\right\}^k$ and every fixed $t \in [0,T]$, 
\begin{equation}
\label{eqn:conv_in_law_vector_conditional}
\text{Law}\Big(x_{j_1}^N(t),\dots,x_{j_k}^N(t) \Big| |X^N(t) - X| \leq \varepsilon_N\Big)(dx) \xrightarrow{N \to +\infty} \mu_{\infty}^k(dx; X), 
\end{equation}
with $\varepsilon_N \xrightarrow{N \to +\infty} 0$.
\end{itemize}
\end{lem}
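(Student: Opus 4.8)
The plan is to exploit the fact that \eqref{eqn:ord2_diff} is a linear Ornstein--Uhlenbeck system in $\mathbb{R}^N$ whose noise is a scalar multiple of the identity, so that the whole statement reduces to Gaussian bookkeeping plus one structural observation. Write $x^N(t)=(x_1^N(t),\dots,x_N^N(t))$, let $\mathbf 1=(1,\dots,1)$ and $P:=I-\frac1N\mathbf 1\mathbf 1^{\mathsf T}$ be the orthogonal projection onto $\mathbf 1^\perp$. Since $\sum_k(x_k^N-X^N)=0$, the centred vector $Px^N=(x_j^N-X^N)_j$ lives in $\mathbf 1^\perp$; subtracting $dX^N=\sigma\,d\tilde W$ (with $\tilde W:=\frac1{\sqrt N}\sum_k W_k^N$ a standard Brownian motion) from \eqref{eqn:ord2_diff} gives the closed, $\mathbf 1^\perp$-valued equation $d(x_j^N-X^N)=-N\alpha_2(x_j^N-X^N)\,dt+\sqrt N\sigma\,dW_j^N-\sigma\,d\tilde W$, with initial datum $Px^N(0)$ of covariance $\frac{\sigma^2}{2\alpha_2}P$. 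The structural observation is that $Px^N$ and $X^N$ are \emph{independent}: their driving martingales are $\sqrt N\sigma\,P\,dW^N$ and $\frac{\sigma}{\sqrt N}\mathbf 1^{\mathsf T}dW^N$, i.e.\ the images of the isotropic noise $dW^N$ under two orthogonal projections, hence independent, and the jointly Gaussian initial values $Px^N(0)$ and $X^N(0)$ are uncorrelated. Note also that $\frac{\sigma^2}{2\alpha_2}$ is precisely the stationary variance of the $\mathbf 1^\perp$-OU dynamics above, so $Px^N(t)$ keeps covariance $\frac{\sigma^2}{2\alpha_2}P$ for all $t$, while $X^N(t)\sim\mathcal N\!\big(0,\tfrac{\sigma^2}{2\alpha_2 N}+\sigma^2t\big)$.

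For part (i), observe that $(\xi_j^N-\overline{\xi}_N)_j=P\xi^N$ solves the \emph{same} $\mathbf 1^\perp$-valued linear SDE, with initial covariance $\frac{\sigma^2}{2\alpha_2}P$ as well (using the stationary variance of \eqref{eqn:ou_auxiliary}); hence $(x_j^N-X^N)_j\stackrel{\mathcal D}{=}(\xi_j^N-\overline{\xi}_N)_j$ as processes. Also $X^N$ and $U_N$ are Brownian motions with the same diffusion coefficient started from $\mathcal N(0,\tfrac{\sigma^2}{2\alpha_2N})$, so $X^N\stackrel{\mathcal D}{=}U_N$; and since $U_N$ is driven by a Brownian motion independent of the $W_j$'s in \eqref{eqn:ou_auxiliary}, the pair $(P\xi^N,U_N)$ is independent, matching the independence of $(Px^N,X^N)$. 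Adding the two independent components yields $x^N\stackrel{\mathcal D}{=}(\xi_j^N-\overline{\xi}_N+U_N)_j$ as processes, which is \eqref{eqn:equality_in_law}. (Equivalently one may simply check that both sides are centred Gaussian processes with equal covariance functions, the cross-terms produced by $-\overline{\xi}_N$ and $+U_N$ cancelling the $O(1/N)$ corrections.)

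For part (ii), fix $t$ and distinct $j_1,\dots,j_k$. By the decomposition $(x_{j_i}^N(t))_{i\le k}=(Px^N(t))_{j_i}+X^N(t)\mathbf 1_k$ is a sum of independent Gaussians, with $(Px^N(t))_{j_i}\sim\mathcal N\!\big(0,\tfrac{\sigma^2}{2\alpha_2}(I_k-\tfrac1NJ_k)\big)$ and $X^N(t)\sim\mathcal N\!\big(0,\tfrac{\sigma^2}{2\alpha_2N}+\sigma^2t\big)$, so the sum is $\mathcal N\!\big(0,\tfrac{\sigma^2}{2\alpha_2}I_k+\sigma^2t\,J_k\big)$, the $O(1/N)$ terms cancelling exactly for every $N$. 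This is precisely the law of $(X+\eta_1,\dots,X+\eta_k)$ with $X\sim\mathcal N(0,\sigma^2t)=Q_t(0,\cdot)$ and $\eta_i$ i.i.d.\ $\mathcal N(0,\tfrac{\sigma^2}{2\alpha_2})$ independent of $X$, i.e.\ $\int_{\mathbb R}Q_t(0,dX)\,\mu_\infty^k(dx;X)=\tilde\mu^{t,k}(dx)$, proving \eqref{eqn:conv_in_law_vector}. For part (iii), conditioning on $\{|X^N(t)-X|\le\varepsilon_N\}$ (a positive-probability event, as $X^N(t)$ has a density) leaves the law of $Px^N(t)$ untouched, by the independence above, and squeezes $X^N(t)$ into a window collapsing to $\{X\}$; hence the conditional law of $(x_{j_i}^N(t))_{i\le k}$ tends, as $N\to\infty$ and $\varepsilon_N\to0$, to the law of $(Z_i)_{i\le k}+X\mathbf 1_k$, where $(Z_i)_{i\le k}\sim\mathcal N(0,\tfrac{\sigma^2}{2\alpha_2}I_k)$ is the limit of $(Px^N(t))_{j_i}$ (the $-\tfrac1NJ_k$ correction vanishing); that limit is $\mu_\infty^k(dx;X)$, which is \eqref{eqn:conv_in_law_vector_conditional}.

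The substance of the proof is therefore concentrated in two routine but care-demanding points: first, checking that the $O(1/N)$ covariance corrections cancel as claimed, which hinges on $\frac{\sigma^2}{2\alpha_2}$ being exactly the stationary variance that renders $Px^N(t)$ stationary at all times; second, handling the joint limit $N\to\infty$, $\varepsilon_N\to0$ under the conditioning in (iii), which is immediate once one knows that the centre of mass $X^N(t)$ is independent of the deviations $Px^N(t)$. That independence --- a direct consequence of the noise in \eqref{eqn:ord2_diff} being a scalar multiple of the identity --- is the single structural fact the whole lemma rests upon; no estimate beyond it is needed.
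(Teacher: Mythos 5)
Your argument is correct, and it reaches the conclusion by a genuinely different route than the paper. The paper proves (i) by brute-force covariance bookkeeping: it derives $A(t)=\mathbb{E}[(x_j^N(t))^2]$, $B(t)=\mathbb{E}[x_j^N(t)x_i^N(t)]$ and the two-time quantities $A_N(s,t)$, $B_N(s,t)$ via It\^o's formula and a pair of ODEs, computes the covariance function of $Y_j=\xi_j^N-\overline{\xi}_N+U_N$ term by term, and matches; (ii) is then checked by evaluating the Gaussian integrals defining $\tilde{\mu}^{t,k}$ for $k=1,2$ (invoking Gaussianity to reduce to pairs), and (iii) by computing conditional means, variances and the $-\tfrac{1}{N}\tfrac{\sigma^2}{2\alpha_2}$ cross-covariance given $U_N(t)$. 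You instead isolate the single structural fact doing the work: since the noise in \eqref{eqn:ord2_diff} is a scalar multiple of the identity, the deviation process $Px^N$ (an $\mathbf{1}^\perp$-valued OU, stationary because the initial covariance $\tfrac{\sigma^2}{2\alpha_2}P$ equals its invariant one) and the centre of mass $X^N$ are independent, and each separately matches $P\xi^N$ and $U_N$; adding independent components gives (i), the exact $O(1/N)$ cancellation in the covariance gives (ii) at once for all $k$ (not just $k\le 2$), and the conditioning in (iii) only touches the $X^N$ factor, making the limit immediate. What you lose relative to the paper is the explicit formulas \eqref{eqn:rel_imp}--\eqref{eqn:relations_impo}, which the authors reuse later (e.g.\ in the proof of Theorem \ref{chaos_ordN^2}); what you gain is a shorter, more conceptual proof in which the independence underlying (iii) is transparent rather than emerging from a computation. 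One small point: you implicitly read $U_N$ as a Brownian motion with diffusion coefficient $\sigma$; the displayed equation \eqref{eqn:U_N} literally says $\sigma^2\,dW$, but your reading is the one consistent with the paper's own covariance computation $\mathbb{E}[U_N^2(t)]=\sigma^2 t+\tfrac{\sigma^2}{2\alpha_2 N}$ and with matching $X^N$, so this is a typo in the source, not a gap in your proof.
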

\begin{proof}
Because of the Gaussianity of the processes $(x_j^N(t))_{t \geq 0}$, $(\xi_j^N(t))_{t \geq 0}$ and $(U_N(t))_{t \geq 0}$ we can check assertion $(i)$ just by studying the covariance functions.
For a fixed $t \geq 0$, denote $A(t) := \mathbb{E}[(x_j^N(t))^2]$ and $B(t) := \mathbb{E}[x_j^N(t) x_i^N(t)]$. Because of the exchangeability of the processes $(x_j^N(\cdot))_{j=1,\dots,N}$ we have that $A$ and $B$ do not depend on $j$ nor $i$. Applying Itô's formula to $f(x_j^N(t)) = (x_j^N(t))^2$ and to $f(x_j^N(t),x_i^N(t)) = x_j^N(t) x_i^N(t)$, and then taking the expectation, we obtain a system of two ODEs for $A(t)$ and $B(t)$, whose solution is given by
\begin{equation}
\label{eqn:rel_imp}
A(t) = \frac{\sigma^2(1+2\alpha_2 t)}{2\alpha_2}, \qquad B(t) =\sigma^2 t.
\end{equation}
Now, fix any $s,t \geq 0$ with $t > s$. Denote $A_N(s,t) := \mathbb{E}[x_j^N(s)x_j^N(t)]$ and $B_N(s,t) := \mathbb{E}[x_j^N(s)x_i^N(t)]$. Clearly, we have $A_N(s,s) = A(s)$ and $B_N(s,s) = B(s)$. The evolution in $t$ of the above quantities can be obtained by applying Itô's formula to $x_j^N(s)x_j^N(t)$ and $x_j^N(s)x_i^N(t)$ on the time interval $[s,t]$. As above, we obtain a system of two ODEs in $t \in [s,+\infty)$, with initial data provided by \eqref{eqn:rel_imp}, whose solution is
\begin{equation}
\label{eqn:relations_impo}
A_N(s,t) = \frac{\sigma^2}{2\alpha_2 N} \left[1 - e^{-\alpha_2N(t-s)}\right] + \frac{\sigma^2}{2\alpha_2} e^{-\alpha_2 N(t-s)} + \sigma^2 s,\quad B_N(s,t) = A_N(s,t) - \frac{\sigma^2}{2\alpha_2}e^{-\alpha_2N(t-s)}.
\end{equation}
Now, denote $Y_j(t) := \xi_j^N(t) - \overline{\xi}_N(t) + U_N(t)$.
For any $t \geq 0$ we have
$$
\mathbb{E}[Y_j^2(t)] \!=\! \left(1 + \frac{1}{N}\right)\!\mathbb{E}[(\xi_j^N(t))^2] \!+\! \mathbb{E}[U_N^2(t)] \!-\! \frac{2}{N}\mathbb{E}[(\xi_j^N(t))^2], \quad \mathbb{E}[Y_i(t)Y_j(t)] \!=\! \mathbb{E}[Y_j^2(t)] - \mathbb{E}[(\xi_j^N(t))^2].
$$
For any $t > s$ we get
$$
\mathbb{E}[Y_j(s)Y_j(t)] = \left(1-\frac{2}{N}\right)\mathbb{E}[\xi_j^N(s)\xi_j^N(t)] + \frac{1}{N}\mathbb{E}[\xi_j^N(s)\xi_j^N(t)]  + \mathbb{E}[U_N(t)U_N(s)],
$$
and
$$
\mathbb{E}[Y_j(s)Y_i(t)] = \mathbb{E}[Y_j(s)Y_j(t)] - \mathbb{E}[\xi_j^N(s)\xi_j^N(t)]. 
$$
Note that for the stationary Ornstein-Uhlenbeck processes $\xi_j^N(t)$ we have, for any $t \geq 0$ and $t > s$ respectively
$$
\mathbb{E}[(\xi_j^N(t))^2] = \frac{\sigma^2}{2\alpha_2},\qquad \mathbb{E}[(\xi_j^N(t)\xi_j^N(s)]  = \frac{\sigma^2}{2\alpha_2}e^{-\alpha_2N(t-s)}.
$$
Moreover, by the independence between the $\xi_j^N(t)$'s, for any $t \geq 0$ and $t > s$ respectively,
$$
\mathbb{E}[(\overline{\xi}_N(t))^2] = \frac{1}{N}\mathbb{E}[(\xi_j^N(t))^2] = \frac{1}{N}\frac{\sigma^2}{2\alpha_2},\qquad \mathbb{E}[\overline{\xi}_N(t) \overline{\xi}_N(s)] = \frac{1}{N}\frac{\sigma^2}{2\alpha_2}e^{-\alpha_2N(t-s)}.
$$
For $U_N$ we get,
$$
\mathbb{E}[U_N^2(t)] = \sigma^2t + \frac{\sigma^2}{2\alpha_2 N},\qquad \mathbb{E}[U_N(t)U_N(s)] = \sigma^2 t + \frac{\sigma^2}{2\alpha_2 N}.
$$
One can extend the above computations to any $t,s \geq 0$: it suffices to take the minimum between $s$ and $t$ in the above formulae, and multiply by $\sign(t-s)$ in the exponentials. Denoting with $c_N(s,t)$ and $d_N(s,t)$ the covariance functions of $(x_j^N(t))_{t \in [0,T]}$ and $(Y_j(t))_{t \in [0,T]}$ (i.e.\! the process on the right hand side of \eqref{eqn:equality_in_law}), the above computations on $Y_j$ and the expressions \eqref{eqn:rel_imp} and \eqref{eqn:relations_impo} show that, for any $T > 0$, $c_N(s,t) = d_N(s,t)$, so that $(i)$ is proved.
For the proof of $(ii)$, recall that $\text{Law}\Big(x_j^N(t)\Big) = \mathcal{N}\left(0,\frac{\sigma^2}{2\alpha_2}(1+2\alpha_2 t)\right)$.
On the other hand, note that, integrating in $dX$, recalling \eqref{eqn:transition_kernel}, \eqref{eqn:mu_tilde_final} and $\mu_\infty(dx;X) = \mathcal{N}\left(X, \frac{\sigma^2}{2\alpha_2}\right)$,
\begin{align*}
\tilde{\mu}^t(dx) & = \int_{\mathbb{R}}Q_t(0,dX)\mu_{\infty}(dx; X) = \left[\int_{\mathbb{R}}\frac{1}{\sqrt{2\pi \sigma^2 t}} e^{-\frac{X^2}{2\sigma^2 t}}\frac{1}{\sqrt{\frac{\pi \sigma^2}{\alpha_2}}}e^{-\frac{(x-X)^2}{\sigma^2/\alpha_2}}dX\right]dx \\
& = \frac{1}{\sqrt{\frac{\pi \sigma^2}{\alpha_2}} \sqrt{1 + 2 \alpha_2 t}}e^{-\frac{\alpha_2 x^2}{\sigma^2 (1+2\alpha_2t)}}dx = \mathcal{N}\left(0,\frac{\sigma^2}{2\alpha_2}(1+2\alpha_2 t)\right)(dx),
\end{align*}
that is
\begin{equation}
\label{eqn:conv_for_a_fixed_j}
\text{Law}\Big(x_j^N(t)\Big) = \tilde{\mu}^t,
\end{equation}
for every $N \in \mathbb{N}$, with $\tilde{\mu}^t$ as in \eqref{eqn:mu_tilde_final}.
We now check the validity of $(ii)$ for bidimensional vectors $(x_i^N(t),x_j^N(t))$, as the assertion then follows by the Gaussianity of the processes in play. 
By the computations developed for the proof of $(i)$, we know that $(x_i^N(t), x_j^N(t))$ is normally distributed, with $\mathbb{E}[x_i^N(t)] = \mathbb{E}[x_j^N(t)] = 0$, $\text{Var}(x_i^N(t))= A(t) = \frac{\sigma^2 (1+2\alpha_2 t)}{2\alpha_2}$, and $\text{Cov}(x_i^N(t),x_j^N(t)) = B(t) = \sigma^2 t$. Then, we just need to check that $\tilde{\mu}^{t,2}(dx)$, as defined in $(ii)$, has the same moments. Let $(X_1,X_2) \sim \tilde{\mu}^{t,2}$. As one can check (e.g.\! via Mathematica): 
\begin{align*}
\mathbb{E}[X_1 X_2] = \int_{\mathbb{R}^3} x_1 x_2 \frac{1}{\sqrt{2\pi \sigma^2 t}} e^{-\frac{X^2}{2 \sigma^2 t}}\left(\frac{1}{\sqrt{\frac{\pi \sigma^2}{\alpha_2}}}\right)^2e^{-\frac{(x_1-X)^2}{\sigma^2/\alpha_2}}e^{-\frac{(x_2-X)^2}{\sigma^2/\alpha_2}}dXdx_1dx_2 = \sigma^2 t,\\ 
\end{align*}
while the other moments were already verified.

For the proof of $(iii)$, we note that for fixed $j \in \left\{1,\dots,N\right\}$ and any $T>0$ with $t \in [0,T]$, 
\begin{align*}
\text{Law}\Big(x_j^N(t) \big| |X^N(t)-X| \leq \varepsilon_N\Big) = \text{Law}\Bigg(\xi_j^N(t) - \overline{\xi}_N(t) + U_N(t) \Bigg| \Big|U_N(t) - X\Big| \leq \varepsilon_N\Bigg),
\end{align*}
since $X^N(t) \stackrel{\mathcal{D}}{=} U_N(t)$. By noting that $\mathbb{E}\Big[\xi_j^N(t) - \overline{\xi}_N(t) + U_N(t)\big|U_N(t)\Big] = U_N(t)$, and $\text{Var}\Big(\xi_j^N(t) - \overline{\xi}_N(t) + U_N(t)\big|U_N(t)\Big)= \left(1-\frac{1}{N}\right)\frac{\sigma^2}{2\alpha_2}$, we find that 
\begin{align*}
\lim_{N\to\infty}\text{Law}\Big(x_j^N(t) \big| |X^N(t) - X| \leq \varepsilon_N \Big) = \lim_{N\to\infty}\mathcal{N}\left(X,\left(1-\frac{1}{N}\right)\frac{\sigma^2}{2\alpha_2}\right) = \mu_\infty(\cdot;X).
\end{align*}
Furthermore, computing
\begin{align*}
\text{Cov}\Big(\xi_i^N(t) \!-\! \overline{\xi}_N(t) \!+\! U_N(t),\xi_j^N(t) \!-\! \overline{\xi}_j^N(t) \!+\! U_N(t) \Big| \overline{\xi}_N\Big) \!=\! -\frac{2}{N}\mathbb{E}[(\xi_i^N(t))^2] + \mathbb{E}[\overline{\xi}_N^2(t)]= -\frac{1}{N}\frac{\sigma^2}{2\alpha_2},
\end{align*}
which tends to $0$ when $N \to +\infty$, we can deduce the conditional law of bidimensional vectors $(x_i^N(t), x_j^N(t))$, so that $(iii)$ is verified.
\end{proof}

\begin{lem}[Averaging property]
\label{averaging}
Under the notation above, let $f : \mathbb{R}^3 \times [-1,1] \to [-1,1]$ be globally Lipschitz continuous in each variable. Let $L$ be the Lipschitz constant with respect to its fourth argument, i.e., for any $M, M' \in [-1,1]$, 
$$
|f(x_1,x_2,x_3,M) - f(x_1,x_2,x_3, M')| \leq L |M-M'|,
$$ 
for every $(x_1,x_2,x_3) \in \mathbb{R}^3$, and suppose $L <1$.
Let $\mu(du) = \mathcal{N}\left(0,\frac{\sigma^2}{2\alpha_2}\right)(du)$. Then, for any $T > 0$ we have that
\begin{itemize}
\item[(i)] For every $N \in \mathbb{N}$ and $t \in [0,T]$, the equation
\begin{equation}
\label{eqn:implicit_fixedpoint}
M_N(t) = \frac{1}{N}\sum_{j=1}^N f(\xi_j^N(t), \overline{\xi}_N(t), U_N(t), M_N(t))
\end{equation}
has a unique solution almost surely.
\item[(ii)] Let $\Big(B(t)\Big)_{t \geq 0}$ a Brownian motion. For every finite $k$-tuple of times $(t_1,\dots,t_k) \in [0,T]^k$,
\begin{equation}
\label{eqn:joint_conv}
\text{Law}\Big(M_N(t_1),\dots,M_N(t_k)\Big) \xrightarrow{N \to +\infty} \text{Law}\Big(M(t_1),\dots,M(t_k)\Big),
\end{equation}
where the process $\Big(M(t)\Big)_{t \geq 0}$ is defined by 
\begin{equation}
\label{eqn:limit_conv}
M(t) := \int_{\mathbb{R}}f(u,0, \sigma^2 B(t), M(t))\mu(du).
\end{equation}
\item[(iii)] For every fixed $t \in [0,T]$, 
\begin{equation}
\label{eqn:cond_conv_det}
\text{Law}\Big(M_N(t) \Big| |U_N(t) - z| \leq \varepsilon_N\Big) \xrightarrow{N \to +\infty} \delta_{M(t)},
\end{equation}
and $\varepsilon_N \xrightarrow{N \to +\infty} 0$, with 
\begin{equation}
\label{eqn:limit_conditional}
M(t):= \int_\mathbb{R}f(u,0,z,M(t))\mu(du).
\end{equation}
\end{itemize}
\end{lem}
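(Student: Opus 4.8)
The plan is to derive all three parts from a single mechanism: the map $M \mapsto \frac{1}{N}\sum_{j=1}^N f(\cdot,\cdot,\cdot,M)$ and the map $M \mapsto \int_{\mathbb{R}} f(\cdot,\cdot,\cdot,M)\,\mu(du)$ are strict contractions of $[-1,1]$ with the \emph{same} modulus $L<1$, and the fast Ornstein--Uhlenbeck family $(\xi_j^N)_{j}$ is independent of the slow driver $U_N$. Part (i) is then immediate: for every realization of $(\xi_j^N(t))_j,\overline{\xi}_N(t),U_N(t)$ the map $\Psi_N(M):=\frac1N\sum_{j=1}^N f(\xi_j^N(t),\overline{\xi}_N(t),U_N(t),M)$ sends $[-1,1]$ into itself (since $f$ is $[-1,1]$-valued) and is $L$-Lipschitz in $M$, so Banach's theorem yields a unique fixed point $M_N(t)\in[-1,1]$ solving \eqref{eqn:implicit_fixedpoint}; the Banach iteration $\Psi_N^{(n)}(0)\to M_N(t)$ provides a measurable version, so $M_N(t)$ is a genuine random variable.

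For part (ii) I would proceed in three reductions. First, let $\widetilde M_N(t)$ be the fixed point of $M\mapsto\frac1N\sum_j f(\xi_j^N(t),0,U_N(t),M)$; subtracting the two fixed-point identities and using Lipschitzness in the second slot gives $|M_N(t)-\widetilde M_N(t)|\le\frac{\mathrm{Lip}_2}{1-L}\,|\overline{\xi}_N(t)|$, which tends to $0$ in probability because $\mathrm{Var}(\overline{\xi}_N(t))=\frac1N\frac{\sigma^2}{2\alpha_2}\to0$, uniformly over the finitely many $t_1,\dots,t_k$. Second, set $\Phi(u,M):=\int_{\mathbb{R}}f(v,0,u,M)\,\mu(dv)$ and let $\hat M(u)$ denote its unique fixed point in $M$; the same contraction bookkeeping shows $u\mapsto\hat M(u)$ is $\frac{\mathrm{Lip}_3}{1-L}$-Lipschitz, and, conditioning on $U_N$ and using that the $\xi_j^N(t_i)\sim\mu$ are i.i.d.\ and independent of $U_N$, a Glivenko--Cantelli-type argument (pointwise strong law, plus equi-$L$-Lipschitzness in $M$ and tightness of $U_N(t_i)$ to localise the third variable on compacts) gives $\sup_{M\in[-1,1]}\big|\frac1N\sum_j f(\xi_j^N(t_i),0,U_N(t_i),M)-\Phi(U_N(t_i),M)\big|\to0$ in probability, simultaneously for $i=1,\dots,k$; feeding this into the stability estimate yields $|\widetilde M_N(t_i)-\hat M(U_N(t_i))|\to0$ in probability. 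Third, $U_N(t)=U_N(0)+\sigma^2 W(t)$ with $U_N(0)\to0$, so $(U_N(t_1),\dots,U_N(t_k))$ converges in law to $(\sigma^2 B(t_1),\dots,\sigma^2 B(t_k))$ for a standard Brownian motion $B$; applying the continuous mapping theorem to the Lipschitz map $(u_1,\dots,u_k)\mapsto(\hat M(u_1),\dots,\hat M(u_k))$ together with Slutsky and the two previous reductions yields \eqref{eqn:joint_conv} with $M(t)=\hat M(\sigma^2 B(t))$, which is precisely the fixed-point characterisation \eqref{eqn:limit_conv} (note that $\Phi$, hence $\hat M$, does not depend on $t$ because $\xi_j^N(t)\sim\mu$ for every $t$).

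For part (iii) I would run the same two reductions but condition throughout on $A_N:=\{|U_N(t)-z|\le\varepsilon_N\}$. Since $A_N\in\sigma(U_N)$ and $(\xi_j^N)_j$ is independent of $U_N$, conditioning on $A_N$ leaves the law of the $\xi$'s untouched, so $|\overline{\xi}_N(t)|\to0$ and the uniform law of large numbers above still hold in $\mathbb{P}(\,\cdot\mid A_N)$-probability; on $A_N$ one has $U_N(t)\to z$, so by Lipschitzness of $\hat M$, $\hat M(U_N(t))\to\hat M(z)$, and collecting the bounds gives $M_N(t)\to\hat M(z)$ conditionally on $A_N$, i.e.\ \eqref{eqn:cond_conv_det} with $M(t)=\hat M(z)$ as in \eqref{eqn:limit_conditional}.

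The main obstacle I anticipate is making the uniform-in-$M$ law of large numbers rigorous with the \emph{random, $N$-dependent} third argument $U_N(t)$, and ensuring the various ``error $\to0$'' statements hold jointly over $t_1,\dots,t_k$ and, in part (iii), conditionally on an event generated by $U_N$; the independence of $(\xi_j^N)_j$ from $U_N$ is exactly what makes both of these go through and should be invoked explicitly. A secondary, more routine point is verifying that the fixed point depends measurably on the sample point (via the Picard iteration), so that $M_N(t)$ and $\hat M(U_N(t))$ are bona fide random variables to which Slutsky and the continuous mapping theorem apply.
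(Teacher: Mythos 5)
Your proposal is correct and its skeleton coincides with the paper's: part (i) by the same Banach contraction argument, part (ii) by discarding $\overline{\xi}_N(t)$ via a fixed-point stability bound of the form $\frac{1}{1-L}\times(\text{perturbation})$, then reducing to a law of large numbers for the frozen $\xi_j^N(t)\sim\mu$, and part (iii) by rerunning the argument conditionally on the $U_N$-measurable event, using independence of the $\xi$'s from $U_N$. The one place where you diverge is the LLN step, which is exactly the obstacle you flag: you propose a uniform-in-$(u,M)$ Glivenko--Cantelli argument over compacts (covering argument using the Lipschitz continuity of $f$ in its parameters, plus tightness of $U_N(t)$), and then transfer the law of $U_N(t_1),\dots,U_N(t_k)$ to $\sigma^2 B(t_1),\dots,\sigma^2 B(t_k)$ by continuous mapping and Slutsky. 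The paper instead writes the fixed point as a functional $m(\mu_N(t))$ of the empirical measure of the $\xi_j^N(t)$'s, proves the bound $|m(\mu)-m(\nu)|\le \frac{1}{1-L}\|\mu-\nu\|_{\mathrm{BL}}$, and concludes from $\|\mu_N(t)-\mu\|_{\mathrm{BL}}\to 0$ almost surely, working with the coupling $U_N(t)=\sigma^2 W(t)+U_N(0)$ so that the convergence is pathwise for each fixed $t$ and for every fixed value $z$ of the slow variable, which is then substituted by $\sigma^2W(t)$. This device dissolves your main anticipated difficulty: the Lipschitz-in-measure estimate is uniform in the slow argument $z$ and in $M$ simultaneously, so no compact localization of $U_N(t)$ and no separate uniform-in-$M$ argument are needed; your route is also viable (the covering argument does go through because $f$ is bounded and Lipschitz in $(u,M)$ uniformly in its first variable), it is just slightly heavier at that point, while being a bit lighter at the end since you only ever claim convergence in law rather than the paper's almost-sure coupled convergence.
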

\begin{proof}
The map $m \mapsto \frac{1}{N}\sum_{j=1}^Nf(\xi_j^N(t),\overline{\xi}_N(t),U_N(t),m)$
is $L$-Lipschitz continuous with $L <1$. Thus, $(i)$ follows by a contraction argument (e.g.\! Banach-Caccioppoli Theorem).

For the proof of $(ii)$ we make some preliminary remarks. First, note that by definition of $\overline{\xi}_N(t)$, we have
\begin{equation*}
\begin{cases}
d \overline{\xi}_N(t) = -\alpha_2 N\overline{\xi}_N(t)dt + \sigma dW^N(t),\\
\overline{\xi}_N(0) \sim \mathcal{N}\left(0,\frac{\sigma^2}{2\alpha_2 N}\right),
\end{cases}
\end{equation*}
with $W^N(t) := \frac{1}{\sqrt{N}}\sum_{j=1}^N W_j(t)$, with the $W_j$'s appearing in dynamics \eqref{eqn:ou_auxiliary}. The solution of the above equation is
$$
\overline{\xi}_N(t) = \overline{\xi}_N(0) e^{-\alpha_2 N t} + \sigma \int_0^t e^{-\alpha_2 N(t-s)} dW^N(s),
$$
which implies, for any $T > 0$,
\begin{equation}
\label{eqn:overline_aux}
\mathbb{E}\left[\sup_{t \in [0,T]}|\overline{\xi}_N(t)|\right] \xrightarrow{N \to +\infty} 0.
\end{equation}
Moreover, recalling Eq. \eqref{eqn:U_N} for $\Big(U_N(t)\Big)_{t \geq 0}$ and the definition of $\Big(M_N(t)\Big)_{t \geq 0}$ \eqref{eqn:implicit_fixedpoint}, we have the almost sure equality between the processes $\Big(M_N(t)\Big)_{t \geq 0}$ and $\Big(M_N^*(t)\Big)_{t \geq 0}$, the latter being defined by
$$
M^*_N(t) = \frac{1}{N}\sum_{j=1}^N f\left(\xi_j^N(t), \overline{\xi}_N(t), \sigma^2W(t) + U_N(0), M_N^*(t)\right).
$$
Let $\Big(\hat{M}_N(t)\Big)_{t \geq 0}$ be the process defined by
\begin{equation}
\hat{M}_N(t) = \frac{1}{N}\sum_{j=1}^N f\left(\xi_j^N(t), 0, \sigma^2W(t), \hat{M}_N(t)\right).
\end{equation}
In light of \eqref{eqn:overline_aux}, the trivial convergence $\mathbb{E}\left[\sup_{t \in [0,T]} |U_N(t) - \sigma^2 W(t)|\right] \xrightarrow{N \to +\infty} 0$ and the Lipschitz assumptions on $f$, we obtain
\begin{equation}
\label{eqn:lei_10}
\mathbb{E}\left[\sup_{t \in [0,T]}|M_N(t) - \hat{M}_N(t)|\right] \xrightarrow{N \to +\infty} 0.
\end{equation}
In particular $\Big(M_N(t)\Big)_{t \in [0,T]}$ and $\Big(\hat{M}_N(t)\Big)_{t \in [0,T]}$ share the same limit in distribution, provided it exists.

Now we fix a $t \in [0,T]$ and prove $(ii)$ for all the one-dimensional distributions. Let $\hat{M}_N(t)(z)$ be the unique solution to
$$
\hat{M}_N(t)(z) = \frac{1}{N}\sum_{j=1}^N f(\xi_j^N(t),0,z, \hat{M}_N(t)(z)),
$$
and $M(t)(z)$
$$
M(t)(z) = \int_\mathbb{R}f(u,0,z,M(t)(z))\mu(du).
$$
If we show that, for every $z \in \mathbb{R}$,
\begin{equation}
\label{eqn:lei_finale}
\hat{M}_N(t)(z) \xrightarrow{N \to +\infty} M(t)(z),
\end{equation}
almost surely, then we have $\hat{M}_N(t) = \hat{M}_N(t)(\sigma^2 W(t)) \xrightarrow{N \to +\infty} M(t)(\sigma^2 W(t)) = M(t)$ almost surely, and thus the one-dimensional version of $(ii)$ follows by \eqref{eqn:lei_10}. For the proof of \eqref{eqn:lei_finale}, we omit for the moment the arguments $0$ and $z$, and rewrite
$$
\hat{M}_N(t) = \frac{1}{N}\sum_{j=1}^N f(\xi_j^N(t),\hat{M}_N(t)) = \int_{\mathbb{R}}f(u, \hat{M}_N(t)) \mu_N(t)(du),
$$
where $\mu_N(t) := \frac{1}{N}\sum_{j=1}^N \delta_{\xi_j^N(t)}$ is the empirical measure of the $\xi_j^N(t)$'s. We now set $F:[-1,1]\times \mathcal{M}_1(\mathbb{R}) \to \mathbb{R}$ to be given by $F(m, \mu):= \int_{\mathbb{R}} f(u,m)\mu(du)$,
endowing $\mathcal{M}_1(\mathbb{R})$ with the BL (bounded-Lipschitz) metric
$$
||\mu - \nu||_{\text{BL}} = \sup\left\{\Bigg|\int_\mathbb{R}g d\mu - \int_\mathbb{R} g d\nu\ \Bigg| : ||g||_\infty \leq 1, \  g \ 1-\text{Lip.}\right\}.
$$
Note that $m \mapsto F(m,\mu)$ is $L$-Lipschitz, so that there exists a unique $m(\mu)$ such that $m(\mu) = F(m(\mu),\mu)$.
Moreover, we have
\begin{align*}
|m(\mu) - m(\nu)| & = \Bigg| \int_{\mathbb{R}}f(u,m(\mu)) \mu(du) - \int_\mathbb{R} f(u,m(\nu))\nu(du)\Bigg| \\
& \leq \int_\mathbb{R}\Big|f(u,m(\mu)) - f(u,m(\nu))\Big|\mu(du) + \Bigg| \int_\mathbb{R} f(u,m(\nu))\mu(du) - \int_\mathbb{R} f(u,m(\nu))\nu(du)\Bigg|\\
& \leq L |m(\mu) - m(\nu)| + ||\mu- \nu||_{\text{BL}},
\end{align*}
so that $|m(\mu) - m(\nu)| \leq \frac{||\mu-\nu||_{\text{BL}}}{1-L}$.
In particular, $m(\mu)$ is continuous in $\mu$. Finally, since $\hat{M}_N(t) = m(\mu_N(t))$ and by a LLN $\mu_N(t) \to \mu = \mathcal{N}\left(0,\frac{\sigma^2}{2\alpha_2}\right)$ almost surely, we have that, restoring the dependence on $z$ in the previous expression,
$$
\hat{M}_N(t)(z) \to m(t)(\mu) = \int_\mathbb{R} f(u,0,z,m(t)(\mu))\mu(du) = M(t)(z),
$$
so that \eqref{eqn:lei_finale} is proved. Recall that, for any $t$, the above implies
\begin{equation}
\label{eqn:needed}
\hat{M}_N(t)(\sigma^2 W(t)) \xrightarrow{N \to +\infty} M(t)(\sigma^2 W(t))
\end{equation}
almost surely. The same conclusion for the finite dimensional distributions follows by the continuity of the processes with respect to time.
Assertion $(iii)$ follows directly by $(ii)$. It is indeed the corresponding conditional statement of the one-dimensional version of $(ii)$ noting, as we did above, that $U_N(t) \stackrel{\mathcal{D}}{=} \sigma^2 W(t) + U_N(0)$ for every $N$, with $U_N(0) \to 0$ for $N\to +\infty$, and 
\begin{equation*}
\mathbb{E}[\xi_j^N(t)|U_N(t)] = \mathbb{E}[\xi_j^N(t)]= 0,\quad \text{Var}(\xi_j^N(t)|U_N(t)) = \text{Var}(\xi_j^N(t))= \frac{\sigma^2}{2\alpha_2}.
\end{equation*}
The limit distribution is a delta in $M(t)$ since $M(t) = \int_\mathbb{R}f(u,0,z,M(t))\mu(du)$ is deterministic.
\end{proof}
\begin{proof}[Proof of Theorem \ref{chaos_ordN^2}]
Consider the set of $N$ processes $(\tilde{m}_j^N(t), \tilde{M}^N(t))_{j=1,\dots,N}$, coupled with $(m_j^N(t),M^N(t))_{j=1,\dots,N}$, defined by
\begin{equation}
\label{eqn:coupled_tilda}
\begin{cases}
\tilde{m}^N_j(t) = \tanh\left(\beta_1(x_j^N(t) + \tilde{m}^N_j(t)) + \beta_2(X^N(t) + \tilde{M}^N(t))\right),\\
\tilde{m}^N_j(0) = m^N_j(0),\\
 \tilde{M}^N(t) =\frac{1}{N}\sum_{j=1}^N\tanh\left(\beta_1(x_j^N(t) + \tilde{m}^N_j(t)) + \beta_2(X^N(t) + \tilde{M}^N(t))\right).
\end{cases}
\end{equation}
By the contraction estimates \eqref{eqn:strong_collapse} for $k=1,m=2$, we know that both $m_j^N(t) - \tilde{m}_j^N(t)$ and $M^N(t) - \tilde{M}^N(t) \to 0$ in strong norm, for $N \to +\infty$. Indeed, \eqref{eqn:strong_collapse} can be trivially adapted to show that $M^N(t)$ collapses onto the empirical mean of the processes laying on the invariant curve. 
It is then sufficient to study the convergence in distribution of $\Big(\tilde{M}^N(t)\Big)_{t \in [0,T]}$. We first observe that, by $(i)$ of Lemma \ref{coordinates}, for every $N \in \mathbb{N}$, it holds $(\tilde{m}_j^N(t),\tilde{M}^N(t))_{t \in [0,T]} \stackrel{\mathcal{D}}{=} (\hat{m}_j^N(t),\hat{M}^N(t))_{t\in[0,T]}$,
with $(\hat{m}_j^N(t), \hat{M}^N(t))_{j=1,\dots,N}$ given by
\begin{equation}
\label{eqn:utile_dimN^2}
\begin{cases}
\hat{m}^N_j(t) = \tanh\left(\beta_1\big(\xi_j^N(t) - \overline{\xi}_N(t) + U_N(t) + \hat{m}^N_j(t)\big) + \beta_2\big(U_N(t) + \hat{M}^N(t)\big)\right),\\
\hat{m}^N_j(0) = m^N_j(0),\\
\hat{M}^N(t) = \! \frac{1}{N}\sum_{j=1}^N\tanh \! \Big(\beta_1\big(\xi_j^N(t) - \overline{\xi}_N(t) \!+ U_N(t) + \hat{m}^N_j(t)\big) + \beta_2\big(U_N(t) \!+\! \hat{M}^N(t)\big)\Big),
\end{cases}
\end{equation}
with $\xi_j^N(t)$ and $U_N(t)$ given by \eqref{eqn:ou_auxiliary} and \eqref{eqn:U_N} respectively.  Now, we note that the function
\begin{align*}
\varphi(\xi, \overline{\xi},U,M) := & \tanh\Big(\beta_1\big(\xi - \overline{\xi} + U + \varphi(\xi,\overline{\xi},U,M)\big)+ \beta_2\big(U+ M\big)\Big)
\end{align*}
satisfies the Lipschitz properties of Lemma \ref{averaging} for any choice of $\beta_1,\beta_2 > 0$ such that $\beta_1 + \beta_2 < 1$. Indeed, the Lipschitz continuity in the first three variables follows from the regularity of $\tanh(\cdot)$. For the last argument of $\varphi$, for any $M, M' \in [-1,1]$, we estimate
$$
\Big|\varphi(\xi,\overline{\xi},U,M) - \varphi(\xi,\overline{\xi},U,M')\Big| \leq \beta_1 \Big|\varphi(\xi,\overline{\xi},U,M) - \varphi(\xi,\overline{\xi},U,M')\Big| + \beta_2 |M-M'|,
$$
so that $\Big|\varphi(\xi,\overline{\xi},U,M) - \varphi(\xi,\overline{\xi},U,M')\Big|\leq \frac{\beta_2}{1-\beta_1}|M - M'|$.
Thus, $\varphi$ is $L$-Lipschitz continuous in $M$ with $L := \frac{\beta_2}{1-\beta_1} < 1$ if and only if $\beta_1 + \beta_2 < 1$.
We can then apply $(ii)$ of Lemma \ref{averaging} to $\hat{M}^N(t)$, to get, for all the finite dimensional distributions $(t_1,\dots,t_k) \in [0,T]^k$,
$$
\text{Law}\Big(\hat{M}^N(t_1), \dots, \hat{M}^N(t_k)\Big) \xrightarrow{N \to +\infty} \text{Law}\Big(M^*(t_1),\dots,M^*(t_k)\Big),
$$
where 
$$
M^*(t):= \int_\mathbb{R}\varphi(u,0,\sigma^2W(t),M^*(t))\mu(du),
$$
with $\mu = \mathcal{N}\left(0,\frac{\sigma^2}{2\alpha_2}\right)$. 
Assertion $(i)$ is then implied by
\begin{equation}
\label{eqn:last_thm}
\text{Law}\Big(\big(M^*(t)\big)_{t \in [0,T]}\Big) = \text{Law}\Big(\big(M(t)\big)_{t \in [0,T]}\Big),
\end{equation}
with $M(t)$ as in \eqref{eqn:full_law_M(t)}.
We start by proving \eqref{eqn:last_thm} for all the one-dimensional time distributions. For the purpose, we note that, for $t \in [0,T]$, $\sigma^2W(t)\stackrel{\mathcal{D}}{=} X(t) \sim \mathcal{N}(0,\sigma^2t)$,
with $X(t)$ the limit in distribution of $X^N(t)$. Substituting in $M^*(t)$, we have the equality in distribution
\begin{align*}
M^*(t) & = \int_{\mathbb{R}} \tanh\Big(\beta_1(u + X(t) + \varphi(u,0,X(t),M^*(t)))+ \beta_2(X(t) + M^*(t))\Big)\mu(du).
\end{align*}
Finally, with the change of variable $x := u + X(t)$, noting that, by the computations in Lemma \ref{coordinates}, the random variable $x(t) := \xi + \sigma^2W(t) \stackrel{\mathcal{D}}{=} \xi + X(t)$, with $\xi \sim \mathcal{N}\left(0,\frac{\sigma^2}{2\alpha_2}\right)$, is such that $\text{Law}(x(t))(dx) = \tilde{\mu}^t(dx)$,
the relation \eqref{eqn:last_thm} is proved for the one-dimensional time marginal distributions. The analogous conclusion is immediately obtained for all the finite dimensional distributions, by using properties $(i)$ and $(ii)$ of Lemma \ref{coordinates}, which hold for any $N$ and thus also for the limit. The equality in law for the whole process follows, since both $\big(M^*(t)\big)_{t \in [0,T]}$ and $\big(M(t)\big)_{t \in [0,T]}$ are functions of the same Gaussian process $\big(X(t)\big)_{t \in [0,T]}$. 

Assertion $(ii)$ is a consequence of property $(iii)$ of Lemma \ref{averaging} and of a change of coordinates as above. In details, we know that, with the above notation
$$
\text{Law}\Big(\tilde{M}^N(t) \Big| |X^N(t) - X| \leq \varepsilon_N\Big) = \text{Law}\Big(\hat{M}^N(t) \Big| |U_N(t) - X|\leq \varepsilon_N\Big),
$$ 
for some $\varepsilon_N \xrightarrow{N \to +\infty} 0$. By property $(iii)$ of Lemma \ref{averaging} and for the above couplings, this implies
$$
\text{Law}\Big(M^N(t) \Big| |X^N(t) - X| \leq \varepsilon_N\Big) \to \delta_{M(t)},
$$
where
\begin{align*}
M(t) = \int_\mathbb{R}\varphi(u,X,M(t))\mu(du). 
\end{align*}
With the change of coordinates $x = u + X$ we get \eqref{eqn:convergenza_a_delta}.

For the proof of $(iii)$, consider a single process $\hat{m}_j^N(t)$, as given in \eqref{eqn:utile_dimN^2}, for a fixed $t \in [0,T]$. Combining assertion $(iii)$ of Lemma \ref{coordinates} with $(ii)$ of this theorem, it follows directly
$$
\text{Law}\Big(\tilde{m}_j^N(t)\Big| |X^N(t) - X(t)| \leq \varepsilon_N \Big) \to \text{Law}\Big(\tilde{m}_j(t)\Big).
$$
The asymptotic independence among the magnetizations, i.e.\! $(iii)$, follows by noting that the $m_j^N$'s (resp.\! $\tilde{m}_j^N$) are functions of $x_j^N$, $X^N$, and $M^N$ (resp.\! $\tilde{M}^N$). When we condition with respect to $X^N(t)$, we have that the $x_j^N(t)$'s are asymptotically independent by $(iii)$ of Lemma \ref{coordinates}, and $M^N(t)$ tends to a deterministic value by $(ii)$. Thus the dependence among the magnetizations is deleted in the limit.
\end{proof}

\begin{rem}[Long-time behavior]
In this timescale the long-time behavior cannot be determined, as the process $(X(t))_{t \geq 0}$ does not admit an invariant measure on the whole space. However, it is clear that for big positive values of $X(t)$ the second-level magnetization $M(t)$ will be close to $+1$, while for big negative values it will be close to $-1$, as it is shown in Fig.\! \ref{M(t)_betasmall}.
\end{rem}

\begin{figure}%
    \centering
    \subfloat[$(t,M^N(t))$]{{\includegraphics[width=5.5cm]{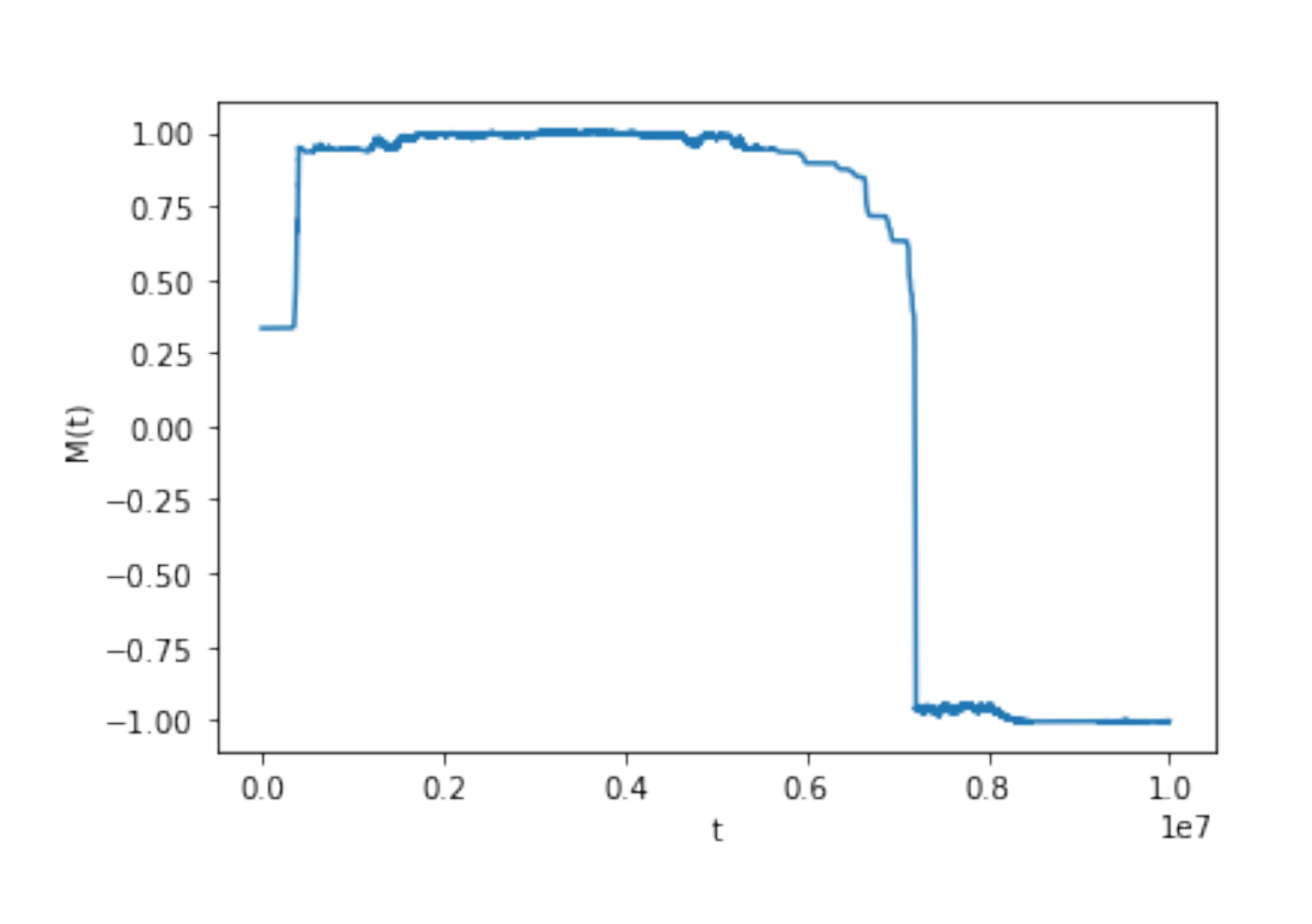} }}%
    \qquad
    \subfloat[$(t,X^N(t))$]{{\includegraphics[width=5.5cm]{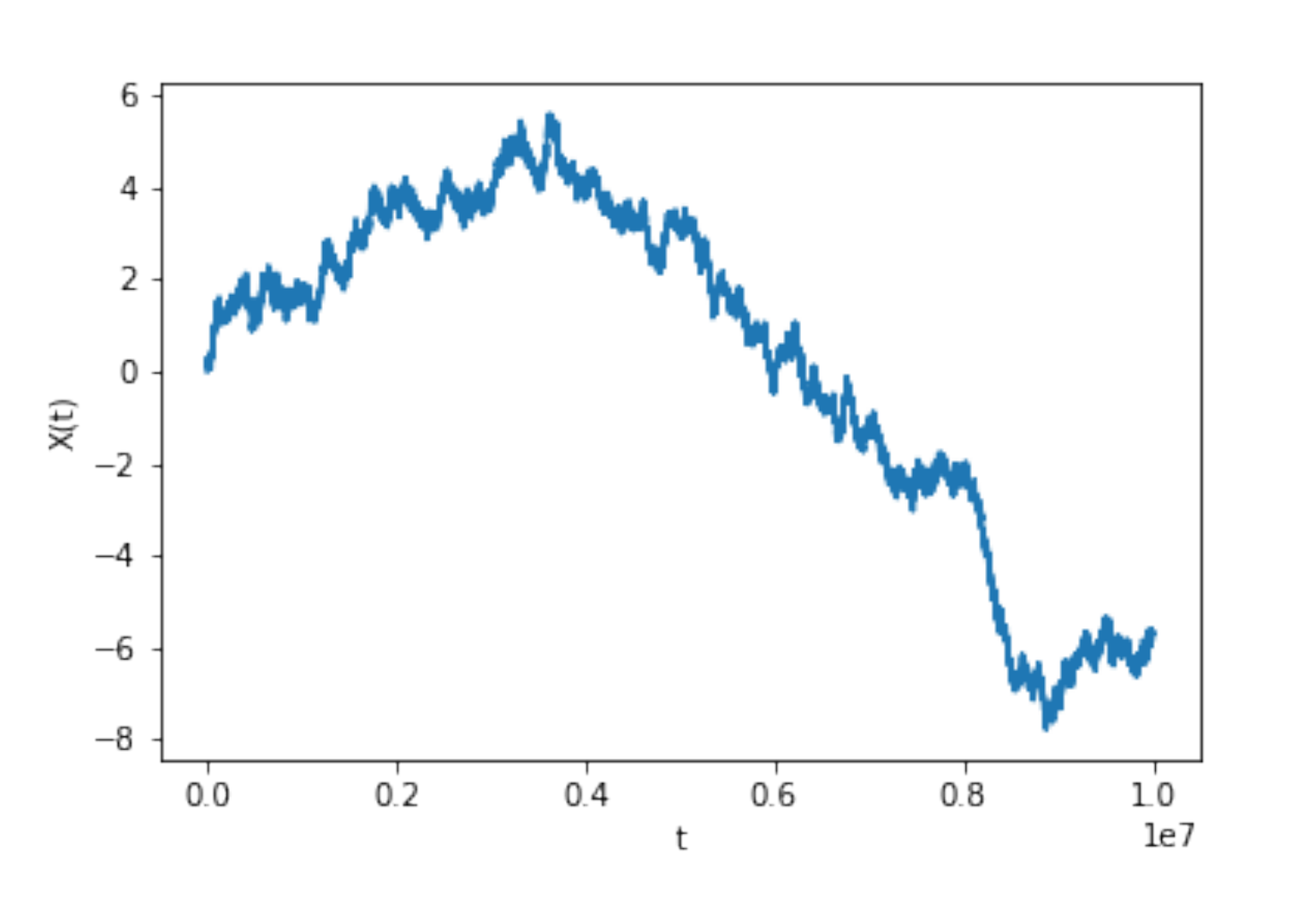} }}%
    \caption{Simulation of the finite particle system subcritical dynamics at a timescale of order $N^2$, for $N = 200$, $\beta_1= \beta_2=0.3$, $\alpha_1 = \alpha_2 = \sigma = 1$, $T = 10^7$.}   
    \label{M(t)_betasmall}
\end{figure}


\subsection{Renormalization theory: the subcritical case}
\label{renormalization_theory}
The results of the previous section are expected to be generalizable to the $k$-th hierarchical level for any $k > 0$ finite. In this section we state what we think should be the corresponding assertion, in form of a conjecture, as we did not work out a proof yet. The goal is to define inductively a \textit{renormalization} map $\varphi_d$, with $d = 1,\dots,k$, which allows one to describe the limit dynamics for the aggregated magnetizations at each timescale $N^dt$ in terms of the corresponding aggregated diffusions.

In this case, the model is defined on the set $V:= \left\{1,\dots,N\right\}^k$. Any of the $N^k$ individuals in the population is identified by a $k$-tuple $i = (i_1,i_2,\dots,i_k)$. For any two individuals $i,j \in V$, define the \textit{hierarchical distance} as
\begin{equation}
\label{eqn:hierarchical_distance}
d(i,j) := \min\left\{ d \ \big| \ 0 \leq d \leq k-1, \  \ (i_{d+1},\dots, i_k) = (j_{d+1}, \dots, j_k)\right\}.
\end{equation}
If in \eqref{eqn:hierarchical_distance} $(i_{d+1},\dots,i_k) \neq (j_{d+1},\dots,j_k)$ for any $0 \leq d \leq k-1$, then we set $d(i,j) := k$.
The interaction among individuals $(i,j)$ at distance $d(i,j) = d$ now scales as
\begin{equation}
\label{eqn:hier_inter}
J_{ij} = \frac{\beta_d}{N^d}, \qquad J'_{ij} = \frac{\alpha_d}{N^{2d -1}}.
\end{equation} 
For $d = 1,2,\dots,k-1$ and $i \in V$, set 
$$
i^d := (i_{d+1},\dots,i_{k}) \in \left\{1,2,\dots,N\right\}^{k-d}, \qquad i^k := \emptyset.
$$
Denote the $N^{k-d}$ \textit{$d$-th level magnetizations}, for any $d < k$, and the \textit{$k$-th level magnetization} as
$$
m^{N}_{i^d}(t) := \frac{1}{N^d}\sum_{j \in V : j^d = i^d}\!\!\!\!\!\!\! \mu_j(t), \quad m^{N}_{i^k}(t) := \frac{1}{N^k}\sum_{j \in V} \mu_j(t). 
$$
Moreover, denote for any $d < k$ the limit \textit{$d$-th level diffusion}, 
\begin{equation}
\label{eqn:d_th_diff}
\begin{cases}
d X^{(d)}(t) = - \alpha_{d+1} X^{(d)}(t) dt + \sigma d W^{(d)}(t),\\
X^{(d)}(0) = 0,
\end{cases}
\end{equation}
and the limit \textit{$k$-th level diffusion}
\begin{equation}
\label{eqn:k_th_diff}
\begin{cases}
d X^{(k)}(t) = \sigma d W(t),\\
X^{(k)}(0) = 0.
\end{cases}
\end{equation}
Let $Q_t^{(d)}(0,dX)$ and $Q_t^{(k)}(0,dX)$ be the transition kernels of the diffusions \eqref{eqn:d_th_diff} and \eqref{eqn:k_th_diff} respectively, and $\nu_y^{(d)}$ the stationary distribution of
\begin{equation}
\begin{cases}
dz(t) = -\alpha_{d+1}(z(t) - y)dt + \sigma dB(t),\\
z(0) = 0,
\end{cases}
\end{equation}
where $B(t)$ is a Brownian motion. Note that, in the notation of the previous section $\nu_y^{(1)}(\cdot) = \mu_{\infty}(\cdot;y)$.
Then, we have
\begin{conj}
\label{conj_2}
Assume $\beta_1 + \dots + \beta_k < 1$ and that $x_j(0) \sim \mathcal{N}(0,1)$ i.i.d.\! for any $j \in V$. Then, for any $i^d$, $d \in \left\{1,2,\dots,k\right\}$ and $T > 0$,
\begin{equation}
\label{eqn:general_subcritical}
(m_{i^d}^N(N^d t))_{t \in [0,T]} \to (m^{(d)}(t))_{t \in [0,T]}
\end{equation}
in the sense of weak convergence of stochastic processes, where
\begin{equation}
\label{eqn:renorm_eq}
m^{(d)}(t) = \varphi_d(X^{(d)}(t),0),
\end{equation}
with $X^{(d)}$ the solution to \eqref{eqn:d_th_diff} (resp.\! \eqref{eqn:k_th_diff}) for $d < k$ (resp.\! $d=k$). The function $\varphi_d = \varphi_d(x,y)$, for $d=2,\dots,k$, is the unique solution to
\begin{equation}
\label{eqn:recursive_hier}
\varphi_d = \int_{\mathbb{R}}\varphi_{d-1}(z,\beta_d(\varphi_d + x) + y)\tilde{\mu}^{t,d}(dz),
\end{equation}
with, for $d = 2,\dots,k$,
\begin{equation}
\tilde{\mu}^{t,d}:= \int_\mathbb{R} Q_t^{(d)}(0,dx)\nu^{(d-1)}_{x},
\end{equation}
and $\varphi_1(x,y)$ is the unique solution to $\varphi_1 = \tanh(\beta_1(\varphi_1 + x) + y)$.
\end{conj}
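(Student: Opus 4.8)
The plan is to argue by induction on the hierarchical level $d\in\{1,\dots,k\}$, reproducing at each level the three-step scheme developed for $k=2$ in Sections~\ref{order1}--\ref{orderN^2}: a \emph{collapse} estimate forcing the bottom spins onto the level-$d$ invariant manifold, a \emph{Gaussian decomposition} of the aggregated diffusions into a tower of fast stationary Ornstein--Uhlenbeck components plus a single slow driver, and an \emph{averaging}/law-of-large-numbers step identifying the limit of the implicit fixed point for the magnetization. The base case $d=1$ is covered by Theorems~\ref{chaos_ord1} and~\ref{chaos_ordN} together with the contraction solvability of $\varphi_1=\tanh(\beta_1(\varphi_1+x)+y)$, immediate since $\beta_1<1$. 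For the inductive step one assumes the statement at all levels strictly below $d$ and all timescales up to $N^{d-1}t$, and transfers the argument to timescale $N^dt$, where --- by the inductive hypothesis and the $k$-level analogue of Theorem~\ref{erg1} --- the levels $1,\dots,d-1$ already sit in their conditional equilibria given the current value of $X^{(d)}$, while the faster levels $d+1,\dots,k$ are still frozen at $0$.

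For the collapse step one applies the $k$-level generator (the evident extension of~\eqref{eqn:h_gen}) in the accelerated scale to $|y_{j}|^{p}$, with $y_j$ the distance of a bottom spin from $\tanh(\beta_1(x_{j}+\mu_{j})+\sum_{\ell=2}^k\beta_\ell(X^{(\ell),N}+M^{(\ell),N}))$; exactly as in Proposition~\ref{cont_est} and Remark~\ref{any_scale} this yields $N^d\mathcal L^N|y_j|^p\le -C(\beta_1,p)\,N^d|y_j|^p+O(N^{d-1})$, because only $\beta_1$ enters the leading coefficient $1-\beta_1(1-\tanh^2(\cdot))\ge 1-\beta_1>0$ and the cross-level terms carry extra negative powers of $N$. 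The version of Corollary~\ref{cor_coll} for the exponent $d$ then forces the bottom magnetizations, and by averaging the level-$d$ magnetization $m^N_{i^d}$, onto the corresponding empirical mean of curve values, so that~\eqref{eqn:general_subcritical} is reduced to the study of these empirical means.

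The Gaussian-decomposition step iterates Lemma~\ref{coordinates}: at timescale $N^dt$ every aggregate diffusion of level $<d$ relaxes on a timescale $o(N^d)$, so in law it equals a stationary fast Ornstein--Uhlenbeck part plus a negligible correction plus its parent aggregate; unwinding the tower leaves the slow driver $X^{(d)}$, an Ornstein--Uhlenbeck process of rate $\alpha_{d+1}$ for $d<k$ and a Brownian motion for $d=k$, and the fixed-time marginal of a bottom diffusion becomes the mixture $\tilde\mu^{t,d}=\int_{\mathbb R} Q^{(d)}_t(0,dx)\,\nu^{(d-1)}_x$. As in Lemma~\ref{coordinates} these identities reduce to Gaussian covariance computations, plus the observation that conditioning on $\{|X^{(d),N}(t)-X|\le\varepsilon_N\}$ removes the inter-particle correlations in the limit, giving conditional propagation of chaos. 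The averaging step is the nested version of Lemma~\ref{averaging}: the defining map $m\mapsto\int_{\mathbb R}\varphi_{d-1}(z,\beta_d(m+x)+y)\,\tilde\mu^{t,d}(dz)$ is Lipschitz in $m$ with constant $\beta_d K_{d-1}$, where $K_{d-1}$ is the Lipschitz constant of $\varphi_{d-1}$ in its field argument; a one-line induction based on $\varphi_d=\int\varphi_{d-1}(z,\beta_d(\varphi_d+x)+y)\tilde\mu^{t,d}(dz)$ gives $K_d=(1-\beta_1-\dots-\beta_d)^{-1}$, hence $\beta_d K_{d-1}=\beta_d/(1-\beta_1-\dots-\beta_{d-1})<1$ precisely under the subcritical hypothesis $\beta_1+\dots+\beta_k<1$. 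This both yields well-posedness of~\eqref{eqn:recursive_hier} by a Banach--Caccioppoli argument and lets the law of large numbers for the empirical measures of the $\xi^N_j(t)$'s pass through the implicit fixed point, exactly as in Lemma~\ref{averaging}(ii)--(iii), producing~\eqref{eqn:renorm_eq} and hence~\eqref{eqn:general_subcritical}.

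The main obstacle is the multiscale bookkeeping hidden in the inductive hypothesis: to launch the level-$d$ analysis one needs the $k$-level analogue of Theorem~\ref{erg1}, namely a window $N^{d-1}\ll\tau\ll N^d$ on which the level-$(d-1)$ structure equilibrates to its conditional stationary law while $X^{(d)}$ has not yet moved, together with the propagation of the correct (random, conditionally Gaussian) initial data through all $k$ nested scales without losing uniformity in $N$. Controlling the $\varepsilon$--$\delta$ stopping-time arguments of Sections~\ref{sub_mfcase} and~\ref{orderN^2} simultaneously across these scales --- and in particular the order of the limits $N\to\infty$ and $\varepsilon_N\to 0$ at every level --- is where the genuine work lies, and is presumably the reason the statement is left as a conjecture.
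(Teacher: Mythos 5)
Be aware first of what you are comparing against: the paper offers no proof of this statement at all --- it is stated explicitly as a conjecture, and the only supporting argument the authors give is the Lipschitz-constant recursion $L_d=L_{d-1}/(1-L_{d-1}\beta_d)$, $L_0=1$, showing that $\beta_1+\dots+\beta_k<1$ propagates the contraction property \eqref{eqn:rec_hier_d-1} through the levels. Your computation $K_d=(1-\beta_1-\dots-\beta_d)^{-1}$ reproduces exactly this, and your three-step inductive scheme (collapse, Gaussian decomposition, averaging) is the natural extension of the machinery of Sections \ref{order1}--\ref{orderN^2} that the authors themselves have in mind. As a roadmap, then, your proposal is aligned with the paper; but it is a roadmap, not a proof, and the places where you wave your hands are precisely where the mathematical content would have to be.

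Concretely: (a) the $k$-level analogue of Theorem \ref{erg1} --- equilibration of levels $1,\dots,d-1$ to their conditional laws on a window $N^{d-1}\ll\tau\ll N^d$, uniformly in $N$ and with the correct conditionally Gaussian initial data --- is invoked in the inductive hypothesis but nowhere established; the proof of Theorem \ref{erg1} relies on explicit rates ($N^{2-\varepsilon}$, $N^{2/3-\varepsilon}$) whose $k$-level counterparts you would have to derive. (b) Collapsing the bottom spins onto the bottom $\tanh$-relation, as in Proposition \ref{cont_est} and Corollary \ref{cor_coll}, is not sufficient to identify $m^{(d)}$: you need each intermediate magnetization $m^N_{i^\ell}$, $\ell<d$, to collapse onto its own recursive relation $\varphi_\ell$, which is a statement about empirical averages over blocks of size $N^\ell$ whose LLN fluctuations (of order $N^{-\ell/2}$) must be shown to survive acceleration by $N^d$; Corollary \ref{cor_coll} as stated covers only $m=1,2$ and only the bottom level. (c) The decomposition of Lemma \ref{coordinates} is a two-component (fast OU plus slow Brownian) Gaussian identity; at level $d$ you face a tower of OU processes with rates $\alpha_2 N,\alpha_3 N^2,\dots$, and both the exact-in-law identity and the conditional propagation of chaos must be iterated through this tower, which is not a routine restatement. (d) Even for $k=2$ the paper proves top-scale convergence (Theorem \ref{chaos_ordN^2}, part (i)) only for finite-dimensional distributions, and the authors say explicitly that process-level weak convergence is open; your induction, built on that technology, therefore cannot deliver the weak convergence of processes asserted in \eqref{eqn:general_subcritical} without a tightness argument that neither you nor the paper supplies. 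Since these four points are exactly where the difficulty lies, your text should be read as a plausible plan consistent with the authors' intentions, not as a proof of the conjecture.
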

\noindent
We conclude the section justifying the $k$-th level subcritical regime condition 
\begin{equation}
\label{eqn:subcritical_condition}
\beta_1 + \dots + \beta_k < 1
\end{equation}
in the above conjecture, which is in accordance with the first two hierarchical levels of the previous sections. Let $L_{d-1}$ be the Lipschitz constant of $\varphi_{d-1}$ in its second variable, with $L_{d-1} < 1$. At the $d$-th hierarchical level, Eq. \eqref{eqn:recursive_hier} has a unique solution, provided that the right hand side is a contraction in terms of $\varphi_d$. This is true if
\begin{equation}
\label{eqn:rec_hier_d-1}
L_{d-1}\beta_d < 1.
\end{equation}
On the other hand, computing the $d$-th level Lipschitz constant $L_d$ we find, for $y, y' \in \mathbb{R}$,
\begin{align*}
|\varphi_d(x,y) - \varphi_d(x,y')| & \leq L_{d-1}(1+\beta_d L_d)|y-y'|,
\end{align*}
and thus 
\begin{equation}
\label{eqn:rec_hier_d}
L_d = \frac{L_{d-1}}{1-L_{d - 1}\beta_d}.
\end{equation}
Using \eqref{eqn:rec_hier_d}, the subcriticality condition \eqref{eqn:subcritical_condition} implies inductively the validity of \eqref{eqn:rec_hier_d-1} for $d=1,\dots, k$, starting from $L_0 = 1$.

\subsection{The limit case: $\left[\beta_1=\beta_2 \to \infty\right]$}
\label{the_limit_case}
In this section we develop heuristics for dealing with the limit case of null temperatures. 
For convenience, here we use the notation
\begin{equation}
\label{eqn:normal_distrs}
\begin{aligned}
\mu_0(dx) &:= \mathcal{N}(0,\rho^2),\qquad \mu_0^t(dx) := \mathcal{N}(0,\rho^2(t)),\\
\mu_X(dx) &:= \mathcal{N}(X,\rho^2),\qquad \mu^\infty_X(dx):= \mu_\infty(dx;X) = \mathcal{N}\left(X,\frac{\sigma^2}{2\alpha_2}\right),
\end{aligned}
\end{equation}
for the normal distributions we consider, where $\rho^2$ and $\rho^2(t)$ depend on the diffusion parameters $\sigma$ and $\alpha_2$. We analyze the limit dynamics at any timescale, formally replacing $\beta_1=\beta_2=\infty$ in the equations. Substituting $\tanh(\beta_1 z + \beta_2 w)$ with $\sign(z + w)$, the main focus of the section is on the study of the deterministic dynamics
\begin{equation}
\label{eqn:limit_intro}
\begin{cases}
\dot{m}(t)(x) = 2\text{sign}(x + m(t)(x) + X + M(t)) - 2m(t)(x),\\
m(0)(x) = m_0(x),\\
M(t) = \int_{\mathbb{R}}m(t)(x)\nu(dx),
\end{cases}
\end{equation}
and of its equilibria, where $X$ has to be intended as a fixed value of the second level diffusion, and the measure $\nu(dx)$ is a normal distribution. Equation \eqref{eqn:limit_intro} describes the zero-temperature dynamics at a timescale of order $1$, possibly with initial data at larger timescales, realized by choosing  an initial value of the second level diffusion $X \neq 0$, which is kept fixed for the whole evolution. The spirit of our approach is the following:
\begin{itemize}
\item we identify all the equilibria reached at an order $1$ timescale, showing that they are provided by \textit{staircase} functions
\begin{equation*}
m^{x_0}(x):=
\begin{cases}
+1, \ \ \ \forall x > x_0,\\
-1, \ \ \ \forall x < x_0,
\end{cases}
\end{equation*}
where the discontinuity point $x_0\in \mathbb{R}$ belongs to a certain interval which we refer to as \textit{fixed points region}. In particular, we show that the fixed points region depends on the current value of the macroscopic quantity $X$ and on the diffusion parameters $\sigma$ and $\alpha_2$ (Propositions \ref{scalino}, \ref{scalinoX});
\item we deduce the local stability of the above configurations (Proposition \ref{stability_scalino});
\item we update the macroscopic time (either with an infinitesimal change at order $N$ or $N^2$), and evolve $X$ to a new value $\overline{X}$;
\item we quantify the corresponding adaptation of the staircase profiles to the change in the environment, distinguishing between the order $N$ dynamics (Section \ref{ordN_subsub}), where the first level diffusions $x_j$'s already evolve non-trivially, and the order $N^2$ dynamics (Section \ref{ordN^2_subsub}), where the first level diffusions have reached a stationary equilibrium with the environment;
\item in case the previous step brings the magnetization profile to a \textit{non-equilibrium} configuration, we quantify the way it approaches again the fixed points region (Propositions \ref{attractor}, \ref{attractorX});
\item the reiteration of the above steps allows for a heuristic description of the order $N^2$ dynamics;
\item we show simulations of the finite particle system at any timescale, confirming the above facts and highlighting the remaining open problems.
\end{itemize}
In particular, we observe the following phenomenon: the $N^2$ dynamics undergoes a \textit{phase transition}, depending on the diffusion parameters $\sigma$ and $\alpha_2$. Specifically, for big values of $\frac{\sigma^2}{2\alpha_2}$, the pair $(X^N(t),M^N(t))$ approximately evolves as a regular two-dimensional diffusion inside the fixed points region. When $\frac{\sigma^2}{2\alpha_2}$ is small instead, $(X^N(t),M^N(t))$ still evolves inside the fixed points region, but behaves as a two-dimensional diffusion with jumps (see Fig.\! \ref{complete_N^2}). The motivation for this is a loss of stability, not yet fully understood, of certain areas of the fixed points region which happens already at an order $N$ timescale.

\subsubsection{Order $1$ dynamics}
When $\beta_1 = \beta_2 \to \infty$, we have that the dynamics at order $1$ is given by 
\begin{equation}
\label{eqn:order1_inf}
\begin{cases}
\dot{m} = 2\text{sign}(2m) - 2m,\\
m(0) = 2p-1,
\end{cases}
\end{equation}
which, when $t \to \infty$ reaches the equilibrium point $m = \text{sign}(2m) = \text{sign}(m)$.
Clearly, Eq. \eqref{eqn:order1_inf} has the same behavior as the low temperature limit of the Curie--Weiss model. It is easy to see that the solution $m = 0$ is unstable and the two polarized solutions $m = \pm 1$ are stable, where the one getting picked asymptotically is determined by the initial sign of $m(0)$. We denote the three equilibria of \eqref{eqn:order1_inf} by $m_0, m_{\pm}$.
We now let the dynamics evolve until a time of order $N$, when the dynamics of the diffusions is not trivial anymore. Let this time be our new initial time, and let the system evolve again at times of order $1$. The mean field equations are now 
\begin{equation}
\label{eqn:limit_inf}
\begin{cases}
\dot{m}(t)(x) = 2\text{sign}(x + m(t)(x) + M(t)) - 2m(t)(x),\\
m(0)(x) = m_0(x),\\
M(t) = \int_{\mathbb{R}}m(t)(x)\mu_0(dx),
\end{cases}
\end{equation}
with $\mu_0$ as in the first line of \eqref{eqn:normal_distrs}, for some $\rho > 0$ which depends on the previous evolution of the diffusions and $m_0(x)$ is close (but not necessarily equal) to the function constantly equal to one of the three equilibria $m_0, m_{\pm}$. The study of the asymptotic profile of Eq. \eqref{eqn:limit_inf} helps us in understanding what the dynamics at longer timescales will be. Indeed, as we already stressed, the further timescales dynamics are expected to be described by motions across the different equilibria profiles of order $1$, triggered by the dynamics of the diffusions.

For studying the asymptotic profiles of the magnetization $m(t)(x)$ we make an \textit{ansatz} on their shape, motivated by the following preliminary remark
\begin{rem}
Any asymptotic equilibrium $m^*(x)$ of Eq. \eqref{eqn:limit_inf} is such that
\begin{equation*}
m^*(x) :=
\begin{cases}
+1, \ \ \ \forall x > 2,\\
-1, \ \ \ \forall x < -2.
\end{cases}
\end{equation*}
Indeed, for any $t> 0$, we have $ -2 \leq m(t)(x) + M(t) \leq 2$, and thus for $x > 2$, $\dot{m}(t)(x) > 0$ and, symmetrically, for $x < -2$, $\dot{m}(t)(x) < 0$. 
\end{rem}

Even though a full proof of the validity of the below ansatz is not established, as we expect it to hold we state it as a
\begin{prop}[Shape of the equilibria] 
\label{scalino}
Every equilibrium of Eq. \eqref{eqn:limit_inf} is a staircase function $m^{x_0}(x)$ of the form
\begin{equation}
\label{eqn:scalino}
m^{x_0}(x):=
\begin{cases}
+1, \ \ \ \forall x > x_0,\\
-1, \ \ \ \forall x < x_0,
\end{cases}
\end{equation}
for some $x_0 \in \mathbb{R}$ satisfying 
\begin{equation}
\label{eqn:posneg}
- 2\mu_0(x_0, +\infty) \leq x_0 \leq 2 \mu_0(-\infty, x_0).
\end{equation}
\end{prop}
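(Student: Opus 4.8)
The plan is to combine the pointwise stationarity relation with a monotonicity property inherited from the flow \eqref{eqn:limit_inf}, and then to read off the admissible jump locations by a direct computation.

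First I would rewrite the equilibrium condition. Imposing $\dot m(t)(x)\equiv 0$ in \eqref{eqn:limit_inf}, an equilibrium $m^*$ with mean $M^*:=\int_{\mathbb R} m^*(x)\,\mu_0(dx)$ must satisfy
\[
m^*(x)=\sign\!\big(x+m^*(x)+M^*\big)\qquad\text{for a.e.\ }x.
\]
Since $\sign$ takes only the values $\pm 1$ away from the origin, this already forces $m^*(x)\in\{-1,+1\}$ for a.e.\ $x$. Sharpening the Remark preceding the statement: if $x>1-M^*$ then $m^*(x)=-1$ is impossible (it would require $x-1+M^*<0$), hence $m^*(x)=+1$; symmetrically $m^*(x)=-1$ for $x<-1-M^*$. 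As $|M^*|\le 1$, both half-lines are non-empty, so the only region where the equation does not by itself fix $m^*$ is the free band $B:=(-1-M^*,\,1-M^*)$, of width $2$.

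Next I would argue that $m^*$ is non-decreasing, which together with the two-valuedness makes it a staircase. Here one uses that the relevant equilibria arise as long-time limits of \eqref{eqn:limit_inf} from monotone (in particular near-constant) initial data, and that monotonicity is preserved by the flow: for $x_1<x_2$, if $m(t)(x_1)=m(t)(x_2)$ at some $t$, then since $M(t)$ is common to the two ODEs and $\sign$ is non-decreasing, $\frac{d}{dt}\big(m(t)(x_2)-m(t)(x_1)\big)\ge 0$ at that instant, so the difference cannot change sign. A bounded non-decreasing function taking only the values $\pm 1$ a.e.\ agrees a.e.\ with a staircase $m^{x_0}$ of the form \eqref{eqn:scalino} for a unique $x_0\in[-1-M^*,\,1-M^*]$. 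Finally, to pin down the admissible thresholds, one computes for $m^*=m^{x_0}$, using that $\mu_0$ is atomless,
\[
M^{x_0}=\mu_0(x_0,+\infty)-\mu_0(-\infty,x_0)=2\mu_0(x_0,+\infty)-1=1-2\mu_0(-\infty,x_0).
\]
Consistency of the equilibrium relation just above the jump ($m^{x_0}=+1$, requiring $x_0+1+M^{x_0}\ge 0$) and just below it ($m^{x_0}=-1$, requiring $x_0-1+M^{x_0}\le 0$) becomes, after substituting the two expressions for $M^{x_0}$, exactly $-2\mu_0(x_0,+\infty)\le x_0\le 2\mu_0(-\infty,x_0)$, which is \eqref{eqn:posneg}; conversely any $x_0$ in this interval gives an equilibrium, so the range is sharp.

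The main obstacle is the monotonicity step. The stationarity relation alone does not exclude non-monotone profiles inside the free band $B$ — any measurable choice of $\pm 1$ there is pointwise consistent once $M^*$ is taken self-consistent — nor sliding-mode (Filippov) equilibria of the form $m^*(x)=-x-M^*$ on $B$. Establishing monotonicity, hence the staircase shape, for \emph{every} equilibrium rather than only for those reached dynamically from monotone data is precisely the content of the ansatz, which is why the statement is flagged as not fully proved here.
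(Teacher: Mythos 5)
Your consistency computation is essentially the paper's own proof: the paper likewise only verifies the ``easy direction,'' namely that a staircase $m^{x_0}$ satisfies the stationarity relation $m^{x_0}(x)=\sign(x+m^{x_0}(x)+M)$ with $M=1-2\mu_0(-\infty,x_0)$ precisely when $-2\mu_0(x_0,+\infty)\leq x_0\leq 2\mu_0(-\infty,x_0)$, reducing the pointwise conditions to the jump point by monotonicity in $x$ exactly as you do. Your additional monotonicity-preservation argument and the sharpened band $(-1-M^*,1-M^*)$ go slightly beyond the paper, but as you correctly note they do not rule out non-monotone or sliding-mode profiles, which is the same gap the paper itself acknowledges by calling the shape claim an ansatz that is not fully established.
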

\begin{proof}
We restrict ourselves to prove one direction of the ansatz, which is easy. Indeed, a profile $m^{x_0}(x)$ is an equilibrium for the dynamics \eqref{eqn:limit_inf} if
\begin{equation}
\label{eqn:staz}
m^{x_0}(x) = \text{sign}(x + m^{x_0}(x) + M),
\end{equation}
with 
\begin{equation}
\label{eqn:stazM}
M = \int_\mathbb{R} m^{x_0}(x) \mu_0(dx) = - \mu_0(-\infty,x_0) + \mu_0(x_0, \infty) = 1 - 2 \mu_0(-\infty,x_0).
\end{equation}
For \eqref{eqn:staz} to be satisfied it must be, when $x < x_0$, $x - 1 + M < 0$, while, for $x > x_0$, $x + 1 + M > 0$.
Using \eqref{eqn:stazM}, the inequalities become
$$
x -1 + 1- 2\mu_0(-\infty, x_0) = x - 2\mu_0(-\infty,x_0) < 0,
$$ 
for $x < x_0$, and
$$
x + 1 + 1 - 2\mu_0(-\infty,x_0) = x + 2\mu_0(x_0,+\infty) > 0,
$$
where in the second equality we have used $1 - 2\mu_0(-\infty,x_0) = -1 + 2\mu_0(x_0,+\infty)$. Because of the monotonicity of the above conditions with respect to $x$, they can be equivalently stated respectively as 
$$
x_0 - 2 \mu_0(-\infty, x_0) \leq 0,\qquad x_0 + 2\mu_0(x_0,+\infty) \geq 0.
$$
Finally, observe that, when $x_0 \geq 0$, the second inequality is trivially true and thus the inequality on the right in \eqref{eqn:posneg} is the equilibrium condition for this case, while for $x_0 \leq 0$ the first is the trivial one, so that we obtain the left inequality in \eqref{eqn:posneg} as a necessary condition for the equilibrium.
\end{proof}

\begin{rem}
\label{lim_case}
Note that in \eqref{eqn:posneg}, both at a timescale of order $1$ and $N$, $\mu_0$ is a normal distribution centered in $0$ with variance $\rho^2$ (possibly depending on the (macroscopic) time and on the diffusion parameters $\sigma$ and $\alpha_2$). Condition $\eqref{eqn:posneg}$ restricts to 
$$
-2 \leq x_0 \leq 2, \quad -1 \leq x_0 \leq 1,
$$
respectively when $\rho \to 0$ and $\rho \to \infty$. Moreover, the fixed points interval is monotonically decreasing with $\rho$, since $\mu_0(-\infty,x_0)$ is so.
\end{rem}

As an example of convergence to the equilibrium, let us fix a constant initial datum $m(0)(x) \equiv \overline{m}$ (with $0 < \overline{m} < \frac{1}{2}$) for \eqref{eqn:limit_inf} and reason heuristically by small variations of time. Since at the initial time $M(0) = \overline{m}$, for every $x > -2\overline{m}$ one has that $\frac{d}{dt}m(t)(x)\Big|_{t=0} > 0$, and symmetrically, for every $x < -2\overline{m}$, we have $\frac{d}{dt}m(t)(x)\Big|_{t=0} < 0$. One can expect that these considerations should keep being true for any $t > 0$ as the quantities inside the sign function increase/decrease monotonically with time (this is not precise because of the term $M(t)$ inside the sign). The same argument works for $-\frac{1}{2} < \overline{m} < 0$, and for the symmetric case $\overline{m} = 0$. The limit configuration, denoted by $m^*(x)$, is thus given by 
\begin{equation}
\label{eqn:limit_profile}
m^*(x):=
\begin{cases}
-1, \ \ \text{for } x < -2\overline{m},\\
0, \ \ \text{for } x = -2\overline{m},\\
+1, \ \ \text{for } x > -2\overline{m}.  
\end{cases}
\end{equation}
By integrating \eqref{eqn:limit_profile} over the diffusion's distribution we obtain the asymptotic value of $M$,
\begin{equation}
\label{eqn:limit_prof_M}
M = \int_{-2\overline{m}}^{2\overline{m}}\mu_0(dx).
\end{equation}
Depending on the variance parameter of the distribution $\mu_0$, the resulting asymptotic value of $M$ can either be greater or smaller than the initial one (or equal to in the symmetric case $\overline{m} = 0$). The bigger the variance of $\mu_0$, the more $M$ would tend to be depolarized in this limit.


Proposition \ref{scalino} asserts that there exists a whole region of fixed points for Eq. \eqref{eqn:limit_inf}. Concerning the stability properties of these equilibria, we have that 
\begin{prop}[Stability of the equilibria]
\label{stability_scalino}
The equilibrium $m^{x_0}(x)$ is locally stable for the dynamics \eqref{eqn:limit_inf} if inequality \eqref{eqn:posneg} holds. 
\end{prop}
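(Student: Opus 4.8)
The plan is to linearize the dynamics \eqref{eqn:limit_inf} around a staircase profile $m^{x_0}$ satisfying the fixed-point inequality \eqref{eqn:posneg}, and to show that small perturbations are contracted back onto (a possibly shifted) staircase inside the fixed-points region. First I would set up the perturbed initial datum $m(0)(x) = m^{x_0}(x) + \eta_0(x)$ with $\|\eta_0\|_\infty$ small, and track the evolution of the argument of the sign function, $\Psi_t(x) := x + m(t)(x) + M(t)$. The key structural observation is that, away from the discontinuity $x_0$, say for $|x - x_0| > \delta$ with $\delta$ comparable to the perturbation size, the sign of $\Psi_t(x)$ is \emph{determined} and stays constant: for $x > x_0 + \delta$ the strict inequality $x + 1 + M > 0$ coming from \eqref{eqn:posneg} (via the monotone reformulation $x_0 + 2\mu_0(x_0,+\infty) > 0$, which is strict when \eqref{eqn:posneg} holds with the staircase actually being an interior fixed point) survives small perturbations of $m$ and of $M$, so $\dot m(t)(x) = 2 - 2m(t)(x)$ and $m(t)(x) \to 1$ exponentially fast; symmetrically on the left. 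Hence the perturbation is killed exponentially on $\{|x-x_0|>\delta\}$, and what remains is to control the ``interface'' region $|x - x_0| \le \delta$.

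For the interface, the right viewpoint is that the perturbed dynamics, once the outer region has relaxed, reduces to a finite-dimensional problem for the location of the jump. Writing a nearby staircase $m^{x_1}$ with $x_1$ close to $x_0$, I would compute the induced macroscopic magnetization $M(x_1) = 1 - 2\mu_0(-\infty,x_1)$ and the effective scalar field felt at the jump, and show that the map $x_1 \mapsto$ (new jump location after one relaxation) is a contraction — equivalently, that the function whose zero set is the fixed-points interval, $h(x_0) := $ the relevant endpoint expression $x_0 - 2\mu_0(-\infty,x_0)$ (resp. $x_0 + 2\mu_0(x_0,+\infty)$), has the correct monotonicity. Since $\mu_0(-\infty,\cdot)$ is increasing, $x_0 \mapsto x_0 - 2\mu_0(-\infty,x_0)$ is monotone \emph{as long as} the Gaussian density does not exceed $1/2$ there, i.e. for $\rho$ not too small the whole interval is stable; for small $\rho$ one must check the derivative condition pointwise, which is exactly where the ``loss of stability of certain areas'' alluded to after \eqref{eqn:complete_N^2} enters. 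I would phrase the conclusion as: if \eqref{eqn:posneg} holds with the jump point in the interior, the linearization at $m^{x_0}$ has non-positive spectrum with a one-dimensional neutral direction (rigid translations of the jump within the fixed-points interval), and strictly negative spectrum transversally; this is precisely local stability in the sense of Lyapunov within the manifold of staircase profiles.

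Concretely the steps are: (1) split $\mathbb{R}$ into the outer region and the $\delta$-interface; (2) on the outer region, use \eqref{eqn:posneg} (in its strict, monotone-reformulated form) together with the bound $-2 \le m(t)(x) + M(t) \le 2$ to freeze the sign and deduce exponential relaxation $m(t)(x) \to \pm 1$ with rate $2$, uniformly; (3) on the interface, parametrize nearby equilibria by the jump location $x_1$, compute the return map, and show it is a contraction toward $x_0$ using monotonicity of $\mu_0(-\infty,\cdot)$ and the fixed-point inequality; (4) combine (2)–(3) via a Gronwall/comparison argument on $\|m(t) - m^{x_0^{(t)}}\|$ where $x_0^{(t)}$ is the tracked jump location, concluding $m(t) \to m^{x_1}$ for some $x_1$ still satisfying \eqref{eqn:posneg}. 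The main obstacle I expect is rigor at the interface: the term $M(t)$ inside $\mathrm{sign}(\cdot)$ is nonlocal, so the jump location and the macroscopic magnetization are coupled, and one cannot treat $x$-slices independently; moreover $\dot m$ is discontinuous in $m$ across the interface, so classical linearization does not literally apply and one needs a Filippov/monotonicity argument instead. This is exactly why the statement is flagged as an ansatz-level proposition and only ``one direction'' (the necessity of \eqref{eqn:posneg} for $m^{x_0}$ to be an equilibrium) was verified rigorously above; a complete stability proof would require making the interface return-map argument fully quantitative.
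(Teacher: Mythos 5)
Your outer-region step is essentially a globalized (and sharper) version of what the paper actually does: the paper's own argument is explicitly non-rigorous and much more modest, perturbing $m^{x_0}$ at a \emph{single} point $\tilde x$ (so that $M$, an integral against $\mu_0$, is untouched), noting that under \eqref{eqn:posneg} the sign of $x+m+M$ at $\tilde x$ is unchanged for small $\varepsilon$, and reading off that $\frac{d}{dt}m(t)(\tilde x)\big|_{t=0}$ has the restoring sign. Your version --- freezing the sign on $\{|x-x_0|>\delta\}$ using the strict form of \eqref{eqn:posneg} together with the fact that a small sup-norm perturbation moves $M$ only slightly, and deducing exponential relaxation to $\pm1$ at rate $2$ --- is sound and is a genuine improvement on that pointwise check.

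The problem is step (3). You ask that the interface return map be ``a contraction toward $x_0$''; this cannot hold, because every staircase $m^{x_1}$ with $x_1$ in the interior of the fixed-points interval \eqref{eqn:posneg} is itself an equilibrium of \eqref{eqn:limit_inf}, so no argument can contract jump locations back to the particular $x_0$. The most one can prove --- and all that ``locally stable'' can mean for a continuum of equilibria --- is Lyapunov stability: a small perturbation relaxes to a nearby staircase $m^{x_1}$ with $x_1$ within $O(\|\eta_0\|_\infty)$ of $x_0$ and still satisfying \eqref{eqn:posneg}. You acknowledge exactly this when you mention the one-dimensional neutral direction, so step (3) as written is internally inconsistent; it should be replaced by a non-expansiveness (Filippov/monotone comparison) argument tracking the jump location, not a contraction. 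Relatedly, the caveat that for small $\rho$ (Gaussian density exceeding $1/2$) stability of parts of the interval might fail imports a restriction absent from the statement: the monotonicity of $x_0 \mapsto x_0 - 2\mu_0(-\infty,x_0)$ is relevant to the structure of the fixed-points set and to the attractor map of Proposition \ref{attractor}, but the loss-of-stability phenomenon the paper reports occurs at the order-$N$ timescale, driven by the motion of the $x_j$'s, not by the order-$1$ dynamics \eqref{eqn:limit_inf} considered here, which the proposition asserts to be stable on the whole region \eqref{eqn:posneg}. Finally, you leave the interface step open yourself, so the proposal, like the paper's argument, remains heuristic --- acceptable given that the paper flags its proof as non-rigorous, but the contraction claim and the small-$\rho$ hedge should be corrected.
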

\begin{proof}
The proof is non-rigorous. Fix e.g.\! $x_0 > 0$. Choose as initial condition for \eqref{eqn:limit_inf} $m_0(x) = \tilde{m}(x)$, the perturbation of $m^{x_0}(x)$ in a point $\tilde{x} > x_0$, given by
\begin{equation*}
\tilde{m}(x):=
\begin{cases}
\tilde{m}(x) = m^{x_0}(x), \ \ \forall x \neq \tilde{x}\\
\tilde{m}(\tilde{x}) = m^{x_0}(\tilde{x}) - \varepsilon.
\end{cases}
\end{equation*}
Then we have that, heuristically, $\frac{d}{dt}m(t)(\tilde{x})\Big|_{t=0} = \varepsilon > 0 $. Analogously, if $\tilde{x} < x_0$ we consider $\tilde{m}(x)$, defined as
\begin{equation*}
\tilde{m}(x):=
\begin{cases}
\tilde{m}(x) = m^{x_0}(x), \ \ \forall x \neq \tilde{x}\\
\tilde{m}(\tilde{x}) = m^{x_0}(\tilde{x}) + \varepsilon, 
\end{cases}
\end{equation*}
so that  $\frac{d}{dt}m(t)(\tilde{x})\Big|_{t=0} = -\varepsilon < 0$.
\end{proof}

To sum up, as we saw above, when we start the dynamics with a constant initial datum $m_0(x) \equiv \overline{m}$ we soon get attracted (at times of order $1$) to a staircase equilibrium $m^{x_0}(x)$ for some $x_0 \in \mathbb{R}$.
The next proposition, for which we do not have a proof (but is motivated by Proposition \ref{stability_scalino} and supported by numerics), describes what happens at a timescale of order $1$ when we start the dynamics \eqref{eqn:limit_inf} with a staircase initial datum $m_0(x) = m^{x_0}(x)$ with $x_0$ not belonging to the fixed points region given by \eqref{eqn:posneg}.

\begin{prop}[Stable attractors of the dynamics]
\label{attractor}
Let $m(t)(x)$ be the solution to Eq. \eqref{eqn:limit_inf} with initial datum $m_0(x) = m^{x_0}(x)$, with $x_0 > 0$ (resp.\! $x_0 < 0$) such that $x_0 > 2 \mu_0(-\infty, x_0)$ (resp.\! $x_0 < - 2\mu_0(x_0, + \infty)$). Then, we have
$$
\lim_{t \to \infty} m(t)(x) = m^{\overline{x_0}}(x),
$$
with $\overline{x_0} = 2\mu_0(-\infty,\overline{x_0})$ (resp.\! $\overline{x_0} = - 2\mu_0(\overline{x_0},+\infty)$).
\end{prop}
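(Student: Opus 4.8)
The plan is to exploit the order structure of \eqref{eqn:limit_inf}. Write $a_m(t)(x):=x+m(t)(x)+M(t)$ for the argument of the sign. The dynamics is order preserving in the pointwise sense: if $m(t)(\cdot)\le n(t)(\cdot)$ then $a_m\le a_n$, hence $\tfrac{d}{dt}\big(n-m\big)(t)(x)\ge -2\big(n-m\big)(t)(x)$ and Gronwall keeps $n-m\ge 0$; the same estimate applied to $x_1<x_2$ shows that monotonicity of $x\mapsto m(t)(x)$ is preserved, so the set of non-decreasing $[-1,1]$-valued profiles is invariant. Since the field is discontinuous I would first fix a notion of solution — either Filippov, or the $\beta\to\infty$ limit of the dynamics with $\sign(\cdot)$ replaced by $\tanh(\beta\,\cdot)$, for which global well-posedness and the comparison principle follow from the Lipschitz arguments of Section \ref{order1} — and check that absolute continuity in $t$ and the two order properties survive the limit.

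Next I would record that the (time-independent) initial datum $m^{x_0}$ is a subsolution of \eqref{eqn:limit_inf}: for $x>x_0$ the field vanishes because $x+1+M^{x_0}=x+2\mu_0(x_0,\infty)>0$, while for $x<x_0$ it equals $2\sign\big(x-2\mu_0(-\infty,x_0)\big)+2\ge 0$, using $M^{x_0}=1-2\mu_0(-\infty,x_0)$; it fails to be a solution precisely because the hypothesis $x_0>2\mu_0(-\infty,x_0)$ makes the field strictly positive on a set of positive $\mu_0$-measure. By comparison with this subsolution, $m(t)(x)\ge m^{x_0}(x)$ for all $t$ — in particular $m(t)(x)=+1$ for $x>x_0$ — and then $m(t+s)\ge m(s)$ for all $s,t\ge 0$, so $t\mapsto m(t)(x)$ is non-decreasing; consequently $M(\cdot)$ is non-decreasing, continuous by dominated convergence, and $M(t)\ge M(0)=1-2\mu_0(-\infty,x_0)>1-x_0$.

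Given that $M(\cdot)$ is continuous and non-decreasing, I would integrate the fibre ODEs $\dot m(t)(x)=2\sign(a_m(t)(x))-2m(t)(x)$ explicitly. Set $\tau(x):=\inf\{s\ge 0:\ M(s)\ge 1-x\}$ (with $\inf\emptyset=+\infty$). While $m(t)(x)=-1$ one has $a_m(t)(x)=x-1+M(t)<0$ exactly for $t<\tau(x)$, so $m(t)(x)\equiv -1$ on $[0,\tau(x))$; at $t=\tau(x)$, $a_m=0$, and for $t>\tau(x)$ both $m(t)(x)$ and $M(t)$ are non-decreasing, whence $a_m(t)(x)>0$ and $m(t)(x)=1-2e^{-2(t-\tau(x))}$. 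Feeding this back into $M(t)=\int m(t)(x)\,\mu_0(dx)$ and using $\tau(x)\le t\iff x\ge 1-M(t)$ gives the self-consistent relation
\begin{equation*}
M(t)=1-2\mu_0(-\infty,x_0)+2\int_{[\,1-M(t),\,x_0)}\big(1-e^{-2(t-\tau(x))}\big)\,\mu_0(dx),
\end{equation*}
a bounded monotone Volterra equation. Its (unique) solution $M(t)$ increases to a limit $M_\infty$; letting $t\to\infty$ (dominated convergence, $e^{-2(t-\tau(x))}\to 0$ on $\{\tau(x)<\infty\}$) yields $M_\infty=1-2\mu_0(-\infty,1-M_\infty)$. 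Setting $\overline{x_0}:=1-M_\infty$ this is exactly $\overline{x_0}=2\mu_0(-\infty,\overline{x_0})$, and moreover $m(t)(x)\to +1$ for every $x>\overline{x_0}$ (those with $\tau(x)<\infty$, since $M(s)\uparrow M_\infty>1-x$) and $m(t)(x)\to -1$ for every $x<\overline{x_0}$ (those with $\tau(x)=\infty$), i.e. $m(t)(\cdot)\to m^{\overline{x_0}}(\cdot)$, which is the assertion. A shorter but less informative variant avoids the fibrewise integration: the bounded monotone orbit converges to an equilibrium $m_\infty$ with $m_\infty\equiv +1$ on $(x_0,\infty)$ and, by comparison with the stationary staircase $m^{\overline{x_0}}\ge m^{x_0}$ (here $\overline{x_0}<x_0$ since $y\mapsto y-2\mu_0(-\infty,y)$ is negative at $0$ and positive at $x_0$), $m_\infty\equiv -1$ on $(-\infty,\overline{x_0})$; by Proposition \ref{scalino}, $m_\infty$ is then a staircase $m^{x_\ast}$ with $x_\ast\in[\overline{x_0},x_0]$ satisfying \eqref{eqn:posneg}, and one pins $x_\ast=\overline{x_0}$.

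The main obstacle is the first step: \eqref{eqn:limit_inf} has a genuinely discontinuous nonlinearity, so there is no classical flow, and the entire argument rests on having a notion of solution for which the trajectory is absolutely continuous in $t$, the order-preserving property holds on the infinite-dimensional state space of monotone $[-1,1]$-valued profiles, and time-independent sub/supersolutions bound the solution — the delicate region being exactly the moving front $\{x:a_m(t)(x)=0\}$ as it sweeps across $\operatorname{supp}\mu_0$, where uniqueness is most fragile. A secondary, softer point is that for small $\rho$ the map $y\mapsto y-2\mu_0(-\infty,y)$ may have several zeros in $(0,x_0)$; then $\overline{x_0}$ must be read as the particular root $1-M_\infty$ selected by the dynamics, and matching it with the stable equilibrium intended in the statement uses Proposition \ref{stability_scalino} (the non-minimal staircases between $\overline{x_0}$ and $x_0$ being unstable from below).
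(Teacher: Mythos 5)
A preliminary but essential remark: the paper contains no proof of Proposition \ref{attractor}. It is introduced explicitly as a statement ``for which we do not have a proof'', supported only by the local-stability heuristic of Proposition \ref{stability_scalino} and by simulations, and the authors say they ``were not able to prove this result''. So your proposal cannot be measured against a paper argument; it is an attempt to supply one where the paper offers only heuristics. On its merits, your strategy is coherent and goes well beyond what the paper does: the cooperative (order-preserving) structure of the nonlocal field in \eqref{eqn:limit_inf}, the observation that the frozen staircase $m^{x_0}$ is a subsolution precisely when $x_0>2\mu_0(-\infty,x_0)$, the resulting monotone-in-time relaxation of $m(t)$ and hence of $M(t)$, and the fibrewise integration behind the moving front $1-M(t)$ leading to the closed relation $M_\infty=1-2\mu_0(-\infty,1-M_\infty)$, i.e. $\overline{x_0}=1-M_\infty=2\mu_0(-\infty,\overline{x_0})$, with $m(t)\to m^{\overline{x_0}}$ pointwise off the limiting front. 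I checked the bookkeeping: fibres strictly past the front satisfy $x+m(t)(x)+M(t)\ge 1+m(t)(x)>0$, so they never fall back; a positive-$\mu_0$-mass of fibres in $(2\mu_0(-\infty,x_0),x_0)$ wakes up at $t=0$, so $M$ strictly increases; the Volterra identity and the dominated-convergence passage $t\to\infty$ are consistent. If the foundational layer were in place, this would be a genuine proof of the statement, and it matches the paper's heuristic picture (relaxation to the boundary of the fixed-points region \eqref{eqn:posneg}).

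The genuine gap is exactly the one you name, and it is not cosmetic: every step (comparison with the stationary subsolution, monotonicity in time via the semiflow, continuity of $M$, the fibrewise ODE representation) presupposes a solution concept for the discontinuous $\sign$-dynamics for which trajectories are absolutely continuous, a comparison principle holds for the nonlocal cooperative system, and uniqueness/the flow property hold; with Filippov solutions these are most fragile precisely on the front $\{x:\,x+m(t)(x)+M(t)=0\}$, and the alternative route of taking the $\beta\to\infty$ limit of the $\tanh(\beta\,\cdot)$ dynamics uniformly in time is also nontrivial, since the Lipschitz estimates used in Section \ref{order1} degenerate as $\beta$ grows. This is presumably why the authors left the proposition unproven, so as it stands your argument is a convincing reduction of the problem to that well-posedness/comparison layer rather than a complete proof. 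Two secondary points, which you also flag correctly: for small $\rho$ the equation $y=2\mu_0(-\infty,y)$ may have several roots, so $\overline{x_0}$ in the statement must be interpreted as the root $1-M_\infty$ selected by the dynamics (the statement is otherwise ambiguous); and in your ``shorter variant'' the step ``bounded monotone orbit converges to an equilibrium'' still needs an argument that the pointwise limit is stationary (so that Proposition \ref{scalino} applies), which again leans on continuity properties of the yet-to-be-constructed flow.
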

Proposition \ref{attractor} turns out to be very useful in describing the dynamics at order $N$ and $N^2$ by infinitesimal (of order $1$) variations of time. Indeed, the presence of a non-zero $X(t)$ can move the magnetization profile to be outside of the fixed points region. The above proposition thus quantifies how the dynamics gets attracted again towards the fixed points region, at least for times of order $1$. Unfortunately we were not able to prove this result, which can be motivated heuristically by saying that the out-of-equilibrium dynamics approaches the nearest possible stable equilibrium. An illustration of this phenomenon is given in Fig.\! \ref{scalino_ord1}. 
\begin{figure}%
    \centering
    \subfloat[Initial configuration at order $1$]{{\includegraphics[width=5.5cm]{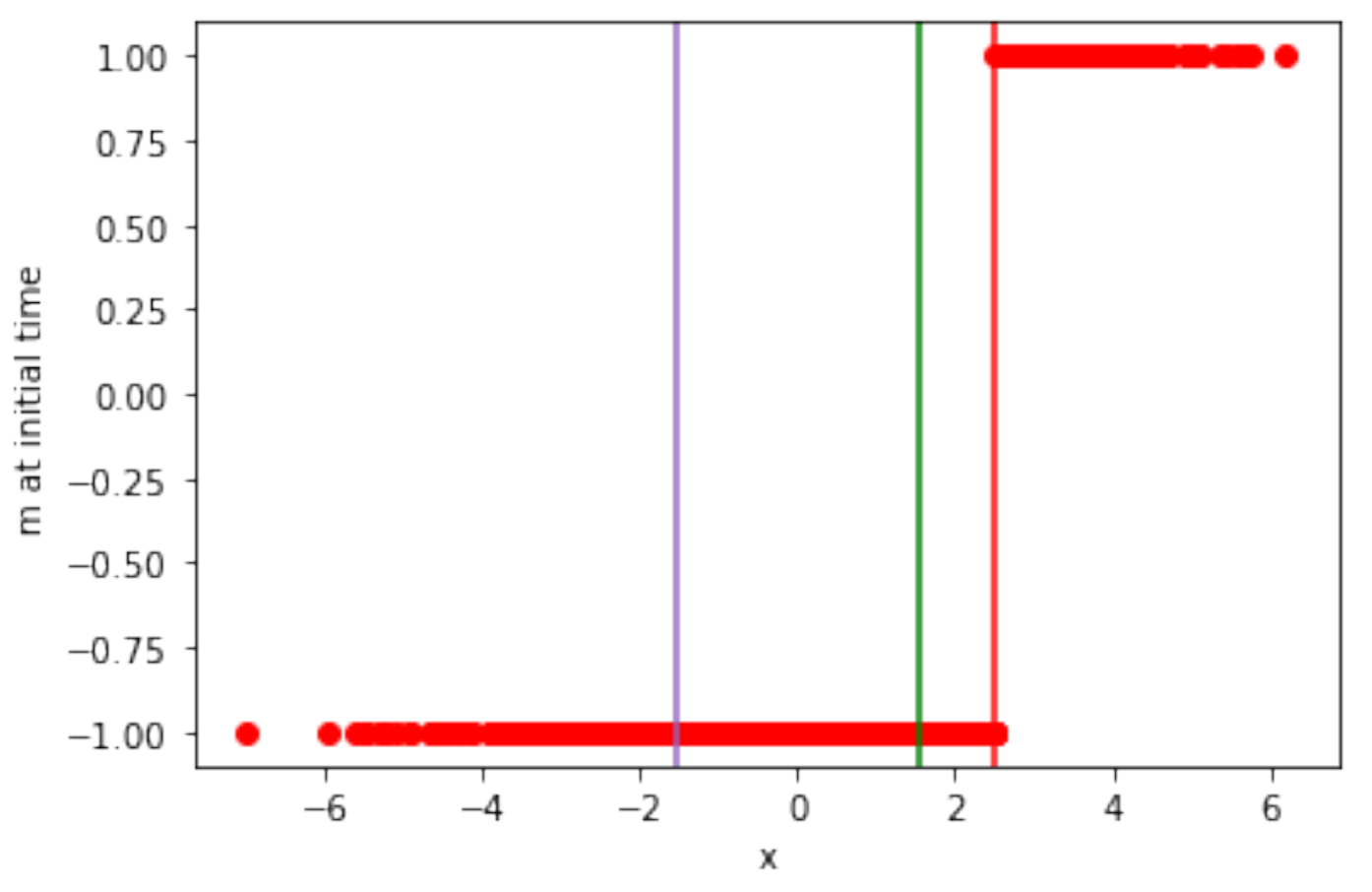}}}%
    \qquad
    \subfloat[Final configuration at order $1$]{{\includegraphics[width=5.5cm]{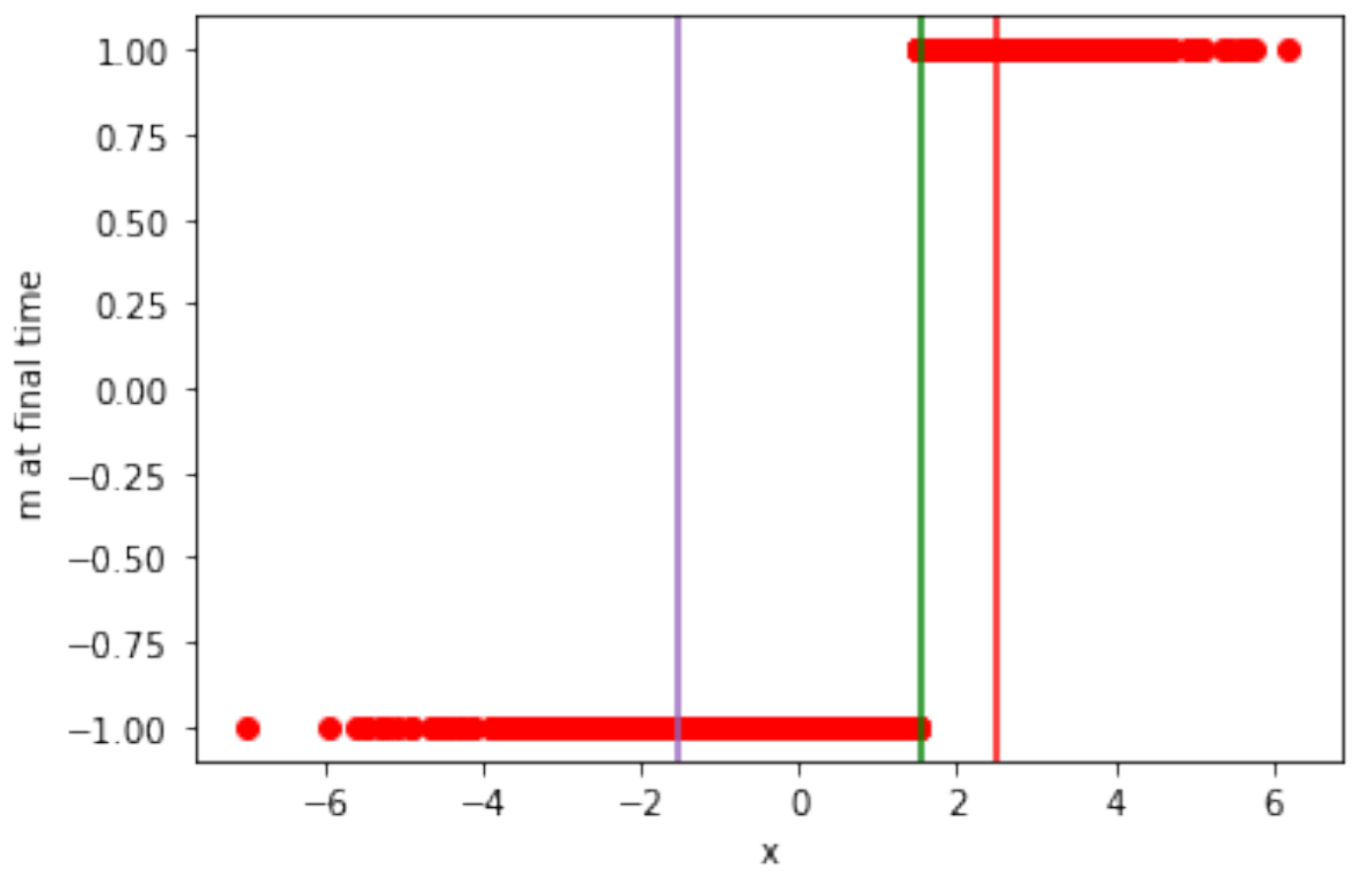} }}%
    \caption{Simulation of the finite particle system's dynamics at a timescale of order $1$, for $N = 1000$, $\beta_1= \beta_2=\infty$, $\alpha_1 = \alpha_2 = 1$, $\sigma = 3$. We start the dynamics with a staircase function (the red line) outside the fixed points region's band (purple and green lines). We take $x_j(0) \sim \mathcal{N}\left(0,\frac{\sigma^2}{2\alpha_2}\right)$.}   
    \label{scalino_ord1}
\end{figure}
\subsubsection{Order $N$ dynamics}
\label{ordN_subsub}
In the timescale of order $N$ the only additional dynamics which takes place is due to the fact that $\mu_0^t(dx)$ now depends on time (it is a normal distribution centered around $0$, with variance depending on the macroscopic timescale $t$ and proportional to $\frac{\sigma^2}{2\alpha_2}$), because of the dynamics of the Ornstein-Uhlenbeck diffusions. In this scale we thus expect to see the same staircase equilibrium previously reached, with some movement of the points close to $x_0$ (caused by the motion of the diffusions at order $N$) in between the region of fixed points described in Proposition \ref{scalino}. At the finite particle system level indeed, the motion of the diffusions at order $N$ should produce a coexistence of phases around $x_0$, with some magnetizations being $+1$ and others $-1$. An illustration of this is shown in Fig.\! \ref{scalino_ordN}, the analogous to Fig.\! \ref{scalino_ord1} at order $N$. The bigger the diffusive coefficient $\sigma$ (for a fixed $\alpha_2$), the wider the range of the diffusions and the area with coexistence of phases are: in Fig.\! \ref{scalino_ordN} the coexistence area fills all the fixed points region.
The deterministic limit dynamics becomes
\begin{equation}
\label{eqn:limit_inf_ordN}
\begin{cases}
m(t)(x) = \text{sign}(x + m(t)(x) + M(t)),\\
m(0)(x) = m^{x_0}(x),\\
M(t) = \int_{\mathbb{R}}m(t)(x)\mu_0^t(dx).
\end{cases}
\end{equation}
At this timescale the diffusion parameters play an important role. 
For $\frac{\sigma^2}{2\alpha_2}$ big, simulations suggest the presence of a very mild interaction among the magnetizations: see Fig.\! \ref{magn_ordN_bigsigma}, where we plot the path of the second level empirical magnetization relative to the same simulation of Fig.\! \ref{scalino_ordN}. We see that, after starting from a rather polarized value, after a short time $M^N(t)$ becomes very small and from that time on it just wanders around $0$, so that the single magnetization's processes are subject to a very low interaction among themselves, which could eventually tend to zero for $N \to +\infty$; in fact, the interaction among the diffusions is also tending to $0$. In other words, the presence of a big $\sigma > 0$ (for a fixed $\alpha_2 > 0$) might render the particles asymptotically independent with $M \equiv 0$. Assuming this is the case, the limit process for each magnetization should be given by independent copies of a non-Markovian spin with jump times distributed as the hitting times of the Ornstein-Uhlenbeck. Moreover, from Fig.\! \ref{scalino_ordN} we see that the jumps should occur precisely at the borders of the fixed points region (the purple and green lines). This regime appears then to be related to the mean field scenario of Section \ref{mean_field_hier} for $\beta \to \infty$, highlighted in Remark \ref{limiting_case_b=infty}.
\begin{figure}%
    \centering
    \subfloat[Initial configuration at order $N$]{{\includegraphics[width=5.5cm]{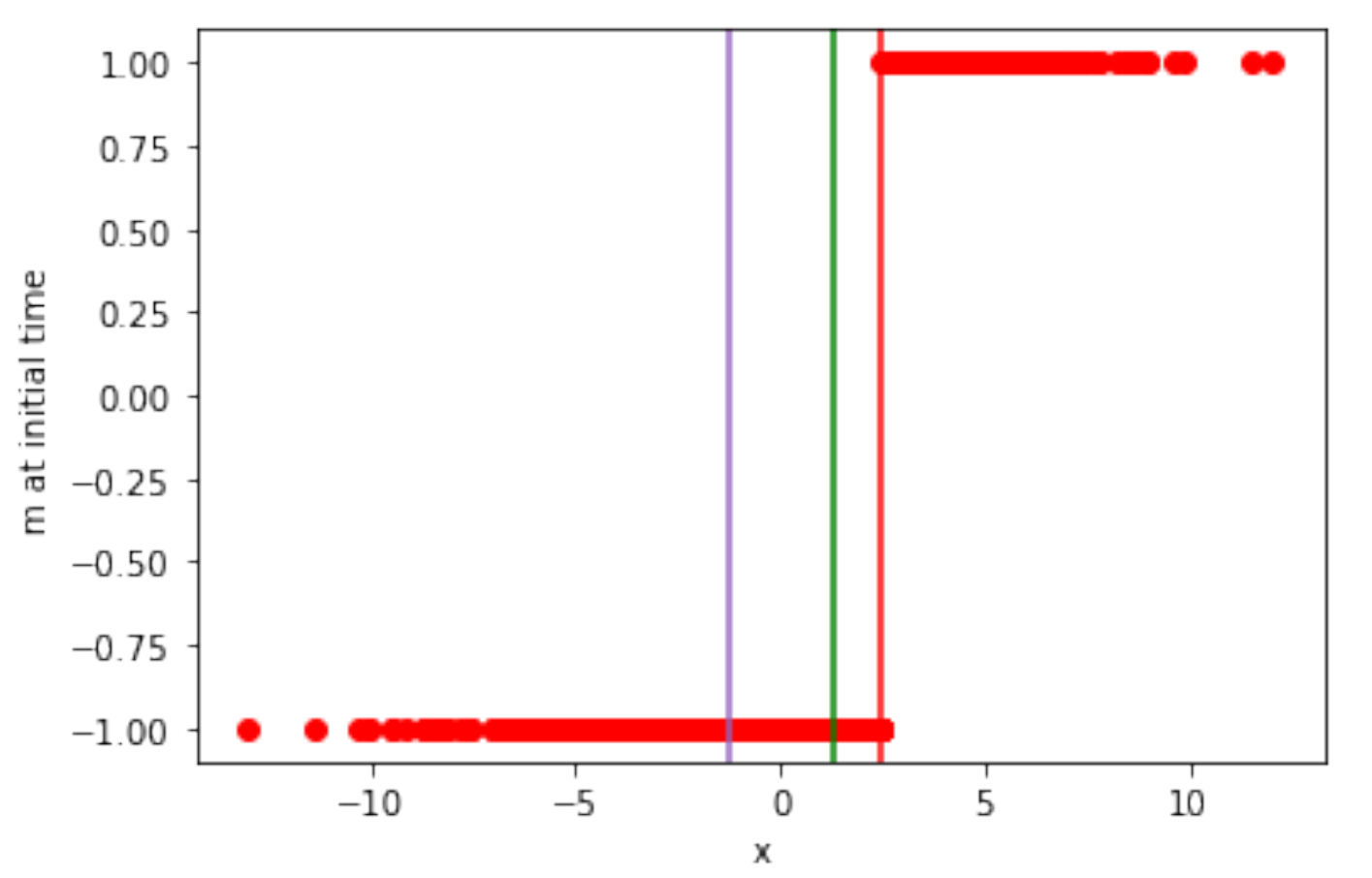}}}%
    \qquad
    \subfloat[Final configuration at order $N$]{{\includegraphics[width=5.5cm]{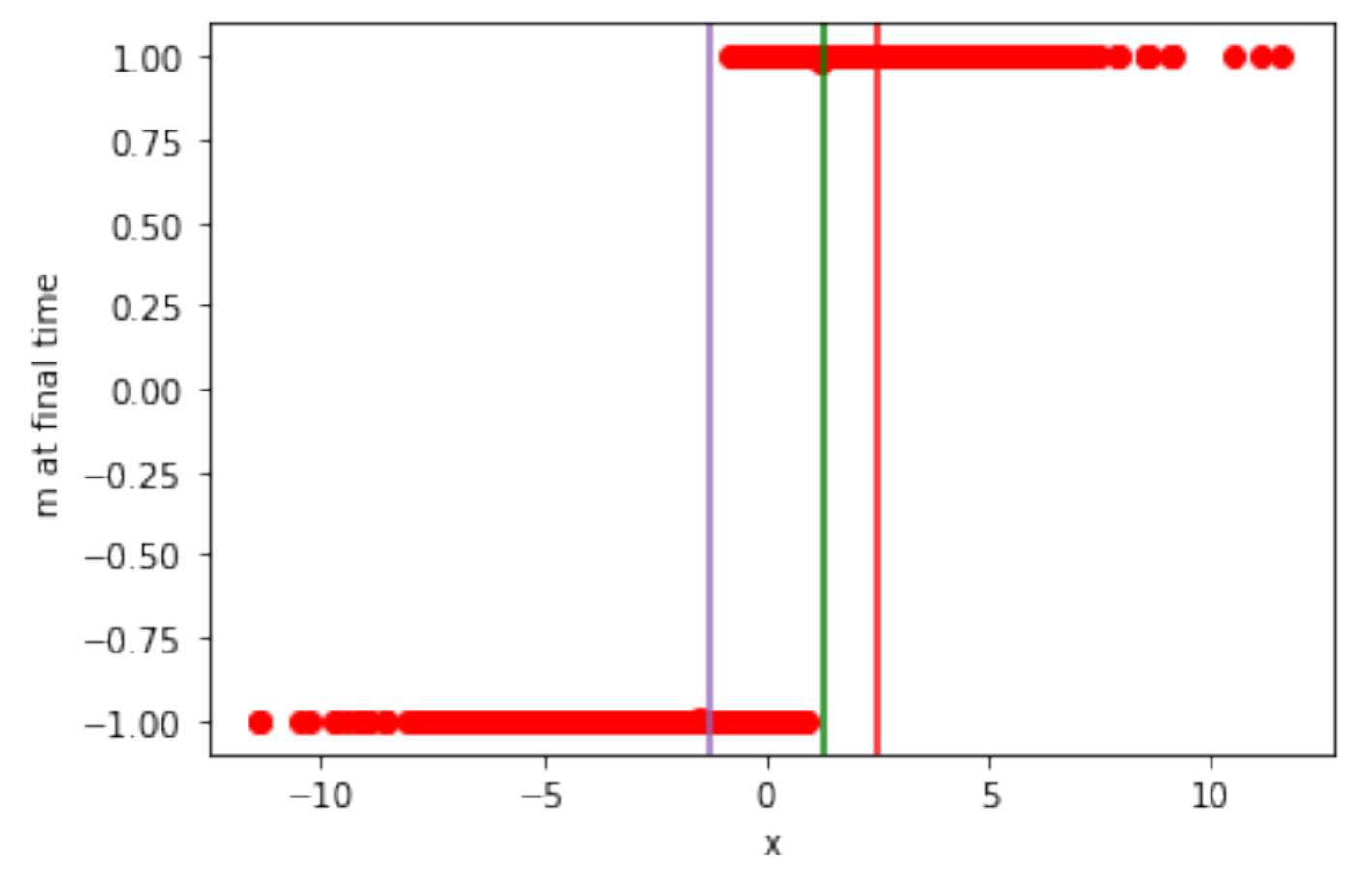} }}%
    \caption{Simulation of the finite particle system's dynamics at a timescale of order $N$, for $N = 1000$, $\beta_1= \beta_2=\infty$, $\alpha_1 = \alpha_2 = 1$, and $\sigma = 5$.  As above, we take $x_j(0) \sim \mathcal{N}\left(0,\frac{\sigma^2}{2\alpha_2}\right)$.}   
    \label{scalino_ordN}
\end{figure}

\begin{figure}%
\centering
\subfloat[$(t,M^N(t))$]{{\includegraphics[width=5.5cm]{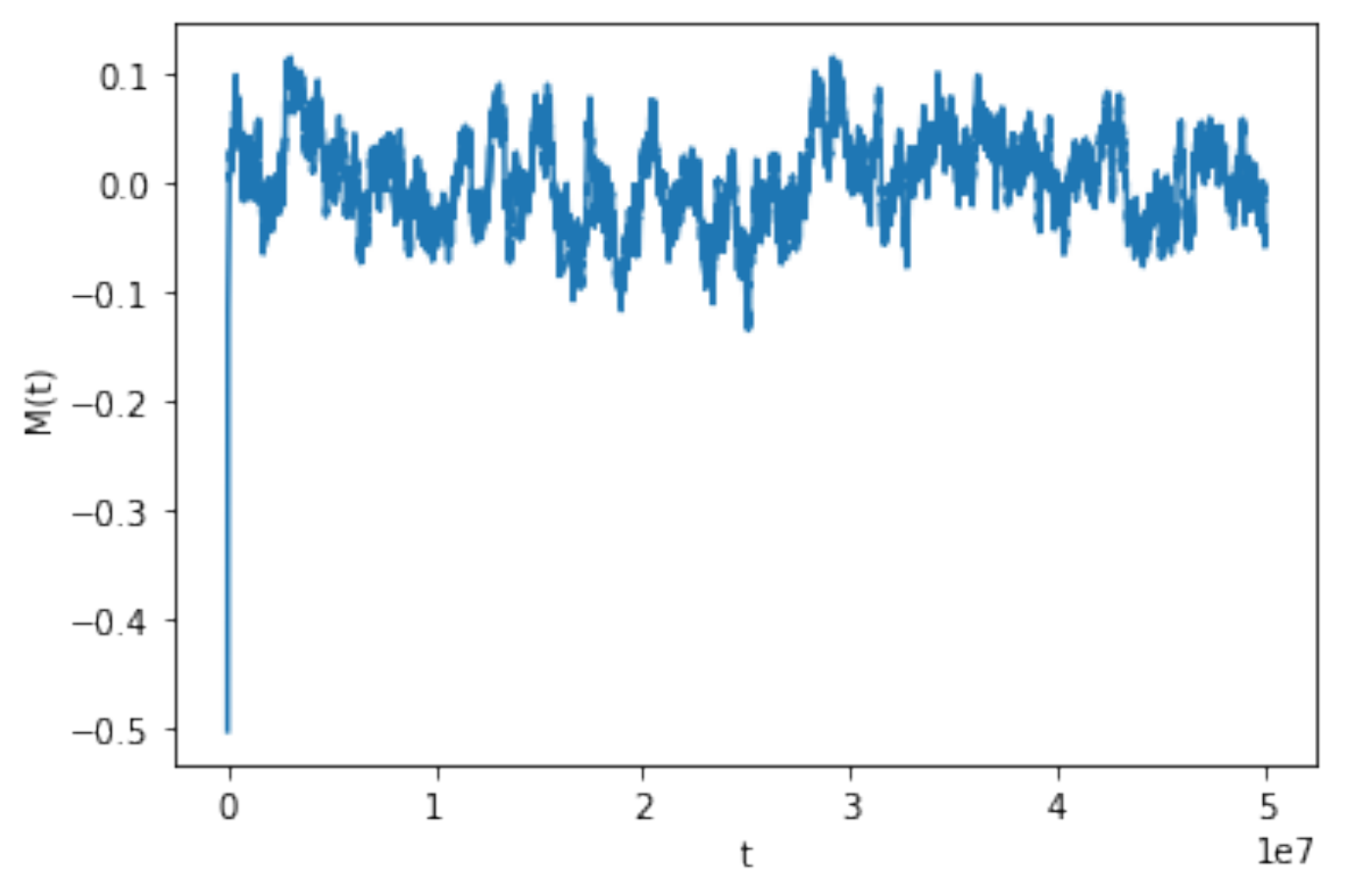}}}%
\caption{The path of the empirical $2$-level magnetization for the same simulation of Fig.\! \ref{scalino_ordN}.}   
\label{magn_ordN_bigsigma}
\end{figure}

When the parameter $\frac{\sigma^2}{2\alpha_2}$ is small, we instead witness the loss of stability of certain areas of the fixed points region. For a description of this we refer to the next section, where we describe the full dynamics at order $N^2$.

\subsubsection{Order $N^2$ dynamics}
\label{ordN^2_subsub}

In order to describe the order $N^2$ dynamics, we proceed as above by looking at the conditional dynamics with respect to the values of the macroscopic limit diffusion $X(t)$. We fix an initial condition with $X(0) \neq 0$ and evolve the dynamics at times of order $1$. The latter  converges soon to some staircase equilibrium in the fixed points region and stays put for all times of order $1$. At times of order $N$ we then see some diffusive behavior of the equilibrium around the fixed $X$, until the process $X(t)$ changes again. The reiteration of this procedure for the updated value of $X$ describes an infinitesimal time step in the order $N^2$ timescale.
The order $1$ conditional dynamics is given by
\begin{equation}
\label{eqn:limit_infX}
\begin{cases}
\dot{m}(t)(x) = 2\text{sign}(x + m(t)(x) + M(t)+ X) - 2m(t)(x),\\
m(0)(x) = m^{x_0}(x),\\
M(t) = \int_{\mathbb{R}}m(t)(x)\mu_X^{\infty}(dx),
\end{cases}
\end{equation}
for some fixed $X \in \mathbb{R}$ and some staircase initial condition $m^{x_0}$ for $m$, which was reached at the previous timescale long-time limit.
In \eqref{eqn:limit_infX}, $\mu_X^{\infty} = \mathcal{N}\left(X, \frac{\sigma^2}{2\alpha_2}\right)$ is the asymptotic distribution of the (sped up) Ornstein-Uhlenbeck processes for a fixed value of $X$.
Proposition \ref{scalino} generalizes to
\begin{prop}[Shape of the equilibria]
\label{scalinoX}
Every equilibrium of Eq. \eqref{eqn:limit_infX} is a staircase function $m^{x_0}(x)$ of the form
\begin{equation}
\label{eqn:scalinoX}
m^{x_0}(x):=
\begin{cases}
+1, \ \ \ \forall x > x_0,\\
-1, \ \ \ \forall x < x_0,
\end{cases}
\end{equation}
for some $x_0 \in \mathbb{R}$ satisfying 
\begin{equation}
\label{eqn:posnegX}
- 2\mu_0^{\infty}(x_0 - X, +\infty) \leq x_0 + X \leq 2 \mu_0^{\infty}(-\infty, x_0-X),
\end{equation}
\end{prop}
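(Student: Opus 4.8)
\emph{Approach.} The plan is to read off the equilibria of \eqref{eqn:limit_infX} directly from the stationarity condition, in the spirit of the proof of Proposition~\ref{scalino} but now keeping the shift $X$ and the drifted Gaussian. Write $M^{\ast}:=\int_{\mathbb{R}}m^{\ast}(x)\,\mu_X^{\infty}(dx)$. An equilibrium $m^{\ast}$ of \eqref{eqn:limit_infX} satisfies $2\,\sign\!\bigl(x+m^{\ast}(x)+M^{\ast}+X\bigr)-2m^{\ast}(x)=0$ for every $x$, i.e.
\begin{equation*}
m^{\ast}(x)=\sign\!\bigl(x+m^{\ast}(x)+M^{\ast}+X\bigr)\qquad\text{for all }x\in\mathbb{R}.
\end{equation*}
Since the right-hand side lies in $\{-1,0,1\}$, so does $m^{\ast}$; and $m^{\ast}(x)=0$ forces $x=-(M^{\ast}+X)$, so $m^{\ast}\in\{-1,1\}$ off at most one point, hence Lebesgue- and $\mu_X^{\infty}$-a.e. \emph{First step (localization).} If $m^{\ast}(x)=1$ the equation forces $x+1+M^{\ast}+X>0$, and if $m^{\ast}(x)=-1$ it forces $x-1+M^{\ast}+X<0$. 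Hence $m^{\ast}\equiv-1$ on $\bigl(-\infty,-(1+M^{\ast}+X)\bigr)$ and $m^{\ast}\equiv+1$ on $\bigl(1-M^{\ast}-X,+\infty\bigr)$, while on the bounded bistable strip $I:=\bigl(-(1+M^{\ast}+X),\,1-M^{\ast}-X\bigr)$, which has length $2$, both values are a priori compatible with the equation.

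\emph{Second step (shape on $I$).} Here I would invoke order-preservation for the flow of \eqref{eqn:limit_infX}: if $m(0)(\cdot)$ is non-decreasing in $x$ then $m(t)(\cdot)$ is non-decreasing for all $t\ge0$. This follows from a crossing-point comparison: for $x_{1}<x_{2}$, whenever $m(t)(x_{1})=m(t)(x_{2})=v$ one has $\dot m(t)(x_{1})=2\,\sign(x_{1}+v+M(t)+X)-2v\le 2\,\sign(x_{2}+v+M(t)+X)-2v=\dot m(t)(x_{2})$ because $x\mapsto\sign(x+v+M(t)+X)$ is non-decreasing, so the ordering is never reversed. Since every equilibrium relevant to Section~\ref{the_limit_case} is a long-time limit of a non-decreasing initial profile (the constant states at order $1$, the staircases $m^{x_{0}}$ at orders $N$ and $N^{2}$), $m^{\ast}$ is non-decreasing; being $\{-1,1\}$-valued a.e., it must equal a staircase $m^{x_{0}}$ for a unique $x_{0}$, and the localization forces $x_{0}\in\overline I$, i.e. $-1\le x_{0}+M^{\ast}+X\le1$. \emph{Third step (the inequality).} For a staircase, self-consistency gives $M^{\ast}=\mu_X^{\infty}(x_{0},+\infty)-\mu_X^{\infty}(-\infty,x_{0})=1-2\mu_X^{\infty}(-\infty,x_{0})$; substituting into $x_{0}+M^{\ast}+X\le1$ and $x_{0}+M^{\ast}+X\ge-1$ gives $x_{0}+X\le2\mu_X^{\infty}(-\infty,x_{0})$ and $x_{0}+X\ge-2\mu_X^{\infty}(x_{0},+\infty)$; and since $\mu_X^{\infty}=\mathcal{N}\bigl(X,\tfrac{\sigma^{2}}{2\alpha_{2}}\bigr)$ translates to the centered $\mu_{0}^{\infty}$ via $\mu_X^{\infty}(-\infty,x_{0})=\mu_{0}^{\infty}(-\infty,x_{0}-X)$ and $\mu_X^{\infty}(x_{0},+\infty)=\mu_{0}^{\infty}(x_{0}-X,+\infty)$, this is precisely \eqref{eqn:posnegX}. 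Reversing these steps shows that every staircase with $x_{0}$ obeying \eqref{eqn:posnegX} is an equilibrium, so \eqref{eqn:posnegX} is the exact admissible range --- the analogue of the ``easy direction'' carried out for Proposition~\ref{scalino}.

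\emph{Main obstacle.} The delicate point is the second step without the monotonicity input: the stationarity equation alone has a large family of non-monotone $\{-1,1\}$-valued fixed points supported on the bistable strip $I$ --- any such profile whose $\mu_X^{\infty}$-mass is consistent with the $M^{\ast}$ it produces solves it --- and a perturbation computation suggests several of these are even locally stable. So ``every equilibrium is a staircase'' cannot hold literally, and the Proposition must be read for the equilibria actually generated by the dynamics of Section~\ref{the_limit_case}, all of which start from non-decreasing profiles, so that order-preservation yields the staircase shape. Promoting this to an unconditional theorem would require a basin-of-attraction analysis for \eqref{eqn:limit_infX} excluding the non-monotone fixed points, which is why the shape is stated as an ansatz; the localization in the first step and the equivalence of $x_{0}\in\overline I$, self-consistency and \eqref{eqn:posnegX} in the third step are, by contrast, completely rigorous.
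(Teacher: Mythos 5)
The part of this statement the paper actually proves is your third step: for Proposition \ref{scalinoX} the paper's proof consists of repeating the computation of Proposition \ref{scalino} and inserting the identity $\mu_X^{\infty}(-\infty,x_0)=\mu_0^{\infty}(-\infty,x_0-X)$, i.e.\! a staircase $m^{x_0}$ is stationary precisely when $-1\le x_0+M^{*}+X\le 1$ with $M^{*}=1-2\mu_X^{\infty}(-\infty,x_0)$, which is \eqref{eqn:posnegX}; your derivation of this equivalence, including the converse, is correct and matches the paper. The difference lies in how the staircase shape itself is handled: the paper explicitly treats it as an unproven ansatz and restricts to the easy direction, whereas you add a localization step (rigorous, and sharper than the remark preceding Proposition \ref{scalino}: a strip of length $2$ positioned according to $M^{*}+X$ rather than the crude bound $|x|\le 2$) and an order-preservation argument to obtain the shape for equilibria reached from monotone initial profiles. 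Your diagnosis of the obstacle is also accurate and goes beyond the paper: on the bistable strip any $\{-1,+1\}$-valued profile whose $\mu_X^{\infty}$-mass is consistent with the $M^{*}$ it generates is a fixed point of the pointwise stationarity equation, and the single-point perturbation argument of Proposition \ref{stability_scalino} stabilizes these just as well as the staircases, so the literal claim can only be read as describing the equilibria selected by the dynamics --- which is how the paper itself frames it. The one caveat concerns your second step: the crossing/comparison argument that monotonicity is preserved needs care because the right-hand side of \eqref{eqn:limit_infX} is discontinuous, so uniqueness and comparison require a Filippov-type or one-sided-Lipschitz justification before ``the ordering is never reversed'' is legitimate; as written it is a heuristic, but that is consistent with the level of rigor the paper adopts throughout this section.
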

\begin{proof}
The proof follows the same steps as in the proof of Proposition \ref{scalino}, observing that
$$
\mu_X^{\infty}(-\infty, x_0) = \mu_0^{\infty}(-\infty,x_0-X),
$$
with $\mu_0^\infty = \mathcal{N}\left(0,\frac{\sigma^2}{2\alpha_2}\right)$.
\end{proof}

Proposition \ref{attractor} generalizes to
\begin{prop}[Stable attractors of the dynamics]
\label{attractorX}
Let $m(t)(x)$ be the solution to Eq. \eqref{eqn:limit_infX} with initial datum $m_0(x) = m^{x_0}(x)$, with $x_0 + X > 0$ (resp.\! $x_0  + X < 0$) such that $x_0+ X > 2 \mu_X^{\infty}(-\infty, x_0)$ (resp.\! $x_0 + X < - 2\mu_X^{\infty}(x_0, + \infty)$). Then, we have
$$
\lim_{t \to \infty} m(t)(x) = m^{\overline{x_0}}(x),
$$
with $\overline{x_0} = 2\mu_X^{\infty}(-\infty,\overline{x_0})$ (resp.\! $\overline{x_0} = - 2\mu_X^{\infty}(\overline{x_0},+\infty)$).
\end{prop}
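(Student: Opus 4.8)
The plan is to deduce Proposition~\ref{attractorX} from Proposition~\ref{attractor} by the same device that turns Proposition~\ref{scalino} into Proposition~\ref{scalinoX}: absorb the constant second-level field $X$ into the $x$-variable. Concretely I would set $y:=x+X$ and $n(t)(y):=m(t)(y-X)$. A one-line computation shows that the $X$ inside the sign in \eqref{eqn:limit_infX} cancels and that $n$ solves
\[
\dot n(t)(y) = 2\,\text{sign}\bigl(y + n(t)(y) + N(t)\bigr) - 2 n(t)(y), \qquad N(t) = \int_{\mathbb{R}} n(t)(y)\,\nu_X(dy),
\]
with $\nu_X := \mathcal{N}\!\bigl(2X,\tfrac{\sigma^2}{2\alpha_2}\bigr)$, while the staircase initial datum $m^{x_0}$ becomes $n^{x_0+X}$. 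Since $\mu_X^\infty(-\infty,x_0)=\nu_X(-\infty,x_0+X)$, the hypothesis $x_0+X>2\mu_X^\infty(-\infty,x_0)$ says exactly that the interface $y_0:=x_0+X$ of $n_0$ lies above the fixed-points band of Proposition~\ref{scalinoX} written in the $y$-variable; Proposition~\ref{attractor} --- whose supporting argument never uses that the underlying Gaussian is centred --- then applies and yields $n(t)(\cdot)\to n^{\overline y}(\cdot)$ with $\overline y=2\nu_X(-\infty,\overline y)$. Undoing the substitution ($\overline{x_0}:=\overline y-X$) gives the claim, the limiting interface being the boundary point of \eqref{eqn:posnegX}, i.e.\ $\overline{x_0}+X=2\mu_X^\infty(-\infty,\overline{x_0})$.

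Because Proposition~\ref{attractor} is itself stated without a complete proof, for a self-contained treatment I would instead run its motivating argument directly in the $X$-dependent setting. Take $x_0>0$ with $x_0+X>2\mu_X^\infty(-\infty,x_0)$, the case $x_0<0$ being symmetric. From $M(0)=\int m^{x_0}\,d\mu_X^\infty=1-2\mu_X^\infty(-\infty,x_0)$ one finds that at $t=0$ the argument of the sign at a point $x<x_0$ equals $x+X-2\mu_X^\infty(-\infty,x_0)$, which is strictly positive on the interval $\bigl(2\mu_X^\infty(-\infty,x_0)-X,\,x_0\bigr)$; hence $\dot m(0)(x)=4>0$ there and the interface immediately starts moving to the left. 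The heuristic, as for Proposition~\ref{attractor}, is that the profile stays essentially a staircase with interface position $x_0(t)$, that $x_0(t)$ decreases monotonically as long as $x_0(t)+X>2\mu_X^\infty(-\infty,x_0(t))$ --- along the way $M(t)$ increases, which keeps the sign positive just to the left of the interface --- and that monotonicity plus boundedness force $x_0(t)$ to converge to the unique boundary point $\overline{x_0}$ of \eqref{eqn:posnegX}, at which $\dot m\equiv 0$. Local stability of $m^{\overline{x_0}}$, which is Proposition~\ref{stability_scalino}, prevents overshooting past the band.

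The main obstacle is exactly the one already flagged for Proposition~\ref{attractor}: promoting ``the profile stays essentially a staircase and its interface moves monotonically'' into a rigorous statement. This requires (i) controlling the transient layer where $m(t)(x)\in(-1,1)$, showing it does not spread and that $t\mapsto m(t)(x)$ is monotone for each fixed $x$ off the interface, and (ii) handling the nonlocal feedback through $M(t)=\int m(t)(x)\,\mu_X^\infty(dx)$, which is what could a priori produce oscillation of the interface rather than monotone convergence. A way to organise this would be to show first that $t\mapsto M(t)$ is monotone and bounded by its equilibrium value, so that the sign argument $x+X-2\mu_X^\infty(-\infty,x_0(t))+o(1)$ changes sign at a monotone threshold; making the $o(1)$ precise in the presence of the $\text{sign}$ nonlinearity is the delicate point. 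The route I would try is to regularise $\text{sign}$ by $\tanh(\beta\,\cdot)$, establish the monotone relaxation at finite (supercritical) $\beta$ by comparison and coupling arguments in the spirit of the proof of Theorem~\ref{thm1}, and then pass to the limit $\beta\to\infty$.
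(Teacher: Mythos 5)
The paper contains no proof of Proposition \ref{attractorX}: it is stated as the $X$-dependent generalization of Proposition \ref{attractor}, which the authors explicitly say they were unable to prove and support only by the heuristic of Proposition \ref{stability_scalino} and by numerics (Fig.\! \ref{scalino_ord1}, \ref{unstable}). Your proposal therefore cannot be compared to a proof in the paper; what it does is follow the paper's intended route and make it more explicit. Your shift $y=x+X$, $n(t)(y)=m(t)(y-X)$ is correct (the $X$ in the sign cancels, the measure becomes $\mathcal{N}(2X,\sigma^2/2\alpha_2)$, and $\mu_X^\infty(-\infty,x_0)=\nu_X(-\infty,x_0+X)$), with the honest caveat, which you flag, that this needs Proposition \ref{attractor} for a non-centred Gaussian --- harmless here, since that proposition is itself only heuristic, and your direct sign computation in the $X$-dependent setting is exactly the argument the authors run for Proposition \ref{attractor} and \ref{stability_scalino}. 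One point in your favour: the limiting condition you derive, $\overline{x_0}+X=2\mu_X^\infty(-\infty,\overline{x_0})$ (resp.\! $\overline{x_0}+X=-2\mu_X^\infty(\overline{x_0},+\infty)$), is the one consistent with the band \eqref{eqn:posnegX} and with the border formulas \eqref{eqn:left_right_border}, whereas the proposition as printed drops the $+X$; that appears to be a slip in the statement rather than an error on your side. Finally, the obstacles you single out --- proving the profile remains a staircase, monotonicity of the interface and of $M(t)$ under the nonlocal feedback, and the $\tanh(\beta\,\cdot)$ regularisation with $\beta\to\infty$ --- are precisely what remains open in the paper, so your proposal is a correct account of the state of the result rather than a complete proof, which matches the paper's own treatment.
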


\begin{rem}
The borders of the fixed points region of Proposition \ref{attractorX} can be expressed in terms of $(X,M)$, $M$ being the asymptotically stable value of $M(t)$ in Eq. \eqref{eqn:limit_infX}:
\begin{equation}
\label{eqn:left_right_border}
\overline{x_0}(X,M) = -1- X- M, \qquad \overline{x_0}(X,M) = 1- X- M,
\end{equation}
respectively for the left border (i.e.\! for $\overline{x_0} + X < 0$), and the right border (i.e.\! for $\overline{x_0} + X > 0$).
The expressions in \eqref{eqn:left_right_border} can be derived by using that $M = 1 - 2 \mu_X^{\infty}(-\infty, \overline{x_0})$.
\end{rem}
In Fig.\! \ref{fixed_points_region} we plot the fixed points region as a parametric function of $X$ and $M$, with the two borders respectively given by
\begin{equation}
\label{eqn:l_r_bord}
M =  1 - 2\mu_X^{\infty}(-\infty, -1-X-M), \quad M = 1 -2\mu_X^{\infty}(-\infty, 1-X-M).
\end{equation}
We can distinguish two regimes depending on the diffusion parameters: for large values of $\frac{\sigma^2}{2\alpha_2}$, both expressions in \eqref{eqn:l_r_bord} define the graph of a function $M = \psi(X)$, while this is not the case when $\frac{\sigma^2}{2\alpha_2}$ is small. Unfortunately, we were not able to determine the precise value of $\sigma$ and $\alpha_2$ where this transition takes place, due to the implicit character of the equations in play.
\begin{figure}%
    \centering
    \subfloat[Fixed points region for large $\frac{\sigma^2}{2\alpha_2}$]{{\includegraphics[width=5.5cm]{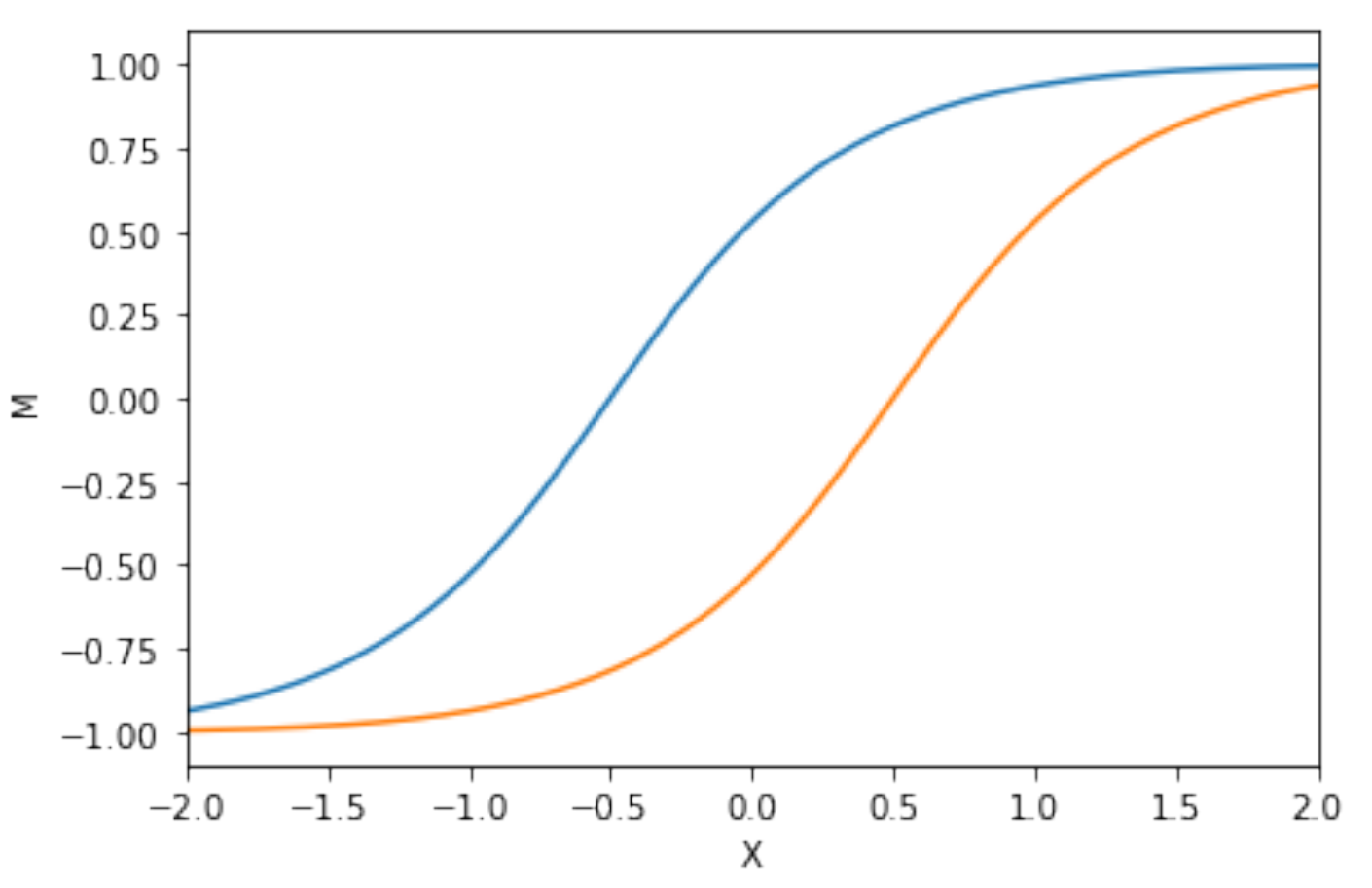}}}%
    \qquad
    \subfloat[Fixed points region for small $\frac{\sigma^2}{2\alpha_2}$]{{\includegraphics[width=5.5cm]{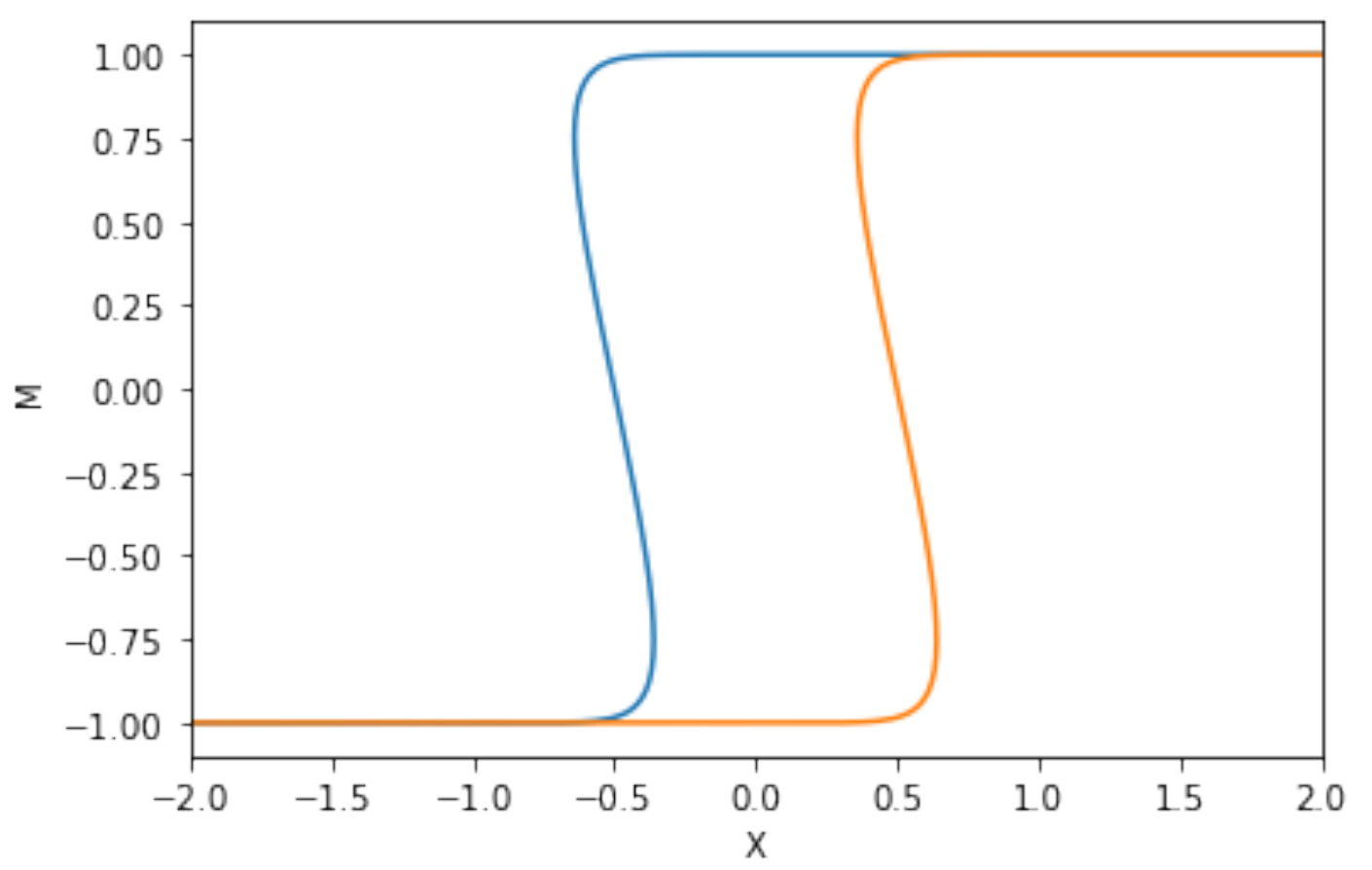} }}%
    \caption{Fixed points region for $\alpha_2 = 1$, $\sigma=3$ (left), and for $\alpha_2 = 3$, $\sigma = 1$ (right).}   
    \label{fixed_points_region}
\end{figure}
Simulations suggest that the dynamics of $(X^N(t),M^N(t))$ is substantially different in the two cases: for big values of $\frac{\sigma^2}{2\alpha_2}$, we observe a diffusive motion onto the fixed points region, while for small $\frac{\sigma^2}{2\alpha_2}$ the dynamics resembles a diffusion with jumps. In both cases, as we noted for the simpler case $X^N(t) \equiv 0$, the single $1$-level magnetizations $m_i^N(t)$'s should be evolving as non-Markovian spins, this time interacting since $X^N(t) \neq 0$.
This situation appears to be comparable to its mean field counterpart shown in Fig.\! \ref{mf-sim}, with the diffusive parameters playing the role of (the inverse of) $\beta$.
\begin{figure}%
    \centering
    \subfloat[$X^N(t) \equiv X_0 = 0.4$, $M_0 = 0.802$ ]{{\includegraphics[width=5.5cm]{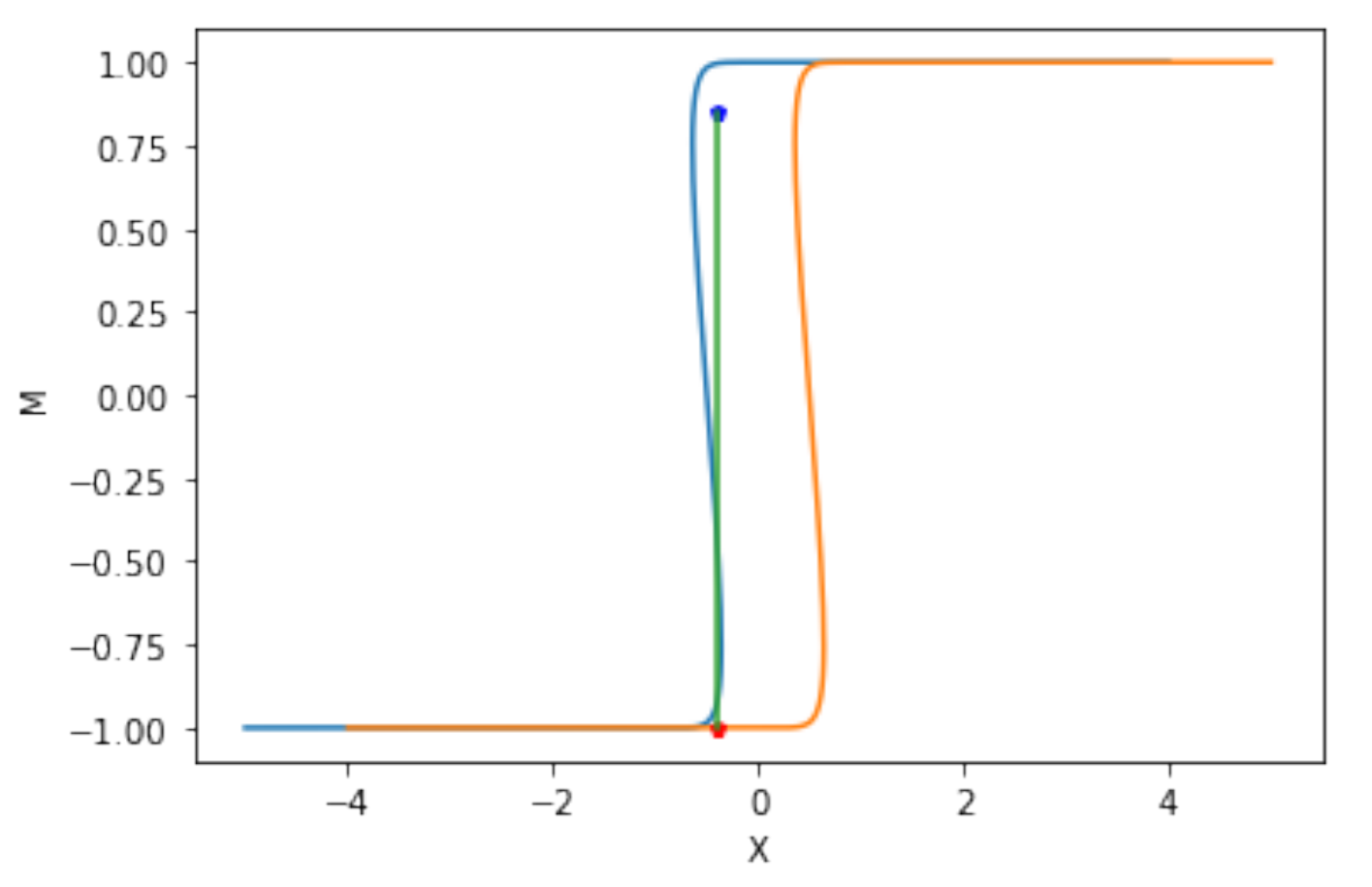}}}%
    \qquad
    \subfloat[$X^N(t) \equiv X_0 = -0.4$, $M_0 = -0.802$]{{\includegraphics[width=5.5cm]{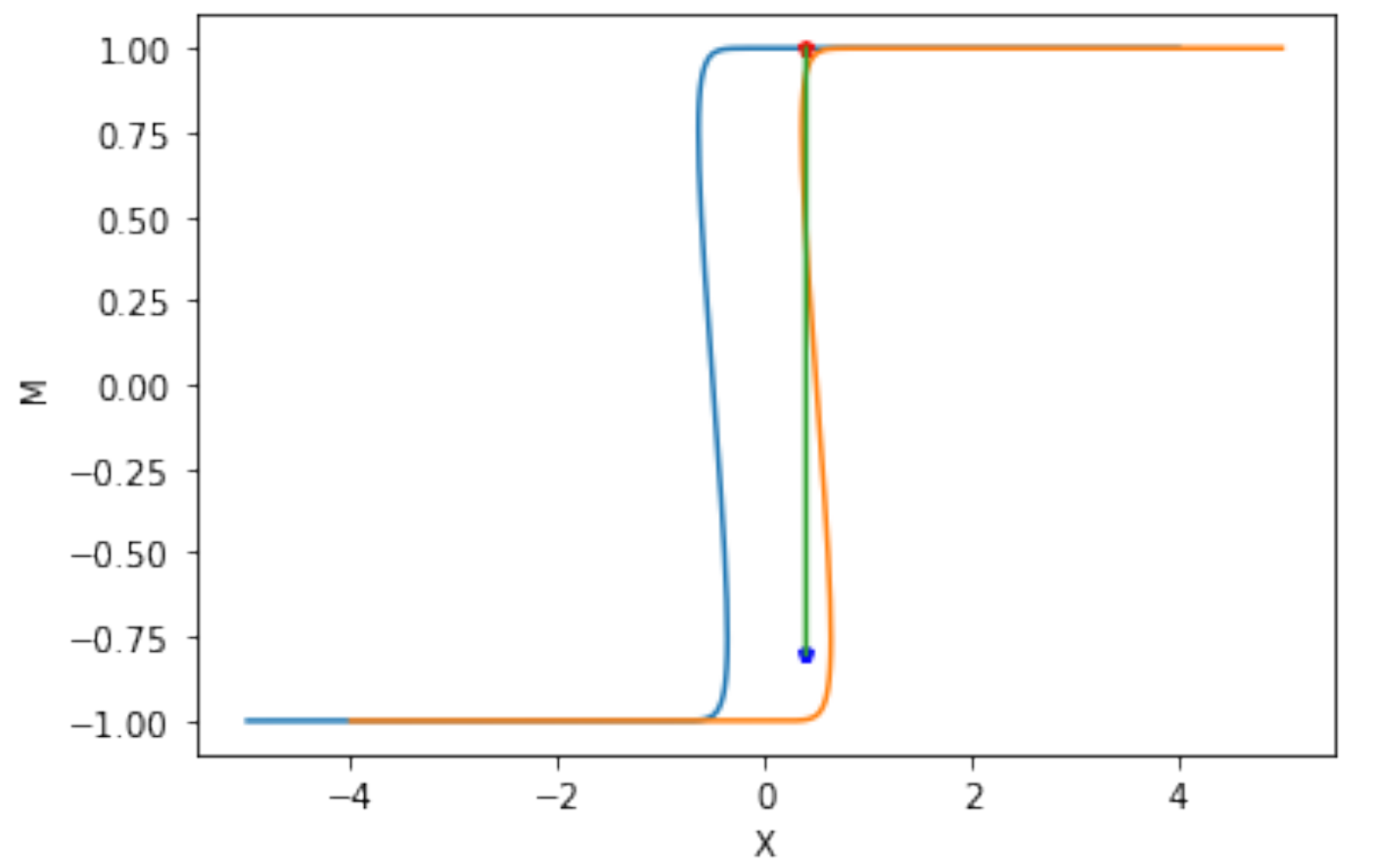} }}%
    \caption{Two simulations of the order $N$ dynamics, with symmetric initial conditions in the two unstable regions, with $\alpha_2 = 3$, $\sigma=1$, $\beta_1=\beta_2 = \infty$. The blue (red) dot is the initial (final) point of each trajectory.}   
    \label{unstable}
\end{figure}
As in the mean field case, the jumps seem to be occuring because of a loss of stability of the fixed points in certain areas of the phase-space. This loss of stability seems to originate at an order $N$ timescale (this is also in parallel with the mean field case, where it was originating at an order $1$ timescale). Indeed, for $\frac{\sigma^2}{2\alpha_2}$ small, starting the dynamics \eqref{eqn:limit_infX} from a staircase equilibrium which belongs to a certain area of the fixed points region, and letting it evolve for times of order $N$ when the $x_i$'s start their motion, we see a fast trajectory which very soon gets attracted to an area close to the opposite border from which it started. An example of this is shown in Fig.\! \ref{unstable}, where we have kept fixed $X^N(t) \equiv X(0)$ to simulate the dynamics at a timescale of order $N$: we indeed see two fast transient trajectories starting from two initial points (the blue dots) which seem to belong to the unstable regions of fixed points. The same simulation at an order $1$ timescale would have instead shown a trivial dynamics constantly equal to the initial datum, for any choice of the latter inside the two-dimensional manifold of fixed points.  Finally, in Fig.\! \ref{complete_N^2} we show a complete simulation of the trajectories $(X^N(t),M^N(t))$ at a timescale of order $N^2$, showing the different behavior depending on the value of $\frac{\sigma^2}{2\alpha_2}$. In the right plot, the white areas at the borders of the fixed points region which are not hit by any trajectory should approximate the unstable regions of fixed points. 
\begin{figure}%
    \centering
    \subfloat[$\frac{\sigma^2}{2\alpha_2}$ large]{{\includegraphics[width=5.5cm]{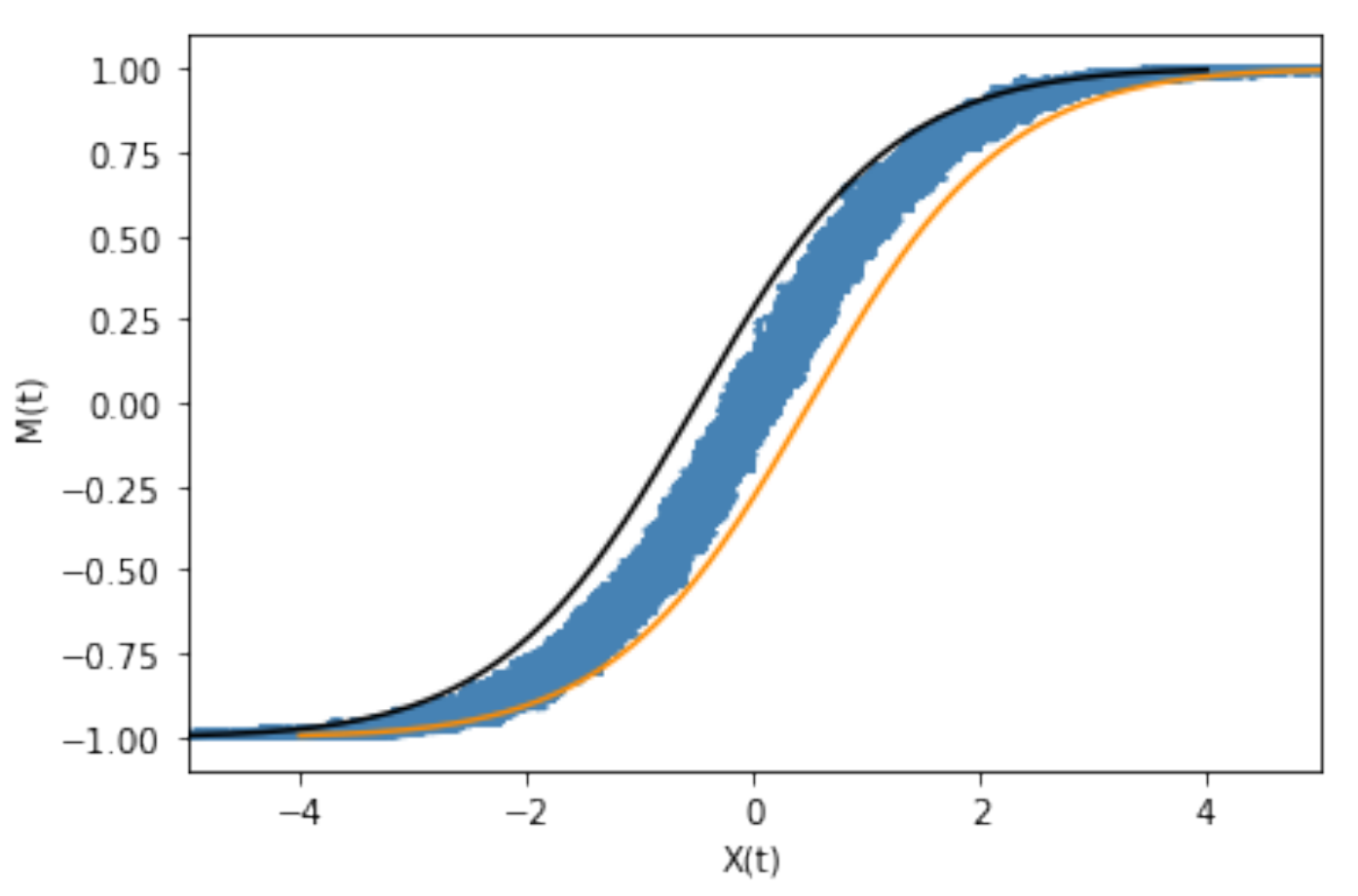}}}%
    \qquad
    \subfloat[$\frac{\sigma^2}{2\alpha_2}$ small]{{\includegraphics[width=5.5cm]{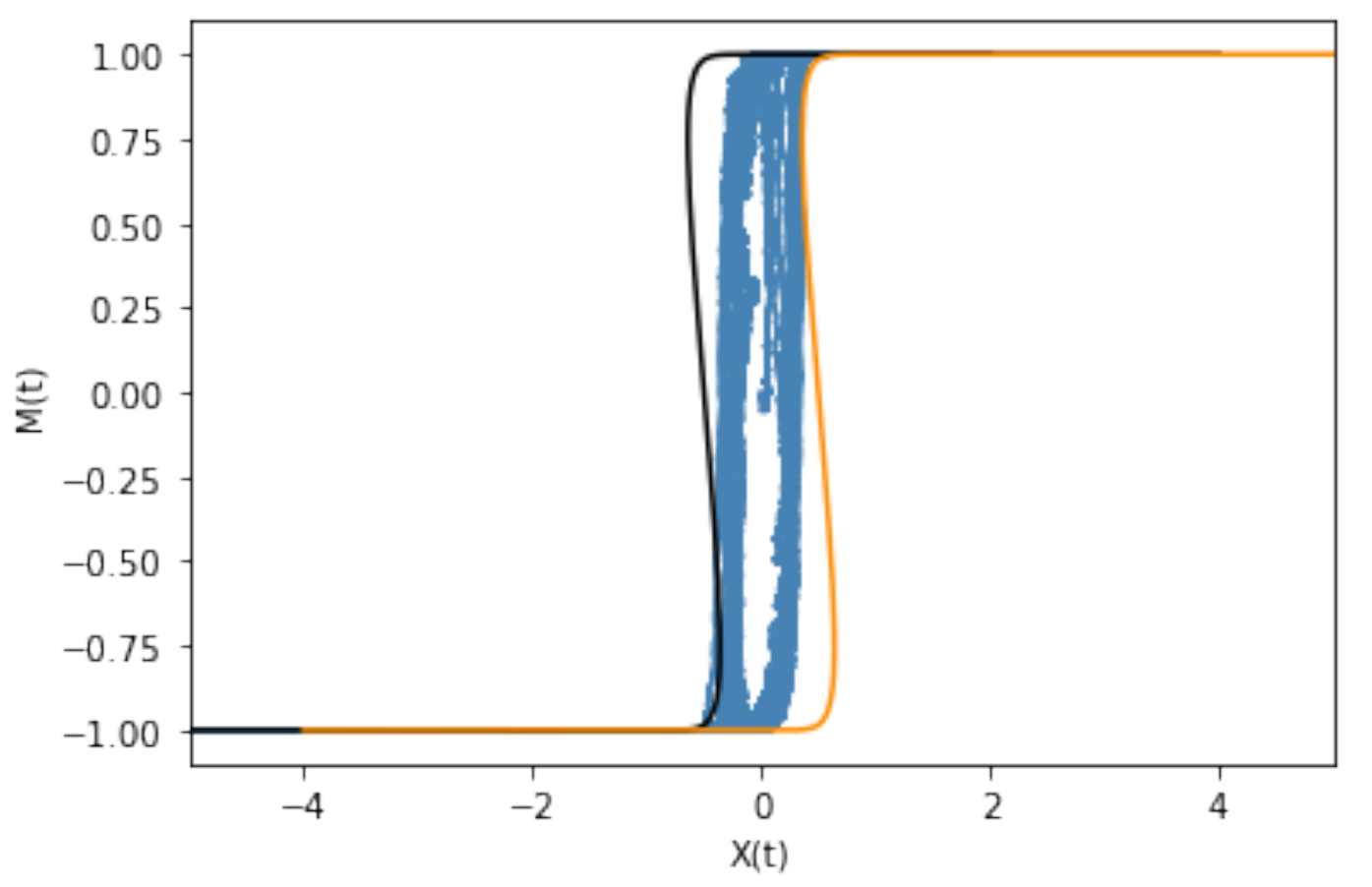} }}%
    \caption{Two simulated trajectories of $(X^N(t), M^N(t))$ at the order $N^2$ timescale for $N = 500$ and $T = 5\times10^8$ , with $\beta_1=\beta_2 = \infty$. On the left $\alpha_2 = 1$, $\sigma=5$, while on the right $\alpha_2 = 3$, $\sigma=1$.}   
    \label{complete_N^2}
\end{figure}

\end{document}